\newcommand{\R}{\mathbb{R}}
\newcommand{\C}{\mathbb{C}}
\newcommand{\Q}{\mathbb{Q}}
\newcommand{\Z}{\mathbb{Z}}
\newcommand{\N}{\mathbb{N}}
\newcommand{\ra}{\rightarrow}
\newtheorem{thm}{Theorem}
\newtheorem{df}[thm]{Definition}
\newtheorem{prop}[thm]{Proposition}
\newtheorem{lem}[thm]{Lemma}
\newtheorem{cor}[thm]{Corollary}
\newtheorem{conj}[thm]{Conjecture}
\newtheorem{ex}[thm]{Example}
\theoremstyle{remark}
\newtheorem{rk}[thm]{Remark}
\newtheorem{exc}[thm]{Exercise}
\newcommand{\fl}{f_{\ast}}
\newcommand{\fu}{f^{\ast}}
\newcommand{\decal}[1]{\lbrack #1 \rbrack}
\newcommand{\PP}{\mathbb{P}}
\DeclareMathOperator{\im}{Im}
\DeclareMathOperator{\codim}{codim}
\newcommand{\QHS}{\text{$\QQ$-HS}}
\newcommand{\shH}{\mathcal{H}}
\DeclareMathOperator{\MT}{MT}
\DeclareMathOperator{\Ad}{Ad}
\newcommand{\End}{\operatorname{End}}
\newcommand{\HdR}[1]{H^{#1}}
\newcommand{\Xan}{X^{\mathrm{an}}}
\newcommand{\Dan}{D^{\mathrm{an}}}
\newcommand{\Zan}{Z^{\mathrm{an}}}
\newcommand{\Ban}{B^{\mathrm{an}}}
\newcommand{\Fan}{\shF^{\mathrm{an}}}
\newcommand{\Ean}{\shE^{\mathrm{an}}}
\newcommand{\Lan}{\shL^{\mathrm{an}}}
\newcommand{\fan}{f^{\mathrm{an}}}
\newcommand{\piu}{\pi^{\ast}}
\newcommand{\classdR}[1]{\lbrack #1 \rbrack}
\newcommand{\cdR}{c}
\newcommand{\shL}{\mathscr{L}}
\newcommand{\famA}{\mathcal{A}}
\DeclareMathOperator{\Gal}{Gal}
\DeclareMathOperator{\Aut}{Aut}
\newcommand{\Hom}{\operatorname{Hom}}
\newcommand{\mlie}{\mathfrak{m}}
\newcommand{\Eun}{E^{\times}}
\newcommand{\Fun}{F^{\times}}
\newcommand{\Qdeg}[1]{\lbrack #1 : \QQ \rbrack}
\renewcommand{\sb}{\bar{s}}
\newcommand{\TrEQ}{\Tr_{E/\QQ}}
\newcommand{\NmEF}{\Nm_{E/F}}
\DeclareMathOperator{\Tr}{Tr}
\DeclareMathOperator{\Nm}{Nm}
\newcommand{\disc}{\operatorname{disc}}
\newcommand{\phizeta}{\varphi_{\zeta}}
\newcommand{\wed}[2]{\bigwedge\nolimits_{#1}^{#2}}
\newcommand{\Grass}{\operatorname{Grass}}
\newcommand{\MgdN}{\mathcal{M}_{g,d,N}}
\newcommand{\Dsp}{D^{\mathrm{sp}}}
\newcommand{\abs}[1]{\lvert #1 \rvert}
\newcommand{\tensor}{\otimes}
\newcommand{\ZZ}{\mathbb{Z}}
\newcommand{\QQ}{\mathbb{Q}}
\newcommand{\RR}{\mathbb{R}}
\newcommand{\CC}{\mathbb{C}}
\newcommand{\HH}{\mathbb{H}}
\newcommand{\menge}[2]{\bigl\{ \thinspace #1 \thinspace\thinspace \big\vert%
\thinspace\thinspace #2 \thinspace \bigr\}}
\newcommand{\define}[1]{\emph{#1}}
\newcommand{\shf}[1]{\mathscr{#1}}
\newcommand{\OX}{\shO_X}
\newcommand{\shO}{\shf{O}}
\newcommand{\shF}{\shf{F}}
\newcommand{\shE}{\shf{E}}
\newcommand{\into}{\hookrightarrow}
\DeclareMathOperator{\GL}{GL}
\DeclareMathOperator{\sgn}{sgn}
\newcommand{\class}[1]{\lbrack #1 \rbrack}
\newcommand{\Spec}{\operatorname{Spec}}
\newcommand{\argbl}{-}
\newcommand{\Mmod}{\mathcal{M}}
\title{Notes on absolute Hodge classes}
\author{Fran\c{c}ois Charles}
\address{D\'epartement de Math\'ematiques et Applications, \'Ecole Normale Sup\'erieure, 45, rue d'Ulm, 75005 Paris,
France}
\email{francois.charles@ens.fr}
\author{Christian Schnell}
\address{Department of Mathematics, Statistics and Computer Science, University of Illinois at Chicago, 851 S. Morgan
Street, Chicago, IL 60607, USA}
\email{cschnell@math.uic.edu}
\date{}
\begin{document}

\maketitle

\section*{Introduction}

Absolute Hodge classes first appear in Deligne's proof of the Weil conjectures for K3 surfaces in \cite{DelK3} and are
explicitly introduced in \cite{Del82}. The notion of absolute Hodge classes in the singular cohomology of a smooth
projective variety stands between that of Hodge classes and classes of algebraic cycles. While it is not known whether
absolute Hodge classes are algebraic, their definition is both of an analytic and arithmetic nature.

The paper \cite{DelK3} contains one of the first appearances of the notion of motives, and is among the first
unconditional applications of motivic ideas. Part of the importance of the notion of absolute Hodge classes is indeed to
provide an unconditional setting for the application of motivic ideas. The papers \cite{DelK3}, \cite{DMOS} 
and \cite{An}, among others, give examples of this train of thoughts. The book \cite{Jan} develops a theory of mixed
motives based on absolute Hodge classes.

\bigskip

In these notes, we survey the theory of Hodge classes. The first section of these notes deals with algebraic de Rham
cohomology and cycle classes. As proved by Grothendieck, the singular cohomology groups of a complex algebraic variety
can be computed using suitable algebraic de Rham complexes. This provides an algebraic device for computing topological
invariants of complex algebraic varieties. 

The preceding construction is the main tool behind the definition of Hodge classes, which is the
object of section 2. Indeed, this allows to consider Galois actions on the singular cohomology groups with complex
coefficients of a complex algebraic variety. In section 2, we discuss the definition of absolute Hodge classes. We
try to investigate two aspects of this subject. The first one pertains to the Hodge conjecture. Absolute Hodge classes
make it possible to shed some light on the problem of the algebraicity of Hodge classes, and allow us to isolate the
number-theoretic content of the Hodge conjecture. The second aspect we hint at is the motivic meaning of absolute Hodge
classes. While we do not discuss the construction of motives for absolute Hodge classes as in \cite{DMOS}, we show
various functoriality and semi-simplicity properties of absolute Hodge classes which lie behind the more general
motivic constructions cited above. We try to phrase our results so as to get results and proofs which are valid for
Andr\'e's theory of motivated cycles as in \cite{An}. While we do not define motivated cycles, some of our proofs are
very much inspired by this paper.

The third section deals with variational properties of absolute Hodge classes. After stating the variational Hodge
conjecture, we prove Deligne's principle B as in \cite{Del82} which one of the main technical tools of the paper. In
the remainder of the section, we discuss consequences of the algebraicity of Hodge bundles and of the Galois action on
relative de Rham cohomology. Following \cite{Voabs}, we investigate the meaning of the theorem of
Deligne-Cattani-Kaplan on the algebraicity of Hodge loci, see \cite{CDK}, and discuss the link between Hodge classes
being absolute and the field of definition of Hodge loci.

The last two sections are devoted to important examples of absolute Hodge classes. Section 4 discusses the Kuga-Satake
correspondence following Deligne in \cite{DelK3}. In section 5, we give a full proof of Deligne's theorem which states
that Hodge classes on abelian varieties are absolute, see \cite{Del82}.

In writing these notes, we did not seek efficiency at all costs. Indeed, we did not necessarily prove properties of
absolute Hodge cycles in the shortest way possible, but we rather chose to emphasize a variety of techniques and ideas.

\subsection*{Acknowledgements} This text is an expanded version of five lectures given at the ICTP summer school
on Hodge theory in Trieste in June 2010. The first two lectures were devoted to absolute Hodge cycles and arithmetic
aspects of the Hodge conjecture. The remaining three lectures outlined Deligne's proof that every Hodge class on an
abelian variety is an absolute Hodge class. We would like to thank the organizers for this very nice and fruitful summer
school.

Claire Voisin was supposed to give these lectures in Trieste, but she could not attend. It would be hard to acknowledge
enough the influence of her work on these notes and the lectures we gave. We are grateful for her allowing us to use
the beautiful survey \cite{VoHodge} for our lectures and thank her sincerely. We also want to thank Matt Kerr for
giving one of the lectures. 

\bigskip

\tableofcontents


\section{Algebraic de Rham cohomology}

Shortly after Hironaka's paper on resolutions of singularities had appeared,
Grothendieck observed that the cohomology groups of a complex algebraic variety can
be computed algebraically. More precisely, he showed that on a nonsingular $n$-dimensional algebraic
variety $X$ (of finite type over the field of complex numbers $\CC$), the
hypercohomology of the algebraic de Rham complex
\[
	\OX \to \Omega_{X/\CC}^1 \to \dotsb \to \Omega_{X/\CC}^n 
\]
is isomorphic to the singular cohomology $H^{\ast}(\Xan, \CC)$ of the complex manifold
corresponding to $X$. Grothendieck's theorem makes it possible to ask arithmetic
questions in Hodge theory, and is the founding stone for the theory of
absolute Hodge classes. In this lecture, we briefly review Grothendieck's theorem, as
well as the construction of cycle classes in algebraic de Rham cohomology.

\subsection{Algebraic de Rham cohomology}

We begin by describing algebraic de Rham cohomology in a more general setting. Let
$X$ be a nonsingular quasi-projective variety, defined over a field $K$ of
characteristic zero. This means that we have a morphism $X \to \Spec K$, and we let
$\Omega_{X/K}^1$ denote the sheaf of K\"ahler differentials on $X$. We also define $\Omega_{X/K}^i =
\bigwedge^i \Omega_{X/K}^1$.

\begin{df}
The \define{algebraic de Rham cohomology} of $X \to K$ consists of the $K$-vector
spaces
\[
	\HdR{i}(X/K) = \HH^i \bigl( \OX \to \Omega_{X/K}^1 \to \dotsb \to \Omega_{X/K}^n \bigr),
\]
where $n = \dim X$.
\end{df}

This definition is compatible with field extensions, for the following reason. Given
a field extension $K \subseteq L$, we let $X_L = X \times_{\Spec K} \Spec L$ denote
the variety obtained from $X$ by extension of scalars. Since $\Omega_{X_L/L}^1 \simeq
\Omega_{X/K}^1 \tensor_K L$, we obtain $\HdR{i}(X_L/L) \simeq \HdR{i}(X/K) \tensor_K L$. 

The algebraic de Rham complex $\Omega_{X/K}^{\bullet}$ is naturally filtered by the
subcomplexes $\Omega_{X/K}^{\bullet \geq p}$, and we shall denote the
induced filtration on its hypercohomology by
\[
	F^p \HdR{i}(X/K) = \im \bigl(
		\HH^i(\Omega_{X/K}^{\bullet \geq p} \to \Omega_{X/K}^{\bullet} \bigr)
\]
and refer to it as the Hodge filtration. We can now state Grothendieck's comparison theorem.

\begin{thm}[Grothendieck]
Let $X$ be a nonsingular projective variety over $\CC$, and let $\Xan$ denote the
associated complex manifold. Then there is a canonical isomorphism
\[
	\HdR{i}(X/\CC) \simeq H^i(\Xan, \CC),
\]
and under this isomorphism, $F^p \HdR{i}(X/\CC) \simeq F^p H^i(\Xan, \CC)$ gives the
Hodge filtration on singular cohomology.
\end{thm}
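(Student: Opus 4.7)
The plan is a three-step reduction passing through the analytic category. The idea is to compare both the algebraic de Rham complex on $X$ and the singular cochain complex of $\Xan$ with a common intermediary: the holomorphic de Rham complex $\Omega^{\bullet}_{\Xan}$ on the complex manifold $\Xan$.

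First, I would use Serre's GAGA theorem to pass from algebraic to analytic coherent cohomology. Since $X$ is projective and the Kähler differentials $\Omega_{X/\CC}^p$ are coherent, GAGA supplies canonical isomorphisms $H^q(X, \Omega_{X/\CC}^p) \simeq H^q(\Xan, \Omega_{\Xan}^p)$ for all $p,q$. Comparing the Hodge-to-de Rham spectral sequences (both of which are built from these $E_1$-pages and converge to hypercohomology of the respective de Rham complexes) yields an isomorphism
\[
  \HH^i\bigl(X, \Omega_{X/\CC}^{\bullet}\bigr) \simeq \HH^i\bigl(\Xan, \Omega_{\Xan}^{\bullet}\bigr),
\]
compatible with the filtrations induced by the naive truncations $\Omega^{\bullet \geq p}$.

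Second, I would invoke the holomorphic Poincaré lemma: the natural inclusion $\CC_{\Xan} \hookrightarrow \Omega^{\bullet}_{\Xan}$ is a quasi-isomorphism of complexes of sheaves on $\Xan$. This is a local statement, proved on a polydisc by noting that any closed holomorphic $p$-form can be integrated against the radial homotopy. Taking hypercohomology gives $\HH^i(\Xan, \Omega^{\bullet}_{\Xan}) \simeq H^i(\Xan, \CC)$, which combined with the first step produces the desired canonical isomorphism $\HdR{i}(X/\CC) \simeq H^i(\Xan, \CC)$.

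Third, for the Hodge filtration, note that by construction $F^p \HdR{i}(X/\CC)$ is the image of $\HH^i(X, \Omega_{X/\CC}^{\bullet \geq p}) \to \HH^i(X, \Omega_{X/\CC}^{\bullet})$, which under GAGA corresponds to the image of $\HH^i(\Xan, \Omega_{\Xan}^{\bullet \geq p}) \to \HH^i(\Xan, \Omega_{\Xan}^{\bullet})$. The classical definition of the Hodge filtration on $H^i(\Xan, \CC)$ coming from Hodge theory on the compact Kähler manifold $\Xan$ is exactly this image. So the two filtrations agree as soon as the Hodge-to-de Rham spectral sequence degenerates at $E_1$, equivalently as soon as the maps $\HH^i(\Xan, \Omega_{\Xan}^{\bullet \geq p}) \to \HH^i(\Xan, \Omega_{\Xan}^{\bullet})$ are injective. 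This is the main obstacle: the comparison of vector spaces is essentially formal (GAGA plus Poincaré), but the statement about filtrations rests genuinely on analytic input, namely $E_1$-degeneration via harmonic theory and the Kähler identities (or, alternatively, the algebraic proof of Deligne--Illusie). Once degeneration is in hand, the identification of filtrations is automatic and the theorem follows.
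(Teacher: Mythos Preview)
Your proof is correct and follows essentially the same route as the paper: GAGA plus a spectral-sequence comparison to identify algebraic and holomorphic de Rham hypercohomology, then the holomorphic Poincar\'e lemma to reach $H^i(\Xan,\CC)$. The only difference is that the paper handles the filtration statement more tersely---it simply takes the Hodge filtration on $H^i(\Xan,\CC)$ to be the one induced by the naive filtration on $\Omega_{\Xan}^{\bullet}$ and reapplies GAGA---whereas you make the dependence on $E_1$-degeneration explicit, but this is the same argument.
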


\begin{proof}
The theorem is a consequence of the GAGA theorem of Serre. Let $\shO_{\Xan}$ denote
the sheaf of holomorphic functions on the complex manifold $\Xan$. We then have a
morphism $\pi \colon (\Xan, \shO_{\Xan}) \to (X, \OX)$ of locally ringed spaces. 
For any coherent sheaf $\shF$ on $X$, the associated coherent analytic sheaf on
$\Xan$ is given by $\Fan = \piu \shF$, and according to Serre's theorem, $H^i(X,
\shF) \simeq H^i(\Xan, \Fan)$. 

It is easy to see from the local description of the sheaf of K\"ahler
differentials that $(\Omega_{X/\CC}^1)^{\mathrm{an}} = \Omega_{\Xan}^1$. This implies
that $H^q(X, \Omega_{X/\CC}^p) \simeq H^q(\Xan, \Omega_{\Xan}^p)$ for all $p,q \geq
0$. Now pullback via $\pi$ induces homomorphisms
$\HH^i(\Omega_{X/\CC}^{\bullet}) \to \HH^i(\Omega_{\Xan}^{\bullet})$, which are
isomorphism by Serre's theorem. Indeed, the groups on the left are computed by a
spectral sequence with $E_2^{p,q}(X) = H^q(X, \Omega_{X/\CC}^p)$, and the groups on
the right by a spectral sequence with terms $E_2^{p,q}(\Xan) = H^q(\Xan,
\Omega_{\Xan}^p)$, and the two spectral sequences are isomorphic starting from the 
$E_2$-page. By the Poincar\'e lemma, the holomorphic de Rham complex $\Omega_{\Xan}^{\bullet}$ is
a resolution of the constant sheaf $\CC$, and therefore $H^i(\Xan, \CC) \simeq
\HH^i(\Omega_{\Xan}^{\bullet})$. Putting everything together, we obtain a canonical
isomorphism
\[
	\HdR{i}(X/\CC) \simeq H^i(\Xan, \CC).
\]

Since the Hodge filtration on $H^i(\Xan, \CC)$ is induced by the naive filtration on the complex
$\Omega_{\Xan}^{\bullet}$, the second assertion follows by the same argument.
\end{proof}

\begin{rk}
A similar result holds when $X$ is nonsingular and quasi-projective. Using resolution
of singularities, one can find a nonsingular variety $\overline{X}$ and a divisor with normal
crossing singularities, such that $X = \overline{X} - D$. Using differential forms with at
worst logarithmic poles along $D$, one still has
\[
	H^i(\Xan, \CC) \simeq \HH^i(\Omega_{\overline{X}^{an}}^{\bullet}(\log \Dan))
		\simeq \HH^i(\Omega_{\overline{X}/\CC}^{\bullet}(\log D));
\]
under this isomorphism, the Hodge filtration is again induced by the naive filtration
on the logarithmic de Rham complex $\Omega_{\overline{X}^{an}}^{\bullet}(\log \Dan)$. Since algebraic differential forms
on $X$
have at worst poles along $D$, it can further be shown that those groups are
still isomorphic to $\HdR{i}(X/\CC)$.

The general case of a possibly singular quasi-projective variety is dealt with in \cite{DelH3}. It
involves the previous construction together with simplicial techniques.
\end{rk}

Now suppose that $X$ is defined over a subfield $K \subseteq \CC$. Then the complex
vector space $H^i(\Xan, \CC)$ has two additional structures: a $\QQ$-structure,
coming from the universal coefficients theorem
\[
	H^i(\Xan, \CC) \simeq H^i(\Xan, \QQ) \tensor_{\QQ} \CC,
\]
and a $K$-structure, coming from Grothendieck's theorem
\[
	H^i(\Xan, \CC) \simeq \HdR{i}(X/K) \tensor_K \CC.
\]
In general, these two structures are not compatible with each other. It should be
noted that the Hodge filtration is defined over $K$.

The same construction works in families to show that Hodge bundles and the
Gauss-Manin connection are algebraic. Let $f \colon X \to B$ be a smooth projective
morphism of varieties over $\CC$. For each $i$, it determines a variation of Hodge
structure on $B$ whose underlying vector bundle is
\[
	\shH^i = R^i \fl \QQ \tensor_{\QQ} \shO_{\Ban}
		\simeq \RR^i \fan_{\ast} \Omega_{\Xan/\Ban}^{\bullet}
		\simeq \bigl( \RR^i \fl \Omega_{X/B}^{\bullet} \bigr)^{\mathrm{an}}.
\]
By the relative version of Grothendieck's theorem, the Hodge bundles are given by
\[
	F^p \shH^i \simeq \bigl( \RR^i \fl \Omega_{X/B}^{\bullet \geq p} \bigr)^{\mathrm{an}}.
\]
Katz and Oda have shown  in \cite{KO} that the Gauss-Manin connection $\nabla \colon \shH^i \to
\Omega_{\Ban}^1 \tensor \shH^i$ can also be constructed algebraically. Starting from the
exact sequence
\[
	0 \to \fu \Omega_{B/\CC}^1 \to \Omega_{X/\CC}^1 \to \Omega_{B/\CC}^1 \to 0,
\]
let $L^r \Omega_{X/\CC}^i = \fu \Omega_{B/\CC}^r \wedge \Omega_{X/\CC}^{i-r}$. We get
a short exact sequence of complexes
\[
	0 \to \fu \Omega_{B/\CC}^1 \tensor \Omega_{X/B}^{\bullet-1} \to
		\Omega_{X/\CC}^{\bullet} / L^2 \Omega_{X/\CC}^{\bullet} \to
		\Omega_{X/B}^{\bullet} \to 0,
\]
and hence a connecting morphism
\[
	\RR^i \fl \Omega_{X/B}^{\bullet} \to \RR^{i+1} \fl \bigl( \fu \Omega_{B/\CC}^1
		\tensor \Omega_{X/B}^{\bullet-1} \bigr)
	\simeq \Omega_{B/\CC}^1 \tensor \RR^i \fl \Omega_{X/B}^{\bullet}.
\]
The theorem of Katz-Oda is that the associated morphism between analytic vector bundles is
precisely the Gauss-Manin connection $\nabla$.

For our purposes, the most interesting conclusion is the following: if $f$, $X$, and
$B$ are all defined over a subfield $K \subseteq \CC$, then the same is true for the
Hodge bundles $F^p \shH^i$ and the Gauss-Manin connection $\nabla$. We shall make use
of this fact later when discussing absolute Hodge classes and Deligne's Principle~B.

\subsection{Cycle classes}

Let $X$ be a nonsingular projective variety over $\CC$, of dimension $n = \dim X$. 
Let $Z \subseteq X$ be an algebraic subvariety of codimension $p$. It determines a
cycle class
\[
	\class{\Zan} \in H^{2p} \bigl( \Xan, \QQ(p) \bigr)
\]
in Betti cohomology, as follows: Let $\widetilde{Z}$ be a resolution of singularities of
$Z$, and let $\mu \colon \widetilde{Z} \to X$ denote the induced morphism. By Poincar\'e
duality, the linear functional
\[
	H^{2n-2p}(\Xan, \QQ) \to \QQ, \qquad
		\alpha \mapsto \frac{1}{(2 \pi i)^{n-p}} \int_{\widetilde{Z}^{\mathrm{an}}} \mu^{\ast}(\alpha)
\]
is represented by a unique class $\zeta \in H^{2p}(\Xan, \QQ)$, with the
property that
\[
	 \frac{1}{(2 \pi i)^{n-p}} \int_{\widetilde{Z}^{\mathrm{an}}} \mu^{\ast}(\alpha)
		= \frac{1}{(2 \pi i)^n} \int_{\Xan} \zeta \cup \alpha.
\]
This class is clearly of type $(p,p)$, and therefore a Hodge class. To eliminate the
annoying factor of $(2 \pi i)$, we define the cycle class of $Z$ to be $\class{\Zan} = (2 \pi
i)^p \zeta$, which is now a rational Hodge class in the weight zero Hodge structure $H^{2p}
\bigl( \Xan, \QQ(p) \bigr)$.

An important fact is that one can also define a cycle class
\[
	\classdR{Z} \in F^p \HdR{2p}(X/\CC)
\]
in algebraic de Rham cohomology, and that the following comparison theorem holds.

\begin{thm}
Under the isomorphism $\HdR{2p}(X/\CC) \simeq H^{2p}(\Xan, \CC)$, we have
\[
	\classdR{Z} = \class{\Zan}.
\]
\end{thm}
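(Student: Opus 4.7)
The plan is to construct the de Rham cycle class by a Gysin/pushforward construction and then match it with the Betti class via compatibility of trace maps under Grothendieck's comparison isomorphism.

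First, I would construct $\classdR{Z}$. For the smooth locus, one has a Gysin morphism $\mu_{\ast} \colon \HdR{0}(\widetilde{Z}/\CC) \to \HdR{2p}(X/\CC)$ associated to the proper morphism $\mu \colon \widetilde{Z} \to X$ from the chosen resolution of singularities. Set $\classdR{Z} := \mu_{\ast}(1)$. The fact that this lands in $F^p \HdR{2p}(X/\CC)$ follows because the Gysin map shifts the Hodge filtration by $p$ (the codimension of $\widetilde{Z}$ in $X$); equivalently, $\classdR{Z}$ is represented by a section of $\RR^p f_{\ast} \Omega_{X/\CC}^p$ supported along $Z$, via the fundamental local isomorphism $\Omega_{X/\CC}^p / \Omega_{X/\CC}^p(-pZ) \to \iota_{\ast} \Omega_{Z/\CC}^0$ in a local model.

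Next, to prove the equality with the Betti class, I would reduce to checking the Poincaré-duality pairing. The Betti class $\class{\Zan}$ is characterized by the identity $(2\pi i)^{-n} \int_{\Xan} \class{\Zan} \cup \alpha = (2\pi i)^{-(n-p)} \int_{\widetilde{Z}^{\mathrm{an}}} \mu^{\ast}(\alpha)$ for every $\alpha \in H^{2n-2p}(\Xan, \QQ)$. The algebraic Gysin map satisfies the analogous adjunction $\Tr_X(\mu_{\ast}(1) \cup \alpha) = \Tr_{\widetilde{Z}}(\mu^{\ast} \alpha)$, where $\Tr$ denotes the algebraic trace map in de Rham cohomology. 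So the theorem reduces to showing that Grothendieck's comparison isomorphism intertwines the algebraic trace map with the topological integration map $(2\pi i)^{-n} \int_{\Xan}$, and similarly on $\widetilde{Z}$; this is then compatible along $\mu$ by functoriality.

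The main obstacle is the compatibility of the algebraic trace with analytic integration, including tracking the normalizing factors $(2\pi i)$. This is essentially a local computation: one reduces to an affine model, say $Z = \{z_1 = \dots = z_p = 0\} \subseteq \CC^n$, where $\classdR{Z}$ is represented by an explicit residue/Koszul expression in the local cohomology $H^p_Z(\Omega_X^p)$, and one computes its cup product with a compactly supported test form. The factor $(2\pi i)^p$ appearing in $\class{\Zan} = (2\pi i)^p \zeta$ matches exactly the $(2\pi i)^p$ produced by the iterated residue/Cauchy formula, and the global normalization $(2\pi i)^n$ on $X$ versus $(2\pi i)^{n-p}$ on $\widetilde{Z}$ encodes the codimension shift. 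Once this local residue calculation is carried out, the equality $\classdR{Z} = \class{\Zan}$ follows globally by the characterization via Poincar\'e duality, since both sides represent the same linear functional on $H^{2n-2p}(\Xan, \QQ) \tensor \CC$.
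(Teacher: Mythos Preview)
Your approach is correct in outline but takes a genuinely different route from the paper. The paper does not argue via Gysin maps and trace compatibility at all. Instead, it \emph{defines} the algebraic cycle class through Chern classes: one first constructs $c_1(\shL) \in F^1 \HdR{2}(X/\CC)$ for a line bundle $\shL$ by the explicit \v{C}ech cocycle $dg_{ij}/g_{ij}$, checks by hand that this agrees with the Betti first Chern class under the comparison isomorphism (tracing through the exponential sequence), extends to higher Chern classes of locally free sheaves via the projective bundle formula, and then sets $\classdR{Z} = \frac{(-1)^{p-1}}{(p-1)!}\, c_p(\shO_Z)$, invoking Riemann--Roch to identify this with the Betti cycle class. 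The comparison $\classdR{Z} = \class{\Zan}$ then follows formally from $c_k(\shE) = c_k(\Ean)$, which was already established.

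Each approach has its merits. The paper's Chern-class route is elementary and self-contained once GAGA is in place: the only real computation is the line-bundle case, and everything else is formal. Your Gysin/trace approach is more conceptually direct --- it matches the very definition of $\class{\Zan}$ via Poincar\'e duality --- but it front-loads more machinery: one needs an algebraic Gysin morphism (or equivalently a good theory of local cohomology and residues in de Rham cohomology), the projection formula, and the compatibility of algebraic and analytic trace maps, the last of which, as you note, is a genuine residue calculation. One small caveat: since the paper takes the Chern-class formula as the \emph{definition} of $\classdR{Z}$, your argument as written proves a slightly different statement unless you also verify that $\mu_{\ast}(1)$ agrees with $\frac{(-1)^{p-1}}{(p-1)!}\, c_p(\shO_Z)$; this is of course true, but it is an extra step relative to the paper's framing.
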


Consequently, if $Z$ and $X$ are both defined over a subfield $K \subseteq \CC$, then
the cycle class $\class{\Zan}$ is actually defined over the algebraic closure $\bar{K}$.

In the remainder of this lecture, our goal is to understand the construction of the
algebraic cycle class, and where the factor $(2 \pi i)^p$ in the definition of the
cycle class comes from. We shall first look at a nice
special case, due to Bloch. Assume for now that $Z$ is a local complete intersection
of codimension $p$. This means that $X$ can be covered by open sets $U$, with the
property that $Z \cap U = V(f_1, \dotsc, f_p)$ is the zero scheme of $p$ regular
functions $f_1, \dotsc, f_p$. Then $U - Z \cap U$ is covered by the open sets
$D(f_1), \dotsc, D(f_p)$, and
\begin{equation} \label{eq:Bloch}
	\frac{df_1}{f_1} \wedge \dotsb \wedge \frac{df_p}{f_p}
\end{equation}
is a closed $p$-form on $D(f_1) \cap \dotsb \cap D(f_p)$. Using \v{C}ech cohomology,
it determines a class in
\[
	H^{p-1} \bigl( U - Z \cap U, \Omega_{X/\CC}^{p, \mathrm{cl}} \bigr),
\]
where $\Omega_{X/\CC}^{p, \mathrm{cl}}$ is the subsheaf of $\Omega_{X/\CC}^p$
consisting of closed $p$-forms. Since we have a map of complexes $\Omega_{X/\CC}^{p,
\mathrm{cl}} \decal{-p} \to \Omega_{X/\CC}^{\bullet \geq p}$, we get
\[
	H^{p-1} \bigl( U - Z \cap U, \Omega_{X/\CC}^{p, \mathrm{cl}} \bigr)
	\to \HH^{2p-1} \bigl( U - Z \cap U, \Omega_{X/\CC}^{\bullet \geq p} \bigr)
	\to \HH_{Z \cap U}^{2p} \bigl( \Omega_{X/\CC}^{\bullet \geq p} \bigr).
\]
One can show that the image of \eqref{eq:Bloch} in the cohomology group with supports
on the right does not depend on the choice of local equations $f_1, \dotsc, f_p$. (A
good exercise is to prove this for $p=1$ and $p=2$.) It therefore defines a global section of
the sheaf $\shH_Z^{2p}(\Omega_{X/\CC}^{\bullet \geq p})$.
Using that $\shH_Z^i(\Omega_{X/\CC}^{\bullet \geq p}) = 0$ for $i \leq 2p-1$, we get
from the local-to-global spectral sequence that
\[
	\HH_Z^{2p} \bigl( \Omega_{X/\CC}^{\bullet \geq p} \bigr)
	\simeq H^0 \bigl( X, \shH_Z^{2p}(\Omega_{X/\CC}^{\bullet \geq p}) \bigr).
\]
In this way, we obtain a well-defined class in $\HH_Z^{2p}(\Omega_{X/\CC}^{\bullet
\geq p})$, and hence in the algebraic de Rham cohomology
$\HH^{2p}(\Omega_{X/\CC}^{\bullet \geq p}) = F^p \HdR{2p}(X/\CC)$.

For the general case, one uses the theory of Chern classes, which associates to a
locally free sheaf $\shE$ of rank $r$ a sequence of Chern classes $c_1(\shE), \dotsc,
c_r(\shE)$. We recall their construction in Betti cohomology and in algebraic de Rham
cohomology.

First, consider the case of an algebraic line bundle $\shL$; as usual, we denote the
associated holomorphic line bundle by $\Lan$. The first Chern class $c_1(\Lan) \in
H^2 \bigl( \Xan, \ZZ(1) \bigr)$ can be defined using the exponential sequence
\[
0 \to \ZZ(1) \to \shO_{\Xan} \xrightarrow{\exp} \shO_{\Xan}^{\ast} \to 0.
\]
The isomorphism class of $\Lan$ belongs to $H^1(\Xan, \shO_{\Xan}^{\ast})$, and
$c_1(\Lan)$ is the image of this class under the connecting homomorphism.

To relate this to differential forms, cover $X$ by open subsets $U_i$ on which $\Lan$
is trivial, and let $g_{ij} \in \shO_{\Xan}^{\ast}(U_i \cap U_j)$ denote the
holomorphic transition functions for this cover. If each $U_i$ is simply connected,
say, then we can write $g_{ij} = e^{f_{ij}}$, and then 
\[
	f_{jk} - f_{ik} + f_{ij} \in \ZZ(1)
\]
form a $2$-cocycle that represents $c_1(\Lan)$. Its image in $H^2(\Xan, \CC) \simeq
\HH^2(\Omega_{\Xan}^{\bullet})$ is cohomologous to the class of the $1$-cocycle
$df_{ij}$ in $H^1(\Xan, \Omega_{\Xan}^1)$. But $df_{ij} = dg_{ij} / g_{ij}$, and so
$c_1(\Lan)$ is also represented by the cocycle $dg_{ij} / g_{ij}$. This explains
the special case $p=1$ in Bloch's construction.

To define the first Chern class of $\shL$ in algebraic de Rham cohomology, we use the
fact that a line bundle is also locally trivial in the Zariski topology. If $U_i$ are
Zariski-open sets on which $\shL$ is trivial, and $g_{ij} \in \shO_X^{\ast}(U_i \cap
U_j)$ denotes the corresponding transition functions, we can define $\cdR_1(\shL) \in
F^1 \HdR{2}(X/\CC)$ as the hypercohomology class determined by the cocycle $dg_{ij} / g_{ij}$.
In conclusion, we then have $\cdR_1(\shL) = c_1(\Lan)$ under the isomorphism in
Grothendieck's theorem.

Now suppose that $\shE$ is a locally free sheaf of rank $r$ on $X$. On the associated
projective bundle $\pi \colon \PP(\shE) \to X$, we have a universal line bundle
$\shO_{\shE}(1)$, together with a surjection from $\piu \shE$. In Betti cohomology,
we have
\[
	H^{2r} \bigl( \PP(\Ean), \ZZ(r) \bigr) =
		\bigoplus_{i=0}^{r-1} \xi^i \cdot \piu H^{2r-2i} \big( \Xan, \ZZ(r-i) \bigr),
\]
where $\xi \in H^2 \bigl( \PP(\Ean), \ZZ(1) \bigr)$ denotes the first
Chern class of $\shO_{\shE}(1)$. Consequently, there are unique classes $c_k \in
H^{2k} \bigl( \Xan, \ZZ(k) \bigr)$ that satisfy the relation
\[
	\xi^r - \piu(c_1) \cdot \xi^{r-1} + \piu(c_2) \cdot \xi^{r-2} + \dotsb + (-1)^r \piu(c_r) = 0,
\]
and the $k$-th Chern class of $\Ean$ is defined to be $c_k(\Ean) = c_k$.
The same construction can be carried out in algebraic de
Rham cohomology, producing Chern classes $\cdR_k(\shE) \in F^k H^{2k}(X/\CC)$.
It follows easily from the case of line bundles that we have
\[
	\cdR_k(\shE) = c_k(\Ean)
\]
under the isomorphism in Grothendieck's theorem.

Since coherent sheaves admit finite resolutions by locally free sheaves, it is
possible to define Chern classes for arbitrary coherent sheaves. One consequence of
the Riemann-Roch theorem is the equality
\[
	\class{\Zan} = \frac{(-1)^{p-1}}{(p-1)!} c_p(\shO_{\Zan}) 
		\in H^{2p} \bigl( \Xan, \QQ(p) \bigr).
\]
Thus it makes sense to define the cycle class of $Z$ in algebraic de Rham cohomology
by the formula
\[
	\classdR{Z} = \frac{(-1)^{p-1}}{(p-1)!} c_p(\shO_Z) \in F^p \HdR{2p}(X/\CC).
\]
It follows that $\classdR{Z} = \class{\Zan}$, and so the cycle class of $\Zan$ can
indeed be constructed algebraically, as claimed.

\begin{exc}\label{exc}
Let $X$ be a nonsingular projective variety defined over $\CC$, let $D \subseteq X$ be a
nonsingular hypersurface, and set $U = X - D$. One can show that $\HdR{i}(U/\CC)$ is
isomorphic to the hypercohomology of the log complex $\Omega_{X/\CC}^{\bullet}(\log
D)$. Use this to construct a long exact sequence
\[
\dotsb \to \HdR{i-2}(D) \to \HdR{i}(X) \to \HdR{i}(U) \to \HdR{i-1}(D) \to \dotsb
\]
for the algebraic de Rham cohomology groups. Conclude by induction on the dimension
of $X$ that the restriction map
\[
	\HdR{i}(X/\CC) \to \HdR{i}(U/\CC)
\]
is injective for $i \leq 2 \codim Z - 1$, and an isomorphism for $i \leq 2 \codim Z -
2$.
\end{exc}


\section{Absolute Hodge classes}

 In this section, we introduce the notion of absolute Hodge classes in the cohomology of a complex algebraic variety.
While Hodge theory applies to general compact K\"ahler manifolds, absolute Hodge classes are brought in as a way
to deal with cohomological properties of a variety coming from its algebraic structure.

This circle of ideas is closely connected to the motivic philosophy as envisioned by Grothendieck. One of
the goals of this text is to hopefully give a hint of how absolute Hodge classes can allow one to give unconditional
proofs for results of a motivic flavor.

\subsection{Algebraic cycles and the Hodge conjecture}\label{Hodge-cj}

As an example of the need for a suitable structure on the cohomology of a complex algebraic variety that uses more than
usual Hodge theory, let us first discuss some aspects of the Hodge conjecture.

\bigskip

Let $X$ be a smooth projective variety over $\C$. The singular cohomology groups of $X$ are endowed with pure Hodge
structures such that for any integer $p$, $H^{2p}(X, \Z(p))$ has weight $0$. We denote by $Hdg^p(X)$ the group of Hodge
classes in $H^{2p}(X, \Z(p))$.

As we showed earlier, if $Z$ is a subvariety of $X$ of codimension $p$, its cohomology class $[Z]$ in $H^{2p}(X, \Z(p))$
is a Hodge class. The Hodge conjecture states that the cohomology classes of subvarieties of $X$ span the
$\Q$-vector space generated by Hodge classes.

\begin{conj}
Let $X$ be a smooth projective variety over $\C$. For any nonnegative integer $p$, the subspace of degree $p$ rational
Hodge classes
$$Hdg^p(X)\otimes \Q\subset H^{2p}(X, \Q(p))$$
is generated over $\Q$ by the cohomology classes of codimension $p$ subvarieties of $X$.
\end{conj}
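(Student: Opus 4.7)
The statement to be established is the Hodge conjecture itself, which is famously open; rather than a genuine strategy, what I can offer is a roadmap modelled on the one case that is under control, namely $p=1$, together with an honest account of the point at which the plan must stop.

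\textbf{Step 1: The case $p=1$.} I would begin with the Lefschetz $(1,1)$ theorem. Given a class $\alpha \in Hdg^1(X) \subseteq H^2(X, \Z(1))$, the plan is to exploit the exponential exact sequence
\[
0 \to \Z(1) \to \shO_{\Xan} \xrightarrow{\exp} \shO_{\Xan}^{\ast} \to 0
\]
on the analytic side. The image of $\alpha$ in $H^2(\Xan, \shO_{\Xan})$ is the $(0,2)$-component of its Hodge decomposition, and since $\alpha$ is of type $(1,1)$, this component vanishes. Consequently $\alpha$ lifts to a class in $H^1(\Xan, \shO_{\Xan}^{\ast}) = \Pic(\Xan)$, and by GAGA the resulting line bundle $\shL$ is algebraic. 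By the cycle class formula of the previous section, $c_1(\shL)$ is realized by the divisor of any nonzero meromorphic section, which exhibits $\alpha$ as algebraic.

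\textbf{Step 2: Splitting the problem via absolute Hodge classes.} For $p \geq 2$ there is no analogue of the exponential sequence, and higher codimension cycles are not classified by cohomology. The strategy suggested by the present notes is to break the problem in two. First, I would try to show that every Hodge class is absolute Hodge; that is, that its de Rham realization remains a Hodge class after composing with every embedding $\sigma \colon \C \to \C$. This is the arithmetic half, and will be carried out for abelian varieties by Deligne's theorem in Section 5. Second, I would try to show that every absolute Hodge class is algebraic, passing whenever possible to a family and transferring algebraicity between fibres via Deligne's principle~B.

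\textbf{Step 3: The obstruction.} \textbf{The second half is where the proposal must stop}, and it is the main obstacle. There is no known procedure that takes an absolute Hodge class and returns an algebraic cycle whose class it is; the Galois-coherent system of conjugate classes looks like a shadow of a cycle, but no construction extracts the cycle from the shadow. All the supporting evidence---the Kuga--Satake correspondence of Section 4, the algebraicity of Hodge loci of Cattani--Deligne--Kaplan, and Deligne's theorem on abelian varieties---shows that absolute Hodge classes behave \emph{as if} they were algebraic, but in each instance the cycle, when known to exist, is produced by an independent geometric construction rather than read off from the cohomology class. This is the reason the Hodge conjecture is open, and any honest plan must identify this step as the point of failure.
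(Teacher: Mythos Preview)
Your assessment is correct: the statement is the Hodge conjecture, labelled as a \emph{Conjecture} in the paper and not proved there. There is therefore no ``paper's own proof'' to compare against, and your decision to present a roadmap rather than a purported proof is the right one.

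Your roadmap in fact matches the paper's own framing closely. The decomposition you describe in Step~2 is exactly what the paper does in Section~2.5, where the Hodge conjecture is split into Conjecture~\ref{H-implies-AH} (every Hodge class is absolute Hodge) and Conjecture~\ref{AH-implies-alg} (every absolute Hodge class is algebraic). Your identification of the second half as the genuine obstruction is accurate and agrees with the paper's discussion: the paper observes that Andr\'e's work on motivated cycles reduces much of Conjecture~\ref{AH-implies-alg} to the standard conjectures, but offers no unconditional approach. Your treatment of the $p=1$ case via the exponential sequence and GAGA is standard and correct, though the paper itself does not spell out the Lefschetz $(1,1)$ argument.

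One small addition you might make: the paper emphasises (Proposition~25 and the surrounding discussion) that the reason the split into absolute Hodge classes is natural is that algebraic subvarieties are precisely the closed subsets stable under all field automorphisms of $\CC$, so absolute Hodge classes are the cohomological shadow of this Galois-equivariance. That gives a conceptual motivation for why the decomposition is not arbitrary.
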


\bigskip

If $X$ is only assumed to be a compact K\"ahler manifold, the cohomology groups $H^{2p}(X, \Z(p))$ still carry Hodge
structures, and analytic subvarieties of $X$ still give rise to Hodge classes. While a general compact K\"ahler manifold
can have very few analytic subvarieties, Chern classes of coherent sheaves also are Hodge classes on the cohomology of
$X$.

Note that on a smooth projective complex variety, analytic subvarieties are algebraic by the GAGA principle of Serre,
and that Chern classes of coherent sheaves are linear combinations of cohomology classes of algebraic subvarieties of
$X$. Indeed, this is true for locally free sheaves and coherent sheaves on a smooth variety have finite free
resolutions. This latter result is no longer true for general compact K\"ahler manifolds, and indeed Chern classes of
coherent sheaves can generate a strictly larger subspace than that generated by the cohomology classes of analytic
subvarieties.

These remarks show that the Hodge conjecture could be generalized to the K\"ahler setting by asking whether Chern
classes
of coherent sheaves on a compact K\"ahler manifold generate the space of Hodge classes. This would be the natural
Hodge-theoretic framework for this question. However, the answer to this question is negative, as proved by Voisin in
\cite{Vo02}.

\begin{thm}
There exists a compact K\"ahler manifold $X$ such that $Hdg^2(X)$ is nontorsion while for any coherent sheaf $\mathcal
F$ on $X$, $c_2(\mathcal F)=0$, $c_2(\mathcal F)$ denoting the second Chern class of $\mathcal F$.
\end{thm}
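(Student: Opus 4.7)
The plan is to take $X$ to be a very general compact complex torus of dimension four equipped with an extra endomorphism structure, so that a Weil-type Hodge class of type $(2,2)$ is forced to exist, while simultaneously the N\'eron--Severi group is trivial; the latter will eventually kill the second Chern classes of all coherent sheaves.

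Concretely, I would fix an imaginary quadratic field $K \subset \CC$ with ring of integers $\mathcal{O}_K$ and a free $\mathcal{O}_K$-module $\Lambda$ of rank $4$, so that $V = \Lambda \otimes_\ZZ \RR$ has real dimension $8$. For a parameter $J$ in an open subset of the moduli of $K$-linear complex structures of Weil type on $V$, the quotient $T_J = V/\Lambda$ is a compact K\"ahler manifold of complex dimension $4$. On every such $T_J$, the Weil construction exhibits a distinguished rank-one $K$-subspace of $\wed{K}{4} H^1(T_J, \QQ)$ sitting inside $\wed{\QQ}{4} H^1(T_J, \QQ) \simeq H^4(T_J, \QQ)$ and consisting entirely of classes of pure Hodge type $(2,2)$. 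A nonzero generator of this subspace, cleared of denominators, yields a non-torsion class in $\mathit{Hdg}^2(T_J)$.

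Next, I would choose $J$ very general in parameter space, in the sense of a Noether--Lefschetz type density argument: outside a countable union of proper analytic subvarieties, the integral $(1,1)$-classes of $T_J$ reduce to those forced by the $\mathcal{O}_K$-action, and by a suitable choice of $\Lambda$ this forced part can be made trivial, so that $\operatorname{NS}(T_J) = 0$. In particular, every holomorphic line bundle on $T_J$ is topologically trivial, and for any locally free coherent sheaf $\shE$ the splitting principle puts all Chern classes of $\shE$ in the subring of $H^\ast(T_J, \QQ)$ generated by $\operatorname{NS}(T_J)$. That subring being trivial, one gets $c_2(\shE)=0$ for every locally free $\shE$.

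The main obstacle is extending the vanishing of $c_2$ to arbitrary coherent sheaves, since on a non-algebraic compact K\"ahler manifold one lacks the finite global locally free resolutions that underlie Grothendieck's algebraic definition of Chern classes. The strategy is to filter $\shF$ by its maximal torsion subsheaf and treat each piece separately: the torsion-free quotient is locally free off a codimension $\geq 2$ analytic subset, and a careful extension/residue argument reduces its $c_2$ to the locally free case plus correction terms supported on that subset; the torsion part contributes through fundamental classes of its codimension-two support components. Arranging that a very general $T_J$ carries no proper analytic subvarieties beyond those dictated by the $K$-action---a further parameter count on top of the Noether--Lefschetz argument---kills both the correction terms and the torsion contribution, and completes the proof.
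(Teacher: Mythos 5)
The paper does not actually prove this theorem; it only cites Voisin's article \cite{Vo02} and records that the example is a general Weil torus. Your setup agrees with hers: a four-dimensional torus with multiplication by an imaginary quadratic field $K$, chosen of Weil type so that $\wed{K}{4} H^1(T,\QQ)$ is a two-dimensional $\QQ$-subspace of $H^4(T,\QQ)$ consisting of $(2,2)$-classes, together with a Baire-category argument giving $\operatorname{NS}(T)=0$ for a very general member of the family. Up to that point the outline is sound, and the reduction of an arbitrary coherent sheaf to its reflexive hull modulo pieces supported on proper analytic subsets is also the right first move.

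The fatal gap is the sentence asserting that ``the splitting principle puts all Chern classes of $\shE$ in the subring of $H^\ast(T_J,\QQ)$ generated by $\operatorname{NS}(T_J)$.'' The splitting principle produces Chern roots on an auxiliary flag bundle over $X$, not first Chern classes of line bundles on $X$ itself, and it does not imply that $c_2$ of a vector bundle is a polynomial in divisor classes: a non-projective K3 surface with trivial N\'eron--Severi group carries its tangent bundle, whose second Chern class equals $24\neq 0$. So the principle you invoke is false, and with it the locally free case --- which is precisely the hard core of Voisin's theorem --- collapses. Showing that every reflexive sheaf on a general Weil torus has vanishing $c_2$ requires genuine input about torsion-free sheaves on tori without positive-dimensional analytic subsets; it is not a formal consequence of $\operatorname{NS}=0$. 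A secondary issue: excluding two-dimensional analytic subvarieties of $T_J$ is not a routine Noether--Lefschetz count, because the class of such a surface would be an integral Hodge class of degree $4$ and could therefore be a Weil class, and the Weil classes remain Hodge throughout the family; a separate argument (Voisin uses the relative Barlet/Douady space) is needed to rule these out generically.
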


The proof of the preceding theorem takes $X$ to be a general Weil torus. Weil tori are complex tori with a specific
linear algebra condition which endows them with a nonzero space of Hodge classes. While here they provide a
counterexample to the Hodge conjecture in the K\"ahler setting, they will be instrumental, in the projective case, to
proving Deligne's theorem on absolute Hodge classes.

To our knowledge, there is no tentative formulation of a Hodge conjecture for compact K\"ahler manifolds. As a
consequence, one has to bring ingredients from the algebraic world to tackle the Hodge conjecture for projective
varieties.

\subsection{Galois action, algebraic de Rham cohomology and absolute Hodge
classes}\label{definitions}

The preceding paragraph suggests that the cohomology of projective complex varieties has a richer underlying structure
than that of a general K\"ahler manifold.

This brings us very close to the theory of motives, which Grothendieck created in the sixties as a way to encompass
cohomological properties of algebraic varieties. Even though these notes won't use the language of motives, the motivic
philosophy is pervasive to all the results we will state.

\bigskip

Historically, absolute Hodge classes were introduced by Deligne in order to make an unconditional use of motivic ideas.
We will review his results in the next sections. The main starting point is, as we showed earlier, that the singular
cohomology of a smooth proper complex algebraic variety with complex coefficients can be computed algebraically, using
algebraic de Rham cohomology.

Indeed, let $X$ be a smooth proper complex algebraic variety defined over $\C$. We have a canonical isomorphism
$$H^*(X^{an}, \C)\simeq \mathbb{H}^*(\Omega^{\bullet}_{X/\C}),$$
where $\Omega^{\bullet}_{X/\C}$ is the algebraic de Rham complex of the variety $X$ over $\C$. A striking consequence of
this isomorphism is that the singular cohomology of the manifold $X^{an}$ with complex definition can be computed
algebraically.  It follows that the topology of $\C$ is not needed in this computation, the field
structure being sufficient. More generally, if $X$ is a smooth proper variety defined over any field $k$ of
characteristic zero, the hypercohomology of the de Rham complex of $X$ over Spec$\, k$ gives a $k$-algebra which by
definition is the algebraic de Rham cohomology of $X$ over $k$.

\bigskip

Now let $Z$ be an algebraic cycle of codimension $p$ in $X$. As we showed earlier, $Z$ has a cohomology class
$$[Z]\in H^{2p}(X^{an}, \Q(p))$$
which is a Hodge class, that is, the image of $[Z]$ in $H^{2p}(X^{an}, \C(p))\simeq H^{2p}(X/ \C)(p)$ lies in
$$F^0 H^{2p}(X/ \C)(p)=F^p H^{2p}(X/ \C).$$

\bigskip

Given any automorphism $\sigma$ of the field $\C$, we can form the conjugate variety $X^{\sigma}$ defined as the
complex variety $X\times_{\sigma} \mathrm{Spec}\,\C$, that is, by the cartesian diagram
\begin{equation}
 \xymatrix{
X^{\sigma}\ar[r]^{\sigma^{-1}}\ar[d] & X\ar[d]\\
\mathrm{Spec}\,\C\ar[r]^{\sigma^*} & \mathrm{Spec}\,\C.
}
\end{equation}
It is a smooth projective variety. If $X$ is defined by homogeneous polynomials $P_1, \ldots, P_r$ in some projective
space, then $X^{\sigma}$ is defined by the conjugates of the $P_i$ by $\sigma$. In this case, the morphism from
$X^{\sigma}$ to $X$ in the cartesian diagram sends the closed point with coordinates $(x_0 : \ldots : x_n)$ to the
closed point with homogeneous coordinates $(\sigma^{-1}(x_0):\ldots:\sigma^{-1}(x_n))$, which allows us to denote it by
$\sigma^{-1}$.

The morphism $\sigma^{-1} : X^{\sigma}\ra X$ is an isomorphism of abstract schemes, but it is not a morphism of complex
varieties.
Pull-back of K\"ahler forms still induces an isomorphism between the de Rham complexes of $X$ and $X^{\sigma}$
\begin{equation}\label{iso-cx}
 (\sigma^{-1})^* \Omega^{\bullet}_{X/\C} \stackrel{\sim}{\ra} \Omega^{\bullet}_{X^{\sigma}/\C}.
\end{equation}

Taking hypercohomology, we get an isomorphism
$$(\sigma^{-1})^* : H^*(X/\C) \stackrel{\sim}{\ra} H^*(X^{\sigma}/\C), \alpha\mapsto \alpha^{\sigma}.$$
Note however that this isomorphism is not $\C$-linear, but $\sigma$-linear, that is, if $\lambda\in\C$, we have
$(\lambda\alpha)^{\sigma}=\sigma(\lambda)\alpha^{\sigma}$. We thus get an isomorphism of complex vector
spaces
\begin{equation}\label{iso-dR}
H^*(X/\C)\otimes_{\sigma}\C \stackrel{\sim}{\ra} H^*(X^{\sigma}/\C)
\end{equation}
between the de Rham cohomology of $X$ and that of $X^{\sigma}$. Here the notation $\otimes_{\sigma}$ means that we are
taking tensor product with $\C$ mapping to $\C$ via the morphism $\sigma$. Since this isomorphism comes from an
isomorphism of the de Rham complexes, it preserves the Hodge filtration.

\bigskip

The preceding construction is compatible with the cycle map. Indeed, $Z$ being as before a codimension $p$ cycle in
$X$, we can form its conjugate $Z^{\sigma}$ by $\sigma$. It is a codimension $p$ cycle in $X^{\sigma}$. The
construction of the cycle class map in de Rham cohomology shows that we have
$$[Z^{\sigma}]=[Z]^{\sigma}$$
in $H^{2p}(X^{\sigma}/\C)(p)$. It lies in $F^0 H^{2p}(X^{\sigma}/ \C)(p)$.

Now as before $X^{\sigma}$ is a smooth projective complex variety, and its de Rham cohomology group $H^{2p}(X^{\sigma}/
\C)(p)$ is canonically isomorphic to the singular cohomology group $H^{2p}((X^{\sigma})^{an}, \C(p)).$ The cohomology
class $[Z^{\sigma}]$ in $H^{2p}((X^{\sigma})^{an}, \C(p))\simeq H^{2p}(X^{\sigma}/\C)(p)$ is a Hodge class. This leads
to the following definition.

\begin{df}
 Let $X$ be a smooth complex projective variety. Let $p$ be a nonnegative integer, and let $\alpha$ be an element of
$H^{2p}(X/\C)(p)$. The cohomology class $\alpha$ is an absolute Hodge class if for every automorphism $\sigma$ of $\C$,
the cohomology class $\alpha^{\sigma}\in H^{2p}((X^{\sigma})^{an}, \C(p))\simeq H^{2p}(X^{\sigma}/\C(p))$ is a Hodge
class\footnote{Since $H^{2p}((X^{\sigma})^{an}, \C)$ is only considered as a vector space here, the Tate twist might
seem superfluous. We put it here to emphasize that the comparison isomorphism with de Rham cohomology contains a factor
$(2 \pi i)^{-p}$.}.
\end{df}

The preceding discussion shows that the cohomology class of an algebraic cycle is an absolute Hodge class. Taking
$\sigma=\mathrm{Id}_{\C}$, we see that absolute Hodge classes are Hodge classes.

Using the canonical isomorphism $H^{2p}(X^{an}, \C(p))\simeq H^{2p}(X/ \C)(p)$, we will say that a class in
$H^{2p}(X^{an}, \C)$ is absolute Hodge if its image in $H^{2p}(X/ \C)(p)$ is.

\bigskip

We can rephrase the definition of absolute Hodge cycles in a slightly more intrinsic way. Let $k$ be a field of
characteristic zero, and let $X$ be a smooth projective variety defined over $k$. Assume that the cardinality of $k$ is
less or equal to the cardinality of $\C$, so that there exist embeddings of $k$ into $\C$. Note that any variety
defined over a field of characteristic zero is defined over such a field, as it is defined over a field generated over
$\Q$ by a finite number of elements.

\begin{df}\label{def-dR}
Let $p$ be a nonnegative integer, and let $\alpha$ be an element of the de Rham cohomology space $H^{2p}(X/k)$. Let
$\tau$ be an embedding of $k$ into $\C$, and let $\tau X$ be the complex variety obtained from $X$ by base change to
$\C$. We say that $\alpha$ is a Hodge class relative to $\tau$ if the image of $\alpha$ in
$$ H^{2p}(\tau X/\C)=H^{2p}(X/k)\otimes_{\tau}\C$$
is a Hodge class. We say that $\alpha$ is absolute Hodge if it is a Hodge class relative to every embedding of $k$ into
$\C$.
\end{df}

Let $\tau$ be any embedding of $k$ into $\C$. Since by standard field theory, any two embeddings of $k$ into $\C$ are
conjugated by an automorphism of $\C$, it is straightforward to check that such a cohomology class $\alpha$ is absolute
Hodge if and only if its image in $H^{2p}(\tau X/\C)$ is. Definition \ref{def-dR} has the advantage of not referring to
automorphisms of $\C$, as it is impossible to exhibit one except for the identity and complex conjugation.

This definition allows us to work with absolute Hodge classes in a wider setting by using other cohomology theories.
Even though we will be mainly concerned with de Rham cohomology in these
notes, let us state what absolute Hodge classes are for \'etale cohomology.

\begin{df}\label{def-etale}
 Let $\overline{k}$ be an algebraic closure of $k$. Let $p$ be a nonnegative integer, $l$ a prime number, and let
$\alpha$ be an element of the \'etale cohomology space $H^{2p}(X_{\overline{k}}, \Q_l(p))$. Let
$\tau$ be an embedding of $\overline{k}$ into $\C$, and let $\tau X$ be the complex variety obtained from
$X_{\overline{k}}$ by base change to $\C$. We say that $\alpha$ is a Hodge class relative to $\tau$ if the image of
$\alpha$ in
$$ H^{2p}((\tau X)^{an}, \Q_l(p))\simeq H^{2p}(X_{\overline{k}}, \Q_l(p))$$
is a Hodge class, that is, if it lies in the rational subspace $ H^{2p}((\tau X)^{an}, \Q(p))$ of $ H^{2p}((\tau
X)^{an}, \Q_l(p))$ and is a Hodge class. We say that $\alpha$ is absolute Hodge if it is a Hodge class relative to every
embedding of $k$ into $\C$.
\end{df}

\begin{rk}
 It is not clear whether an absolute Hodge class in the sense of definition \ref{def-dR} is always the first component
of an absolute Hodge class in the sense of definition \ref{def-etale}, see \cite{Del82}, Question 2.4.
\end{rk}

\begin{rk}
 It is possible to encompass crystalline cohomology in a similar framework, see for instance \cite{Bl}.
\end{rk}

\begin{rk}\label{mixed}
It is possible to work with absolute Hodge classes on more general varieties. Indeed, while the definitions we gave
above only deal with the smooth projective case, the fact that the singular cohomology of any quasi-projective
variety can be computed using suitable versions of algebraic de Rham cohomology -- whether through logarithmic de Rham
cohomology, algebraic de Rham cohomology on simplicial schemes or a combination of the two -- makes it possible to
consider absolute Hodge classes in the singular cohomology groups of a general complex variety. 

Note here that if $H$ is a mixed Hodge structure defined over $\Z$ with weight filtration $W_{\bullet}$ and Hodge
filtration $F^{\bullet}$, a Hodge class in $H$ is an element of $H_{\Z}\bigcap F^0 H_{\C} \bigcap W_0 H_{\C}$. One of
the main specific features of absolute Hodge classes on quasi-projective varieties is that they can be found in the odd
singular cohomology groups. Let us consider the one-dimensional case as an example. Let $C$ be a smooth complex
projective curve, and let $D=\Sigma_i n_i P_i$ be a divisor of degree $0$ on $C$. Let $Z$ be the support of $D$, and let
$C'$ be the complement of $Z$ in $C$. It is a smooth quasi-projective curve.

As in Exercise \ref{exc}, we have an exact sequence
$$0\ra H^1(C, \Q(1))\ra H^1(C', \Q(1)) \ra H^0(Z, \Q) \ra H^2(C, \Q(1)).$$
The divisor $D$ has a cohomology class $d\in H^0(Z, \Q)$. Since the degree of $D$ is zero, $d$ maps to zero in $H^2(C,
\Q(1))$. As a consequence, it comes from an element in $H^1(C', \Q(1))$. Now it can be proved that there exists a Hodge
class in $H^1(C', \Q(1))$ mapping to $d$ if and only if some multiple of the divisor $D$ is rationally equivalent to
zero.

In general, the existence of Hodge classes in extensions of mixed Hodge structures is to be related to Griffiths'
Abel-Jacobi map, see \cite{Car}. The problem of whether those are absolute Hodge classes is linked with problems
pertaining to the Bloch-Beilinson filtration and comparison results with regulators in \'etale cohomology, see
\cite{Jan}. 

While we will not discuss here specific features of this problem, most of the results we will state in the pure case
have extensions to the mixed case, see for instance \cite{ChAJ}.
\end{rk}

\subsection{Variations on the definition and some functoriality properties}\label{variations}

While the goal of these notes is neither to construct nor to discuss the category motives one can obtain using
absolute Hodge classes, we will need to use functoriality properties of absolute Hodge classes that are very close to
those motivic constructions. In this paragraph, we extend the definition of absolute Hodge classes to encompass
morphisms, multilinear forms, etc. This almost amounts to defining motives for absolute Hodge classes as in \cite{DMOS}.
The next paragraph will be devoted to semi-simplicity results through the use of polarized Hodge structures.

\bigskip

The following generalizes Definition \ref{def-dR}.
\begin{df}
Let $k$ be a field of characteristic zero with cardinality less or equal than the cardinality of $\C$. Let $(X_i)_{i\in
I}$ and $(X_j)_{j\in J}$ be smooth projective varieties over $\C$, and let $(p_i)_{i\in I}$, $(q_j)_{j\in J}$, $n$ be
integers.Let $\alpha$ be an element of the tensor product
$$(\bigotimes_{i\in I} H^{p_i}(X_i/k))\otimes (\bigotimes_{j\in J} H^{q_j}(X_j/k)^*)(n).$$

Let $\tau$ be an embedding of $k$ into $\C$. We say that $\alpha$ is a Hodge class relative to $\tau$ if the image of
$\alpha$ in
$$(\bigotimes_{i\in I} H^{p_i}(X_i/k))\otimes (\bigotimes_{j\in J}
H^{q_j}(X_j/k)^*)(n)\otimes_{\tau}\C=(\bigotimes_{i\in I} H^{p_i}(\tau X_i/\C))\otimes (\bigotimes_{j\in J} H^{q_j}(\tau
X_j/\C)^*)(n)$$
is a Hodge class. We say that $\alpha$ is absolute Hodge if it is a Hodge class relative to every embedding of $k$ into
$\C$.
\end{df}
As before, if $k=\C$, we can speak of absolute Hodge classes in the group
$$(\bigotimes_{i\in I} H^{p_i}(X_i, \Q))\otimes (\bigotimes_{j\in J} H^{q_j}(X_j, \Q)^*)(n).$$
If $X$ and $Y$ are two smooth projective complex varieties, and if
$$f : H^p(X, \Q(i))\ra H^q(Y, \Q(j))$$
is a morphism of Hodge structures, we will say that $f$ is absolute Hodge, or is given by an absolute Hodge class, if
the element corresponding to $f$ in
$$H^q(Y, \Q)\otimes H^p(X, \Q)^*(j-i)$$
is an absolute Hodge class. Similarly, we can define what it means for a multilinear form, e.g., a polarization, to be
absolute Hodge.

This definition allows us to exhibit elementary examples of Hodge classes as follows.
\begin{ex}
Let $X$ be a smooth projective complex variety.
\begin{itemize}
\item Cup-product defines a map
$$H^p(X, \Q)\otimes H^q(X, \Q)\ra H^{p+q}(X, \Q).$$
This map is given by an absolute Hodge class.
\item Poincar\'e duality defines an isomorphism
$$H^p(X, \Q)\ra H^{2d-p}(X, \Q(d))^*,$$
where $d$ is the dimension of $X$. This map is given by an absolute Hodge class.
\end{itemize}
\end{ex}

\begin{proof}
This is formal. Let us write down the computations involved. Assume $X$ is defined over $k$ (which might be $\C$). We
have a cup-product map
$$H^p(X/k)\otimes H^q(X/k)\ra H^{p+q}(X/k).$$
Let $\tau$ be an embedding of $k$ into $\C$. The induced map
$$H^p(\tau X/\C)\otimes H^q(\tau X/\C)\ra H^{p+q}(\tau X/\C)$$
is cup-product on the de Rham cohomology of $\tau X$. We know that cup-product on a smooth complex projective variety is
compatible with Hodge structures, which shows that it is given by a Hodge class. The conclusion follows, and a very
similar argument proves the result regarding Poincar\'e duality.
\end{proof}

\bigskip

Morphisms given by absolute Hodge classes behave in a functorial way. The following properties are easy to prove,
working as in the preceding example to track down compatibilities.

\begin{prop}\label{functor}
Let $X$, $Y$ and $Z$ be smooth projective complex varieties, and let
$$f : H^p(X, \Q(i))\ra H^q(Y, \Q(j)),\, g : H^q(Y, \Q(j))\ra H^r(Y, \Q(k))$$
be morphisms of Hodge structures.
\begin{enumerate}
\item If $f$ is induced by an algebraic correspondence, then $f$ is absolute Hodge.
\item If $f$ and $g$ are absolute Hodge, then $g\circ f$ is absolute Hodge.
\item Let
$$f^{\dagger}: H^{2d'-q}(Y, \Q(d'-j))\ra H^{2d-p}(X, \Q(d-i))$$
be the adjoint of $f$ with respect to Poincar\'e duality. Then $f$ is absolute Hodge if and only if $f^{\dagger}$ is
absolute Hodge.
\item If $f$ is an isomorphism, then $f$ is absolute Hodge if and only if $f^{-1}$ is absolute Hodge.
\end{enumerate}
\end{prop}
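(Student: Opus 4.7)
The unifying observation is that all the operations in play -- tensor product, K\"unneth, Poincar\'e duality, contraction, and composition with cycle classes -- commute with the base change $\otimes_{\tau}\CC$ that defines absolute Hodgeness, and their analogues in singular cohomology are morphisms of Hodge structures. Granting this, each assertion will reduce to (i) the two examples above (cup product and Poincar\'e duality are absolute Hodge) and (ii) the fact that cycle classes are absolute Hodge, both of which are already available.

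The plan is to identify a morphism $f\colon H^p(X,\QQ(i))\to H^q(Y,\QQ(j))$ with the element $\widetilde f \in H^q(Y,\QQ(j)) \otimes H^p(X,\QQ(i))^*$; by definition $f$ is absolute Hodge iff $\widetilde f$ is. Writing $d=\dim X$, Poincar\'e duality identifies $H^p(X,\QQ(i))^*$ with $H^{2d-p}(X,\QQ(d-i))$ via an absolute Hodge isomorphism, and K\"unneth identifies the resulting tensor product with a direct summand of $H^{2d-p+q}(X\times Y, \QQ(d-i+j))$ via the external product of pullbacks along the projections, which is absolute Hodge since cup products and pullbacks along morphisms defined over the base field are. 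Thus $\widetilde f$ corresponds to a class $\xi_f$ in the cohomology of $X\times Y$.

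Part (1): if $f$ is induced by an algebraic cycle $\Gamma \subseteq X\times Y$, then $\xi_f$ is (a K\"unneth component of) the cycle class $[\Gamma]$. Since $[\Gamma]$ is absolute Hodge by the construction recalled in Section~1, so is its K\"unneth component, hence so is $\widetilde f$, hence so is $f$. Part (2): $g\circ f$ corresponds to the image of $\widetilde g \otimes \widetilde f$ under the contraction
\begin{equation*}
\bigl(H^r(Z,\QQ(k)) \otimes H^q(Y,\QQ(j))^*\bigr) \otimes \bigl(H^q(Y,\QQ(j)) \otimes H^p(X,\QQ(i))^*\bigr) \longrightarrow H^r(Z,\QQ(k)) \otimes H^p(X,\QQ(i))^*,
\end{equation*}
which commutes with base change along $\tau$ and is a morphism of Hodge structures on each fiber, so it preserves being absolute Hodge.

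Part (3) will follow at once from (2) together with the Example: $f^{\dagger}$ is by construction the composition of $f$ with Poincar\'e duality isomorphisms on $X$ and on $Y$, both of which are absolute Hodge. Part (4) is even more direct: if $f$ is an isomorphism and absolute Hodge, then for every embedding $\tau$ the map $f_\tau$ is a Hodge-structure isomorphism, so $(f_\tau)^{-1}$ is a morphism of Hodge structures; since base change commutes with inversion, $(f^{-1})_\tau = (f_\tau)^{-1}$ is Hodge for every $\tau$, so $f^{-1}$ is absolute Hodge. The one point that will require genuine care is (1): one must unwind the identifications via K\"unneth and Poincar\'e duality and check that the class $\xi_f$ associated to ``the morphism induced by the cycle $\Gamma$'' really is the relevant K\"unneth component of $[\Gamma]$, with the correct Tate twist and sign. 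Everything else is formal manipulation of the tensor-categorical structure.
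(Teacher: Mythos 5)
Your proposal is correct and follows essentially the same route as the paper, which omits the proof with the remark that the properties ``are easy to prove, working as in the preceding example to track down compatibilities'' --- i.e., exactly the formal argument you give: all the relevant operations (K\"unneth, Poincar\'e duality, contraction, cycle class) are defined algebraically, commute with conjugation/base change, and are morphisms of Hodge structures. Your extra care about identifying $\xi_f$ with the correct K\"unneth component of $[\Gamma]$ in part (1) is precisely the kind of compatibility the paper has in mind (compare its separate proof that the K\"unneth components of the diagonal are absolute Hodge).
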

Note that the last property is not known to be true for algebraic correspondences. For those, it is equivalent to the
Lefschetz standard conjecture, see the next paragraph. We will need a refinement of this property as follows.

\begin{prop}\label{inverse}
Let $X$ and $Y$ be smooth projective complex varieties, and let
$$p : H^p(X, \Q(i))\ra H^p(X, \Q(i))\, \mathrm{and}\, q : H^q(Y, \Q(j))\ra  H^q(Y, \Q(j))$$
be projectors. Assume that $p$ and $q$ are absolute Hodge. Let $V$ (resp. $W$) be the image of $p$ (resp. $q$), and let
$$ f : H^p(X, \Q(i))\ra H^q(Y, \Q(j))$$
be absolute Hodge. Assume that $qfp$ induces an isomorphism from $V$ to $W$. Then the composition
$$\xymatrix{H^q(Y, \Q(j)) \ar@{>>}[r] & W\ar[r]^{(qfp)^{-1}} & V \ar@{^{(}->}[r] & H^p(X, \Q(i))}$$
is absolute Hodge.
\end{prop}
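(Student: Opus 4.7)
The plan is to characterize the desired map $\phi$ by a system of linear equations in $g := qfp$, $p$, and $q$, and then to observe that both the equations and the uniqueness of their solution are preserved under $\sigma$-conjugation for any $\sigma \in \Aut(\C)$.

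First I would set $g = qfp$. By parts (1) and (2) of Proposition \ref{functor}, $g$ is absolute Hodge, and by hypothesis $g|_V : V \to W$ is an isomorphism. The composition $\phi : H^q(Y, \Q(j)) \to H^p(X, \Q(i))$ in the statement is then the unique $\Q$-linear map satisfying
$$\phi \circ g = p, \quad g \circ \phi = q, \quad p \circ \phi = \phi, \quad \phi \circ q = \phi,$$
as an elementary linear-algebra fact using the bijectivity of $g|_V$. Since $V = \im p$ and $W = \im q$ are rational sub-Hodge structures of the ambient cohomology groups and $g|_V$ is an isomorphism of Hodge structures, the inverse of $g|_V$ extended by zero is itself a morphism of Hodge structures; so $\phi$ is a Hodge class in the Hom space.

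To verify absoluteness, I would fix $\sigma \in \Aut(\C)$ and show that $\phi^\sigma$ is a Hodge class on $X^\sigma$ and $Y^\sigma$. Since $p$, $q$, and $f$ are absolute Hodge, their conjugates $p^\sigma$, $q^\sigma$, $f^\sigma$ are morphisms of rational Hodge structures on the conjugated varieties; hence so is $g^\sigma = q^\sigma f^\sigma p^\sigma$, and $V^\sigma := \im p^\sigma$, $W^\sigma := \im q^\sigma$ are sub-Hodge structures. The equations above are algebraic identities between de Rham morphisms, so their $\sigma$-conjugates give identical equations for $\phi^\sigma$, $g^\sigma$, $p^\sigma$, $q^\sigma$. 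Invertibility is preserved under $\sigma$-conjugation (it is a rank condition on an algebraic tensor), so $g^\sigma|_{V^\sigma}$ is again an isomorphism, and the $\sigma$-conjugated system still admits a unique solution. By uniqueness, $\phi^\sigma$ coincides with the morphism produced from $(g^\sigma, p^\sigma, q^\sigma)$ by the same recipe as $\phi$, and the argument of the previous paragraph, applied on the $\sigma$-side, shows that this morphism is a morphism of rational Hodge structures, hence a Hodge class.

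The only subtle point is confirming that the equational characterization of $\phi$ really does commute with conjugation. This is guaranteed by the fact that the defining relations are algebraic identities among de Rham realizations, and that both the invertibility hypothesis and the linear-algebraic uniqueness of the solution transport along $\sigma$ without change; these are precisely the features that the formalism of absolute Hodge classes is designed to exploit.
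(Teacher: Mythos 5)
Your argument is correct and is essentially the paper's proof made explicit: the paper simply observes that since $p$, $q$, $f$ are absolute Hodge, the whole construction transports under any $\sigma\in\Aut(\C)$, so one only needs to check the Hodge property for the identity automorphism. Your equational characterization of the composition (and the observation that conjugation preserves composition, the projector identities, and the invertibility of $qfp$ on $\im p$) is precisely the detail the paper leaves implicit, so the two proofs coincide in substance.
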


\begin{proof}
We need to check that after conjugating by any automorphism of $\C$, the above composition is given by a Hodge class.
Since $q$, $f$ and $p$ are absolute Hodge, we only have to check that this is true for the identity automorphism, which
is the case.
\end{proof}

This is to compare with Grothendieck's construction of the category of pure motives as a pseudo-abelian category, see
for instance \cite{An-book}.

\subsection{Classes coming from the standard conjectures and polarizations}\label{standard}

Let $X$ be a smooth projective complex variety of dimension $d$. The cohomology of $X\times X$ carries a number of Hodge
classes which are not known to be algebraic. The standard conjectures, as stated in \cite{Gr68}, predict that the
K\"unneth components of the diagonal and the inverse of the Lefschetz isomorphism are algebraic. A proof of those would
have a lot of consequences in the theory of pure motives. Let us prove that they are absolute Hodge classes. More
generally -- and informally -- any cohomology class obtained from absolute Hodge classes by canonical rational
constructions can be proved to be absolute Hodge.

\bigskip

First, let $\Delta$ be the diagonal of $X\times X$. It is an algebraic cycle of codimension $d$ in $X\times X$, hence it
has a cohomology class $[\Delta]$ in $H^{2d}(X\times X, \Q(d))$. By the K\"unneth formula, we have a canonical
isomorphism
of Hodge structures
$$H^{2d}(X\times X, \Q)\simeq \bigoplus_{i=0}^{2d} H^i(X, \Q)\otimes H^{2d-i}(X, \Q),$$
hence projections $H^{2d}(X\times X, \Q)\ra H^i(X, \Q)\otimes H^{2d-i}(X, \Q)$.
Let $\pi_i$ be the component of $[\Delta]$ in $H^i(X, \Q)\otimes H^{2d-i}(X, \Q)(d)\subset H^{2d}(X\times X, \Q)(d)$.
The cohomology classes $\pi_i$ are the called the K\"unneth components of the diagonal.

\begin{prop}
The K\"unneth components of the diagonal are absolute Hodge cycles.
\end{prop}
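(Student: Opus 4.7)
The plan is to exploit the fact that the class of the diagonal $[\Delta] \in H^{2d}(X \times X/\CC)(d)$ is already known to be absolute Hodge (as it is algebraic), and then to show that passing to a K\"unneth component is a construction that commutes with the $\sigma$-conjugation operation introduced in Section \ref{definitions}. Once these two ingredients are in place, the result will fall out because on any smooth projective complex variety the K\"unneth components of the diagonal are automatically Hodge classes.

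First, I would check that if $\sigma$ is an automorphism of $\CC$, then under the identification $(X \times X)^{\sigma} = X^{\sigma} \times X^{\sigma}$, the conjugate cycle $\Delta^{\sigma}$ is exactly the diagonal of $X^{\sigma}$. This is purely functorial. Next, I would observe that the K\"unneth isomorphism $H^{2d}((X \times X)/\CC) \simeq \bigoplus_i H^i(X/\CC) \otimes H^{2d-i}(X/\CC)$ exists at the level of algebraic de Rham cohomology (being the cup-product map composed with an appropriate duality), is canonical, and therefore is compatible with base change along any field automorphism. In particular, applying $\otimes_{\sigma} \CC$ carries this decomposition to the analogous K\"unneth decomposition of $H^{2d}(X^{\sigma} \times X^{\sigma}/\CC)$, and under this identification $\pi_i^{\sigma}$ is sent to the $i$-th K\"unneth component of $[\Delta^{\sigma}] = [\Delta_{X^{\sigma}}]$.

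To conclude, I would use the standard fact that on any smooth projective complex variety $Y$ of dimension $d$, the K\"unneth components of the diagonal are Hodge classes: indeed $[\Delta_Y] \in H^{2d}(Y \times Y, \QQ(d))$ is of type $(0,0)$, and the K\"unneth decomposition is a decomposition of rational Hodge structures (each summand carrying the tensor product Hodge structure), so each projection $\pi_i$ lies in $\bigl( H^i(Y) \otimes H^{2d-i}(Y)(d) \bigr) \cap F^0$ and is therefore a Hodge class. Applying this to $Y = X^{\sigma}$ for every $\sigma$ gives that $\pi_i^{\sigma}$ is Hodge, proving that $\pi_i$ is absolute Hodge.

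The only non-formal step is the naturality of the K\"unneth isomorphism under $\sigma$-conjugation, but this is a straightforward consequence of the algebraic construction of cup-product and the definition of $\otimes_{\sigma}$ applied to algebraic de Rham cohomology; no real obstacle arises. In fact this proof can be packaged more succinctly using Proposition \ref{functor}: the cup product and Poincar\'e duality are absolute Hodge, so the K\"unneth projectors, which can be written as explicit compositions of these together with the absolute Hodge class $[\Delta]$, are themselves absolute Hodge.
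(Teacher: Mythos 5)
Your proof is correct and follows essentially the same route as the paper's: both reduce to showing that the K\"unneth decomposition in algebraic de Rham cohomology is compatible with $\sigma$-conjugation, so that $\pi_i^{\sigma}$ is the $i$-th K\"unneth component of the diagonal of $X^{\sigma}\times X^{\sigma}$ and hence a Hodge class.
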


\begin{proof}
Clearly the $\pi_i$ are Hodge classes. Let $\sigma$ be an automorphism of $\C$. Denote by $\Delta^{\sigma}$ the diagonal
of $X^{\sigma}\times X^{\sigma}=(X\times X)^{\sigma}$, and by $\pi_i^{\sigma}$ the K\"unneth components of
$\Delta^{\sigma}$. Those are also Hodge classes.

Let $\pi_{i,dR}$ (resp. $(\pi_i^{\sigma})_{dR}$) denote the images of the $\pi_i$ (resp. $\pi_i^{\sigma}$) in the de
Rham cohomology of $X\times X$ (resp. $X^{\sigma}\times X^{\sigma})$. The K\"unneth formula holds for de Rham cohomology
and is compatible with the comparison isomorphism between de Rham and singular cohomology. It follows that
$$(\pi_i^{\sigma})_{dR}=(\pi_{i,dR})^{\sigma}.$$
Since the $(\pi_i^{\sigma})_{dR}$ are Hodge classes, the conjugates of $\pi_{i,dR}$ are, which concludes the proof.
\end{proof}

Fix an embedding of $X$ into a projective space, and let $h\in H^2(X, \Q(2))$ be the cohomology class of a hyperplane
section. The hard Lefschetz theorem states that for all $i\leq d$, the morphism
$$L^{d-i}=\cup h^{d-i} : H^i(X, \Q)\ra H^{2d-i}(X, \Q(d-i)), x\mapsto x\cup\xi^{d-i}$$
is an isomorphism.

\begin{prop}
The inverse $f_i : H^{2d-i}(X, \Q(d-i))\ra H^{i}(X, \Q) $ of the Lefschetz isomorphism is absolute Hodge.
\end{prop}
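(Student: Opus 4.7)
The plan is to reduce the statement to Proposition \ref{functor}(4) by first showing that the Lefschetz operator $L^{d-i}$ itself is absolute Hodge. Since $h \in H^2(X, \Q(1))$ is the cohomology class of a hyperplane section, it is the class of an algebraic cycle; consequently $h^{d-i}$ is the class of an iterated hyperplane intersection, an algebraic cycle of codimension $d-i$ in $X$. The morphism $L^{d-i} = h^{d-i} \cup (-) : H^i(X, \Q) \to H^{2d-i}(X, \Q(d-i))$ is therefore induced by the algebraic correspondence on $X \times X$ whose class is $[\Delta_X] \cdot \pi_1^*(h^{d-i})$, where $\Delta_X$ is the diagonal. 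By Proposition \ref{functor}(1), $L^{d-i}$ is absolute Hodge.

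Next, I invoke the hard Lefschetz theorem to conclude that $L^{d-i}$ is an isomorphism, and then apply Proposition \ref{functor}(4) to deduce that its inverse $f_i$ is absolute Hodge.

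The content really sits in Proposition \ref{functor}(4), and it is worth unwinding what it says here: for any embedding $\sigma \colon \C \to \C$, the conjugate $(L^{d-i})^{\sigma}$ is cup product with the hyperplane class $h^{\sigma}$ on the smooth projective variety $X^{\sigma}$, which is again a hard Lefschetz isomorphism by the same theorem applied to $X^{\sigma}$. Its inverse is automatically a morphism of Hodge structures on singular cohomology, hence a Hodge class, and since conjugation by $\sigma$ commutes with inversion one has $(f_i)^{\sigma} = ((L^{d-i})^{\sigma})^{-1}$, which is therefore a Hodge class.

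There is no serious obstacle: the argument is formal once Proposition \ref{functor}(4) is granted. The important conceptual point — explicitly flagged by the remark following Proposition \ref{functor} — is that the analogous statement for algebraic correspondences, namely that $f_i$ comes from an algebraic cycle on $X \times X$, is precisely the Lefschetz standard conjecture and is open. Working modulo absolute Hodge classes rather than algebraic cycles is exactly what makes the inversion step painless.
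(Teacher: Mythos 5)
Your proof is correct and is exactly the paper's argument: the paper simply states that the result is an immediate consequence of Proposition \ref{functor}, and you have spelled out the two parts used, namely that $L^{d-i}$ is absolute Hodge because it is induced by an algebraic correspondence (part (1)) and that the inverse of an absolute Hodge isomorphism is absolute Hodge (part (4)).
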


\begin{proof}
This an immediate consequence of Proposition \ref{functor}.
\end{proof}

As an immediate corollary, we get the following result.

\begin{cor}
Let $i$ be an integer such that $2i\leq d$. An element $x\in H^{2i}(X, \Q)$ is an absolute Hodge class if and only if
$x\cup \xi^{d-2i}\in H^{2d-2i}(X, \Q(d-2i))$ is an absolute Hodge class.
\end{cor}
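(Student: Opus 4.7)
The plan is to reduce the corollary directly to the preceding Proposition together with the fact that cupping with the hyperplane class is an absolute Hodge operation. First I would note that $h$ is the cohomology class of a hyperplane section and is therefore absolute Hodge, since cohomology classes of algebraic cycles are absolute Hodge (the discussion following Definition~\ref{def-dR}). Combining this with the Example above, which says that cup-product is itself an absolute Hodge morphism, one deduces inductively that $h^{d-2i}$ is absolute Hodge and that the Lefschetz operator
$$L^{d-2i}=\cup\, h^{d-2i}\colon H^{2i}(X,\Q)\longrightarrow H^{2d-2i}(X,\Q(d-2i))$$
is absolute Hodge as a morphism of Hodge structures; concretely, its associated class in $H^{2d-2i}(X,\Q(d-2i))\otimes H^{2i}(X,\Q)^{*}$ is obtained by contracting the absolute Hodge cup-product tensor against the absolute Hodge class $h^{d-2i}$.

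For the forward implication, if $x$ is absolute Hodge then for every $\sigma\in\Aut(\C)$ the $\sigma$-conjugate of $L^{d-2i}$ is a Hodge morphism and $x^{\sigma}$ is a Hodge class, so $(x\cup h^{d-2i})^{\sigma}=(L^{d-2i})^{\sigma}(x^{\sigma})$ is a Hodge class on $X^{\sigma}$; this means $x\cup h^{d-2i}$ is absolute Hodge. For the converse, the preceding Proposition asserts that the inverse $f_i=(L^{d-2i})^{-1}$ is absolute Hodge, and applying this absolute Hodge morphism to the absolute Hodge class $x\cup h^{d-2i}$ recovers $x$ as an absolute Hodge class by the same functoriality argument.

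There is no real obstacle: the substance of the corollary is already contained in the preceding Proposition (which ultimately rests on Proposition~\ref{functor}(4), asserting that inverses of absolute Hodge isomorphisms are absolute Hodge), and the rest is just the tautological observation that an absolute Hodge isomorphism sets up a bijection between the subspaces of absolute Hodge classes on its source and target.
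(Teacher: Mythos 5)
Your proof is correct and is essentially the argument the paper intends: the corollary is stated as an immediate consequence of the preceding proposition, with the forward direction coming from the fact that $L^{d-2i}$ is absolute Hodge (being cup-product with the class of an algebraic cycle) and the converse from applying the absolute Hodge inverse $f_{2i}$ supplied by that proposition. Nothing further is needed.
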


\bigskip

Using the preceding results, one introduce polarized Hodge structures in the setting of absolute Hodge classes. Let us
start with an easy lemma.

\begin{lem}\label{Lef-decomp}
Let $X$ be a smooth projective complex variety of dimension $d$, and let $h\in H^2(X, \Q(1))$ be the cohomology class
of a hyperplane
section. Let $L$ denote the operator given by cup-product with $\xi$. Let $i$ be an integer. In the Lefschetz
decomposition
$$H^i(X, \Q)=\bigoplus_{j\geq 0} L^j H^{i-2j}(X, \Q)_{prim}$$
of the cohomology of $X$ into primitive parts, the projection of $H^i(X, \Q)$ onto $L^j H^{i-2j}(X, \Q)_{prim}
\hookrightarrow H^i(X, \Q)$ is given by an absolute Hodge class.
\end{lem}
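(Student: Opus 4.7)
The plan is to express the projector onto $L^j H^{i-2j}(X,\Q)_{prim}$ as a composition of morphisms already known to be absolute Hodge, and then conclude via the closure of absolute Hodge morphisms under composition (Proposition~\ref{functor}(2)). Two operators serve as the building blocks. First, the Lefschetz operator $L$ itself is absolute Hodge: cup product with $h$ is induced by the algebraic correspondence $\Delta \cap p_2^{-1}(H) \subset X \times X$, so by Proposition~\ref{functor}(1) every iterate $L^k$ is absolute Hodge. Second, by the preceding proposition, the inverse of hard Lefschetz $(L^{d-k})^{-1}$ is absolute Hodge for each $k \leq d$.

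Assume first that $i \leq d$ and write $H^i = H^i_{prim} \oplus L\,H^{i-2}$. If $\alpha = \alpha_{prim} + L\beta$, then $L^{d-i+1}\alpha = L^{d-i+2}\beta$ because $L^{d-i+1}$ kills $H^i_{prim}$; hard Lefschetz recovers $\beta = (L^{d-i+2})^{-1} L^{d-i+1}\alpha$. Therefore the primitive projector can be written
\[
\pi^i_{\mathrm{prim}} \;=\; \mathrm{id} \;-\; L \circ (L^{d-i+2})^{-1} \circ L^{d-i+1},
\]
visibly absolute Hodge. For higher $j$, introduce the lowering operator $\Lambda := (L^{d-i+2})^{-1} \circ L^{d-i+1} \colon H^i \to H^{i-2}$. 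The same computation shows that $\Lambda$ sends $L^k\alpha$ to $L^{k-1}\alpha$ on each summand $L^k H^{i-2k}_{prim}$ with $k \geq 1$ and kills $H^i_{prim}$. Iterating, $\Lambda^j \colon H^i \to H^{i-2j}$ sends $L^j H^{i-2j}_{prim}$ isomorphically onto $H^{i-2j}_{prim}$, kills the summands with $k < j$, and sends those with $k > j$ into the non-primitive part of $H^{i-2j}$. Composing with the primitive projector $\pi^{i-2j}_{\mathrm{prim}}$ (already handled, since $i - 2j \leq d$) and then with $L^j$, the desired projection equals
\[
L^j \circ \pi^{i-2j}_{\mathrm{prim}} \circ \Lambda^j,
\]
a composition of absolute Hodge maps.

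The remaining case $i > d$ is handled by conjugation: $L^{i-d} \colon H^{2d-i} \to H^i(i-d)$ and its inverse are both absolute Hodge, and $L^{i-d}$ identifies the Lefschetz decomposition of $H^{2d-i}$ (already treated, since $2d - i < d$) with that of $H^i$; hence each projector on $H^i$ is conjugate under $L^{i-d}$ to an absolute Hodge projector on $H^{2d-i}$.

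The real work is purely bookkeeping: verifying the action of $\Lambda^j$ on each primitive summand of the Lefschetz decomposition and keeping track of Tate twists through the compositions. Once the explicit formulas above are in hand, the lemma is immediate from Proposition~\ref{functor}.
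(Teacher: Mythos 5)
Your proof is correct and follows essentially the same route as the paper: the paper's argument is precisely your operator $L\circ (L^{d-i+2})^{-1}\circ L^{d-i+1}$ identified as the projection onto $L\,H^{i-2}(X,\Q)$, with the higher $j$ handled by the phrase ``by induction.'' You have simply made that induction explicit (via the lowering operator $\Lambda$) and spelled out the case $i>d$, which the paper leaves implicit.
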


\begin{proof}
By induction, it is enough to prove that the projection of $H^i(X, \Q)$ onto  $L H^{i-2}(X, \Q)$ is given by an
absolute Hodge class. While this could be proved by an argument of Galois equivariance as before, consider the
composition
$$L\circ f_i\circ L^{d-i+1} : H^i(X, \Q) \ra H^i(X, \Q)$$
where $f_i : H^{2d-i}(X, \Q)\ra H^{i}(X, \Q)$ is the inverse of the Lefschetz operator. It is the desired projection
since $H^k(S, \Q)_{prim}$ is the kernel of $L^{d-i+1}$ in $H^i(S, \Q)$ .
\end{proof}

This allows for the following result, which shows that the Hodge structures on the cohomology of smooth projective
varieties can be polarized by absolute Hodge classes.

\begin{prop}\label{polarization}
Let $X$ be a smooth projective complex variety and $k$ be an integer. There exists a pairing
$$Q : H^{k}(X, \Q)\otimes H^{k}(X, \Q)\ra \Q(-k)$$
which is given by an absolute Hodge class and turns $H^{k}(X, \Q)$ into a polarized Hodge structure.
\end{prop}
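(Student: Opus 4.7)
The plan is to take for $Q$ the classical Hodge--Riemann polarization and verify that it is absolute Hodge by decomposing it into ingredients that have already been shown to be absolute Hodge in the preceding paragraphs; the positivity conditions making $Q$ an honest polarization are purely analytic and can be quoted from classical Hodge theory.

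Let $d = \dim X$ and fix the hyperplane class $h \in H^2(X,\Q(1))$. As the cycle class of a hyperplane section, $h$ is absolute Hodge, and by the Example in Section \ref{variations} cup product is absolute Hodge, so each multiplication map $\cup h^{r} : H^i(X,\Q) \ra H^{i+2r}(X,\Q(r))$ is absolute Hodge. The trace map $H^{2d}(X,\Q(d)) \ra \Q$ is (via Poincar\'e duality) the cycle class of a point, hence absolute Hodge as well. Finally, by Lemma \ref{Lef-decomp} each projector
$$\pi_j : H^k(X,\Q) \onto L^j H^{k-2j}(X,\Q)_{\mathrm{prim}} \into H^k(X,\Q)$$
coming from the Lefschetz decomposition is absolute Hodge.

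Given these building blocks, I would define $Q : H^k(X,\Q) \otimes H^k(X,\Q) \ra \Q(-k)$ by the explicit Hodge--Riemann formula
$$Q(\alpha,\beta) = \sum_{j \ge \max(0,\,k-d)} (-1)^{(k-2j)(k-2j-1)/2}\, \bigl\langle h^{d-k+2j} \cup \pi_j(\alpha),\, \pi_j(\beta) \bigr\rangle,$$
where $\langle -,- \rangle$ denotes the cup-product pairing followed by the trace. Each term in this sum is a composition of absolute Hodge morphisms (projectors, cup products with powers of $h$, the cup-product pairing, and the trace), so by Proposition \ref{functor} each summand is absolute Hodge, and hence so is $Q$.

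It remains to check that $Q$ so defined really is a polarization of the weight-$k$ Hodge structure $H^k(X,\Q)$. This is exactly the Hodge--Riemann bilinear relations applied summand by summand to the Lefschetz decomposition, and it is purely classical Hodge theory with no arithmetic content. The only part of the argument that requires any actual care is the bookkeeping: getting the signs $(-1)^{(k-2j)(k-2j-1)/2}$ and the powers of $h$ correct on each primitive summand so that positivity holds and the target is $\Q(-k)$. I expect this to be the main obstacle, but it is routine and involves no new ideas beyond what is already in the section.
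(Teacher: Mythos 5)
Your proposal is correct and follows essentially the same route as the paper: the paper also takes the Hodge--Riemann pairing $\alpha\otimes\beta\mapsto\int_X\alpha\cup L^{d-k}(s(\beta))$, where $s$ acts by a sign on each Lefschetz summand, quotes the Hodge index theorem for positivity, and deduces absoluteness from Lemma \ref{Lef-decomp} together with the absoluteness of cup product and of $L$. One bookkeeping point you already flagged: as written, $h^{d-k+2j}\cup\pi_j(\alpha)$ applies the extra $2j$ powers of $h$ to $\pi_j(\alpha)=L^j\alpha_j$ rather than to the primitive component $\alpha_j$, which overshoots the top degree for $j>0$; the correct exponent on the full class $\pi_j(\alpha)$ is $d-k$, exactly as in the paper's formula.
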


\begin{proof}
Let $d$ be the dimension of $X$. By the hard Lefschetz theorem, we can assume $k\leq d$. Let $H$ be an ample line bundle
on
$X$ with first Chern class $h\in H^2(X, \Q(1)$, and let $L$ be the endomorphism
of the cohomology of $X$ given by the cup-product with $h$. Consider the Lefschetz decomposition
$$H^{k}(X, \Q)=\bigoplus_{i\geq 0} L^i H^{k-2i}(X, \Q)_{prim}$$
of $H^{k}(X, \Q)$ into primitive parts. Let $s$ be the linear automorphism of
$H^{k}(X, \Q)$ which is given by multiplication by $(-1)^i$ on $L^i H^{k-2i}(X, \Q)_{prim}$.

By the Hodge index theorem, the pairing
$$H^{k}(X, \Q)\otimes H^{k}(X, \Q)\ra \Q(1), \, \alpha\otimes\beta \mapsto
\int_{X} \alpha \cup L^{d-k}(s(\beta))$$
turns $H^{2p}(\overline{\mathcal X}, \Q)$ into a polarized Hodge structure.

By Lemma \ref{Lef-decomp}, the projections of $H^{2p}(X, \Q)$ onto the factors $L^i
H^{2p-2i}(X, \Q)_{prim}$ are given by absolute Hodge classes. It follows that the morphism $s$ is
given by an absolute Hodge class.

Since cup-product is given by an absolute Hodge class, see \ref{variations}, and $L$ is induced by an algebraic
correspondence, it follows that the pairing $Q$ is given by an absolute Hodge class, which concludes the proof of the
proposition.
\end{proof}

\begin{prop}\label{semi-simple}
Let $X$ and $Y$ be smooth projective complex varieties, and let
$$f : H^p(X, \Q(i))\ra H^q(Y, \Q(j))$$
be a morphism of Hodge structures. Fix polarizations on the cohomology groups of $X$ and $Y$ given by absolute Hodge
classes. Then the orthogonal projection of $H^{p}(X, \Q(i))$ onto $\mathrm{Ker}\,f$ and
 the orthogonal projection of $H^q(Y, \Q(j))$ onto $\mathrm{Im}\,f)$ are given by absolute Hodge classes.
\end{prop}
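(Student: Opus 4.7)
The plan is to realize both orthogonal projections as polynomials with rational coefficients in certain absolute Hodge self-adjoint endomorphisms built from $f$ and its adjoint, the polarizations supplying both the non-degeneracy needed to identify kernels and images, and the semisimplicity that makes the polynomial construction run.

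First I would introduce the adjoint $f^{\dagger} : H^q(Y, \Q(j)) \to H^p(X, \Q(i))$ of $f$ with respect to $Q_X$ and $Q_Y$. Viewing each polarization as an absolute Hodge isomorphism to the dual Hodge structure (up to Tate twist), Proposition \ref{functor} shows that $f^{\dagger}$ is absolute Hodge, and hence so is $g := f^{\dagger} \circ f$, which is moreover $Q_X$-self-adjoint. I would then check that $\ker g = \ker f$ and $\mathrm{im}\,g = (\ker f)^{\perp}$, so that the linear projector onto $\ker g$ along $\mathrm{im}\,g$ is precisely the orthogonal projection onto $\ker f$: if $g(x) = 0$ then $f(x) \in \ker f^{\dagger} = (\mathrm{im}\,f)^{\perp}$ while also $f(x) \in \mathrm{im}\,f$, and these subspaces intersect trivially because $Q_Y$ restricted to the sub-Hodge structure $\mathrm{im}\,f$ is still a polarization, hence non-degenerate; the statement about $\mathrm{im}\,g$ then follows by a dimension count using that $Q_X$ is non-degenerate.

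The heart of the argument is to show that this projector admits the form $P(g)$ for some $P \in \Q[t]$, since polynomials with rational coefficients in an absolute Hodge operator are again absolute Hodge. For this I need $g$ to be semisimple, which I would obtain as follows: $g$ commutes with the Weil operator $C$ on $H^p(X, \C)$ because it is a morphism of Hodge structures, and being $Q_X$-self-adjoint it is then also self-adjoint with respect to the positive definite Hermitian form $Q_X(C \cdot, \overline{\cdot})$. Such an operator is diagonalizable, so its minimal polynomial $\mu$ is squarefree. Writing $\mu(t) = t \cdot q(t)$ with $q(0) \neq 0$ (the case where $0$ is not a root is trivial, as it forces $\ker f = 0$), the polynomial $q(g)/q(0)$ acts as the identity on $\ker g$ and as zero on every nonzero eigenspace, hence is the sought orthogonal projection onto $\ker f$, and is absolute Hodge. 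Running the identical argument with $h := f \circ f^{\dagger}$ produces an absolute Hodge projection onto $\ker h = (\mathrm{im}\,f)^{\perp}$; subtracting from the identity gives the projection onto $\mathrm{im}\,f$.

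The main obstacle is the semisimplicity step: it is essential here that $Q_X$ and $Q_Y$ are genuine polarizations rather than merely non-degenerate absolute Hodge pairings, as one really needs the positivity of $Q_X(C \cdot, \overline{\cdot})$ to conclude that a $Q_X$-self-adjoint morphism of Hodge structures is diagonalizable. All remaining steps are either formal applications of Proposition \ref{functor} or linear algebra inside a polarized Hodge structure.
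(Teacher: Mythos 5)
Your proposal is correct and follows essentially the same route as the paper: both construct the absolute Hodge endomorphism $f^{\dagger}\circ f$, identify its kernel with $\mathrm{Ker}\,f$ and its image with $(\mathrm{Ker}\,f)^{\perp}$ using the definiteness of the polarization on sub-Hodge structures, and then realize the orthogonal projection as a polynomial with rational coefficients in this operator. The only divergence is in how that last step is justified: you prove full diagonalizability of $f^{\dagger}\circ f$ via the Weil operator, whereas the paper deduces the polynomial expression directly from the direct-sum decomposition $H^{p}(X,\Q)=\mathrm{Ker}\,h\oplus\mathrm{Im}\,h$ (which already forces $0$ to be a simple root of the minimal polynomial); both arguments are valid.
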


\begin{proof}
The proof of this result is a formal consequence of the existence of polarizations by absolute Hodge classes. It is easy
to prove that the projections we consider are absolute using an argument of Galois equivariance as in the preceding
paragraph. Let us however give an alternate proof from linear algebra. The abstract argument corresponding to this proof
can be found in \cite{An}, section 3. We will only prove that the orthogonal projection of $H^{p}(X, \Q(i))$ onto
$\mathrm{Ker}\,f$ is absolute Hodge, the other statement being a consequence via Poincar\'e duality.

\bigskip

For ease of notation, we will not write down Tate twists. Those can be recovered by weight considerations. By Poincar\'e
duality, the polarization on
$H^{p}(X, \Q)$ induces an isomorphism
$$\phi : H^{p}(X, \Q) \ra H^{2d-p}(X, \Q),$$
where $d$ is the dimension of $X$, which is absolute Hodge since the polarization is. Similarly, the polarization on
$H^{q}(Y, \Q)$ induces a morphism
$$\psi: H^{q}(Y, \Q) \ra H^{2d'-q}(Y, \Q)$$
where $d'$ is the dimension of $Y$, which is given by an absolute Hodge class.

Consider the following diagram, which does not commute
$$\xymatrix{H^{p}(X,\Q)\ar[d]^{f}\ar[r]^{\phi} & H^{2d-p}(X, \Q)\\
            H^{q}(Y, \Q)\ar[r]^{\psi} & H^{2d'-q}(Y,\Q)\ar[u]^{f^{\dagger}}},$$
and consider the morphism
$$h : H^{p}(X, \Q)\ra H^{p}(X, \Q), x\mapsto (\phi^{-1}\circ f^{\dagger}\circ \psi
\circ f)(x).$$
Since all the morphisms in the diagram above are given by absolute Hodge classes, $h$ is. Let us compute the kernel
and the image of $h$.

Let $x\in  H^{p}(X, \Q)$. We have $h(x)=0$ if and only if $f^{\dagger}\psi f(x)=0$, which means that
for all $y$ in $H^{p}(X, \Q)$,
$$f^{\dagger}\psi f(x)\cup y=0,$$
that is, since $f$ and $f^{\dagger}$ are transpose of each other :
$$\psi f(x)\cup f(y)=0,$$
which exactly means that $f(x)$ is orthogonal to $f(H^{p}(X, \Q))$ with respect to the
polarization of $H^{q}(Y, \Q)$. Now the space $f(H^{p}(X, \Q))$ is a Hodge
substructure of the polarized Hodge structure $H^{q}(Y, \Q)$. As such, it does not contain any nonzero
totally isotropic element. This implies that $f(x)=0$ and shows that
$$\mathrm{Ker}\,h=\mathrm{Ker}\, f.$$

Since $f$ and $f^{\dagger}$ are transpose of each other, the image of $h$ is clearly contained
in $(\mathrm{Ker}\,f)^{\bot}$. Considering the rank of $h$, this readily shows that
$$\mathrm{Im}\,h =(\mathrm{Ker}\,f)^{\bot}.$$

The two subspaces $\mathrm\mathrm{Ker}\,h={Ker}\, f$ and $\mathrm{Im}\,h=(\mathrm{Ker}\,f)^{\bot}$ of
$H^{p}(X, \Q)$ are in direct sum. By standard linear algebra, it follows that the orthogonal
projection $p$ of $H^{p}(X, \Q)$ onto $(\mathrm{Ker}\,f)^{\bot}$ is a polynomial in $h$ with
rational coefficients. Since $h$ is given by an absolute Hodge class, so is $p$, as well as $\mathrm{Id}-p$, which
is the orthogonal projection onto $\mathrm{Ker}\,f$.
\end{proof}

\begin{cor}\label{antecedent}
 Let $X$ and $Y$ be two smooth projective complex varieties, and let
$$f : H^p(X, \Q(i))\ra H^q(Y, \Q(j))$$
be a morphism given by an absolute Hodge class. Let $\alpha$ be an absolute Hodge class in the image of $f$. Then there
exists an absolute Hodge class $\beta\in H^p(X, \Q(i))$ such that $f(\beta)=\alpha$.
\end{cor}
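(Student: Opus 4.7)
The strategy is to build an absolute Hodge section of $f$ on $\mathrm{Im}\,f$ and apply it to $\alpha$. The key tools will be Proposition \ref{semi-simple} (which produces absolute Hodge orthogonal projectors onto kernels and images) together with Proposition \ref{inverse} (which abstractly ``inverts'' a morphism between images of absolute Hodge projectors).

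First, I would fix polarizations on $H^p(X, \Q(i))$ and $H^q(Y, \Q(j))$ given by absolute Hodge classes, which exist by Proposition \ref{polarization}. Applying Proposition \ref{semi-simple} to $f$ and taking the complement by the identity (which is itself absolute Hodge), the orthogonal projector $p : H^p(X, \Q(i)) \to H^p(X, \Q(i))$ onto $(\mathrm{Ker}\,f)^{\bot}$ and the orthogonal projector $q : H^q(Y, \Q(j)) \to H^q(Y, \Q(j))$ onto $\mathrm{Im}\,f$ are both absolute Hodge.

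Set $V = \mathrm{Im}\,p = (\mathrm{Ker}\,f)^{\bot}$ and $W = \mathrm{Im}\,q = \mathrm{Im}\,f$. The restriction $f|_V$ is a linear bijection from $V$ to $W$, so $qfp$ induces an isomorphism $V \xrightarrow{\sim} W$. Proposition \ref{inverse} then applies and yields that the composition
$$g : H^q(Y, \Q(j)) \twoheadrightarrow W \xrightarrow{(qfp)^{-1}} V \hookrightarrow H^p(X, \Q(i))$$
is absolute Hodge. By construction one has $f \circ g = q$: indeed on $W$ the map $(qfp)^{-1}$ lands in $V$ and $f|_V \circ (f|_V)^{-1} = \mathrm{Id}_W$, while on $W^{\bot}$ both sides vanish.

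Finally, set $\beta := g(\alpha)$. Viewing $\alpha$ as an absolute Hodge morphism $\Q \to H^q(Y, \Q(j))$, the composition $g \circ \alpha$ is absolute Hodge by Proposition \ref{functor}(2), so $\beta$ is absolute Hodge. Since $\alpha \in \mathrm{Im}\,f = W$ we have $q(\alpha) = \alpha$, and therefore $f(\beta) = f(g(\alpha)) = q(\alpha) = \alpha$. The only subtle point is recognizing Proposition \ref{inverse} as the right mechanism for inverting $f$ once it has been cut down to the isomorphism $V \to W$; the rest is formal bookkeeping.
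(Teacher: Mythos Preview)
Your proof is correct and follows exactly the paper's own argument: invoke Proposition~\ref{semi-simple} to obtain the absolute Hodge projectors onto $(\mathrm{Ker}\,f)^{\bot}$ and $\mathrm{Im}\,f$, then apply Proposition~\ref{inverse} to produce the absolute Hodge section $g$, and set $\beta=g(\alpha)$. You have simply spelled out a few steps (fixing polarizations via Proposition~\ref{polarization}, passing from the projector onto $\mathrm{Ker}\,f$ to its complement, and the verification $f\circ g=q$) that the paper leaves implicit.
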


\begin{proof}
By Proposition \ref{semi-simple}, the orthogonal projection of $H^{p}(X, \Q(i))$ onto $(\mathrm{Ker}\,f)^{\bot}$ and
 the orthogonal projection of $H^q(Y, \Q(j))$ onto $\mathrm{Im}\,f)$ are given by absolute Hodge classes. Now
Proposition \ref{inverse} shows that the composition
$$\xymatrix{H^q(Y, \Q(j)) \ar@{>>}[r] & \mathrm{Im}\,f\ar[r]^{(qfp)^{-1}} & (\mathrm{Ker}\,f)^{\bot} \ar@{^{(}->}[r] &
H^p(X, \Q(i))}$$
is absolute Hodge. As such, it sends $\alpha$ to an absolute Hodge class $\beta$. Since $\alpha$ belongs to the image
of $f$, we have $f(\beta)=\alpha$.
\end{proof}

The results we proved in this paragraph and the preceding one are the ones needed to construct a category of motives
for absolute Hodge cycles and prove it is a semi-simple abelian category. This is done in \cite{DMOS}. In that sense,
absolute Hodge classes provide a way to work with an unconditional theory of motives, to quote Andr\'e.

We actually proved more. Indeed, while the explicit proofs we gave of Proposition \ref{Lef-decomp} and Proposition
\ref{semi-simple} might seem a little longer than what would be needed, they provide the cohomology classes we need
using only classes coming from the standard conjectures. This is the basis for Andr\'e's notion of motivated cycles
described in \cite{An}. This paper shows that a lot of the results we obtain here about the existence of some absolute
Hodge classes can be actually strengthened to motivated cycles. In particular, the algebraicity of the absolute Hodge
classes we consider, which is a consequence of the Hodge conjecture, is most of the time implied by the standard
conjectures.

\subsection{Absolute Hodge classes and the Hodge conjecture}

Let $X$ be a smooth projective complex variety. We proved earlier that the cohomology class of an algebraic cycle in
$X$ is absolute Hodge. This remark allows us to split the Hodge conjecture in the two following conjectures.

\begin{conj}\label{H-implies-AH}
 Let $X$ be a smooth projective complex variety. Let $p$ be a nonnegative integer, and let $\alpha$ be an element of
$H^{2p}(X, \Q(p))$. Then $\alpha$ is a Hodge class if and only if it is an absolute Hodge class.
\end{conj}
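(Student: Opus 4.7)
The easy direction --- absolute Hodge implies Hodge --- is immediate by specializing to $\sigma = \mathrm{Id}_{\CC}$. The content lies entirely in showing every Hodge class is absolute Hodge, so I will not attempt a complete proof: the statement is a genuinely open problem, strictly weaker than the Hodge conjecture but, as the paper frames it, isolating its arithmetic content. My plan is to describe two partial strategies and the obstruction that keeps either from closing.

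The first strategy is variational, in the spirit of Deligne. Given $\alpha \in H^{2p}(X, \QQ(p))$ with $X$ defined over a subfield $k \subset \CC$, I would spread $X$ out in a smooth projective family $f \colon \mathcal{X} \to S$ with $\mathcal{X}, S$ of finite type over $\bar{\QQ}$, containing $X$ as a fiber over a complex point. By the algebraicity of the Hodge bundles $F^p \shH^{2p}$ and the Gauss-Manin connection $\nabla$ (Katz-Oda, as recalled in Section 1), this entire flat bundle-with-filtration is defined over $\bar{\QQ}$. Parallel transport of $\alpha$ then cuts out the locus $S^{\alpha} \subset S$ on which the transport remains of type $(p,p)$. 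The theorem of Cattani-Deligne-Kaplan, which the paper will invoke in Section 3, says $S^{\alpha}$ is an algebraic subvariety of $S$. If one could show moreover that $S^{\alpha}$ is defined over $\bar{\QQ}$, then for any $\sigma \in \Aut(\CC)$ the conjugate pair $(X^{\sigma}, \alpha^{\sigma})$ would sit over a point of $(S^{\alpha})^{\sigma}$, a component of the Hodge locus in the conjugate family, and Deligne's Principle B would propagate the Hodge property of $\alpha^{\sigma}$ from any single $\bar{\QQ}$-point.

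The second strategy is motivic reduction. Using the functoriality package developed in Section 2 --- cup product, Poincar\'e duality, K\"unneth projectors, inverses of the Lefschetz operator, all given by absolute Hodge classes, and the stability results of Propositions \ref{functor}, \ref{inverse}, \ref{semi-simple}, and Corollary \ref{antecedent} --- one would try to realize $\alpha$ as the image under an absolute Hodge correspondence of a Hodge class on a variety for which the conjecture is already known: an abelian variety (Deligne's theorem, Section 5) or a variety linked to one via a Kuga-Satake construction (Section 4). Corollary \ref{antecedent} would then yield absoluteness of $\alpha$ itself.

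The main obstacle is that neither route is known in full generality. To show $S^{\alpha}$ is defined over $\bar{\QQ}$ one essentially needs an a priori arithmetic input about $\alpha$ --- a substitute for algebraicity at the level of periods --- and no such substitute is presently available beyond special cases. Dually, writing an arbitrary Hodge class as a correspondence-image of a class on an abelian variety is a motivic reduction of roughly the same strength as the standard conjectures. The gap between Hodge-theoretic and motivic data is exactly what makes \ref{H-implies-AH} a conjecture rather than a theorem.
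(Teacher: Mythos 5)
You are right that this statement is a conjecture, not a theorem: the paper offers no proof beyond the trivial direction (take $\sigma = \mathrm{Id}_{\CC}$), and instead reformulates the hard direction via fields of definition of Hodge loci (Section 3) and establishes it in special cases (abelian varieties and the Kuga--Satake correspondence, Sections 4--5). Your two partial strategies are precisely the paper's own approaches, and your identification of the obstruction is accurate, so there is nothing to correct.
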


\begin{conj}\label{AH-implies-alg}
 Let $X$ be a smooth projective complex variety. For any nonnegative integer $p$, the subspace of degree $p$ absolute
Hodge classes is generated over $\Q$ by the cohomology classes of codimension $p$ subvarieties of $X$.
\end{conj}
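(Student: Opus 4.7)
The final statement is a conjecture, not a theorem, and I cannot offer a proof: together with Conjecture \ref{H-implies-AH} it would imply the full Hodge conjecture, which is a notorious open problem. What I can do is describe the natural strategies one might attempt, and explain why none of them is known to work in general.

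The first strategy is arithmetic: exploit that an absolute Hodge class $\alpha$ behaves well under $\Aut(\CC)$, in the sense that for each $\sigma$ the conjugate $\alpha^{\sigma}$ on $X^{\sigma}$ is again a Hodge class. By spreading $X$ out to a scheme of finite type over $\QQ$, one can hope to descend a given absolute Hodge class to a variety over a number field $K \subseteq \CC$, match it via the comparison isomorphisms to an \'etale (or crystalline) class that is Galois-equivariant, and then try to produce an algebraic cycle through something like the Tate conjecture over finite fields together with reduction modulo $p$. A second, categorical strategy is to use the semi-simplicity package from the preceding subsection (Proposition \ref{semi-simple}, Corollary \ref{antecedent}): form the pseudo-abelian category of ``motives for absolute Hodge cycles'', decompose the motive of $X$ into simple pieces, and try to show that every projector on an absolute Hodge summand is cut out by algebraic projectors from a visibly algebraic class, such as a K\"unneth component or a Lefschetz inverse. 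A third, case-by-case strategy is to treat specific geometries---abelian varieties, Shimura varieties, K3 and hyperk\"ahler varieties---where the absolute Hodge classes one meets are concrete (Weil classes on Weil tori, Kuga-Satake classes as in \S 4, Hodge classes arising from CM constructions) and can sometimes be realized by explicit correspondences.

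The main obstacle, shared with the Hodge conjecture itself, is that there is no known procedure for producing an algebraic cycle from purely cohomological input. The arithmetic reduction above needs the Tate conjecture as a black box and even then only yields algebraicity after extension of scalars; the motivic reformulation merely repackages the same question in a different language; and the case-by-case approach, while genuinely successful in a handful of highly symmetric examples, has no known mechanism for extending to a general smooth projective variety. So the honest plan is to accept that this conjecture is, at present, out of reach, and to view the entire theory developed in these notes as a machinery for isolating the purely algebraic (as opposed to number-theoretic) content of the Hodge conjecture: Conjecture \ref{H-implies-AH} captures the arithmetic half, and Conjecture \ref{AH-implies-alg} is what remains.
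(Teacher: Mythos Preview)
Your assessment is correct: this is a conjecture, not a theorem, and the paper offers no proof of it either. There is nothing to compare at the level of argument.

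That said, the paper's own discussion surrounding the conjecture emphasizes two points you do not mention. First, it gives a heuristic motivation via the proposition that a closed subset of $X(\CC)$ whose image under every $\sigma \in \Aut(\CC)$ remains closed must be algebraic; this is meant to suggest why Galois-equivariant cohomology classes should come from algebraic cycles. Second, and more substantively, the paper points to Andr\'e's work on motivated cycles \cite{An}: for essentially all the absolute Hodge classes actually constructed in these notes (inverses of Lefschetz operators, projections in the primitive decomposition, the Kuga--Satake correspondence, etc.), the conjecture is a consequence of the standard conjectures, which are believed to be considerably weaker than the full Hodge conjecture. Your arithmetic strategy via the Tate conjecture and your case-by-case remarks are reasonable complements, but the reduction-to-standard-conjectures observation is the paper's main structural point about Conjecture~\ref{AH-implies-alg} and is worth including.
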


Those statements are a tentative answer to the problem we raised in paragraph \ref{Hodge-cj}. Indeed, while those two
conjectures together imply the Hodge conjecture, neither of them makes sense in the setting of K\"ahler manifolds. The
main feature of an algebraic variety $X$ we use here, which is not shared by general compact K\"ahler manifolds, is that
such an $X$ always can be realized as the generic fiber of a morphism of algebraic varieties defined over a number
field. We will go back to this point of view in the next section when studying the relationship between Hodge loci and
absolute Hodge classes. The fact that we can use varieties over number fields is the reason why Galois actions can enter
the picture in the algebraic setting.

This situation however does not happen outside the algebraic world, as automorphisms of $\C$ other than the identity
and complex conjugation are very discontinuous -- e.g., they are not measurable. This makes it impossible to give a
meaning to the conjugate of a complex manifold by an automorphism of $\C$.

\bigskip

Even for algebraic varieties, the fact that automorphisms of $\C$ are highly discontinuous appears. Let $\sigma$ be an
automorphism of $\C$, and let $X$ be a smooth projective complex variety. Equation (\ref{iso-dR}) induces a
$\sigma$-linear isomorphism
$$(\sigma^{-1})^* : H^*(X^{an}, \C) \ra H^*((X^{\sigma})^{an}, \C)$$
between the singular cohomology with complex coefficients of the complex manifolds underlying $X$ and $X^{\sigma}$.
Conjecture \ref{H-implies-AH} means that Hodge classes in $H^*(X^{an}, \C)$ should map to Hodge classes in
$H^*((X^{\sigma})^{an}, \C)$. In particular, they should map to elements of the rational subspace
$H^*((X^{\sigma})^{an}, \Q)$.

However, it is not to be expected that $(\sigma^{-1})^*$ maps $H^*(X^{an}, \Q)$ to $H^*((X^{\sigma})^{an}, \Q)$.
It can even happen that the two algebras $H^*(X^{an}, \Q)$ and $H^*((X^{\sigma})^{an}, \Q)$ are not isomorphic, see
\cite{Ch}. This implies in particular that the complex varieties  $X^{an}$ and  $(X^{\sigma})^{an}$ need not be
homeomorphic, while the schemes $X$ and $X^{\sigma}$ are. This also shows that singular cohomology with rational
algebraic coefficients can not be defined algebraically\footnote{While the isomorphism we gave between the algebras
$H^*(X^{an}, \C)$ and $H^*((X^{\sigma})^{an}, \C)$ is not $\C$-linear, it is possible to show using \'etale cohomology
that there exists a $\C$-linear isomorphism between those two algebras, depending on an embedding of $\Q_l$ into
$\C$.}.

\bigskip

The main goal of these notes is to discuss Conjecture \ref{H-implies-AH}. We will give a number of example of absolute
Hodge classes which are not known to be algebraic, and describe some applications. While Conjecture
\ref{AH-implies-alg} seems to be very open at the time, we can make two remarks about it.

Let us first state a result which might stand as a motivation for the statement of this conjecture. We mentioned
above that conjugation by an automorphism of $\C$ does not in general preserve singular cohomology with rational
coefficients, but it does preserve absolute Hodge classes by definition.

Let $X$ be a smooth projective complex variety. The singular cohomology with rational coefficients of the underlying
complex manifold $X^{an}$ is spanned by the cohomology classes of images of real submanifolds of $X^{an}$. The next
result, see
\cite{votakagi}, Lemma 28 for a related statement, shows that among closed subsets of $X^{an}$, algebraic subvarieties
are the only one that remain closed after conjugation by an automorphism of $\C$.

Recall that if $\sigma$ is an automorphism of $\C$, we have an isomorphism of schemes
$$\sigma : X \ra X^{\sigma}.$$ It sends complex points of $X$ to complex points of $X^{\sigma}$.

\begin{prop}
Let $X$ be a complex variety, and let $F$ be a closed subset of $X^{an}$. Assume that for any automorphism $\sigma$
of $\C$, the subset
$$\sigma(F)\subset X^{\sigma}(\C)$$
is closed in $(X^{\sigma})^{an}$. Then $F$ is a countable union of algebraic subvarieties of $X$.
\end{prop}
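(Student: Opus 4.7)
My plan is to proceed by induction on $\dim X$, after reducing to the case that $X$ is irreducible (the hypothesis passes to each irreducible component, since $\sigma$ preserves the decomposition). Fix a countable algebraically closed subfield $k_{0} \subset \C$ containing $\overline{\Q}$ and a field of definition of $X$; for any $\sigma \in \Aut(\C/k_{0})$, $X^{\sigma}$ is canonically identified with $X$. I will show that either $F = X^{\mathrm{an}}$ — in which case $F$ is the single subvariety $X$ — or there exists a countable algebraically closed extension $k \supset k_{0}$ such that $F \subset X(\C) \setminus X^{\mathrm{gen}}_{k}$, where $X^{\mathrm{gen}}_{k} \subset X(\C)$ denotes the $k$-generic locus, namely the complement of the countable union of proper $k$-subvarieties of $X$. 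In that case $F = \bigcup_{W}(F \cap W(\C))$, the union ranging over the countable set of proper $k$-subvarieties $W$ of $X$, and the hypothesis transfers to each $F \cap W(\C) \subset W^{\mathrm{an}}$ (indeed $\sigma(F \cap W(\C)) = \sigma(F) \cap W^{\sigma}(\C)$ is closed); the induction on $\dim X$ then expresses each $F \cap W(\C)$ as a countable union of subvarieties of $W$, hence of $X$.

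Assuming $F \neq X^{\mathrm{an}}$, fix $c \in X^{\mathrm{an}} \setminus F$ and set $k_{1} = \overline{k_{0}(c)}$, still countable. I iterate: as long as $F \cap X^{\mathrm{gen}}_{k_{n}} \neq \emptyset$, pick $t_{n} \in F \cap X^{\mathrm{gen}}_{k_{n}}$ and set $k_{n+1} = \overline{k_{n}(t_{n})}$. If the process terminates at some $k_{N}$, we have our desired $k = k_{N}$. If it runs forever, the resulting sequence $\{t_{n}\}$ is $k_{1}$-algebraically independent in $F$: each $t_{n}$ is $k_{n}$-generic in $X$ and $k_{n}$ contains $k_{1}(t_{1}, \ldots, t_{n-1})$, so $(t_{1}, \ldots, t_{n})$ is $k_{1}$-generic in $X^{n}$ for all $n$.

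The final step derives a contradiction. Using that $X^{\mathrm{gen}}_{k_{1}(s_{1}, \ldots, s_{n-1})}$ is a dense $G_{\delta}$ in $X^{\mathrm{an}}$ (Baire category on the locally compact Hausdorff space $X^{\mathrm{an}}$, applied to the countably many proper subvarieties involved), I inductively choose $s_{n} \in X^{\mathrm{gen}}_{k_{1}(s_{1}, \ldots, s_{n-1})}$ in shrinking analytic neighbourhoods of $c$, so that $s_{n} \to c$ and $\{s_{n}\}$ is $k_{1}$-algebraically independent. The sequences $\{t_{n}\}$ and $\{s_{n}\}$ then have the same type over $k_{1}$, so by homogeneity of the algebraically closed field $\C$ over $k_{1}$ there is a $\sigma \in \Aut(\C/k_{1})$ with $\sigma(t_{n}) = s_{n}$ for every $n$. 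By hypothesis $\sigma(F) \subset X^{\mathrm{an}}$ is closed; since $s_{n} = \sigma(t_{n}) \in \sigma(F)$ converges to $c$, we get $c \in \sigma(F)$. As $c \in k_{1}$ is fixed by $\sigma$, this yields $c = \sigma^{-1}(c) \in F$, contradicting $c \notin F$.

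The main obstacle is the construction of $\sigma$: the crucial point is that absorbing $c$ into $k_{1}$ from the outset forces $\sigma$ to fix $c$, while the Baire-theoretic choice of $\{s_{n}\}$ along the generic locus with $s_{n} \to c$ traps $c$ inside the closed set $\sigma(F)$; combining these two features is what delivers the contradiction. Secondary difficulties include ensuring that all relevant objects (subvarieties, coordinate data, generic loci) are defined over countable algebraically closed fields so that the iteration stays within the countable world, and verifying the induction on $\dim X$ is well-founded via $\dim W < \dim X$ for each proper $k$-subvariety $W$.
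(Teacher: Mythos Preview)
Your argument is correct and shares the paper's core structure: induct on $\dim X$, reduce to the dichotomy ``$F$ is contained in a countable union of proper subvarieties over a countable field'' versus ``$F$ contains an infinite sequence of mutually generic points'', and in the latter case use an automorphism of $\C$ to move that sequence around and contradict closedness of the conjugate.

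The differences are in execution. The paper first uses a finite map to reduce to $X = \mathbb{A}^n_{\C}$, so that ``generic'' becomes ``coordinates algebraically independent over a countable field'' in the concrete sense; it then picks any dense sequence in $\C^n$ and an automorphism $\sigma$ sending the $F$-sequence onto it, forcing $\sigma(F)=\C^n$. You instead work intrinsically on $X$ with scheme-theoretic generic points and a model-theoretic homogeneity argument, and your automorphism is tailored: you absorb a chosen point $c \notin F$ into $k_1$, steer the target sequence $s_n \to c$, and conclude $c \in \sigma(F)$ while $\sigma(c)=c$, yielding the contradiction. Your version avoids the auxiliary finite map and is cleaner about fields of definition; the paper's version is more elementary and avoids the model-theoretic vocabulary. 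Both hinge on the same Baire-type observation that over a countable field generic points are dense.
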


\begin{proof}
Using induction on the dimension of $X$, we can assume that $F$ is not contained in a countable union of proper
subvarieties of $X$. We want to prove that $F=X$. Using a finite map from $X$ to a projective space, we can
assume that $X=\mathbb{A}^n_{\C}$.  Our hypothesis is thus that $F$ is a closed subset of $\C^n$ which is not contained
in a countable union of proper subvarieties of $\C^n$, such that for any automorphism of $\C$,
$\sigma(F)=\{(\sigma(x_1), \ldots, \sigma(x_n)), (x_1, \ldots, x_n)\in \C^n\}$
is closed in $\C^n$.  We will use an elementary lemma.

\begin{lem}
Let $k$ be a countable subfield of $\C$. There exists a point $(x_1, \ldots, x_n)$ in $F$ such that the complex
numbers $(x_1, \ldots, x_n)$ are algebraically equivalent over $k$.
\end{lem}

\begin{proof}
Since $k$ is countable, there exists only a countable number of algebraic subvarieties of $\C^n$ defined over $k$. By
our assumption on $F$, there exists a point of $F$ which does not lie in any proper algebraic variety defined over $k$.
Such a point has coordinates which are algebraically independent over $k$.
\end{proof}

Using the preceding lemma and induction, we can find a sequence of points
$$p_i=(x^i_1, \ldots, x^i_n)\in F$$
such that the $(x^i_j)_{i\in\N, j\leq n}$ are algebraically independent over $\Q$. Now let $(y^i_j)_{i\in\N, j\leq n}$
be a sequence of points in $\C^n$ such that $\{(y^i_1, \ldots, y^i_n), i\in\N\}$ is dense in $\C^n$. We can find an
automorphism $\sigma$ of $\C$ mapping $x^i_j$ to $y^i_j$ for all $i,j$. The closed subset of $\C^n$ $\sigma(F)$ contains
a dense subset of $\C^n$, hence $\sigma(F)=\C^n$. This shows that $F=\C^n$ and concludes the proof.
\end{proof}

As an immediate consequence, we get the following.

\begin{prop}
Let $X$ be a complex projective variety, and let $F$ be a closed subset of $X^{an}$. Assume that for any automorphism
$\sigma$
of $\C$, the subset
$$\sigma(F)\subset X^{\sigma}(\C)$$
is closed in $(X^{\sigma})^{an}$. Then $F$ is an algebraic subvariety of $X$.
\end{prop}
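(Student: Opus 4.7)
The plan is to combine the preceding proposition with the compactness afforded by projectivity. By the preceding proposition we may write $F = \bigcup_{i\in\N} Z_i$ as a countable union of irreducible algebraic subvarieties $Z_i$ contained in $F$. Let $\bar F$ denote the Zariski closure of $F$ in $X$; since $X$ is projective, $\bar F$ is a projective algebraic subvariety of $X$ with only finitely many irreducible components $W_1,\dots,W_m$. It suffices to show that $F = \bar F$, i.e.\ that each $W_j$ lies entirely in $F$, for then $F$ is the finite union of the $W_j$'s and hence algebraic.

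Fix $j$ and suppose for contradiction that $W_j \not\subseteq F$. Because $W_j$ is an irreducible component of $\bar F$, the intersection $W_j\cap F$ is then a proper, analytically closed, Zariski-dense subset of $W_j$. Every $Z_i$ sitting inside $W_j$ must be a proper subvariety of $W_j$, of strictly smaller dimension, and hence analytically nowhere dense in the irreducible compact analytic space $W_j^{\mathrm{an}}$. Since $W_j^{\mathrm{an}}$ is compact Hausdorff, hence a Baire space, the Baire category theorem applied to the countable closed cover forces $W_j\cap F$ to be meager in $W_j^{\mathrm{an}}$; in particular, there exists a nonempty analytic open subset $V \subseteq W_j^{\mathrm{an}}$ disjoint from $F$.

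The final and hardest step is to derive the contradiction from the conjugation hypothesis. Following the strategy of the proof of the preceding proposition, I would pick a point $q\in V$ whose coordinates (in a chosen affine chart of $W_j$) are algebraically independent over a countable subfield $k\subseteq\C$ over which $W_j$ and sufficiently many of the $Z_i$'s are defined, and then build an automorphism $\sigma$ of $\C$ sending a countable dense sequence of points of $W_j\cap F$ to a sequence of points of $W_j^{\sigma}$ accumulating at $\sigma(q)$. Since $\sigma(q)\notin\sigma(F)$ while $\sigma(q)$ lies in the analytic closure of $\sigma(F)$ in $(X^{\sigma})^{\mathrm{an}}$, this contradicts the hypothesis that $\sigma(F)$ is closed. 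The main obstacle is the construction of this $\sigma$: one must exhibit an appropriately generic sequence inside the Zariski-dense but analytically meager set $W_j\cap F$, which requires a quantitative refinement of the elementary field-theoretic lemma used in the preceding proof, together with care about points lying in the other components $W_\ell$ with $\ell\ne j$.
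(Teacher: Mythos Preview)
The paper gives no argument beyond calling this an ``immediate consequence'' of the preceding proposition, so there is essentially nothing to compare your attempt against. More to the point, the proposition as stated is \emph{false}, and this is exactly why your final step cannot be completed. Take $X=\PP^1_{\CC}$ and $F=\{0\}\cup\{1/n : n\geq 1\}$: every point of $F$ is rational, hence fixed by every automorphism $\sigma$ of $\CC$, so $\sigma(F)=F$ is closed in $(\PP^1)^{\mathrm{an}}$ for all $\sigma$; yet $F$ is an infinite set of points and is not an algebraic subvariety of $\PP^1$. (Connected counterexamples exist as well: the union of the lines $X=nY$ for $n\geq 1$ together with the line $Y=0$ in $\PP^2$, all defined over $\QQ$.) Only the previous proposition---that $F$ is a \emph{countable union} of subvarieties---holds in this generality.

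Your obstruction in the last paragraph is therefore genuine and unfixable rather than a matter of technique. In the counterexample above one has $\sigma(F)=F$ for every $\sigma$, so $\sigma(q)\notin\sigma(F)$ whenever $q\notin F$, and since $F$ is already closed no choice of $\sigma$ can make points of $\sigma(F)$ accumulate at $\sigma(q)$. One small side remark on an earlier step: the Baire category argument is both unnecessary and slightly misapplied---meagerness of $W_j\cap F$ does not by itself produce an open set in the complement; but since $W_j\cap F$ is already a proper \emph{closed} subset of $W_j^{\mathrm{an}}$, its complement is automatically open and nonempty, which is all you need there.
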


Given that absolute Hodge classes are classes in the singular cohomology groups that are, in some sense, Galois
equivariant\footnote{See \cite{Del82}, Question 2.4, where the questions of whether those are the only ones is raised.}
and that by the preceding result, algebraic subvarieties are the only closed subsets with a good behavior with respect
to the Galois action, this might serve as a motivation for Conjecture \ref{AH-implies-alg}.

\bigskip

Another, more precise, reason that explains why Conjecture \ref{AH-implies-alg} might be more tractable than the Hodge
conjecture is given by the work of Andr\'e around motivated cycles in \cite{An}. Through motivic considerations, Andr\'e
does indeed show that for most of the absolute Hodge classes we know, Conjecture \ref{AH-implies-alg} is actually a
consequence of the standard conjectures, which, at least in characteristic zero, seem considerably weaker than the Hodge
conjecture.

While we won't prove such results, it is to be noted that the proofs we gave in Paragraphs \ref{variations} and
\ref{standard} were given so as to imply Andr\'e's results for the absolute Hodge classes we will consider. The
interested reader should have no problem filling the gaps.

\bigskip

In the following sections, we will not use the notation $X^{an}$ for the complex manifold underlying a complex variety
$X$ anymore, but rather, by an abuse of notation, use $X$ to refer to both objects. The context will hopefully help the
reader avoid any confusion.

\section{Absolute Hodge classes in families}

This section deals with the behavior of absolute Hodge classes under deformation. We will focus on consequences of the
algebraicity of Hodge bundles. We prove Deligne's principle B,
which states that absolute Hodge classes are preserved by parallel transport, and discuss the link between Hodge loci
and absolute Hodge classes as in \cite{Voabs}. The survey \cite{VoHodge} contains a beautiful account of similar
results.

We only work here with projective families. The quasi-projective case could be dealt with and provide similar results,
see for instance \cite{ChAJ}.

\subsection{The variational Hodge conjecture and the global invariant cycle theorem}

Before stating Deligne's Principle B in \cite{Del82}, let us explain a variant of the Hodge conjecture.

Let $S$ be a smooth connected quasi-projective variety, and let $\pi : \mathcal X\ra S$ be a smooth projective morphism.
Let $0$ be a complex point of $S$, and, for some integer $p$ let $\alpha$ be a cohomology class in $H^{2p}(\mathcal X_0,
\Q(p))$. Assume that $\alpha$ is the cohomology class of some codimension $p$ algebraic cycle $Z_0$, and that $\alpha$
extends as a section $\widetilde{\alpha}$ of the local system $R^{2p}\pi_*\Q(p)$ on $S$\footnote{Once again, the
use of a Tate twist might seem superfluous. We write it down in order to emphasize the variation of Hodge structures
that this local system underlies, as well as the factor $(2 \pi i)^{-p}$ that appears in the comparison isomorphism with
de Rham cohomology.}.

In \cite{Gr66}, footnote 13, Grothendieck makes the following conjecture.

\begin{conj}(Variational Hodge conjecture) \label{varH}
For any complex point $s$ of $S$, the class $\widetilde{\alpha}_s$ is the cohomology class of an algebraic cycle.
\end{conj}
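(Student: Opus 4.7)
Since the statement is an open conjecture, the plan is to identify the cleanest reduction to a genuinely hard residual problem, rather than to give a complete proof. The strategy is to factor the variational Hodge conjecture through the notion of absolute Hodge classes, decomposing it into a variational step, handled by Deligne's Principle B, and a pointwise step, which is a variant of the Hodge conjecture itself.

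First, I would show that the flat extension $\widetilde{\alpha}$ takes absolute Hodge values at every $s \in S$. The starting class $\alpha = [Z_0]$ is algebraic, hence absolute Hodge by the cycle class construction of section 1. By the Katz-Oda theorem quoted earlier, the Hodge bundles $F^p \shH^{2p}$ and the Gauss-Manin connection on $\shH^{2p}$ are algebraic, so the family $\pi : \mathcal{X} \to S$ together with its Hodge datum and the flat section $\widetilde{\alpha}$ descend to a finitely generated subfield $k \subseteq \C$. Given an automorphism $\sigma$ of $\C$ fixing $k$, conjugation transforms the whole picture coherently, and the conjugate of $\widetilde{\alpha}$ remains a flat section of the conjugate local system, taking the value $[Z_0^{\sigma}]$ at $\sigma(0)$, which is Hodge. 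By the flat-section argument underlying Deligne's Principle B, forthcoming in the paper, the Hodge property then holds at every point of the conjugate family, so $\widetilde{\alpha}_s$ is absolute Hodge for every $s \in S$.

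Second, one must pass from absolute Hodge to algebraic at each fiber, which is Conjecture \ref{AH-implies-alg}. This is the hard part, and where every known method stops: there is no general technique producing algebraic cycles out of cohomology classes, even absolute Hodge ones. The extra input of the variational hypothesis — algebraicity at one fiber plus flat extendability — does not appear to help beyond Principle B; a direct deformation-theoretic approach via relative Hilbert or Chow schemes fails because their components are at best quasi-projective over $S$ and need not cover $S$, and a Hodge class at a nearby fiber has no a priori reason to lift to an effective cycle. Thus, modulo Principle B, the variational Hodge conjecture is equivalent to the fiberwise statement that absolute Hodge classes on a smooth projective variety are algebraic, which is the genuine obstacle and remains as intractable as the Hodge conjecture at a single point.
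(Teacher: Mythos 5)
You have correctly recognized that this statement is an open conjecture, so the only sensible task is to compare reductions, and your reduction is sound but genuinely different from the one the paper gives. The paper (Proposition \ref{varH-prop} and its corollary) deduces the variational Hodge conjecture from the full Hodge conjecture via the global invariant cycle theorem: monodromy-invariance of $\widetilde{\alpha}$ lifts $\alpha$ to a Hodge class $a$ on a smooth compactification $\overline{\mathcal X}$ of the total space, the Hodge conjecture is applied \emph{once}, to $a$, and the algebraic cycle so obtained is restricted to each fiber. Your route instead runs Principle~B first (all values $\widetilde{\alpha}_s$ are absolute Hodge) and then invokes Conjecture \ref{AH-implies-alg} fiberwise. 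What your approach buys is a reduction to the a priori weaker hypothesis that absolute Hodge classes are algebraic, i.e.\ only one of the two halves into which the paper splits the Hodge conjecture; what the paper's approach buys is that the conjectural input is needed only for the single variety $\overline{\mathcal X}$ rather than for every fiber, and that no arithmetic (conjugation by $\Aut(\CC)$) is needed at all. Two small corrections: the variational Hodge conjecture is \emph{implied by}, not ``equivalent to,'' the statement that absolute Hodge classes are algebraic — no converse is claimed or known; and in your sketch of Principle~B you should conjugate by \emph{arbitrary} automorphisms $\sigma$ of $\CC$ (obtaining the conjugate family $\mathcal X^{\sigma} \to S^{\sigma}$ and the cycle $Z_0^{\sigma}$), not only by those fixing the field of definition $k$ — restricting to $\sigma$ fixing $k$ would not establish that $\widetilde{\alpha}_s$ is absolute Hodge in the sense of the definition. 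With those adjustments your first step is exactly the paper's first proof of Theorem \ref{principle-B}.
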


Using the Gauss-Manin connection and the isomorphism between de Rham and singular cohomology, we can formulate an
alternative version of the variational Hodge conjecture in de Rham cohomology. For this, keeping the notations as above,
we have a coherent sheaf $\mathcal H^{2p}=\R^{2p}\pi_* \Omega^{\bullet}_{\mathcal X/S}$ which computes the relative de
Rham cohomology of $\mathcal X$ over $S$. As we saw earlier, it is endowed with a canonical connection, the Gauss-Manin
connection $\nabla$.

\begin{conj}\label{varH-dR}(Variational Hodge conjecture for de Rham cohomology)
Let $\beta$ be a cohomology class in $H^{2p}(\mathcal X_0/\C)$. Assume that $\beta$ is the cohomology class of some
codimension $p$ algebraic cycle $Z_0$, and that $\beta$ extends as a section $\widetilde{\beta}$ of the coherent sheaf
$\mathcal H^{2p}=\R^{2p}\pi_* \Omega^{\bullet}_{\mathcal X/S}$ such that $\widetilde{\beta}$ is flat for the Gauss-Manin
connection. The variational Hodge conjecture states that for any complex point $s$ of $S$, the class
$\widetilde{\beta}_s$ is the cohomology class of an algebraic cycle.
\end{conj}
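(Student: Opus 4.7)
The strategy is to propagate the known algebraicity at the base point $0$ to every fiber by combining (i) the cycle class being absolute Hodge, (ii) Deligne's Principle~B (announced for this very section), and (iii) the global invariant cycle theorem. The honest outcome of this plan is a reduction of the variational Hodge conjecture to the statement that absolute Hodge classes are algebraic (Conjecture \ref{AH-implies-alg}), which is where the essential difficulty concentrates.

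\textbf{Step 1: upgrade the starting class.} Since $\beta = \classdR{Z_0} \in F^p \HdR{2p}(\mathcal{X}_0 / \CC)$ is the de Rham cycle class of the algebraic cycle $Z_0$, the functoriality of cycle classes under base change by any $\sigma \in \Aut(\CC)$ gives $\beta^\sigma = \classdR{Z_0^\sigma}$, which is a Hodge class in $H^{2p}(\mathcal{X}_0^\sigma / \CC)(p)$. Thus $\beta$ is absolute Hodge in the sense of Definition~\ref{def-dR}.

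\textbf{Step 2: spread via Principle~B.} The Hodge bundles $F^\bullet \shH^{2p}$ and the Gauss--Manin connection $\nabla$ on $\shH^{2p} = \RR^{2p}\fl \Omega^{\bullet}_{\mathcal{X}/S}$ are algebraic (Katz--Oda), so the datum $(\shH^{2p}, F^\bullet, \nabla)$ is defined over any field of definition of $\pi : \mathcal{X} \to S$. Applying Deligne's Principle~B (the key technical tool of Section~3, which uses precisely this algebraicity to commute "absolute Hodge" with parallel transport), we conclude that $\tilde\beta_s \in \HdR{2p}(\mathcal{X}_s/\CC)(p)$ is an absolute Hodge class for every complex point $s \in S$. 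In particular it is a rational Hodge class on $\mathcal{X}_s$.

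\textbf{Step 3: produce a candidate lift.} To refine "absolute Hodge" to "algebraic" we exploit geometry of the family. The flat section $\tilde\beta$ is monodromy invariant on (a cover of) $S$, so by the global invariant cycle theorem applied to a smooth projective compactification $\overline{\mathcal{X}}$ of $\mathcal{X}$, the corresponding Betti class $\tilde\beta_s \in H^{2p}(\mathcal{X}_s, \QQ(p))^{\pi_1(S,s)}$ lifts to a Hodge class $\gamma \in H^{2p}(\overline{\mathcal{X}}, \QQ(p))$. By Proposition~\ref{antecedent}, since the restriction map is absolute Hodge and $\tilde\beta_s$ is absolute Hodge and in its image, the lift $\gamma$ may be taken to be absolute Hodge as well. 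Algebraicity of $\gamma$ on $\overline{\mathcal{X}}$ would pull back to algebraicity of $\tilde\beta_s$ on $\mathcal{X}_s$, yielding the conjecture.

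\textbf{Main obstacle.} The plan converts a variational problem over $S$ into the single problem of exhibiting an algebraic cycle on $\overline{\mathcal{X}}$ representing the absolute Hodge class $\gamma$. That last step is Conjecture~\ref{AH-implies-alg}, a priori weaker than the Hodge conjecture but still open. Thus the strategy above does not settle Conjecture~\ref{varH-dR} unconditionally; what it does establish is the unconditional intermediate statement that the flat extension of an algebraic cycle class remains \emph{absolute Hodge} on every fiber, so that only the cycle-class representability of this specific absolute Hodge class remains to be proved.
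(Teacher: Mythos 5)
The statement you were asked about is a \emph{conjecture} (Grothendieck's variational Hodge conjecture, in its de Rham formulation), and the paper offers no proof of it --- indeed it remarks that very little is known about it. So the correct outcome here is not a proof, and you rightly do not claim one. What your argument actually establishes is precisely what the paper establishes around this conjecture: your Steps 1--2 are the content of Deligne's Principle~B (Theorems \ref{principle-B} and \ref{principle-BdR}), namely that the flat extension of an absolute Hodge class (in particular of the class of an algebraic cycle) remains absolute Hodge at every fiber, and your Step 3 is essentially the paper's second proof of Principle~B and its proof of Proposition \ref{varH-prop}: use the global invariant cycle theorem on a smooth compactification $\overline{\mathcal X}$ together with Corollary \ref{antecedent} to lift the monodromy-invariant class to an absolute Hodge class on $\overline{\mathcal X}$, so that the full conjecture reduces to the algebraicity of that one class, i.e.\ to Conjecture \ref{AH-implies-alg} (the paper phrases the analogous reduction as ``the Hodge conjecture implies the variational Hodge conjecture''). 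So your proposal is sound and faithful to the paper's treatment; the only caveat is one of framing: it should be presented as a conditional reduction and a proof of the absolute-Hodge analogue (Principle~B), not as an attempted proof of Conjecture \ref{varH-dR} itself, which remains open.
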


\begin{prop}\label{varH-equiv}
 Conjecture \ref{varH} and \ref{varH-dR} are equivalent.
\end{prop}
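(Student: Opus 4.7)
The plan is to exploit the comparison theorems recalled in Section~1 in the relative setting. Globalizing Grothendieck's theorem, there is a canonical analytic isomorphism
\[
\mathcal{H}^{2p,\mathrm{an}} \simeq R^{2p}\pi^{\mathrm{an}}_{\ast} \CC \otimes_{\CC} \mathcal{O}_{S^{\mathrm{an}}}
\]
under which the algebraic Gauss--Manin connection $\nabla$ of Katz--Oda corresponds to the unique connection whose sheaf of flat sections is $R^{2p}\pi^{\mathrm{an}}_{\ast} \CC$. Moreover, applied fiberwise, the cycle class comparison of Section~1.2 identifies the algebraic and Betti cycle classes of a codimension $p$ subvariety of $\mathcal{X}_s$ (with the $(2\pi i)^p$ factor absorbed into the Tate twist). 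These two ingredients will reduce the equivalence to a formal correspondence between sections of $R^{2p}\pi_{\ast}\QQ(p)$ and $\nabla$-flat sections of $\mathcal{H}^{2p}$.

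For \ref{varH}~$\Rightarrow$~\ref{varH-dR}, I would take $\widetilde{\beta}$ a $\nabla$-flat section of $\mathcal{H}^{2p}$ with $\widetilde{\beta}_0 = [Z_0]_{dR}$ and, via the comparison above, view it as a global section of the complex local system $R^{2p}\pi^{\mathrm{an}}_{\ast} \CC$. Such a section on the connected base $S$ is determined by its value at $0$ via parallel transport; since that value lies in the rational subspace $H^{2p}(\mathcal{X}_0, \QQ(p))$, the section is automatically defined over $\QQ(p)$, i.e.\ it is the image of a section $\widetilde{\alpha}$ of $R^{2p}\pi_{\ast}\QQ(p)$ with $\widetilde{\alpha}_0 = [Z_0]$. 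Applying Conjecture~\ref{varH} produces, for every $s \in S$, an algebraic cycle whose Betti class is $\widetilde{\alpha}_s$; the pointwise cycle class comparison then translates this into $\widetilde{\beta}_s$ being the algebraic de Rham class of the same cycle.

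The converse is symmetric: starting from $\widetilde{\alpha}$ a section of $R^{2p}\pi_{\ast}\QQ(p)$ with $\widetilde{\alpha}_0 = [Z_0]$, tensoring with $\CC$ and applying the comparison yields a $\nabla$-flat section $\widetilde{\beta}$ of $\mathcal{H}^{2p}$ satisfying $\widetilde{\beta}_0 = [Z_0]_{dR}$. Conjecture~\ref{varH-dR} provides, at each $s$, an algebraic cycle $Z_s$ with $[Z_s]_{dR} = \widetilde{\beta}_s$, and the pointwise cycle class comparison gives $[Z_s] = \widetilde{\alpha}_s$ in Betti cohomology.

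I do not expect a serious obstacle: the argument is entirely formal once one has (i) Grothendieck's comparison theorem in families, (ii) the Katz--Oda algebraic description of $\nabla$, and (iii) the pointwise agreement of the de Rham and Betti cycle class maps, all of which are established in Section~1. The only point worth articulating carefully is that a $\nabla$-flat global section of $\mathcal{H}^{2p}$ whose value at a single fiber lies in the rational lattice $H^{2p}(\mathcal{X}_0, \QQ(p))$ really does come from a global section of the Betti local system $R^{2p}\pi_{\ast}\QQ(p)$; this is immediate from the identification of $\nabla$-flat sections with sections of the underlying complex local system, combined with connectedness of $S$.
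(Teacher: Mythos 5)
Your argument is correct in outline and matches the paper's strategy up to one point, but it contains a genuine gap in the direction producing the hypotheses of Conjecture~\ref{varH-dR} from those of Conjecture~\ref{varH}. The comparison isomorphism $\mathcal H^{2p,\mathrm{an}} \simeq R^{2p}\pi_*\QQ(p)\otimes_{\QQ}\mathcal O_{S^{\mathrm{an}}}$ is an isomorphism of \emph{analytic} sheaves, so from a global section $\widetilde{\alpha}$ of the Betti local system you only obtain a $\nabla$-flat \emph{holomorphic} section of $\mathcal H^{2p,\mathrm{an}}$. Conjecture~\ref{varH-dR}, however, is about sections of the algebraic coherent sheaf $\R^{2p}\pi_*\Omega^{\bullet}_{\mathcal X/S}$ — the paper stresses that the whole point of this formulation is that it is purely algebraic. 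Over a non-proper base $S$ a flat holomorphic section of an algebraic vector bundle with algebraic connection need not be algebraic (think of $e^z$ as a flat section of $(\mathcal O_{\mathbb A^1}, d - dz)$), so the step "tensoring with $\CC$ and applying the comparison yields a section $\widetilde{\beta}$ of $\mathcal H^{2p}$" asserts more than the formal correspondence gives you. Your claim that the only delicate point is rationality of a flat section picks out the easy direction; the nontrivial verification lies here.

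The paper closes this gap with the global invariant cycle theorem: a flat holomorphic section corresponds to a monodromy-invariant class in $H^{2p}(\mathcal X_0,\CC)$, which by that theorem is the restriction of a class on a smooth compactification $\overline{\mathcal X}$; the restriction to $\mathcal X$ of the corresponding algebraic de Rham class on $\overline{\mathcal X}$ is then an algebraic section inducing the given holomorphic one. (One could alternatively invoke the regularity of the Gauss--Manin connection, but some such input is needed.) The rest of your argument — the identification of rational flat sections with sections of $R^{2p}\pi_*\QQ(p)$ when the value at one point is rational, and the pointwise agreement of the Betti and de Rham cycle class maps — agrees with the paper's proof.
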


\begin{proof}
The de Rham comparison isomorphism between singular and de Rham cohomology in a relative context takes the form
of a canonical isomorphism
\begin{equation}\label{relative-dR}
\R^{2p}\pi_* \Omega^{\bullet}_{\mathcal X/S}\simeq R^{2p}\pi_*\Q(p)\otimes_{\Q} \mathcal O_S.
\end{equation}

Note that this formula is not one from algebraic geometry. Indeed, the sheaf $\mathcal O_S$ denotes here the sheaf of
holomorphic functions on the complex manifold $S$. The derived functor $\R^{2p}\pi_*$ on the left is a functor between
categories of complexes of holomorphic coherent sheaves, while the one on the right is computed for sheaves with the
usual complex topology. The Gauss-Manin connection is the connection on $\R^{2p}\pi_* \Omega^{\bullet}_{\mathcal X/S}$
for which the local system $R^{2p}\pi_*\Q(p)$ is constant. As we saw earlier, the locally free sheaf
$\R^{2p}\pi_* \Omega^{\bullet}_{\mathcal X/S}$ is algebraic, as well as the Gauss-Manin connection.

Given $\beta$ a cohomology class in the de Rham cohomology group $H^{2p}(\mathcal X_0/\C)$ as above,
we know that $\beta$ belongs to the rational subspace $H^{2p}(\mathcal X_0, \Q(p))$ because it is the cohomology class
of an algebraic cycle. Furthermore, since $\widetilde{\beta}$ is flat for the Gauss-Manin connection and is rational at
one point, it corresponds to a section of the local system $R^{2p}\pi_*\Q(p)$ under the comparison isomorphism above.
This shows that Conjecture \ref{varH} implies Conjecture \ref{varH-dR}.

On the other hand, sections of the local system $R^{2p}\pi_*\Q(p)$ induce flat algebraic sections of the coherent sheaf
$\R^{2p}\pi_* \Omega^{\bullet}_{\mathcal X/S}$. By definition of the Gauss-Manin connection, they do induce flat
holomorphic sections, and we have to show that those are algebraic. This is a consequence of the following important
result, which is due to Deligne.

\begin{thm}(Global invariant cycle theorem)\label{global}
Let $\pi : \mathcal X\ra S$ be a smooth projective morphism of quasi-projective complex varieties, and let $i : \mathcal
X\hookrightarrow\overline{\mathcal X}$ be a smooth compactification of $\mathcal X$. Let $0$ be complex point of $S$,
and let $\pi_1(S, 0)$ be the fundamental group of $S$. For any integer $k$, the space of monodromy-invariant classes of
degree $k$
$$H^k(\mathcal X_0, \Q)^{\pi_1(S, 0)}$$
is equal to the image of the restriction map
$$i_0^* : H^k(\overline{\mathcal X}, \Q)\ra H^k(\mathcal X_0, \Q),$$
where $i_0$ is the inclusion of $\mathcal X_0$ in $\overline{\mathcal X}$.
\end{thm}
In the theorem, the monodromy action is the action of the fundamental group $\pi_1(S, 0)$ on the cohomology groups of
the fiber $\mathcal X_0$. Note that the theorem implies that the space $H^k(\mathcal X_0, \Q)^{\pi_1(S, 0)}$ is a
sub-Hodge structure of $H^k(\mathcal X_0, \Q)$. However, the fundamental group of $S$ does not in general act by
automorphisms of Hodge structures.

The global invariant cycle theorem implies the algebraicity of flat holomorphic sections of the vecotr
bundle $\R^{2p}\pi_*
\Omega^{\bullet}_{\mathcal X/S}$ as follows. Let $\widetilde{\beta}$ be such a section, and keep the notation
of the theorem. By definition of the Gauss-Manin connection, $\widetilde{\beta}$ corresponds to a section of the local
system $R^{2p}\pi_*\C(p)$ under the isomorphism \ref{relative-dR}, that is, to a monodromy-invariant class in
$H^{2p}(\mathcal X_0, \C(p))$. The global invariant cycle theorem shows, using the comparison theorem between singular
and de Rham cohomology on $\overline{\mathcal X}$, that $\widetilde{\beta}$ comes from a de Rham cohomology class $b$
in $H^{2p}(\overline{\mathcal X}/\C)$. As such, it is algebraic.

The preceding remarks readily show the equivalence of the two versions of the variational Hodge conjecture.
\end{proof}

\bigskip

The next proposition shows that the variational Hodge conjecture is actually a part of the Hodge conjecture. This fact
is a consequence of the global invariant cycle theorem. The following proof will be rewritten in the next paragraph to
give results on absolute Hodge cycles.

\begin{prop}\label{varH-prop}
Let $S$ be a smooth connected quasi-projective variety, and let $\pi : \mathcal X\ra S$ be a smooth projective morphism.
Let $0$ be a complex point of $S$, and let $p$ be an integer.
\begin{enumerate}
\item Let $\alpha$ be a cohomology class in $H^{2p}(\mathcal X_0,
\Q(p))$. Assume that $\alpha$ is a Hodge class and that $\alpha$
extends as a section $\widetilde{\alpha}$ of the local system $R^{2p}\pi_*\Q(p)$ on $S$. Then for any complex point $s$
of $S$, the classes
$\widetilde{\alpha}_s$ is a Hodge class.
\item Let $\beta$ be a cohomology class in $H^{2p}(\mathcal X_0/\C)$. Assume that $\beta$ is a Hodge class and that
$\beta$ extends as a section $\widetilde{\beta}$ of the coherent sheaf
$R^{2p}\pi_* \Omega^{\bullet}_{\mathcal X/S}$ such that $\widetilde{\beta}$ is flat for the Gauss-Manin connection. Then
for any complex point $s$ of $S$, the classes
$\widetilde{\beta}_s$ is a Hodge class.
\end{enumerate}
\end{prop}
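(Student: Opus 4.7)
The plan is to use Deligne's global invariant cycle theorem (Theorem \ref{global}) to realize the monodromy-invariant class underlying $\widetilde{\alpha}$ as the restriction of a single cohomology class on a smooth projective compactification, and then to transport the Hodge property across fibers using the fact that such restrictions are morphisms of Hodge structures.

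I first treat part (1). Since $\widetilde{\alpha}$ is a global section of the local system $R^{2p}\pi_{*}\Q(p)$, the class $\alpha = \widetilde{\alpha}_{0}$ is invariant under the monodromy action of $\pi_{1}(S,0)$. Choose a smooth projective compactification $i \colon \mathcal{X} \hookrightarrow \overline{\mathcal{X}}$, and for each $s \in S$ let $i_{s} \colon \mathcal{X}_{s} \hookrightarrow \overline{\mathcal{X}}$ denote the inclusion of the fiber. By Theorem \ref{global}, the restriction
\[
i_{0}^{*} \colon H^{2p}(\overline{\mathcal{X}}, \Q(p)) \longrightarrow H^{2p}(\mathcal{X}_{0}, \Q(p))^{\pi_{1}(S,0)}
\]
is surjective; moreover it is a morphism of pure polarizable Hodge structures of weight zero, and its image is a sub-Hodge structure of $H^{2p}(\mathcal{X}_{0}, \Q(p))$. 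By semi-simplicity of the category of polarizable Hodge structures, exactly as used in the proof of Proposition \ref{semi-simple}, the surjection $i_{0}^{*}$ admits a section of Hodge structures, so the Hodge class $\alpha$ lifts to a Hodge class $\bar{\alpha} \in H^{2p}(\overline{\mathcal{X}}, \Q(p))$.

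Next, for any $s \in S$ the class $i_{s}^{*}\bar{\alpha}$ is a Hodge class in $H^{2p}(\mathcal{X}_{s}, \Q(p))$, since $i_{s}^{*}$ is likewise a morphism of Hodge structures. It remains to identify $i_{s}^{*}\bar{\alpha}$ with $\widetilde{\alpha}_{s}$. The key observation is that $s \mapsto i_{s}^{*}\bar{\alpha}$ is a flat section of $R^{2p}\pi_{*}\Q(p)$: all these fiberwise restrictions factor through the single class $i^{*}\bar{\alpha} \in H^{2p}(\mathcal{X}, \Q(p))$, and a cohomology class on the total space restricts to a parallel family in the local system of fiber cohomologies. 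This flat section agrees with $\widetilde{\alpha}$ at $s=0$, hence everywhere on the connected base $S$ by unicity of parallel transport. This proves (1). Part (2) then follows from (1) via the relative comparison isomorphism \eqref{relative-dR}: a section of $\R^{2p}\pi_{*}\Omega^{\bullet}_{\mathcal{X}/S}$ which is flat for the Gauss-Manin connection corresponds to a section of $R^{2p}\pi_{*}\Q(p) \otimes_{\Q} \C$, and the assumption that $\beta = \widetilde{\beta}_{0}$ is a Hodge class forces its value at $0$ to be rational; by flatness the entire section is then defined over $\Q$, so that (1) applies.

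The main technical point is the lifting step: producing the Hodge class $\bar{\alpha}$ on the compactification that restricts to $\alpha$. This depends on the fact that the restriction $i_{0}^{*}$ in Theorem \ref{global} is genuinely a morphism of Hodge structures, a fact which ultimately rests on Deligne's theory of mixed Hodge structures and the degeneration of the Leray spectral sequence for a smooth projective family, combined with the semi-simplicity of polarizable rational Hodge structures. Once this lift is available, both parts follow formally.
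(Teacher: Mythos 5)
Your proof is correct and follows essentially the same route as the paper: both apply the global invariant cycle theorem to lift the monodromy-invariant Hodge class $\alpha$ to a Hodge class on a smooth compactification $\overline{\mathcal X}$ (the paper realizes the semi-simplicity splitting explicitly as the orthogonal complement of $\ker i_0^*$ for a polarization, which is the same mechanism you invoke), then restrict to the other fibers and use connectedness of $S$ to identify the restrictions with $\widetilde{\alpha}_s$, deducing part (2) from part (1) via the relative comparison isomorphism and flatness.
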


As an immediate corollary, we get the following.

\begin{cor}
The Hodge conjecture implies the variational Hodge conjecture.
\end{cor}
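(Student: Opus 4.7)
The plan is to combine Proposition~\ref{varH-prop}(1), which has just been established, with the Hodge conjecture applied separately on each fiber. Starting from the data of Conjecture~\ref{varH}, I would first observe that the class $\alpha = [Z_0] \in H^{2p}(\mathcal{X}_0, \Q(p))$ is in particular a Hodge class, since classes of algebraic cycles on smooth projective varieties are always Hodge classes (recalled at the beginning of Section~\ref{Hodge-cj}). Hence the hypotheses of Proposition~\ref{varH-prop}(1) are satisfied: we have a Hodge class at the point $0$ that extends as a section $\widetilde{\alpha}$ of the local system $R^{2p}\pi_*\Q(p)$.

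Next, I would apply Proposition~\ref{varH-prop}(1) to conclude that for every complex point $s \in S$, the parallel translate $\widetilde{\alpha}_s \in H^{2p}(\mathcal{X}_s, \Q(p))$ is a Hodge class. Because $\pi$ is smooth and projective, each fiber $\mathcal{X}_s$ is itself a smooth projective complex variety, so the Hodge conjecture applies to $\mathcal{X}_s$ in degree $p$. It therefore tells us that the rational Hodge class $\widetilde{\alpha}_s$ is a $\Q$-linear combination of classes of codimension $p$ subvarieties of $\mathcal{X}_s$, and is thus itself the cohomology class of a codimension $p$ algebraic cycle with rational coefficients on $\mathcal{X}_s$. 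This is exactly the assertion of Conjecture~\ref{varH}.

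There is no genuine obstacle in this deduction: the substantive work has already been absorbed into Proposition~\ref{varH-prop}, whose proof used Deligne's global invariant cycle theorem (Theorem~\ref{global}) to guarantee that a flat extension of a Hodge class remains Hodge in every fiber. The present corollary simply invokes the Hodge conjecture on each individual fiber $\mathcal{X}_s$ to upgrade ``Hodge class'' to ``class of an algebraic cycle.'' If one prefers the de Rham formulation, the same strategy goes through verbatim using Proposition~\ref{varH-prop}(2) in place of (1), together with the equivalence recorded in Proposition~\ref{varH-equiv}.
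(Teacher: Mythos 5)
Your argument is correct and matches the paper's: the corollary is deduced exactly as an immediate consequence of Proposition~\ref{varH-prop} (whose proof rests on the global invariant cycle theorem), with the Hodge conjecture then invoked to upgrade the Hodge class $\widetilde{\alpha}_s$ on the smooth projective fiber $\mathcal{X}_s$ to the class of an algebraic cycle. Nothing essential differs from the paper's treatment.
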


\begin{proof}
The two statements of the proposition are equivalent by the same arguments as in Proposition \ref{varH-equiv}.
Let us keep the notations as above. We want to prove that for any complex point $s$ of $S$, the class
$\widetilde{\alpha}_s$ is a Hodge class. Let us show how this is a consequence of the global invariant cycle theorem.
This is a simple consequence of Corollary \ref{antecedent} in the -- easier -- context of Hodge classes. Let us prove
the result from scratch.

As in Proposition \ref{polarization}, we can find a pairing
$$H^{2p}(\overline{\mathcal X}, \Q)\otimes H^{2p}(\overline{\mathcal X}, \Q)\ra \Q(1)$$
which turns $H^{2p}(\overline{\mathcal X}, \Q)$ into a polarized Hodge structure.

Let $i : \mathcal X\hookrightarrow\overline{\mathcal X}$ be a smooth compactification of $\mathcal X$, and let $i_0$ be
the inclusion of $\mathcal X_0$ in $\overline{\mathcal X}$.

By the global invariant cycle theorem, the morphism
$$i_0^* : H^{2p}(\overline{\mathcal X}, \Q)\ra H^{2p}(\mathcal X_0, \Q)^{\pi_1(S, 0)}$$
is surjective. It restricts to an isomorphism of Hodge structures
$$i_0^* : (\mathrm{Ker} i_0^*)^{\bot}\ra H^{2p}(\mathcal X_0, \Q)^{\pi_1(S, 0)},$$
hence a Hodge class $a\in(\mathrm{Ker} i_0^*)^{\bot}\subset H^{2p}(\overline{\mathcal X}, \Q)$ mapping to $\alpha$.
Indeed, saying that $\alpha$ extends to a global section of the local system $R^{2p}\pi_*\Q(p)$ exactly means that
$\alpha$ is monodromy-invariant, and $\alpha$ is a Hodge class since it is a cohomology class of an algebraic cycle.

Now let $i_s$ be the inclusion of $\mathcal X_s$ in $\overline{\mathcal X}$. Since $S$ is connected, we have
$\widetilde{\alpha}_s=i_s^*(a),$
which shows that $\widetilde{\alpha}_s$ is a Hodge class, and that the Hodge conjecture implies it is the cohomology
class of an algebraic cycle.
\end{proof}

It is an important fact that the variational Hodge conjecture is a purely algebraic statement. Indeed, we saw earlier
that both relative de Rham cohomology and the Gauss-Manin connection can be defined algebraically. This is to be
compared to the above discussion of the transcendental aspect of the Hodge conjecture, where one cannot avoid to use
singular cohomology, which cannot be defined in a purely algebraic fashion as it does depend on the topology of $\C$.

Very little seems to be known about the variational Hodge conjecture, see however \cite{Blo}.

\subsection{Deligne's Principle B}

In this paragraph, we state and prove the so-called Principle B for absolute Hodge cycles, which is due to Deligne. It
shows that the variational Hodge conjecture is true if one replaces algebraic cohomology classes by absolute Hodge
classes.

\begin{thm}\label{principle-B}(Principle B)
Let $S$ be a smooth connected quasi-projective variety, and let $\pi : \mathcal X\ra S$ be a smooth projective morphism.
Let $0$ be a complex point of $S$, and, for some integer $p$ let $\alpha$ be a cohomology class in $H^{2p}(\mathcal X_0,
\Q(p))$. Assume that $\alpha$ is an absolute Hodge class and that $\alpha$ extends as a section $\widetilde{\alpha}$ of
the local system $R^{2p}\pi_*\Q(p)$ on $S$. Then for any complex point $s$ of $S$, the class $\widetilde{\alpha}_s$ is
absolute Hodge.
\end{thm}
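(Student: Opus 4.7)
The plan is to deduce Principle B from the global invariant cycle theorem together with Corollary \ref{antecedent}, which provides absolute Hodge lifts of absolute Hodge classes through absolute Hodge morphisms. The key observation is that parallel-transported absolute Hodge classes should come from restricting a single absolute Hodge class on a smooth compactification of the total space.

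First I would choose a smooth compactification $i\colon\mathcal X\hookrightarrow\overline{\mathcal X}$ of $\mathcal X$. For each complex point $s$ of $S$, denote by $i_s\colon\mathcal X_s\hookrightarrow\overline{\mathcal X}$ the corresponding closed immersion; each $i_s$ is a morphism of smooth projective complex varieties, so Proposition \ref{functor}(1) shows that the induced pullback
\[
    i_s^*\colon H^{2p}(\overline{\mathcal X},\QQ(p))\to H^{2p}(\mathcal X_s,\QQ(p))
\]
is a morphism of Hodge structures given by an absolute Hodge class. Since $\widetilde\alpha$ is a global section of $R^{2p}\pi_*\QQ(p)$, the class $\alpha=\widetilde\alpha_0$ is invariant under the monodromy action of $\pi_1(S,0)$. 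By Theorem \ref{global} (the global invariant cycle theorem), $\alpha$ therefore lies in the image of $i_0^*$.

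Now I apply Corollary \ref{antecedent} to the absolute Hodge morphism $i_0^*$ and the absolute Hodge class $\alpha$ lying in its image: there exists an absolute Hodge class $a\in H^{2p}(\overline{\mathcal X},\QQ(p))$ with $i_0^*(a)=\alpha$. For any complex point $s\in S$, the class $i_s^*(a)$ is then absolute Hodge, as the composition of two absolute Hodge morphisms applied to $a$ (Proposition \ref{functor}(2)).

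It remains to identify $i_s^*(a)$ with the parallel transport $\widetilde\alpha_s$. The assignment $s\mapsto i_s^*(a)$ defines a section of the local system $R^{2p}\pi_*\QQ(p)$ over $S$: indeed, for a small loop of $S$-pointed discs, $i_s^*(a)$ varies only through the locally constant restriction of the fixed cohomology class $a$. This section is monodromy-invariant and agrees with $\widetilde\alpha$ at $s=0$, so since $S$ is connected, it coincides with $\widetilde\alpha$ throughout $S$. Hence $\widetilde\alpha_s = i_s^*(a)$ is absolute Hodge for every $s$. The main potential obstacle is the verification that the Corollary \ref{antecedent} step genuinely applies (i.e., that $i_0^*$ is a morphism given by an absolute Hodge class and $\alpha$ is in its image); both points are settled by combining Proposition \ref{functor}(1) with Theorem \ref{global}, so the entire argument reduces to the functorial machinery already set up in the previous section.
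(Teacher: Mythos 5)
Your argument is correct and coincides with the second of the two proofs given in the paper: both deduce Principle B by using the global invariant cycle theorem to place $\alpha$ in the image of $i_0^*$, applying Corollary \ref{antecedent} to produce an absolute Hodge lift $a$ on the compactification $\overline{\mathcal X}$, and then identifying $\widetilde{\alpha}_s$ with $i_s^*(a)$ by connectedness of $S$. The only difference is that you spell out the functoriality inputs (Proposition \ref{functor}) slightly more explicitly, which is harmless.
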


As in Proposition \ref{varH-equiv}, this is equivalent to the following rephrasing.

\begin{thm}\label{principle-BdR}(Principle B for de Rham cohomology)
Let $S$ be a smooth connected quasi-projective variety, and let $\pi : \mathcal X\ra S$
be a smooth projective morphism. Let $0$ be a complex point of $S$, and, for some integer $p$ let $\beta$ be a
cohomology class in $H^{2p}(\mathcal X_0/\C)$. Assume that $\beta$ is an absolute Hodge class and that $\beta$ extends
as a flat section $\widetilde{\beta}$ of the locally free sheaf
$\mathcal H^{2p} = \R^{2p}\pi_* \Omega^{\bullet}_{\mathcal X/S}$ endowed with the Gauss-Manin connection. Then for any
complex point $s$ of $S$, the class $\widetilde{\beta}_s$ is absolute Hodge.
\end{thm}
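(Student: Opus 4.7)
The plan is to reduce the statement to a question about a single smooth projective variety (a compactification $\overline{\mathcal{X}}$) by using the global invariant cycle theorem, and then to propagate ``absolute Hodge-ness'' across $S$ by restricting to fibers. The input from the machinery developed earlier in the paper is that restrictions, cup products, and Poincar\'e duals of absolute Hodge classes are absolute Hodge, and, crucially, that an absolute Hodge class in the image of an absolute Hodge morphism admits an absolute Hodge preimage (Corollary \ref{antecedent}). Both versions of Principle B are equivalent by the argument of Proposition \ref{varH-equiv}, so I will work with the statement in singular cohomology.

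First, choose a smooth compactification $i\colon \mathcal{X}\hookrightarrow\overline{\mathcal{X}}$ and denote by $i_s\colon \mathcal{X}_s\hookrightarrow\overline{\mathcal{X}}$ the closed embedding of the fiber over $s\in S(\C)$. Because $\widetilde{\alpha}$ is a global section of the local system $R^{2p}\pi_\ast\Q(p)$, the class $\alpha$ is monodromy-invariant, so by the global invariant cycle theorem (Theorem \ref{global}) we have
\[
\alpha \in \im\bigl(i_0^\ast \colon H^{2p}(\overline{\mathcal{X}},\Q(p)) \to H^{2p}(\mathcal{X}_0,\Q(p))\bigr).
\]
The morphism $i_0^\ast$ is induced by the algebraic correspondence given by the graph of $i_0$, hence is absolute Hodge by Proposition \ref{functor}(1). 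Since $\alpha$ is an absolute Hodge class lying in the image of the absolute Hodge morphism $i_0^\ast$, Corollary \ref{antecedent} produces an absolute Hodge class $a\in H^{2p}(\overline{\mathcal{X}},\Q(p))$ with $i_0^\ast(a)=\alpha$.

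Next, for every complex point $s\in S$, the class $i_s^\ast(a)\in H^{2p}(\mathcal{X}_s,\Q(p))$ is absolute Hodge, again because $i_s^\ast$ is induced by an algebraic correspondence and absolute Hodge classes are preserved under composition with absolute Hodge morphisms (Proposition \ref{functor}(1)--(2)). It therefore suffices to identify $\widetilde{\alpha}_s$ with $i_s^\ast(a)$. The assignment $s\mapsto i_s^\ast(a)$ defines a flat global section of the local system $R^{2p}\pi_\ast\Q(p)$ on $S$, because it is obtained by pulling back a fixed cohomology class on $\overline{\mathcal{X}}$ along the family of inclusions $i_s$ (this is literally the composition of the constant local system $H^{2p}(\overline{\mathcal{X}},\Q(p))$ with the restriction map to the fibers of $\pi$). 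At the point $0$ this section equals $i_0^\ast(a)=\alpha=\widetilde{\alpha}_0$. Since $S$ is connected and two flat sections of a local system that agree at a point agree everywhere, we conclude $\widetilde{\alpha}_s=i_s^\ast(a)$ for all $s$, and the right-hand side is absolute Hodge.

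The main obstacle in this approach is Step two: it is not enough to know that $\alpha$ lifts to some Hodge class on $\overline{\mathcal{X}}$; the lift must itself be absolute Hodge. That is exactly the content of Corollary \ref{antecedent}, whose proof in turn rests on the existence of polarizations by absolute Hodge classes (Proposition \ref{polarization}) and the semi-simplicity argument of Proposition \ref{semi-simple}. Granted those results, the rest of the argument is formal: the global invariant cycle theorem supplies a class on a proper variety, restriction to fibers preserves the absolute Hodge property because it is algebraic, and the local system structure together with connectedness of $S$ identifies the resulting family of restrictions with the given flat extension $\widetilde{\alpha}$.
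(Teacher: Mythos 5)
Your argument is correct and coincides with the paper's second proof of Principle B: both use the global invariant cycle theorem to realize $\alpha$ in the image of $i_0^\ast$, invoke Corollary \ref{antecedent} to obtain an absolute Hodge lift $a$ on the compactification $\overline{\mathcal X}$, and conclude by connectedness of $S$ that $\widetilde{\alpha}_s=i_s^\ast(a)$ for all $s$. Your writeup simply spells out in more detail the flatness of $s\mapsto i_s^\ast(a)$, which the paper leaves implicit.
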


We will give two different proofs of this result to illustrate the techniques we introduced earlier. Both rely on
Proposition \ref{varH-prop}, and on the global invariant cycle theorem. The first one proves the result as a consequence
of the algebraicity of the Hodge bundles and of the Gauss-Manin connection. It is essentially Deligne's proof
in \cite{Del82}. The second proof elaborates on polarized Hodge structures and is inspired by Andr\'e's approach in
\cite{An}.

\begin{proof}
We will work with de Rham cohomology. Let $\sigma$ be an automorphism of $\C$. Since $\widetilde{\beta}$ is a global
section of the locally free sheaf $\mathcal H^{2p}$, we can form the conjugate section
$\widetilde{\beta}^{\sigma}$ of the conjugate sheaf $(\mathcal H^{2p})^{\sigma}$ on
$S^{\sigma}$. Now as in \ref{definitions}, this sheaf identifies with the relative de Rham cohomology of $\mathcal
X^{\sigma}$ over $S^{\sigma}$.

Fix a complex point $s$ in $S$, the class $\widetilde{\beta}_s$ is absolute Hodge. This means that for any such
$\sigma$, the class $\widetilde{\beta}^{\sigma}_{\sigma(s)}$ is a Hodge class. Now since $\beta=\widetilde{\beta}_0$ is
an absolute Hodge class by assumption, $\widetilde{\beta}^{\sigma}_{\sigma(0)}$ is a Hodge class.

Since the construction of the Gauss-Manin connection commutes with base change, the Gauss-Manin connection
$\nabla^{\sigma}$ on the relative de Rham cohomology of $\mathcal X^{\sigma}$ over $S^{\sigma}$ is the conjugate by
$\sigma$ of the Gauss-Manin connection on $\mathcal H^{2p}$.

These remarks allow us to write
$$\nabla^{\sigma}\widetilde{\beta}^{\sigma}=(\nabla\widetilde{\beta})^{\sigma}=0$$
since $\widetilde{\beta}$ is flat. This shows that $\widetilde{\beta}^{\sigma}$ is a flat section of the relative de
Rham cohomology of $\mathcal X^{\sigma}$ over $S^{\sigma}$. Since $\widetilde{\beta}^{\sigma}_{\sigma(0)}$ is a Hodge
class, Proposition \ref{varH-prop} shows that $\widetilde{\beta}^{\sigma}_{\sigma(0)}$ is a Hodge class, which is what
we needed to prove.
\end{proof}

Note that while the above proof may seem just a formal computation, it actually uses in an essential way the important
fact that both relative de Rham cohomology and the Gauss-Manin connection are algebraic object, which makes it possible
to conjugate them by field automorphisms.

Let us give a second proof of Principle B.

\begin{proof}
This is a consequence of Corollary \ref{antecedent}. Indeed, let $i : \mathcal X\hookrightarrow\overline{\mathcal X}$ be
a smooth compactification of $\mathcal X$, and let $i_0$ be the inclusion of $\mathcal X_0$ in $\overline{\mathcal X}$.

By the global invariant cycle theorem, the morphism
$$i_0^* : H^{2p}(\overline{\mathcal X}, \Q)\ra H^{2p}(\mathcal X_0, \Q)^{\pi_1(S, 0)}$$
is surjective. As a consequence, since $\alpha$ is monodromy-invariant, it belongs to the image of $i_0^*$. By Corollary
\ref{antecedent}, we can find an absolute Hodge class $a\in H^{2p}(\overline{\mathcal X}, \Q)$ mapping to $\alpha$.
Now let $i_s$ be the inclusion of $\mathcal X_s$ in $\overline{\mathcal X}$. Since $S$ is connected, we have
$$\widetilde{\alpha}_s=i_s^*(a),$$
which shows that $\widetilde{\alpha}_s$ is an absolute Hodge class, and concludes the proof.
\end{proof}

Note that following the remarks we made around the notion of motivated cycles, this argument could be used to prove that
the standard conjectures imply the variational Hodge conjecture, see \cite{An}.

\bigskip

Principle B will be one of our main tools in proving that some Hodge classes are absolute. When working with families
of varieties, it allows us to work with specific members of the family where algebraicity results might be known. When
proving the algebraicity of the Kuga-Satake correspondence between a projective K3 surface and its Kuga-Satake abelian
variety, it will make it possible to reduce to the case of Kummer surfaces, while in the proof of Deligne's theorem
that Hodge classes on abelian varieties are absolute, it allows for a reduction to the case of abelian varieties with
complex multiplication. Its mixed case version is instrumental to the results of \cite{ChAJ}.

\subsection{The locus of Hodge classes}

In this paragraph, we recall the definitions of the Hodge locus and the locus of Hodge classes associated to a
variation of Hodge structures and discuss their relation to the Hodge conjecture. The study of those has been started by
Griffiths in \cite{Gr68}. References on this subject include
\cite{Vo-book}, chapter 17 and \cite{VoHodge}. To simplify matters, we will only deal with variations of Hodge
structures coming from geometry, that is, coming from the cohomology of a family of smooth projective varieties. We will
point out statements that generalize to the quasi-projective case.

\bigskip

Let $S$ be a smooth complex quasi-projective variety, and let $\pi : \mathcal X\ra S$ be a smooth projective morphism.
Let $p$ be an integer. As earlier, consider the Hodge bundles
$$\mathcal H^{2p} = \R^{2p}\pi_*\Omega^{\bullet}_{\mathcal X/S}$$
together with the Hodge filtration
$$F^k\mathcal H^{2p} = \R^{2p}\pi_*\Omega^{\bullet\geq k}_{\mathcal X/S}.$$

Those are algebraic vector bundles over $S$, as we saw before. They are endowed with the Gauss-Manin connection
$$\nabla : \mathcal H^{2p} \ra \mathcal H^{2p}\otimes \Omega^1_{\mathcal X/S}.$$
Furthermore, the local system
$$H^{2p}_{\Q}=R^{2p}\pi_*\Q(p)$$
injects into $\mathcal H^{2p}$ and is flat with respect to the Gauss-Manin connection.

Let us start with a set-theoretic definition of the locus of Hodge classes.
\begin{df}
 The locus of Hodge classes in $\mathcal H^{2p}$ is the set of pairs $(\alpha,s)$, $s\in S(\C)$, $\alpha\in \mathcal
H^{2p}_s$, such that $\alpha$ is a Hodge class, that is, $\alpha\in  F^p\mathcal H^{2p}_s$ and $\alpha\in
H^{2p}_{\Q,s}$.
\end{df}

It turns out that the locus of Hodge classes is the set of complex points of a countable union of analytic subvarieties
of $\mathcal H^{2p}$. This can be seen as follows, see the above references for a thorough description. Let $(\alpha,
s)$ be in the locus of Hodge classes of $\mathcal H^{2p}$. We want to describe the component of the locus of Hodge
classes passing through $(\alpha, s)$ as an analytic variety in a neighborhood of $(\alpha, s)$.

On a neighborhood of $s$, the class $\widetilde{\alpha}$ extends to a flat holomorphic section of $\mathcal H^{2p}$. Now
the points $(\widetilde{\alpha}_t, t)$, for $t$ in the neighborhood of $s$, which belong to the locus of Hodge classes
are the points of an analytic variety, namely the variety defined by the $(\widetilde{\alpha}_t, t)$ such that
$\widetilde{\alpha}_t$ vanishes in the holomorphic (and even algebraic) vector bundle $\mathcal H^{2p}/F^p\mathcal
H^{2p}$.

It follows from this remark that the locus of Hodge classes is a countable union of analytic subvarieties of $\mathcal
H^{2p}$. Note that if we were to consider only integer cohomology classes to define the locus of Hodge classes, we would
actually get an analytic subvariety. The locus of Hodge classes was introduced in \cite{CDK}. It is of course very much
related to the more classical Hodge locus.

\begin{df}
The Hodge locus associated to $\mathcal H^{2p}$ is the projection on $S$ of the locus of Hodge classes. It is a
countable union of analytic subvarieties of $S$.
\end{df}

Note that the Hodge locus is interesting only when $\mathcal H^{2p}$ has no flat global section of type $(p,p)$. Indeed,
if it has, the Hodge locus is $S$ itself. However, in this case, one can always split off any constant variation of
Hodge structures for $\mathcal H^{2p}$ and consider the Hodge locus for the remaining variation of Hodge structures.

The reason why we are interested in those loci is the way they are related to the Hodge conjecture. Indeed, one has the
following.

\begin{prop}\label{alg}
If the Hodge conjecture is true, then the locus of Hodge classes and the Hodge locus for $\mathcal H^{2p}\ra S$ are
countable unions of closed algebraic subsets of $\mathcal H^{2p}\times S$ and $S$ respectively.
\end{prop}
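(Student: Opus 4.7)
The plan is to realize the locus of Hodge classes as a countable union of images of cycle class maps on relative Hilbert schemes, and then use properness of the Hilbert schemes over $S$ to conclude that these images are closed algebraic subsets of $\mathcal H^{2p}$.

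First, I would fix a relatively very ample line bundle on $\mathcal X\to S$, and for each Hilbert polynomial $P$ form the relative Hilbert scheme $\mathrm{Hilb}_P(\mathcal X/S)\to S$. Its components corresponding to codimension $p$ closed subschemes form a countable collection of projective $S$-schemes. The algebraic construction of cycle classes in $\HdR{2p}(X/\C)$ recalled in section 1 is compatible with base change, so it produces, over $S$, an $S$-morphism of algebraic varieties
\[
c_P : \mathrm{Hilb}_P(\mathcal X/S) \longrightarrow \mathcal H^{2p}
\]
which on the fiber over $s\in S$ sends $[Z]$ to the de Rham cycle class of $Z\subset\mathcal X_s$ in $F^p\HdR{2p}(\mathcal X_s/\C)\subset\mathcal H^{2p}_s$. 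Because $\mathrm{Hilb}_P(\mathcal X/S)$ is projective over $S$ and $\mathcal H^{2p}\to S$ is separated, the image $c_P(\mathrm{Hilb}_P(\mathcal X/S))$ is a closed algebraic subset of $\mathcal H^{2p}$.

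Next, to account for $\Q$-linear combinations of algebraic classes, for each finite tuple of Hilbert polynomials $(P_1,\ldots,P_r)$ and each tuple $(q_1,\ldots,q_r)\in\Q^r$, I would form the fiber product $\prod_S \mathrm{Hilb}_{P_i}(\mathcal X/S)$, which is again projective over $S$, and the $S$-morphism
\[
c_{P_\bullet,q_\bullet} : \prod_S \mathrm{Hilb}_{P_i}(\mathcal X/S)\longrightarrow \mathcal H^{2p},\qquad (Z_1,\ldots,Z_r)\longmapsto \sum q_i\, [Z_i].
\]
As above, each image $V_{P_\bullet,q_\bullet}$ is a closed algebraic subset of $\mathcal H^{2p}$, and by construction it is contained in the locus of Hodge classes (cycle classes are Hodge classes, and the Hodge locus is preserved by $\Q$-linear combinations). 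Taking the union over the countably many choices of $(P_\bullet,q_\bullet)$, we get a countable union $\bigcup_{P_\bullet,q_\bullet} V_{P_\bullet,q_\bullet}$ of closed algebraic subsets of $\mathcal H^{2p}$ which is \emph{a priori} contained in the locus of Hodge classes.

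The key input of the Hodge conjecture is the reverse containment: every Hodge class $(\alpha,s)$ in $\mathcal H^{2p}$ is, by assumption, a $\Q$-linear combination of fundamental classes of codimension $p$ subvarieties of $\mathcal X_s$, hence lies in some $V_{P_\bullet,q_\bullet}$. Thus the locus of Hodge classes equals $\bigcup_{P_\bullet,q_\bullet} V_{P_\bullet,q_\bullet}$, a countable union of closed algebraic subsets of $\mathcal H^{2p}$. For the Hodge locus, I would then project to $S$: since each $\prod_S \mathrm{Hilb}_{P_i}(\mathcal X/S)$ is proper over $S$, the image in $S$ of $V_{P_\bullet,q_\bullet}$ is a closed algebraic subset of $S$, and the same countable union argument yields the claim.

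The main obstacle is really only bookkeeping: one must check that the algebraic cycle class map constructed in section 1 extends to a genuine morphism of $S$-schemes $\mathrm{Hilb}_P(\mathcal X/S)\to \mathcal H^{2p}$ (as opposed to a fiberwise recipe), and that the enumeration of components of the Hilbert scheme together with $\Q$-coefficients really yields only countably many pieces. Both are standard once one uses the universal family on $\mathrm{Hilb}_P(\mathcal X/S)\times_S\mathcal X$ together with the Chern class construction of cycle classes recalled in section 1, which is functorial under base change.
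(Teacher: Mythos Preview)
Your proposal is correct and follows essentially the same approach as the paper: both argue that, under the Hodge conjecture, the locus of Hodge classes coincides with the locus of cycle classes, which is parametrized by relative Hilbert schemes that are proper over $S$ with countably many components, so the images of the cycle class maps give the desired countable union of closed algebraic subsets. The paper's proof is a three-line sketch of precisely this argument, and you have simply filled in the details (the explicit $S$-morphism to $\mathcal H^{2p}$, the handling of $\QQ$-linear combinations via fiber products, and the properness argument for closedness).
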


\begin{proof}
We only have to prove the proposition for the locus of Hodge classes. If the Hodge conjecture is true, the locus of
Hodge classes is the locus of cohomology classes of algebraic cycles with rational coefficients. Those algebraic cycles
are parametrized by Hilbert schemes for the family $\mathcal X/B$. Since those are proper and have countably many
connected components, the Hodge locus is a countable union of closed algebraic subsets of $\mathcal H^{2p}\times S$.
\end{proof}

This consequence of the Hodge conjecture is now a theorem proved in \cite{CDK}.

\begin{thm}(Cattani, Deligne, Kaplan)
With the notations above, the locus of Hodge classes and the Hodge locus for $\mathcal H^{2p}\ra S$ are countable unions
of closed algebraic subsets of $\mathcal H^{2p}\times S$ and $S$ respectively.
\end{thm}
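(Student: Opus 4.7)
The plan is to reduce the statement to extending closed analytic subvarieties across the boundary of a compactification, and then to invoke Chow's theorem. From the discussion preceding the theorem, we already know that the locus of Hodge classes is a countable union of irreducible closed analytic subvarieties $Y_i$ of the total space of $\mathcal{H}^{2p}$, each of which is cut out locally by the vanishing in $\mathcal{H}^{2p}/F^p \mathcal{H}^{2p}$ of a flat holomorphic section extending a rational class. Fix a smooth projective compactification $\bar S$ of $S$ with $D = \bar S \setminus S$ a simple normal crossings divisor. Deligne's canonical extension produces a vector bundle $\bar{\mathcal{H}}^{2p}$ on $\bar S$ extending $\mathcal{H}^{2p}$, and Schmid's nilpotent orbit theorem shows that each Hodge subbundle $F^p \mathcal{H}^{2p}$ extends to a holomorphic subbundle of $\bar{\mathcal{H}}^{2p}$. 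Compactify the total space by $P = \mathbb{P}(\bar{\mathcal{H}}^{2p} \oplus \mathcal{O}_{\bar S})$, which is an algebraic projective variety containing the total space of $\mathcal{H}^{2p}$ as a Zariski open subset. It then suffices to prove that the closure $\bar Y_i$ of each component in $P$ is a closed analytic subvariety: by Chow's theorem it is algebraic, hence so is $Y_i = \bar Y_i \cap \mathcal{H}^{2p}$. The Hodge locus statement in $S$ then follows by pushing forward under the algebraic projection $P \to \bar S$.

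The main obstacle is the extension step: showing that $\bar Y_i$ is analytic across $D$. Around a boundary point $s_\infty$ one works in a polydisk neighborhood $\Delta^n$ in which $D$ is cut out by coordinate hyperplanes; after a finite base change one may assume the monodromy operators around the boundary components are unipotent, of the form $T_j = e^{N_j}$ with commuting nilpotent $N_j$. By Schmid's nilpotent orbit theorem, the period map is asymptotically modelled by a nilpotent orbit attached to a limiting mixed Hodge structure, and a multi-valued flat section $\widetilde{\alpha}$ representing a candidate Hodge class can be analyzed through its image in this limit. The condition that $\widetilde{\alpha}$ lie in $F^p$ near $s_\infty$ translates, via the nilpotent orbit approximation, into conditions on the logarithms of the local coordinates. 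The heart of the proof is to show, using the multi-variable $\mathrm{SL}_2$-orbit theorem of Cattani-Kaplan-Schmid and the theory of admissible variations, that these conditions actually cut out a genuine closed analytic subset extending $Y_i$ across $D$, rather than merely a formal or real-analytic locus.

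A further delicate point is to ensure that only finitely many monodromy orbits of integral classes contribute Hodge classes near $s_\infty$, so that the closure is genuinely a locally finite union of analytic sets and not a countable accumulation. This is controlled by combining polarizations with the Hodge-norm growth estimates of Cattani-Kaplan-Schmid, which show that integral classes of Hodge type have uniformly bounded Hodge norm on compact subsets approaching $D$, and this bound survives the monodromy action. Granting these two ingredients — the analytic extension across $D$ and the local finiteness — Chow's theorem gives algebraicity of each $\bar Y_i$, and the statement for the Hodge locus follows by taking images under the proper algebraic map $P \to \bar S$ and intersecting with $S$. The main difficulty, as expected, is entirely concentrated in the boundary analysis and relies essentially on the asymptotic theory of several-variable degenerations of Hodge structure.
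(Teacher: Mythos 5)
The paper does not actually prove this theorem: it is quoted from \cite{CDK} as a known result (the only argument given nearby, Proposition~\ref{alg}, derives the statement from the Hodge conjecture, which is of course not a proof). So there is no internal proof to compare yours against; your proposal has to stand on its own as a reconstruction of the Cattani--Deligne--Kaplan argument.

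As such a reconstruction, the architecture you describe is the right one: compactify the base with a normal crossings boundary, pass to the canonical extension of $\mathcal{H}^{2p}$ and a projective compactification of its total space, show that the closure of each component of the locus of Hodge classes is analytic there, and conclude by Chow/GAGA, pushing forward to $\bar S$ for the Hodge locus. But the two steps you explicitly ``grant'' --- (i) that the $F^p$-condition on a multivalued flat rational section cuts out a genuine closed analytic set across $D$, and (ii) that only finitely many monodromy orbits of integral classes contribute Hodge classes near a boundary point --- are not technical footnotes; they are the entire content of the theorem, and everything preceding them in your argument is soft. In particular (ii) does not follow from a generic ``Hodge classes have bounded Hodge norm on compacta'' assertion: the estimate actually needed is that an integral class which is of type $(p,p)$ at points approaching $D$, with its polarization pairing bounded, has Hodge norm bounded uniformly in the class as well as in the point, and proving this is precisely where the several-variable $\mathrm{SL}_2$-orbit theorem enters, through the weight filtrations of the limiting mixed Hodge structure. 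Without carrying out that asymptotic analysis the proposal is a correct plan rather than a proof. A smaller point: analyticity of $\bar Y_i$ does not follow merely from the defining conditions ``extending''; one needs either the explicit local normal form near $D$ that Cattani--Deligne--Kaplan extract from the nilpotent orbit approximation, or an extension theorem of Bishop/Remmert--Stein type fed by a volume or dimension bound, and the argument should commit to one of these.
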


As before, the preceding discussion can be led in the quasi-projective case. Generalized versions of the Hodge
conjecture lead to similar algebraicity predictions, and indeed the corresponding algebraicity result for variations of
mixed Hodge structures is proved in \cite{BPS}, after the work of Brosnan-Pearlstein on the zero
locus of normal functions in \cite{BP}.

\subsection{Galois action on relative de Rham cohomology}

Let $S$ be a smooth irreducible quasi-projective variety over a field $k$, and let $\pi : \mathcal X\ra S$ be a smooth
projective morphism. Let $p$ be an integer. Consider again the Hodge bundles $\mathcal H^{2p}$
together with the Hodge filtration $F^k\mathcal H^{2p} = R^{2p}\pi_*\Omega^{\bullet\geq k}_{\mathcal X/S}.$ Those are
defined over $k$.

\bigskip

Let $\alpha$ be a section of $\mathcal H^{2p}$ over $S$. Let $\eta$ be the generic point of $S$. The class $\alpha$
induces a class $\alpha_{\eta}$ in the de Rham cohomology of the generic fiber $\mathcal X_{\eta}$ of $\pi$.

Let $\sigma$ be any embedding of $k(S)$ in $\C$ over $k$. The
morphism $\sigma$ corresponds to a morphism Spec$(\C)\ra\eta\ra S$, hence it induces a complex point $s$ of $S_{\C}$.
We have an isomorphism
$$\mathcal X_{\eta}\times_{k(S)}\C \simeq \mathcal X_{\C,s}$$
and the cohomology class $\alpha_{\eta}$ pulls back to a class $\alpha_{s}$ in the cohomology of $\mathcal
X_{\C,s}$.

The class $\alpha_s$ only depends on the complex point $s$. Indeed, it can be obtained the following way. The class
$\alpha$ pulls-back as a section $\alpha_{\C}$ of $\mathcal H^{2p}_{\C}$ over $S_{\C}$. The class $\alpha_s$ is the
value of $\alpha_{\C}$ at the point $s\in S(\C)$.

The following rephrases the definition of a Hodge class.
\begin{prop}\label{Hodgevar}
Assume that $\alpha_{\eta}$ is an absolute Hodge class. If $\alpha_{\eta}$ is absolute, then $\alpha_{s}$ is a Hodge
class. Furthermore, in case $k=\Q$, $\alpha_{\eta}$ is absolute if and only if $\alpha_s$ is a Hodge class for all $s$
induced by embeddings $\sigma : \Q(S)\ra \C$.
\end{prop}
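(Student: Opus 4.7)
The statement is essentially a direct unpacking of Definition \ref{def-dR} combined with the construction of $\alpha_s$ given in the preceding paragraph, so the plan is mostly to chase definitions while being careful about which embeddings of $k(S)$ into $\C$ correspond to which complex points of $S_{\C}$.

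The starting observation is the identification recorded just above the proposition: the embedding $\sigma\colon k(S)\hookrightarrow \C$ induces a fiber diagram
\[
\mathcal{X}_{\eta}\times_{k(S),\sigma}\C \;\simeq\; \mathcal{X}_{\C,s},
\]
and under the corresponding $\sigma$-linear isomorphism between de Rham cohomology groups, $\alpha_{\eta}$ maps precisely to $\alpha_s$. Combined with compatibility of the Hodge filtration with base change (which follows from the fact that the Hodge filtration on $\mathcal{H}^{2p}$ is induced by the naive filtration of $\Omega^{\bullet}_{\mathcal{X}/S}$, hence is defined over $k$), this identification is the only real input.

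For the first part, I would simply apply Definition \ref{def-dR} to the class $\alpha_{\eta}\in H^{2p}(\mathcal{X}_{\eta}/k(S))$: by assumption it is a Hodge class relative to every embedding of $k(S)$ into $\C$, so in particular relative to the given $\sigma$. Via the isomorphism above, this is exactly the assertion that $\alpha_s$ is a Hodge class. For the converse when $k=\Q$, I would argue that every embedding $\sigma\colon \Q(S)\hookrightarrow \C$ is automatically an embedding \emph{over} $\Q$ (since $\Q$ is the prime subfield of $\C$), and such embeddings are in bijection with the complex points $s\in S_{\C}$ whose image in the scheme $S$ is the generic point $\eta$. Hence the hypothesis that $\alpha_s$ is Hodge for all such $s$ says exactly that $\alpha_{\eta}$ is a Hodge class relative to every embedding of $\Q(S)$ into $\C$, which is the definition of absolute Hodge for the class $\alpha_{\eta}$ defined over $\Q(S)$.

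There is no real obstacle here; the only point requiring care is to explain why the special role of $k=\Q$ in the converse comes from the fact that, for a general base field $k$, the embeddings of $k(S)$ into $\C$ required by the definition of absolute Hodge are not restricted to lie over a fixed embedding $k\hookrightarrow \C$, so one would have to vary both $k\hookrightarrow \C$ and the generic complex point above it in order to recover all embeddings $k(S)\hookrightarrow\C$. When $k=\Q$ this distinction disappears, which is precisely why the converse holds in that case.
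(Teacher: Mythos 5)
Your proposal is correct and matches the paper's (implicit) treatment: the paper offers no proof at all, presenting the proposition as a direct rephrasing of Definition \ref{def-dR}, which is exactly the definition-chase you carry out. Your extra remark explaining why the converse requires $k=\Q$ — namely that only then do the points $s$ induced by embeddings over $k$ exhaust all embeddings of $k(S)$ into $\C$ demanded by the definition of absolute Hodge — is a worthwhile clarification of the one genuinely non-vacuous point.
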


\bigskip
We try to investigate the implications of the previous rephrasing.
\begin{lem}\label{Baire}
Assume the field $k$ is countable. Then the set of points $s\in S_{\C}(\C)$ induced by embeddings of $k(S)$ in $\C$ over
$k$ is dense in $S_{\C}(\C)$ for the usual topology.
\end{lem}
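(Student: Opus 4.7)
The plan is to identify the relevant $\C$-points intrinsically and then apply the Baire category theorem. An embedding $\sigma \colon k(S) \to \C$ over $k$ corresponds to a $k$-morphism $\Spec \C \to \Spec k(S) \to S$, hence via base change to a $\C$-point $s$ of $S_{\C}$. Such a point is characterized by the fact that under the projection $\mathrm{pr} \colon S_{\C} \to S$, its image is the generic point $\eta$ of $S$. Equivalently, $s$ is not contained in $\mathrm{pr}^{-1}(Z) = Z_{\C}$ for any proper closed subvariety $Z \subsetneq S$ (defined over $k$). Thus the set of interest is
\[
\Sigma \;:=\; S_{\C}(\C) \;\setminus\; \bigcup_{Z \subsetneq S} Z_{\C}(\C),
\]
where $Z$ ranges over all proper closed subvarieties of $S$.

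The next step is to argue that this is a countable union. Since $S$ is of finite type over $k$, it is covered by finitely many affine opens $\Spec A_i$ with each $A_i$ a finitely generated $k$-algebra. Because $k$ is countable, each $A_i$ is countable, so each $A_i$ has at most countably many ideals. Closed subschemes of $S$ are built by glueing closed subschemes of the $\Spec A_i$, so there are only countably many closed subschemes of $S$, and a fortiori only countably many proper closed subvarieties $Z \subsetneq S$.

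For each such $Z$, since $S$ is irreducible of dimension $d$, we have $\dim Z < d$, whence $\dim Z_{\C} < \dim S_{\C} = d$; so $Z_{\C}(\C)$ is a proper closed analytic subset of $S_{\C}(\C)$ and has empty interior in the usual topology. Since $S$ is smooth, $S_{\C}(\C)$ is a complex manifold, in particular a Baire space, and therefore the countable union $\bigcup_Z Z_{\C}(\C)$ has empty interior. Its complement $\Sigma$ is consequently dense in $S_{\C}(\C)$, which is the claim.

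There is no serious obstacle here; the only point that requires a moment's care is the identification in the first paragraph of ``points coming from $k$-embeddings of $k(S)$'' with ``$\C$-points mapping to $\eta$'', and hence with the complement of the countable collection of closed analytic subvarieties $Z_{\C}(\C)$. Once this dictionary is in place, the countability of $k$ and Baire category finish the argument immediately.
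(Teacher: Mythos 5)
Your proposal is correct and follows essentially the same route as the paper's proof: identify the points induced by $k$-embeddings of $k(S)$ with the ``very general'' points (those avoiding every proper closed subvariety of $S$ defined over $k$), note there are only countably many such subvarieties because $k$ is countable, and conclude by the Baire category theorem. Your write-up is in fact a bit more detailed than the paper's on the countability and empty-interior steps, but the argument is the same.
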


\begin{proof}
 Say that a complex point of $S_{\C}$ is very general if it does not lie in any proper algebraic subset of $S_{\C}$
defined over $k$. Since $k$ is countable, the Baire theorem shows that the set of general points is dense in
$S_{\C}(\C)$ for the usual topology.

 Now consider a very general point $s$. There exists an embedding of $k(S)$ into $\C$ such that the associated complex
point of $S_{\C}$ is $s$. Indeed, $s$ being very general exactly means that the image of the morphism
 $$\xymatrix{\mathrm{Spec}(\C)\ar[r]^s & S_{\C} \ar[r] & S}$$
 is $\eta$, the generic point of $S$, hence a morphism Spec$(\C)\ra\eta$ giving rise to $s$. This concludes the proof of
the lemma.
\end{proof}
We say that a complex point of $S_{\C}$ is very general if it lies in the aforementioned subset.

\begin{thm}\label{flat}
Let $S$ be a smooth irreducible quasi-projective variety over a subfield $k$ of $\C$ with generic point $\eta$, and let
$\pi : \mathcal X\ra S$ be a smooth projective
morphism. Let $p$ be an integer, and let $\alpha$ be a section of $\mathcal H^{2p}$ over $S$.
\begin{enumerate}
 \item Assume the class $\alpha_{\eta}\in H^{2p}(\mathcal X_{\eta}/k(S))$ is absolute Hodge. Then $\alpha$ is flat for
the Gauss-Manin connection and $\alpha_{\C}$ is a Hodge class at every complex point of $S_{\C}$.
 \item Assume that $k=\Q$. Then the class $\alpha_{\eta}\in H^{2p}(\mathcal X_{\eta}/\Q(S))$ is absolute Hodge if and
only if $\alpha$ is flat for the Gauss-Manin connection and for any connected component $S'$ of $S_{\C}$, there exists
a complex point $s$ of $S'$ such that $\alpha_s$ is a Hodge class.
\end{enumerate}
\end{thm}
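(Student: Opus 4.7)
The plan is to prove part (1) by a Baire category argument, after a spreading-out reduction to the case $k=\Q$, and then deduce part (2) using Proposition \ref{varH-prop}. For the reduction, spread out $(\pi:\mathcal X\to S,\alpha)$ over a finitely generated $\Q$-subalgebra $R\subset k$ to obtain $(\pi_R:\mathcal X_R\to S_R,\alpha_R)$; after shrinking $R$ and passing to an irreducible component containing the image of $\eta$, view $S_R$ as a smooth irreducible quasi-projective $\Q$-variety with generic point $\eta_R$. Any embedding $\tau:\kappa(\eta_R)\hookrightarrow\C$ extends to some $\tau':k(S)\hookrightarrow\C$, and since $\alpha_\eta$ is the pullback of $\alpha_{R,\eta_R}$ along a base change, the Hodge conditions for $\tau'\alpha_\eta$ and $\tau\alpha_{R,\eta_R}$ coincide; hence $\alpha_{R,\eta_R}$ is absolute Hodge over $\kappa(\eta_R)$. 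Since flatness of the Gauss-Manin section and Hodgeness at every complex fiber pull back along $S\to S_R$, it suffices to prove (1) for $(\pi_R,\alpha_R)$ over $\Q$.

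Now assume $k=\Q$. By Proposition \ref{Hodgevar}, $\alpha_s$ is a Hodge class at every complex point of $S_\C$ induced by an embedding $\Q(S)\hookrightarrow\C$, and by Lemma \ref{Baire} these ``very general'' points form a dense $G_\delta$ subset $U\subset S_\C(\C)$ in the analytic topology. Fix a simply connected open $V\subset S_\C(\C)$ and trivialize the local system $R^{2p}\pi_*\Q(p)\vert_V\simeq\Q^n$; then $\alpha\vert_V:V\to\C^n$ is holomorphic and takes values in the countable set $\Q^n$ on the non-meager subset $U\cap V$. For each $q\in\Q^n$, the analytic subvariety $A_q=\{s\in V:\alpha(s)=q\}$ is either all of $V$ or nowhere dense; if every $A_q$ were proper, $\bigcup_{q\in\Q^n}A_q$ would be a countable union of meager sets, contradicting that it contains $U\cap V$. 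Hence $A_q=V$ for some $q$, so $\alpha\vert_V$ is constant in the flat trivialization. Covering $S_\C(\C)$ by such $V$'s shows $\alpha_\C$ is flat on $S_\C$, and by descent $\alpha$ is flat on $S$. Finally, applying Proposition \ref{varH-prop} on each connected component of $S_\C$ at any very general point therein where $\alpha$ is Hodge gives that $\alpha_t$ is a Hodge class at every complex point $t\in S_\C(\C)$, completing (1).

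For (2), the implication $(\Rightarrow)$ is immediate from (1). For $(\Leftarrow)$, assume $\alpha$ is flat and Hodge at some complex point of each connected component of $S_\C$; Proposition \ref{varH-prop} then propagates Hodgeness from this point to the entire connected component, hence to all of $S_\C$ and in particular to every very general point, so Proposition \ref{Hodgevar} (in the case $k=\Q$) yields that $\alpha_\eta$ is absolute Hodge. The main obstacle is the Baire-category step in part (1), where the reduction to countable $k$ is essential so that the very general complex points of $S_\C$ become dense in the analytic topology; without it, the countable-image-forces-constancy trick cannot be invoked, and one cannot pass from Hodgeness at embedded-generic points to global flatness of $\alpha$.
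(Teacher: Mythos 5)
Your proof is correct and follows essentially the same route as the paper's: reduce to a countable base field so that the very general points of $S_{\C}$ become dense, deduce flatness of $\alpha$ from the fact that the section takes rational values on that locus in a flat trivialization, and then propagate the Hodge condition using Proposition \ref{varH-prop} and conclude with Proposition \ref{Hodgevar}. The one step where you sharpen the paper's write-up is the constancy argument: the paper only invokes density of the set where the section is rational, which by itself does not force a holomorphic section to be locally constant, whereas your Baire-category version (the very general locus is comeager, so among the countably many analytic level sets $\{\alpha=q\}$, $q\in\Q^n$, one must fail to be nowhere dense and hence fills the whole connected chart) is the correct justification of that step.
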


\begin{proof}
All the objects we are considering are defined over a subfield of $k$ that is finitely generated over $\Q$, so we can
assume that $k$ is finitely generated over $\Q$, hence countable. Let $\alpha_{\C}$ be the section of $\mathcal
H^{2p}_{\C}$ over $S_{\C}$ obtained by pulling-back $\alpha$. The value of the class $\alpha_{\C}$ at any general point
is a Hodge class. Locally on $S_{\C}$, the bundle $\mathcal H^{2p}_{\C}$ with the Gauss-Manin connection is
biholomorphic to the flat bundle $S\times\C^n$, $n$ being the rank of $\mathcal H^{2p}_{\C}$, and we can assume such a
trivialization respects the rational subspaces.

Under such trivializations, the section $\alpha_{\C}$ is given locally on $S_{\C}$ by $n$ holomorphic functions which
take rational values on a dense subset. It follows that $\alpha_{\C}$ is locally constant, that is, that $\alpha_{\C}$,
hence $\alpha$, is flat for the Gauss-Manin connection. Since $\alpha$ is absolute Hodge, $\alpha_{\C}$ is a Hodge
class at any very general point of $S_{\C}$. Since those are dense in $S_{\C}(\C)$, Proposition \ref{varH-prop} shows
that $\alpha_{\C}$ is a Hodge class at every complex point of $S_{\C}$. This proves the first part of the theorem.

For the second part, assuming $\alpha$ is flat for the Gauss-Manin connection and $\alpha_s$ is Hodge for points $s$ in
,all the connected components of $S_{\C}$, Proposition \ref{varH-prop} shows that $\alpha_s$ is a Hodge class at all
the complex points $s$ of $S_{\C}$. In particular, this true for the general points of $S_{\C}$, which proves that
$\alpha_{\eta}$ is an absolute Hodge class by Proposition \ref{Hodgevar}.
\end{proof}

As a corollary, we get the following important result.
\begin{thm}\label{rigid}
Let $k$ be an algebraically closed subfield of $\C$, and let $X$ be a smooth projective variety over $k$. Let $\alpha$
be an absolute Hodge class of degree $2p$ in $X_{\C}$. Then $\alpha$ is defined over $k$, that is, $\alpha$ is the
pull-back of an absolute Hodge class in $X$.
\end{thm}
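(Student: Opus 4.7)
The plan is to reduce to Theorem \ref{flat}(1) via a spreading-out argument, using that $k$ is algebraically closed to translate questions about embeddings of function fields into statements about automorphisms of $\C$. Write $V := H^{2p}(X/k)$, so that $H^{2p}(X_{\C}/\C) = V \otimes_k \C$ via the inclusion $\iota\colon k \hookrightarrow \C$, and view $\alpha$ as an element of $V \otimes_k \C$. Because $\C$ is the filtered union of its finitely generated $k$-subalgebras, after shrinking we obtain a smooth irreducible affine $k$-variety $S = \Spec R$, a tautological embedding $\iota_R\colon R \hookrightarrow \C$, and a class $\tilde{\alpha} \in V \otimes_k R$ whose image under $\iota_R$ is $\alpha$. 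We view $\tilde{\alpha}$ as a global section of the Hodge bundle $\mathcal{H}^{2p}$ of the constant family $\mathcal{X} := X \times_k S \to S$; the Gauss--Manin connection on $\mathcal{H}^{2p} = V \otimes_k \mathcal{O}_S$ is the trivial one, so flat sections are precisely those coming from $V$.

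The core step is to show that the restriction $\tilde{\alpha}_\eta \in V \otimes_k k(S)$ to the generic point $\eta$ is absolute Hodge in the sense of Definition \ref{def-dR}. Given any embedding $\tau\colon k(S) \to \C$, the restriction $\tau|_R\colon R \to \C$ and the tautological $\iota_R$ are two $k$-algebra embeddings of $R$ into $\C$. Because $k$ is algebraically closed, any such pair is related by an automorphism of $\C$: match transcendence bases of the two images over $k$ and extend by algebraic closure. This produces $\sigma \in \Aut(\C)$ with $\sigma \circ \iota_R = \tau|_R$. A routine computation with tensor products, using that $\sigma$ acts trivially on $V$ regarded as a $k$-vector space, then identifies the image of $\tilde{\alpha}_\eta$ under $\tau$ with $\alpha^{\sigma}$, both viewed in $V \otimes_{\tau|_k} \C = H^{2p}(\tau|_k X/\C)$; this class is Hodge because $\alpha$ is absolute Hodge.

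Theorem \ref{flat}(1) now forces $\tilde{\alpha}$ to be flat for Gauss--Manin, hence constant by the triviality of the connection, hence of the form $\alpha_k \otimes 1$ for a unique $\alpha_k \in H^{2p}(X/k)$ whose pullback to $X_\C$ is $\alpha$. To verify that $\alpha_k$ is itself absolute Hodge over $k$, for any embedding $\tau_0\colon k \to \C$ the same extension lemma furnishes $\sigma \in \Aut(\C)$ with $\sigma|_k = \tau_0$, and the image of $\alpha_k$ in $V \otimes_{\tau_0} \C$ is identified with $\alpha^\sigma$, a Hodge class by hypothesis. The main obstacle, and essentially the only nontrivial input beyond the cited Theorem \ref{flat}, is the extension lemma for $k$-embeddings of subrings of $\C$; this is the single place where the hypothesis that $k$ is algebraically closed is used in an essential way.
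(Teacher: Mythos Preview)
Your approach is exactly the paper's: spread out to a section $\tilde{\alpha}$ over an irreducible $S$, observe that $\tilde{\alpha}_\eta$ is absolute Hodge, invoke Theorem~\ref{flat}(1) to get flatness, and use triviality of the Gauss--Manin connection on a constant family to conclude $\alpha$ comes from $H^{2p}(X/k)$.

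There is one point of confusion worth correcting. An arbitrary embedding $\tau\colon k(S)\to\C$ need not restrict to the inclusion on $k$, so $\tau|_R$ is not a $k$-algebra embedding in general; more importantly, the extension lemma you invoke (finding $\sigma\in\Aut(\C)$ with $\sigma\circ\iota_R=\tau|_R$) holds for \emph{any} two embeddings of a field of cardinality at most $|\C|$ and does not require $k$ to be algebraically closed. This is precisely the ``standard field theory'' fact recorded after Definition~\ref{def-dR}. Where the hypothesis that $k$ is algebraically closed actually enters is in the step ``flat implies constant'': because $R\subset\C$ and $k=\overline{k}$, the field $k$ is algebraically closed in $k(S)$, so $S$ is geometrically irreducible and the kernel of $d$ on $\mathcal{O}_S$ is exactly $k$; hence flat sections of the trivial connection on $V\otimes_k\mathcal{O}_S$ are precisely $V$. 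Without this, flat sections would only lie in $V\otimes_k k'$ for some finite extension $k'/k$, and the conclusion would fail. So the proof is correct, but the single essential use of algebraic closedness is in the passage from flatness to constancy, not in the extension lemma.
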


\begin{proof}
The cohomology class $\alpha$ belongs to $H^{2p}(X_{\C}/\C)=H^{2p}(X/k)\otimes \C$.  We need to show that it lies in
$H^{2p}(X/k)\subset H^{2p}(X_{\C}/\C)$, that is, that it is defined over $k$.

The class $\alpha$ is defined over a field $K$ finitely generated over $k$. Since $K$ is generated by a finite number of
elements over $k$, we can find a smooth irreducible quasi-projective variety $S$ defined over $k$ such that $K$ is
isomorphic to $k(S)$. Let $\mathcal X=X\times S$, and let $\pi$ be the projection of $\mathcal X$ onto $S$. Saying that
$\alpha$ is defined over $k(S)$ means that $\alpha$ is a class defined at the generic fiber of $\pi$. Up to replacing
$S$ by a Zariski-open subset, we can assume that $\alpha$ extends to a section $\widetilde{\alpha}$ of the relative de
Rham cohomology group $\mathcal H^{2p}$ of $\mathcal X$ over $S$. Since $\alpha$ is an absolute Hodge class, Theorem
\ref{flat} shows that $\widetilde{\alpha}$ is flat with respect to the Gauss-Manin connection on $\mathcal H^{2p}$.

Since $\mathcal X=X\times S$, relative de Rham cohomology is trivial, that is, the flat bundle $\mathcal H^{2p}$ is
isomorphic to $H^{2p}(X/k)\otimes \mathcal O_S$ with the canonical connection. Since $\widetilde{\alpha}$ is a flat
section over $S$ which is irreducible over the algebraically closed field $k$, it corresponds to the constant section
with value some $\alpha_0$ in $H^{2p}(X/k)$. Then $\alpha$ is the image of $\alpha_0$ in
$H^{2p}(X_{\C}/\C)=H^{2p}(X/k)\otimes \C$, which concludes the proof.
\end{proof}

\begin{rk}
In case $\alpha$ is the cohomology class of an algebraic cycle, the preceding result is a consequence of the existence
of Hilbert schemes. If $Z$ is an algebraic cycle in $X_{\C}$, $Z$ is algebraically equivalent to an algebraic cycle
defined over $k$. Indeed, $Z$ corresponds to a point in some product of Hilbert schemes parameterizing subschemes of
$X$.
Those Hilbert schemes are defined over $k$, so their points with value in $k$ are dense. This shows the result. Of
course, classes of algebraic cycles are absolute Hodge, so this is a special case of the previous result.
\end{rk}

\subsection{The field of definition of the locus of Hodge classes}

In this paragraph, we present some of the results of Voisin in \cite{Voabs}. While those could be proved using
Principle B and the global invariant cycle theorem along a line of arguments we used earlier, we focus on deducing the
theorems as consequences of statements from the previous paragraph. The reader can consult \cite{VoHodge} for the
former approach.

\bigskip

Let $S$ be a smooth complex quasi-projective variety, and let $\pi : \mathcal X\ra S$ be a smooth
projective morphism. Let $p$ be an integer, and let $\mathcal H^{2p}=\R^{2p}\pi_*\Omega^{\bullet}_{\mathcal X/S}$
together with the Hodge filtration $F^k\mathcal H^{2p} = \R^{2p}\pi_*\Omega^{\bullet\geq k}_{\mathcal X/S}.$ Assume
$\pi$ is defined over $\Q$. Then $\mathcal H^{2p}$ is defined over $\Q$, as well as the Hodge
filtration. Inside $\mathcal H^{2p}$, we have the locus of Hodge classes as before. It is an algebraic subset of
$\mathcal H^{2p}$.

Note that any smooth projective complex variety is isomorphic to the fiber of such a morphism $\pi$ over a complex
point. Indeed, if $X$ is a smooth projective complex variety, it is defined over a field finitely generated over $\Q$.
Noticing that such a field is the function field of a smooth quasi-projective variety $S$ defined over $\Q$ allows us
to find $\mathcal X\ra S$ as before. Note that $S$ might not be geometrically irreducible.

\begin{thm}
 Let $s$ be a complex point of $S$, and let $\alpha$ be a Hodge class in $H^{2p}(\mathcal X_s/\C)$. Then $\alpha$ is an
absolute Hodge class if and only if the connected component $Z_{\alpha}$ of the locus of Hodge classes passing through
$\alpha$ is defined over $\overline{\Q}$ and the conjugates of $Z_{\alpha}$ by $\mathrm{Gal}(\overline{\Q}/\Q)$ are
contained in the locus of Hodge classes.
\end{thm}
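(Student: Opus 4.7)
The plan is to leverage the Cattani--Deligne--Kaplan theorem, which guarantees that the locus of Hodge classes $L$ is a countable union of algebraic subvarieties of $\mathcal{H}^{2p}$, together with the fact that $\mathcal{H}^{2p}$, its Hodge filtration, and the Gauss--Manin connection are all defined over $\mathbb{Q}$ because $\pi$ is.

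The backward direction is essentially formal. Assume $Z_\alpha$ is defined over $\overline{\mathbb{Q}}$ and every $\mathrm{Gal}(\overline{\mathbb{Q}}/\mathbb{Q})$-conjugate lies in $L$. For any $\sigma \in \mathrm{Aut}(\mathbb{C}/\mathbb{Q})$, the image $Z_\alpha^\sigma$ of $Z_\alpha$ under $\sigma$ on $\mathbb{C}$-points coincides with the complex scalar extension of the $\sigma|_{\overline{\mathbb{Q}}}$-conjugate of $Z_\alpha$, hence lies in $L$ by hypothesis. Since $\alpha^\sigma \in Z_\alpha^\sigma$, the class $\alpha^\sigma$ is Hodge, so $\alpha$ is absolute Hodge.

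For the forward direction, suppose $\alpha$ is absolute Hodge and fix $\sigma \in \mathrm{Aut}(\mathbb{C}/\mathbb{Q})$. I first establish the key claim $Z_\alpha^\sigma \subset L$. Set $T = \pi(Z_\alpha) \subset S$; then $Z_\alpha \to T$ is an isomorphism, corresponding to a single-valued algebraic section $\widetilde{\alpha}$ of $\mathcal{H}^{2p}|_T$ which is flat for the Gauss--Manin connection, takes values in $F^p$, and is rational at every point. Because the three structures (the bundle $\mathcal{H}^{2p}$, the subbundle $F^p$, and the connection $\nabla$) are $\mathbb{Q}$-algebraic, the conjugate section $\widetilde{\alpha}^\sigma$ on $T^\sigma$ is again an algebraic flat section taking values in $F^p$, and its value at $s^\sigma$ is the Hodge class $\alpha^\sigma$. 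Restricting to a smooth connected dense open of the irreducible subvariety $T^\sigma$, Proposition~\ref{varH-prop} applied to the pullback family propagates the Hodge property from $s^\sigma$ to the entire component, proving $Z_\alpha^\sigma \subset L$.

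This places the $\mathrm{Aut}(\mathbb{C}/\mathbb{Q})$-orbit of $Z_\alpha$ inside the countable set of components of $L$. I then invoke the following dichotomy: an irreducible closed subvariety of a quasi-projective $\mathbb{Q}$-variety has $\mathrm{Aut}(\mathbb{C}/\mathbb{Q})$-orbit that is either finite (if and only if it is $\overline{\mathbb{Q}}$-defined) or uncountable, because parameterizing by a point of a Hilbert scheme, which is a countable union of quasi-projective $\mathbb{Q}$-schemes, any transcendental coordinate has uncountably many conjugates. Therefore the orbit of $Z_\alpha$ must be finite, $Z_\alpha$ is defined over $\overline{\mathbb{Q}}$, and its Galois conjugates lie in $L$ by the previous step. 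The main obstacle is the claim $Z_\alpha^\sigma \subset L$: it hinges on carefully tracking flatness, the $F^p$-condition, and rationality through the algebraic Galois action, which works only because all three of these structures on $\mathcal{H}^{2p}$ are defined over $\mathbb{Q}$.
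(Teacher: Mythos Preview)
Your proof is correct and rests on the same core ingredients as the paper's (algebraicity over $\mathbb{Q}$ of the Hodge bundles and Gauss--Manin connection, together with the propagation of the Hodge condition along flat sections as in Proposition~\ref{varH-prop}), but it packages them differently. The paper passes to the $\mathbb{Q}$-Zariski closure $Z'$ of $Z_{\alpha}$, spreads the base so that $(\alpha,s)$ becomes a generic point with $Z'$ dominating $S$, and then invokes Theorem~\ref{flat}(2); the $\overline{\mathbb{Q}}$-definability of $Z_{\alpha}$ falls out because, once $Z'\subset L$, the connected component $Z_{\alpha}$ of $L$ is forced to coincide with an irreducible component of the $\mathbb{Q}$-scheme $Z'$. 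Your route instead conjugates $Z_{\alpha}$ directly and uses the finite/uncountable dichotomy for $\mathrm{Aut}(\mathbb{C}/\mathbb{Q})$-orbits of subvarieties of a $\mathbb{Q}$-scheme. This makes the $\overline{\mathbb{Q}}$-definability step more explicit and sidesteps the spreading argument; the paper's approach, in return, isolates the content cleanly in the generic-point statement Theorem~\ref{flat}.

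One small inaccuracy: the projection $Z_{\alpha}\to T=\pi(Z_{\alpha})$ need not be an isomorphism, only generically finite \'etale, since monodromy over $T$ may permute distinct Hodge classes lying in the same component of $L$; so there need not be a single-valued section $\widetilde{\alpha}$ on $T$. The fix is immediate: pull the family $\mathcal{X}\to S$ back to the smooth locus of $Z_{\alpha}$ itself, where the tautological section is single-valued, flat, and lands in $F^p$, and run your conjugation argument there.
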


\begin{proof}
 Let $Z'$ be the smallest algebraic subset defined over $\Q$ containing $Z_{\alpha}$. It is the $\Q$-Zariski closure of
$Z_{\alpha}$. We want to show that $Z'$ is contained in the locus of Hodge classes if and only if $\alpha$ is absolute
Hodge.

Pulling back to the image of $Z'$ and spreading the base scheme $S$ if necessary, we can reduce to the
situation where $Z'$ dominates $S$, and there exists a smooth projective morphism
$$\pi_{\Q} : \mathcal X_{\Q} \ra S_{\Q}$$
defined over $\Q$, such that $\pi$ is the pull-back of $\pi_{\Q}$ to $\C$, a class $\alpha_{\Q}$ in
$H^{2p}(\mathcal X_{\Q}/S))$, and a
embedding of $\Q(S_{\Q})$ into $\C$ corresponding to the complex point $s\in S(\C)$ such that $\mathcal X_s$ and
$\alpha$ are the pullback of $\mathcal X_{\Q, \eta}$ and $\alpha_{\eta}$ respectively, where $\eta$ is the generic point
of $S$.

In this situation, by the definition of absolute Hodge classes, $\alpha$ is an absolute Hodge class if and only if
$\alpha_{\eta}$ is. Also, since $Z'$ dominates $S$, $Z'$ is contained in the locus of Hodge classes if and only if
$\alpha$ extends as a flat section of $\mathcal H^{2p}$ over $S$ which is a Hodge class at every complex point. Such a
section is automatically defined over $\Q$ since the Gauss-Manin connection is. Statement (2) of Theorem \ref{flat}
allows us to conclude the proof.
\end{proof}

\begin{rk}
 It is to be noted that the proof uses in an essential way the theorem of Cattani-Deligne-Kaplan on the algebraicity of
Hodge loci.
\end{rk}

Recall that Conjecture \ref{H-implies-AH} predicts that Hodge classes are absolute. As an immediate consequence, we get
the following reformulation.

\begin{cor}
 Conjecture \ref{H-implies-AH} is equivalent to the following.

Let $S$ be a smooth complex quasi-projective variety, and let $\pi : \mathcal X\ra S$ be a smooth projective morphism.
Assume $\pi$ is defined over $\Q$. Then the locus of Hodge classes for $\pi$ is a countable union of algebraic
subsets of the Hodge bundles defined over $\Q$.
\end{cor}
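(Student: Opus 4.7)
The plan is to deduce the corollary directly from the preceding theorem (which characterizes absoluteness of a Hodge class $\alpha$ in terms of the field of definition of the component $Z_{\alpha}$ of the locus of Hodge classes through $\alpha$ together with Galois invariance of that component's closure) combined with the Cattani--Deligne--Kaplan algebraicity theorem.

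For the forward direction, I would assume Conjecture \ref{H-implies-AH} and fix a smooth projective $\pi : \mathcal{X} \to S$ defined over $\Q$. By Cattani--Deligne--Kaplan, the locus of Hodge classes in $\mathcal{H}^{2p}$ is already a countable union of closed algebraic subsets over $\C$; I just need to improve the field of definition. Pick any Hodge class $\alpha$ in a fiber of $\mathcal{H}^{2p}$; by assumption $\alpha$ is absolute Hodge, so by the preceding theorem the connected component $Z_{\alpha}$ is defined over $\overline{\Q}$ and every $\mathrm{Gal}(\overline{\Q}/\Q)$-conjugate of $Z_{\alpha}$ is still contained in the locus of Hodge classes. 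Since $Z_{\alpha}$ is defined over a number field, its Galois orbit is finite, and the union of the orbit is a $\Q$-algebraic subset contained in the locus. Taking the union over all components gives a description of the locus as a countable union of algebraic subsets defined over $\Q$.

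For the reverse direction, I would assume the conclusion on the locus of Hodge classes for all such $\pi$ and start from an arbitrary Hodge class $\alpha$ on a smooth projective complex variety $X$. As observed in the discussion preceding the theorem, $X$ can be realized as the complex fiber at some complex point $s\in S(\C)$ of a smooth projective morphism $\pi: \mathcal{X}\to S$ defined over $\Q$ (spread out the field of definition of $X$ to a $\Q$-model). Then $(\alpha,s)$ is a point of the locus of Hodge classes, and by hypothesis it lies in a countable union of $\Q$-algebraic subsets of $\mathcal{H}^{2p}$. The connected component $Z_{\alpha}$ through $(\alpha,s)$ is an irreducible component of one of these $\Q$-algebraic pieces, hence is defined over $\overline{\Q}$, and its $\mathrm{Gal}(\overline{\Q}/\Q)$-conjugates are the remaining components of the same $\Q$-algebraic set and therefore still lie in the locus of Hodge classes. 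Applying the preceding theorem concludes that $\alpha$ is absolute Hodge.

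The main, and really only, technical step will be the passage in the reverse direction from \emph{the whole locus is a countable union of $\Q$-algebraic subsets} to \emph{each individual component is defined over $\overline{\Q}$ with its Galois orbit lying in the locus}. This uses only the standard fact that the irreducible components over $\C$ of a scheme of finite type over $\Q$ are defined over $\overline{\Q}$ and are permuted by $\mathrm{Gal}(\overline{\Q}/\Q)$. In the forward direction the analogous point — that an irreducible $\overline{\Q}$-subvariety has a finite Galois orbit whose union is $\Q$-algebraic — is equally elementary, so all the substance of the corollary is absorbed into the preceding theorem and into Cattani--Deligne--Kaplan.
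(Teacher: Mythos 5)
Your argument is correct and is exactly the intended deduction: the paper states this corollary as an immediate consequence of the preceding theorem (plus Cattani--Deligne--Kaplan for the underlying algebraicity), and your two directions fill in precisely the routine Galois-descent bookkeeping — finite Galois orbits of $\overline{\Q}$-components assembling into $\Q$-algebraic sets, and conversely components of $\Q$-algebraic sets being defined over $\overline{\Q}$ and permuted by $\mathrm{Gal}(\overline{\Q}/\Q)$ — that the paper leaves implicit.
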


It is possible to prove the preceding corollary without resorting to the Cattani-Deligne-Kaplan theorem, see
Proposition \ref{alg}.

In the light of this result, the study of whether Hodge classes are absolute can be seen as a study of the field of
definition of the locus of Hodge classes. An intermediate property is to a ask for the component of the locus of Hodge
classes passing through a class $\alpha$ to be defined over $\overline{Q}$. In \cite{Voabs}, Voisin shows how one can
use arguments from the theory of variations of Hodge structures to give infinitesimal criteria for this to happen.

This is closely related to the rigidity result of Theorem \ref{rigid}. Indeed, using the fact that the Gauss-Manin
connection is defined over $\Q$, it is easy to show that the component of the locus of Hodge classes passing through a
class $\alpha$ in the cohomology of a complex variety defined over $\overline{\Q}$ is defined over $\overline{Q}$ if and
only if $\alpha$ is defined over $\overline{\Q}$ as a class in algebraic de Rham cohomology.

\bigskip

Let us conclude this section by showing how the study of fields of definition for Hodge loci is related to the Hodge
conjecture. The following is due to Voisin in \cite{Voabs}.

\begin{thm}
 Let $S$ be a smooth complex quasi-projective variety, and let $\pi : \mathcal X\ra S$ be a smooth projective morphism.
Assume $\pi$ is defined over $\Q$. Let $s$ be a complex point of $S$ and let $\alpha$ be a Hodge class in
$H^{2p}(\mathcal X_s, \Q(p)$. If the image in $S$ of the component of the locus of Hodge classes passing through
$\alpha$ is defined over $\overline{\Q}$, then the Hodge conjecture for $\alpha$ can be reduced to the Hodge conjecture
for varieties defined over number fields.
\end{thm}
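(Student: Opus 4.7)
The plan is to realize $\alpha$ as the restriction to $\mathcal X_s$ of a Hodge class living on a smooth projective variety defined over $\overline{\Q}$, and then to invoke the Hodge conjecture for that variety (which, being of finite type over $\overline{\Q}$, is automatically defined over a number field).

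First I would pass to the image $T\subseteq S$ of the component of the locus of Hodge classes through $\alpha$, which by hypothesis is defined over $\overline{\Q}$. The flat extension $\widetilde{\alpha}$ of $\alpha$ parametrizing this component may only be single-valued on a finite étale cover $\tilde T\ra T$ corresponding to the stabilizer of $\alpha$ under the monodromy representation of $\pi_1(T^{\mathrm{an}},s)$. In characteristic zero, the étale fundamental group of a variety over an algebraically closed subfield of $\C$ is insensitive to further extension of scalars, so $\tilde T$ carries a model over $\overline{\Q}$ as well. Pulling back $\mathcal X\ra S$ to $\tilde T$ and invoking Hironaka's theorem over $\overline{\Q}$, I would then choose a smooth projective compactification $\overline{\tilde{\mathcal X}}$ of the resulting smooth quasi-projective family $\tilde{\mathcal X}\ra \tilde T$. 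The variety $\overline{\tilde{\mathcal X}}$ is smooth projective and defined over $\overline{\Q}$.

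Let $\tilde s\in \tilde T$ be a lift of $s$, so that $\mathcal X_s\simeq \tilde{\mathcal X}_{\tilde s}$, and let $i\colon \mathcal X_s\hookrightarrow \overline{\tilde{\mathcal X}}$ be the resulting inclusion. By construction $\alpha=\widetilde{\alpha}_{\tilde s}$ is $\pi_1(\tilde T,\tilde s)$-invariant, so Theorem \ref{global} (the global invariant cycle theorem) produces a class in $H^{2p}(\overline{\tilde{\mathcal X}},\Q(p))$ mapping to $\alpha$ under $i^{*}$. Using the polarizations on $H^{2p}(\overline{\tilde{\mathcal X}},\Q(p))$ and $H^{2p}(\mathcal X_s,\Q(p))$ together with the semi-simplicity of morphisms of polarized Hodge structures (Proposition \ref{semi-simple}), I can then arrange for this lift to be a Hodge class $\beta\in H^{2p}(\overline{\tilde{\mathcal X}},\Q(p))$.

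To conclude, $\overline{\tilde{\mathcal X}}$ is of finite type over $\overline{\Q}$ and hence defined over some number field $K\subseteq\overline{\Q}$. Granting the Hodge conjecture for smooth projective varieties defined over number fields, one has $\beta=[\tilde Z]$ for an algebraic cycle $\tilde Z$ on $\overline{\tilde{\mathcal X}}$, and restricting yields $\alpha=i^{*}\beta=[i^{*}\tilde Z]$, which exhibits $\alpha$ as the class of an algebraic cycle on $\mathcal X_s$. The delicate point will be the descent of the monodromy cover $\tilde T$ and the compactification $\overline{\tilde{\mathcal X}}$ to $\overline{\Q}$; once that is secured, the argument is essentially a replay of the proof of Principle B, with the assumption on the field of definition of $T$ playing the role that was played there by the a priori absoluteness of $\alpha$.
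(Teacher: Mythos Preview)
Your proposal is correct and follows the same route as the paper: restrict the family to the image of the Hodge locus component, compactify over $\overline{\Q}$, and use the global invariant cycle theorem together with semi-simplicity of polarized Hodge structures to lift $\alpha$ to a Hodge class on a smooth projective variety defined over a number field. The paper's proof is a two-line sketch to exactly this effect, whereas you are more explicit about one point the paper glosses over, namely that $\alpha$ need not be invariant under the full monodromy of $\pi_1(T,s)$, so one must first pass to the finite \'etale cover $\tilde T\to T$ corresponding to its stabilizer; your observation that this cover, and a Hironaka compactification of the pulled-back family, descend to $\overline{\Q}$ is correct and is the substance behind the paper's phrase ``one can choose the compactification $\overline{\mathcal X}$ to be defined over $\overline{\Q}$''.
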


\begin{proof}
 This is a consequence of the global invariant cycle theorem. Indeed, with the notation of Theorem \ref{global}, one
can choose the compactification $\overline{\mathcal X}$ to be defined over $\overline{\Q}$. The desired result follows
easily.
\end{proof}

\section{The Kuga-Satake construction}

In this section, we give our first nontrivial example of absolute Hodge classes. It is due to Deligne in \cite{DelK3}.

Let $S$ be a complex projective K3 surface. We want construct an abelian variety $A$ and an embedding of Hodge
structures
$$H^2(S, \Q)\hookrightarrow H^1(A, \Q)\otimes H^1(A)$$
which is absolute Hodge. This is the Kuga-Satake correspondence, see \cite{KS}, \cite{DelK3}.

We will take a representation-theoretic approach to this problem. This paragraph merely outlines the construction of the
Kuga-Satake correspondence, leaving aside part of the proofs. We refer to the survey \cite{vG} for more
details. Properties of Spin groups and their representations can be found in \cite{FH}, Chapter 20
or \cite{Bour}, Paragraph 9.

\subsection{Recollection on Spin groups}

We follow Deligne's approach in \cite{DelK3}. Let us start with some linear algebra. Let $V$ be a
finite-dimensional vector space over a field $k$ of characteristic zero with a non-degenerate quadratic form $Q$. Recall
that the Clifford algebra $C(V)$ over
$V$ is the algebra defined as the quotient of the tensor algebra $\bigoplus_{i\leq 0} V^{\otimes i}$ by the relation
$v\otimes v= Q(v), v\in V$. Even though the natural grading of the tensor algebra does not descend to the Clifford
algebra, there is a well-defined sub-algebra $C^+(V)$ of $C(V)$ which is the image of $\bigoplus_{i\leq 0} V^{\otimes
2i}$ in $C(V)$. The algebra $C^+(V)$ is the even Clifford algebra over $V$.

The Clifford algebra is endowed with an anti-automorphism $x\mapsto x^*$ such that $(v_1.\ldots v_i)^*=v_i.\ldots v_1$
if $v_1, \ldots, v_i\in V$. The Clifford group of $V$ is the algebraic group defined by
$$CSpin(V)=\{x\in C^+(V)^*, x.V.x^{-1}\subset V\}.$$
It can be proved that $CSpin(V)$ a connected algebraic group. By definition, it acts on $V$. Let $x\in CSpin(V), v\in
V$. We have $Q(xvx^{-1})=xvx^{-1}xvx^{-1}=xQ(v)x^{-1}=Q(v)$,
which shows that $CSpin(V)$ acts on $V$ through the orthogonal group $O(V)$, hence a map from $CSpin(V)$ to $O(V)$.
Since $CSpin(V)$ is connected, this map factors through $\tau : CSpin(V)\ra SO(V)$. We have an exact
sequence
$$\xymatrix{1\ar[r] & \mathbb G_m \ar[r]^w & CSpin(V)\ar[r] & SO(V)\ar[r] & 1}.$$

The spinor norm is the morphism of algebraic groups
$$N : CSpin(V)\ra \mathbb G_m, \,x\mapsto xx^*.$$
It is well-defined. Let $t$ be the inverse of $N$. The composite map
$$t\circ w : \mathbb G_m \ra \mathbb G_m$$
is the map $x\mapsto x^{-2}$. The Spin group $Spin(V)$ is the algebraic group defined as the kernel of $N$. The Clifford
group is generated by homotheties and elements of the Spin group.

The Spin group is connected and simply connected. The exact sequence
$$ 1\ra \pm 1 \ra Spin(V)\ra SO(V)\ra 1$$
realizes the Spin group as the universal covering of $SO(V)$.

\subsection{Spin representations}

The Clifford group has two different representations on $C^+(V)$. The first one is the adjoint representation
$C^+(V)_{ad}$. The adjoint action of $CSpin(V)$ is defined as
$$x._{ad} v =xvx^{-1},$$
where $x\in CSpin(V)$, $v\in C^+(V)$. It factors through $SO(V)$ and is isomorphic to $\bigoplus_i \bigwedge^{2i} V$ as
a representation of $CSpin(V)$.

The group $CSpin(V)$ acts on $C^+(V)$ by multiplication on the left, hence a representation $C^+(V)_s$, with
$$x._{s} v=xv,$$
where $x\in CSpin(V)$, $v\in C^+(V)$. It is compatible with the structure of right $C^+(V)$-module on $C^+(V)$, and we
have
$$\mathrm{End}_{C^+(V)}(C^+(V)_s)=C^+(V)_{ad}.$$

Assume $k$ is algebraically closed. We can describe those representations explicitly. In case the dimension of $V$ is
odd, let $W$ be a simple $C^+(V)$-module. The Clifford group $CSpin(V)$ acts on $W$. This is the spin representation of
$CSpin(V)$. Then $C^+(V)_s$ is isomorphic to a sum of copy of $W$, and $C^+(V)_{ad}$ is isomorphic to End$_k(W)$ as
representations of $CSpin(V)$.

In case the dimension of $V$ is even, let $W_1$ and $W_2$ be nonisomorphic simple $C^+(V)$-modules. Those are the
half-spin representations of $CSpin(V)$. Their sum $W$ is called the spin representation. Then $C^+(V)_s$ is isomorphic
to a sum of copy of $W$, and $C^+(V)_{ad}$ is isomorphic to End$_k(W_1)\times \mathrm{End}_k(W_2)$ as
representations of $CSpin(V)$.

\subsection{Hodge structures and the Deligne torus}\label{S}

Recall the definition of Hodge structures \`a la Deligne, see \cite{DelH2}. Let $S$ be the Deligne torus, that is, the
real algebraic group of
invertible elements of $\C$. It can be defined as the Weil restriction of $\mathbb G_m$ from $\C$ to $\R$. We have an
exact sequence of real algebraic groups
$$\xymatrix{ 1\ar[r] & \mathbb G_m \ar[r]^w & S\ar[r]^t & \mathbb G_m \ar[r] & 1},$$
where $w$ is the inclusion of $\R^*$ into $\C^*$ and $t$ maps a complex number $z$ to $|z|^{-2}$.  The composite map
$$t\circ w : \mathbb G_m \ra \mathbb G_m$$
is the map $x\mapsto x^{-2}$.

Let $V_{\Z}$ be a free $\Z$-module of finite rank, and let $V=V_{\Q}$. The datum of a Hodge structure of weight $k$ on
$V$ (or $V_{\Z}$) is the same as the datum of a representation
$\rho : S\ra GL(V_{\R})$
such that $\rho w (x) = x^k Id_{V_{\R}}$ for all $x\in \R^*.$ Given a Hodge structure of weight $n$, $z\in \C^*$ acts on
$V_{\R}$ by $z.v=z^p\overline{z}^q v$ if $v\in V^{p,q}$.

\subsection{From weight two to weight one}

Now assume $V$ is polarized of weight zero with Hodge numbers $V^{-1,1}=V^{1, -1}=1$, $V^{0,0}=0$. We say that $V$ (or
$V_{\Z}$) is of K3 type. We get a quadratic form $Q$ on $V_{\R}$, and the representation of $S$ on $V_{\R}$ factors
through the special orthogonal group of $V$ as
$h : S\ra SO(V_{\R})$.
\begin{lem}
 There exists a unique lifting of $h$ to a morphism $\widetilde{h} : S \ra CSpin(V_{\R})$ such that the following
diagram commutes.
$$\xymatrix{1\ar[r] & \mathbb G_m \ar[r]^w \ar@{=}[d] & S\ar[r]^t\ar[d]^{\widetilde{h}} & \mathbb G_m\ar[r]\ar@{=}[d] &
1 \\
            1\ar[r] & \mathbb G_m \ar[r]^w & CSpin(V_{\R})\ar[r]^t & \mathbb G_m \ar[r] & 1.}$$
\end{lem}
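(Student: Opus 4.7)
\emph{Proof plan.}

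\textbf{Uniqueness.} Suppose $\tilde{h}_1, \tilde{h}_2 \colon S \to CSpin(V_\R)$ are two lifts making the diagram commute. Since they project to the same map $h$ in $SO(V_\R)$, their ratio $\tilde{h}_1 \tilde{h}_2^{-1}$ takes values in $\ker(\tau) = w(\G_m)$, so there is a real algebraic character $\phi \colon S \to \G_m$ with $\tilde{h}_1 = w(\phi) \tilde{h}_2$. Compatibility with the spinor norm (i.e. with $t$) then forces $t \circ w \circ \phi = \phi^{-2}$ to be trivial, so $\phi^2 = 1$. But the group of real algebraic characters of $S$ is $\Z$, generated by the norm character $z \mapsto z\bar z$, so there are no nontrivial characters of order $2$, forcing $\phi = 1$.

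\textbf{Reduction to a $2$-plane.} Because $V$ has weight $0$ and the only Hodge components are $V^{-1,1}$, $V^{0,0}$, and $V^{1,-1}$, the morphism $h$ acts trivially on the real subspace $V^0_\R := V_\R \cap V^{0,0}$ and preserves its orthogonal $V^+_\R := V_\R \cap (V^{-1,1} \oplus V^{1,-1})$, which is a real plane of dimension $2$. Thus $h$ factors through $SO(V^+_\R) \hookrightarrow SO(V_\R)$, and it suffices to construct a lift $\tilde{h}$ landing in the subgroup $CSpin(V^+_\R) \subset CSpin(V_\R)$.

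\textbf{Construction of the lift.} The polarization induces a definite form on $V^+_\R$; choose an oriented orthonormal basis $e_1, e_2$, normalized so that $Q(e_1) = Q(e_2) = 1$ (the other sign is handled symmetrically). Inside $C^+(V_\R)$ set $J := e_1 e_2$. Using the Clifford relations and $e_1 \perp e_2$ one finds $J^2 = -Q(e_1) Q(e_2) = -1$ and $J^* = e_2 e_1 = -J$. For $z = a + bi \in \C^\times = S(\R)$, define
\[
\tilde{h}(z) := a + b J,
\]
and extend this to a morphism of real algebraic groups $\tilde{h} \colon S \to C^+(V_\R)^\times$. One then checks that (i) $\tilde{h}(z) \tilde{h}(z)^{-1}$-conjugation stabilizes $V_\R$, because $J$ commutes with every element of $V^0_\R$ (the two basis vectors of $V^+_\R$ each anticommute with vectors of $V^0_\R$, so their product commutes) and sends $V^+_\R$ into itself, so $\tilde{h}(z) \in CSpin(V_\R)$; (ii) the spinor norm satisfies $\tilde{h}(z) \tilde{h}(z)^* = a^2 - b^2 J^2 = |z|^2$, hence $t(\tilde{h}(z)) = |z|^{-2} = t(z)$; (iii) $\tilde{h}(x) = x$ for $x \in \R^\times$, so $\tilde{h} \circ w = w$.

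\textbf{Verification that $\tau \circ \tilde{h} = h$.} On $V^0_\R$ the conjugation action is trivial, matching $h$. On $V^+_\R$, a direct Clifford computation gives
\[
\tilde{h}(z) e_1 \tilde{h}(z)^{-1} = \frac{(a^2 - b^2) e_1 - 2ab\, e_2}{a^2 + b^2},
\]
which for $z = r e^{i\theta}$ is rotation by $2\theta$, and the same applies to $e_2$. Choosing a complex generator $v \in V^{-1,1}$ and writing $e_1 = v + \bar v$, $e_2 = i(v - \bar v)$ (up to a real scalar, with orientation fixed accordingly), a short computation shows $h(z)$ acts on $V^+_\R$ by the same rotation, so $\tau(\tilde{h}(z)) = h(z)$. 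If the natural orientation of the basis gives the opposite rotation, one swaps $e_1, e_2$; this does not affect the formula for $\tilde{h}(z)$'s defining properties but flips the sign of $J$, producing the lift compatible with $h$.

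\textbf{Main obstacle.} The formal diagram chase is immediate; the content of the lemma is the explicit construction in the Clifford algebra, and the only real calculation to worry about is matching the speed of rotation on $V^+_\R$, which comes out to $2\theta$ on both sides precisely because the spin double cover and the weight--zero Hodge action both square the phase.
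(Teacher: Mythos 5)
Your proposal is correct and follows essentially the same route as the paper: restrict to the positive-definite oriented $2$-plane $P = V_\R \cap (V^{-1,1}\oplus V^{1,-1})$, pick a direct orthonormal basis $e_1, e_2$, and send $a+ib$ to $a + b\,e_2e_1$ (your $J = e_1e_2$ differs only by the orientation/sign convention, which you correctly note is fixed by matching the rotation direction on $P$). You simply spell out the details the paper leaves implicit, namely the uniqueness argument via the character group of $S$ and the verification that conjugation by $a+bJ$ rotates $P$ at the same speed $2\theta$ as $h$.
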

 \begin{proof}
It is easy to prove that such a lifting is unique if it exists. The restriction of $Q$ to $P=V_{\R}\bigcap
(V^{-1,1}\oplus V^{1, -1})$ is positive definite. Furthermore, $P$ has a canonical orientation. Let $e_1, e_2$ be a
direct orthonormal basis of $P$. We have $e_1e_2=-e_2e_1$ and $e_1^2=e_2^2=1$. As a consequence, $(e_2e_1)^2=-1$. An
easy computation shows that the morphism $a+ib\mapsto a+be_2e_1$ defines a suitable lifting of $h$.
\end{proof}

Using the preceding lemma, consider such a lifting $\widetilde{h} : S \ra CSpin(V)$ of $h$. Any representation of
$CSpin(V_{\R})$ thus gives rise to a Hodge structure. Let us first consider the adjoint representation. We know that
$C^+(V)_{ad}$ is isomorphic to $\bigoplus_i \bigwedge^{2i} V$, where $CSpin(V)$ acts on $V$ through $SO(V)$. It follows
that $\widetilde{h}$ endows $C^+(V)_{ad}$ with a weight zero Hodge structure. Since $V^{-1,1}=1$, the type of the Hodge
structure $C^+(V)_{ad}$ is $\{(-1,1), (0,0), (1,-1)\}$.

Now assume the dimension of $V$ is odd, and consider the spin representation $W$. It is a weight one
representation. Indeed, the lemma above shows that $C^+(V)_{s}$ is of weight one, and it is isomorphic to a sum of
copies of $W$. Since $C^+(V)_{ad}$ is isomorphic to End$_k(W)$ as representations of $CSpin(V)$, the type of $W$ is
$\{(1,0), (0,1)\}$.

It follows that $\widetilde{h}$ endows $C^+(V)_{s}$ with an effective Hodge structure of weight
one. It is possible to show that this Hodge structure is polarizable, see \cite{DelK3}. The underlying vector space
has $C^+(V_{\Z})$ as a natural lattice. This construction thus defines an abelian variety. Similar computations show
that the same result holds if the dimension of $V$ is even.

\begin{df}
The abelian variety defined by the Hodge structure on $C^+(V)_{s}$ with its natural lattice $C^+(V_{\Z})$ is called the
Kuga-Satake variety associated to $V_{\Z}$. We denote it by $KS(V_{\Z})$.
\end{df}

\bigskip

\begin{thm}\label{KS}
 Let $V_{\Z}$ be a polarized Hodge structure of K3 type. There exists a natural injective morphism of Hodge structures
$$V_{\Q}(-1)\hookrightarrow H^1(KS(V_{\Z}), \Q)\otimes H^1(KS(V_{\Z}), \Q).$$
This morphism is called the Kuga-Satake correspondence.
\end{thm}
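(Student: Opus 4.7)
The plan is to construct the Kuga--Satake correspondence as the composition of a $CSpin(V_{\R})$-equivariant injection of $V$ into the endomorphism algebra of $C^+(V)_{s}$ (obtained from Clifford multiplication), followed by a transport into a tensor square of $H^1(KS(V_{\Z}),\Q)$ using the polarization on the Kuga--Satake abelian variety.

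First, since $Q$ is non-degenerate, I pick $v_0\in V_\Q$ with $Q(v_0)\neq 0$. Right multiplication $R_{v_0}:C^+(V)_s\to C^-(V)_s$, $c\mapsto cv_0$, is an isomorphism of left $C^+(V)$-modules (with inverse $c'\mapsto c'v_0/Q(v_0)$), hence of $CSpin(V)$-representations. I then define
$$\mu:V\otimes C^+(V)_s\longrightarrow C^+(V)_s,\qquad v\otimes c\longmapsto \tfrac{1}{Q(v_0)}\,v\cdot c\cdot v_0.$$
This is $CSpin(V)$-equivariant, because $CSpin(V)$ acts on $V$ by the adjoint representation and on $C^{\pm}(V)_s$ by left multiplication, and one checks $(xvx^{-1})(xc)v_0=xvcv_0$. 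By adjunction, $\mu$ yields a $CSpin(V)$-equivariant map
$$\iota : V\longrightarrow \mathrm{End}_\Q(C^+(V)_s)\simeq C^+(V)_s\otimes C^+(V)_s^{\vee}.$$
Evaluating at $1\in C^+(V)_s$ gives $\iota(v)(1)=vv_0/Q(v_0)$, which is nonzero whenever $v\neq 0$ since $v_0$ is invertible in $C(V)$; thus $\iota$ is injective. Because the Hodge structures on $V$ and on $C^+(V)_s$ are both induced by the lift $\widetilde h$ through the $CSpin(V_{\R})$-action, the $CSpin(V_{\R})$-equivariance upgrades $\iota$ to an injective morphism of Hodge structures. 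The weights balance: $V$ has weight $0$ and $C^+(V)_s\otimes C^+(V)_s^{\vee}$ has weight $1-1=0$, and the Hodge types on both sides are contained in $\{(-1,1),(0,0),(1,-1)\}$, matching the K3 type of $V$.

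To reach the form stated in the theorem, I use the identification $H^1(KS(V_\Z),\Q)\simeq C^+(V)_s$ coming with the construction of the Kuga--Satake variety together with a polarization of the weight-one Hodge structure on $C^+(V)_s$, whose existence is asserted in the preceding discussion. Such a polarization provides an isomorphism of Hodge structures $C^+(V)_s^{\vee}\simeq C^+(V)_s(1)$, so that substituting into $\iota$ produces
$$V\;\hookrightarrow\; C^+(V)_s \otimes C^+(V)_s(1)\;\simeq\; \bigl(H^1(KS(V_\Z),\Q)\otimes H^1(KS(V_\Z),\Q)\bigr)(1);$$
twisting both sides by $\Q(-1)$ yields the desired embedding $V_\Q(-1)\hookrightarrow H^1(KS(V_\Z),\Q)\otimes H^1(KS(V_\Z),\Q)$. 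The main technical point I expect to need care with is the explicit construction of the polarization: one typically writes it as a Clifford-trace pairing built from the anti-involution $x\mapsto x^*$ together with an element of $C^+(V_{\R})$ chosen so that the resulting form is simultaneously $CSpin$-invariant up to a character and polarizes the weight-one Hodge structure $\widetilde h$. A secondary cosmetic issue is that $\iota$ depends on the auxiliary choice of $v_0$; however, any two such choices differ by right multiplication by an invertible element of $C^+(V)$, which commutes with the left $CSpin$-action, so the induced sub-Hodge-structure is independent of the choice up to isomorphism, justifying the adjective ``natural'' in the theorem.
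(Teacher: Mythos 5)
Your proposal follows the same route as the paper: fix $v_0\in V$, use $v\otimes c\mapsto vcv_0$ to embed $V$ equivariantly into $\End_{\QQ}(C^+(V)_s)$, and then use a polarization of the Kuga--Satake variety to identify $\End_{\QQ}(C^+(V)_s)$ with $\bigl(H^1(KS(V_{\ZZ}),\QQ)\otimes H^1(KS(V_{\ZZ}),\QQ)\bigr)(1)$. Your write-up is correct and in fact supplies details (injectivity via invertibility of $v_0$, the explicit equivariance computation, the weight and Tate-twist bookkeeping, independence of the choice of $v_0$) that the paper's proof leaves implicit.
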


\begin{proof}
Let $V=V_{\Q}$. Fix an element $v_0\in V$ and consider the vector space $M=C^+(V)$. It is endowed with a left action of
$V$ by the formula 
$$v.x=vxv_0$$
for $v\in V$, $x\in C^+(V)$. This action induces an embeddings
$$V\hookrightarrow \mathrm{End}_{\Q}(C^+(V)_s)$$
which is equivariant with respect to the action of $CSpin(V)$.

Now we can consider $\mathrm{End}_{\Q}(C^+(V)_s)(-1)$ as a subspace of $C^+(V)\otimes C^+(V)=H^1(KS(V_{\Z}),
\Q)\otimes H^1(KS(V_{\Z}), \Q)$ via a polarization of $KS(V_{\Z})$, and $V$ as a subspace of $C^+(V)$. This
gives an injection
$$V(-1)\hookrightarrow H^1(KS(V_{\Z}), \Q)\otimes H^1(KS(V_{\Z}), Q)$$
as desired. The equivariance property stated above shows that this is a morphism of Hodge structures.
\end{proof}

\begin{rk}
Let $V$ be a Hodge structure of K3 type. In order to construct the Kuga-Satake correspondence associated to $V$, we can
relax a bit the assumption that $V$ is polarized. Indeed, it is enough to assume that $V$ is endowed with a quadratic
form that is positive definite on $(V^{-1, 1}\oplus V^{1, -1})\bigcap V_{\R}$ and such that $V^{1, -1}$ and $V^{-1,
1}$ are totally isotropic subspaces of $V$. 
\end{rk}

\subsection{The Kuga-Satake correspondence is absolute}

Let $X$ be a polarized complex $K3$ surface. Denote by $KS(X)$ the Kuga-Satake variety associated to $H^2(X, \Z(1))$
endowed with the intersection pairing. Even though this pairing only gives a polarization on the primitive part of
cohomology, the construction is possible by the preceding remark. Theorem \ref{KS} gives us a correspondence between the
cohomology groups of $X$ and its Kuga-Satake variety. This is the Kuga-Satake correspondence for $X$. We can now state
and prove the main theorem of this section. It is proved by Deligne in \cite{DelK3}.

\begin{thm}
 Let $X$ be a polarized complex $K3$ surface. The Kuga-Satake correspondence
$$H^2(X, \Q(1))\hookrightarrow H^1(KS(X), \Q)\otimes H^1(KS(X), \Q)$$
is absolute Hodge.
\end{thm}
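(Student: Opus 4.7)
The strategy is to reduce the claim, via Principle B of Theorem \ref{principle-B}, to the case of a single special K3 surface where the Kuga-Satake correspondence is known to be algebraic, hence \emph{a fortiori} absolute Hodge. The natural candidates for such a special fiber are Kummer surfaces, where the Kuga-Satake abelian variety is isogenous to a power of the underlying abelian surface and the correspondence is induced by an honest algebraic cycle on a product.

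First, I would set up a family. Fix the degree of the polarization and let $\pi \colon \mathcal{X} \to S$ be (a finite cover of) the universal family of polarized K3 surfaces of that degree; this $S$ is a smooth, connected, quasi-projective variety, defined over $\QQ$. The variation of Hodge structure on the primitive part of $R^2\pi_{\ast}\QQ(1)$ is of K3 type fiberwise, so after possibly passing to a further finite étale cover of $S$ to trivialize the relevant lifting, one can carry out the Kuga-Satake construction in families to produce an abelian scheme $\mathcal{A} \to S$ whose fiber at $s$ is $KS(\mathcal{X}_s)$. The key point, which one needs to check by tracking the construction of Section~4 in families, is that the Kuga-Satake Hodge structure $C^+(V)_s$ varies algebraically, and the natural embedding
\[
	R^2\pi_{\ast}\QQ(1) \hookrightarrow R^1 p_{\ast}\QQ \otimes R^1 p_{\ast}\QQ
\]
(where $p\colon \mathcal{A}\to S$) is a flat, global section $\widetilde{\kappa}$ of the underlying local system, because it is characterized on each fiber by the natural representation-theoretic recipe involving $CSpin$ and is therefore monodromy-invariant.

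Second, I would invoke Principle~B: since $\widetilde{\kappa}$ is a flat section of a local system over the connected base $S$, and Hodge-valued at every point, it suffices to exhibit one point $s_0 \in S(\CC)$ where $\widetilde{\kappa}_{s_0}$ is absolute Hodge. Take $s_0$ corresponding to a Kummer surface $X_0 = \mathrm{Km}(A)$ for some abelian surface $A$; Kummer surfaces form a non-empty subset of $S$, so such a point exists in $S$ (using that every polarized K3 of the given degree deforms within $S$ to a Kummer K3, which follows from the connectedness/density properties of the Kummer locus in the moduli space, or by enlarging $S$ if necessary to contain a Kummer point). For $X_0 = \mathrm{Km}(A)$, the transcendental lattice of $X_0$ agrees up to indices with that of $A$, and an explicit Clifford-algebra computation identifies $KS(X_0)$ up to isogeny with a power of $A$; the Kuga-Satake class at $s_0$ is then identified with a combination of classes of algebraic cycles on $X_0 \times KS(X_0) \times KS(X_0)$ coming from the blow-up $\widetilde{A}\to X_0$ and the diagonal in $A\times A$. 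Hence $\widetilde{\kappa}_{s_0}$ is algebraic, in particular absolute Hodge.

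Third, applying Theorem \ref{principle-B} to $\widetilde{\kappa}$ (viewed as a Hodge class on $\mathcal{X}_s \times \mathcal{A}_s \times \mathcal{A}_s$ for varying $s$) gives that $\widetilde{\kappa}_s$ is absolute Hodge for every $s \in S(\CC)$. Since every polarized complex K3 surface $X$ (of the fixed degree) appears as some $\mathcal{X}_s$, this yields the theorem; varying the degree handles all polarized K3 surfaces. The main obstacle is the Kummer step: one must (i) produce the Kummer point in the relevant component of the moduli space, and (ii) verify by a direct but somewhat delicate representation-theoretic computation with the even Clifford algebra of $H^2(\mathrm{Km}(A),\QQ(1))$ that the Kuga-Satake correspondence coincides, up to an isogeny and an algebraic correspondence between $X_0$ and $\widetilde{A}$, with the tautological algebraic class on $A\times A$. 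Once this is in hand, Principle~B and the irreducibility of the moduli space do the rest.
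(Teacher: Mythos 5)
Your proposal is correct and follows essentially the same route as the paper: deform the given K3 inside a polarized family to a Kummer surface, carry the Kuga--Satake construction along in the family, and apply Principle~B after checking algebraicity (hence absoluteness) at the Kummer point via the explicit identification $SL(U)\simeq Spin(V)$ and of the half-spin representations with $U=H^1(A,\Q)$ and $U^*$, which exhibits $KS(X_0)$ as isogenous to $(A\times\hat{A})^4$. The paper's version of your ``delicate representation-theoretic computation'' is exactly this Clifford-algebra argument, so there is no substantive difference in strategy.
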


\begin{proof}
 Any polarized complex $K3$ surface deforms to a polarized Kummer surface in a polarized family. Now the Kuga-Satake
construction works in families. As a consequence, by Principle B, see Theorem \ref{principle-B}, it is enough to prove
that the Kuga-Satake correspondence is absolute Hodge for a variety $X$ which is the Kummer variety associated to an
abelian surface $A$. In this case, we can even prove the Kuga-Satake correspondence is algebraic. Let us outline the
proof of this result, which has first been proved by Morrison in \cite{Mor}. We follow a slightly different path.

\bigskip

First, remark that the canonical correspondence between $A$ and $X$ identifies the transcendental part of the Hodge
structure $H^2(X, \Z(1))$ with the transcendental part of $H^2(A, \Z(1))$. Note that the latter Hodge
structure is of K3 type. Since this isomorphism is induced by an algebraic correspondence between $X$ and $A$, standard
reductions show that it is enough to show that the Kuga-Satake correspondence between $A$ and the Kuga-Satake abelian
variety associated to $H^2(A, \Z(1))$ is algebraic. Let us write $U=H^1(A, \Q)$ and $V=H^2(A, \Q)$, considered as vector
spaces. 

We have $V=\bigwedge^2 U$. The vector space $U$ is of dimension $2$, and the weight $1$ Hodge structure on $U$ induces a
canonical isomorphism $\bigwedge^2 V=\bigwedge^4 U \simeq \Q$. The intersection pairing $Q$ on $V$ satisfies
$$\forall x, y\in V, Q(x,y)=x\wedge y.$$
Let $g\in SL(U)$. The determinant $g$ being $1$, $g$ acts trivially on $\bigwedge^2 V=\bigwedge^4 U$. As a consequence,
$g\wedge g$ preserves the intersection form on $V$. This gives a morphism $SL(U)\ra SO(V)$. The kernel of this morphism
is $\pm Id_U$, and it is surjective by dimension counting. Since $SL(U)$ is a connected algebraic group, this gives a
canonical isomorphism $SL(U)\simeq Spin(V)$.

The group $SL(U)$ acts on $U$ by the standard action and on its dual $U^*$ by $g\mapsto {}^tg^{-1}$. Those
representations are irreducible, and they are not isomorphic since no nontrivial bilinear form on $U$ is preserved by
$SL(U)$. By standard representation theory, those are the two half-spin representations of $SL(U)\simeq Spin(V)$. As a
consequence, the Clifford algebra of $V$ is canonically isomorphic to End$(U)\times \mathrm{End}(U^*)$, and we have a
canonical identification
$$CSpin(V)=\{(\lambda g,\lambda {}^tg^{-1}), g\in SL(U), \lambda\in \mathbb G_m\}.$$
An element $(\lambda g,\lambda {}^tg^{-1})$ of the Clifford group acts on the half-spin representations $U$ and $U^*$ by
the first and second component respectively.

\bigskip

The preceding identifications allow us to conclude the proof. Let $h' : S\ra GL(U)$ be the morphism that defines the
weight one Hodge structure on $U$, and let $h : S\ra SO(V)$ endow $V$ with its Hodge structure of K3 type. Note that if
$s\in \C^*$, the determinant of $h'(s)$ is $|s|^4$  since $U$ is of dimension $4$ and weight $1$. Since $V=\bigwedge^2
U(1)$ as Hodge structures, we get that $h$ is the morphism
$$h : s\mapsto |s|^{-2}h'(s)\wedge h'(s).$$
It follows that the morphism
$$\widetilde{h} : S\ra CSpin(V), s\mapsto (h'(s), |s|^2 \,{}^t h'(s)^{-1})=(|s||s|^{-1}h'(s)s,
|s|\,{}^t(|s|^{-1}h'(s))^{-1})$$
is a lifting of $h$ to $CSpin(V)$.

Following the previous identifications shows that the Hodge structure induced by $\widetilde{h}$ on $U$ and $U^*$ are
the ones induced by the identifications
$U=H^1(A, \Q)$ and $U^*=H^1(\hat{A}, \Q)$, where $\hat{A}$ is the dual abelian variety. Since the representation
$C^+(V)_s$ is a sum of $4$ copies of $U\oplus U^*$, this gives an isogeny between $KS(A)$ and $(A\times \hat{A})^4$ and
shows that the Kuga-Satake correspondence is algebraic, using the identity correspondence between $A$ and itself and
the correspondence between $A$ and its dual induced by the polarization. This concludes the proof.
\end{proof}

\begin{rk}
Since the cohomology of a Kummer variety is a direct factor of the cohomology of an abelian variety, it is an immediate
consequence of Deligne's theorem on absolute cycles on abelian varieties that the Kuga-Satake correspondence for Kummer
surfaces is absolute Hodge. However, our proof is more direct and also gives the
algebraicity of the correspondence in the Kummer case. Few algebraicity results are known for the Kuga-Satake
correspondence, but see \cite{Par} for the case of K3 surfaces which are a double cover of $\mathbb P^2$ ramified over
$6$ lines. See also \cite{vG} and \cite{VoKS} for further discussion of this problem.
\end{rk}

\begin{rk}
 In Definition \ref{def-etale}, we extended the notion of absolute Hodge classes to the setting of \'etale cohomology.
While we did not use this notion, most results we stated, for instance Principle B, can be generalized in this setting
with little additional work. This makes it possible to show that the Kuga-Satake correspondence is absolute Hodge in
the sense of Definition \ref{def-etale}. In the paper \cite{DelK3}, Deligne uses this to deduce the Weil conjectures
for K3 surfaces from the Weil conjectures for abelian varieties.
\end{rk}

\section{Deligne's theorem on Hodge classes on abelian varieties}


Having introduced the notion of absolute Hodge classes, Deligne went on to prove the
following remarkable theorem, which has already been mentioned several times in these
notes.

\begin{thm}[Deligne]
On an abelian variety, all Hodge classes are absolute.
\end{thm}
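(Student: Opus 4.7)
The plan is to follow Deligne's original strategy, reducing the theorem in two stages: first from arbitrary abelian varieties to those of CM type via Principle~B, and then from the CM case to the special case of Weil classes via Mumford--Tate theory. Concretely, let $A$ be a complex abelian variety and $\alpha \in H^{2p}(A, \Q(p))$ a Hodge class. I would realize $A$ as a fiber of a polarized family $\pi \colon \famA \to S$ over a level cover of the moduli space of polarized abelian varieties, and let $T \subseteq S$ be the component of the Hodge locus of $\alpha$ passing through $A$. By the Cattani--Deligne--Kaplan theorem, $T$ is algebraic; the key additional input is that $T$ is (up to finite cover) itself a Shimura subvariety of $S$, in which CM points are analytically dense. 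Applying Principle~B to the pulled-back family $\pi|_T$, it will suffice to prove that the flat extension of $\alpha$ is absolute at any single CM fiber.

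For the CM case, let $A$ be a CM abelian variety, so that its Mumford--Tate group $\MT(A)$ is an algebraic torus and the Hodge classes in $H^{2p}(A^n, \Q(p))$ are exactly the $\MT(A)$-invariants in the corresponding tensor representation of $H^1(A, \Q)$. I would then invoke Deligne's structural result for these invariants: every such class can be written as a polynomial, under cup-product, in (i)~divisor classes on sub-abelian varieties of $A^n$, which are absolute by the Lefschetz $(1,1)$-theorem together with the absoluteness of algebraic cycle classes, and (ii)~\emph{Weil classes} attached to pairs $(B, K)$ where $B$ is a factor of $A^n$ equipped with an action of a CM field $K$ of ``split Weil'' type. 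Since cup-product is itself given by an absolute Hodge class and absolute Hodge classes are stable under the operations involved, the problem reduces to showing absoluteness of Weil classes.

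For the Weil class case, fix a CM field $K$ of degree $2d$ and a split Weil signature of rank $n$. The abelian varieties of this type form a family $\pi \colon \famA \to S$ over a Hermitian symmetric domain $S$, and the corresponding Weil classes extend as canonical flat sections of the algebraic Hodge bundle $R^{2d}\pi_{\ast}\Omega_{\famA/S}^{\bullet}$. My plan would be to exhibit a special fiber $\famA_{s_0}$---obtained by specializing to a CM point at which $\famA_{s_0}$ becomes isogenous to a power of a simpler CM abelian variety and the Hodge structure splits accordingly---at which the Weil class is visibly expressible as a polynomial in divisor classes, hence algebraic and therefore absolute. Principle~B will then propagate absoluteness along $S$, yielding absoluteness at the CM fiber chosen at the end of the first reduction, and hence at the original $A$.

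The hardest part, and the true heart of the argument, is this last step: producing a special fiber in the Weil family at which the Weil class is manifestly algebraic. This requires a concrete description of the period map for Weil-type Hodge structures, together with an isogeny decomposition at the special point that identifies the Weil class with a cup-product of $(1,1)$-classes. A secondary obstacle arises in the first reduction, where one must ensure density of CM points in the specific component $T$ of the Hodge locus---not merely in the ambient moduli space---which hinges on $T$ being Shimura rather than just algebraic.
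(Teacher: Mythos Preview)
Your three-step outline matches the paper's strategy, but two of the steps differ in substance from what the paper actually does, and your version of the third step is where a real gap could hide.

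For the reduction from CM abelian varieties to Weil classes, the paper does \emph{not} use a decomposition into polynomials in divisor classes and Weil classes. Instead it invokes Andr\'e's theorem: after enlarging the CM-field $E$ to a Galois extension and replacing $V$ by $V \otimes_{\QQ} E$, one decomposes $\wed{\QQ}{2k} V \otimes_{\QQ} E$ as a direct sum $\bigoplus_\alpha \wed{E}{2k} V_\alpha$ indexed by subsets $\alpha$ of CM-types, and shows that each $V_\alpha$ supporting a nonzero component of the Hodge class is automatically of split Weil type. So every Hodge class is a \emph{sum of pullbacks} of split Weil classes via explicit morphisms $V_\alpha \to V$, with no divisor classes entering at all. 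Your formulation is vaguer and closer to older accounts; it may be workable, but it is not what the paper does, and you would need to state precisely which structural result you are invoking.

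The more serious issue is your plan for the special fiber in the Weil family. You propose specializing to a CM point where the abelian variety becomes isogenous to a power of a simpler one and the Weil class splits as a product of $(1,1)$-classes. The paper's special fiber is quite different and, crucially, \emph{not} a CM point: it is $A_0 \otimes_{\QQ} E$ where $A_0$ is an arbitrary abelian variety of dimension $d/2$ (constructed by choosing a split basis for the hermitian form $\phi$). On this fiber, $\wed{E}{d} H^1(A,\QQ) \simeq H^d(A_0,\QQ) \otimes_{\QQ} E$, and since $d = 2\dim A_0$ the space $H^d(A_0,\QQ)$ is one-dimensional, generated by the class of a point---manifestly absolute. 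This is an elementary linear-algebra observation once you have the right fiber, but finding that fiber (Lemma~\ref{lem:product} in the paper) is the genuine content, and your description does not capture it. Your proposed mechanism of a CM degeneration with an isogeny splitting is not obviously sufficient: there is no reason a priori that at such a point the Weil class factors through $(1,1)$-classes.

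Finally, your secondary concern about density of CM points in the Hodge-locus component $T$ is handled in the paper not by proving $T$ is Shimura, but by working directly with the Mumford--Tate domain $D_h = M(\RR)/K\cap M(\RR)$ and using Borel's result that a maximal torus of $M(\RR)$ can be conjugated by arbitrarily small elements into one defined over $\QQ$; this produces dense CM points without any appeal to Shimura-variety theory.
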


The purpose of the remaining lectures is to explain the proof of Deligne's theorem.
We follow Milne's account of the proof \cite{Del82}, with some simplifications due to
Andr\'e \cite{An-CM} and Voisin.

\subsection{Overview}

In the lectures of Griffiths and Kerr, we have already seen that rational Hodge
structures whose endomorphism algebra contains a CM-field are very special.  Since
abelian varieties of CM-type also play a crucial role in the proof of Deligne's
theorem, we shall begin by recalling two basic definitions.

\begin{df}
A \define{CM field} is a number field $E$ that admits an involution $\iota \in
\Gal(E/\QQ)$, such that for any embedding $s \colon E \into \CC$, one has
$\sb = s \circ \iota$.
\end{df}

The fixed field of the involution is a totally real field $F$; concretely, this means
that $F = \QQ(\alpha)$, where $\alpha$ and all of its conjugates are real numbers.
The field $E$ is then of the form $F \lbrack x \rbrack / (x^2 - f)$, for some element
$f \in F$ that is mapped to a negative number under all embeddings of $F$ into $\RR$.

\begin{df}
An abelian variety $A$ is said to be \define{of CM-type} if a CM-field $E$ is
contained in $\End(A) \tensor \QQ$, and if $H^1(A, \QQ)$ is one-dimensional as an
$E$-vector space. In that case, we clearly have $2 \dim A = \dim_{\QQ} H^1(A, \QQ) =
\Qdeg{E}$.
\end{df}

We will carry out a more careful analysis of abelian varieties and Hodge structures
of CM-type below. To motivate what follows, let us however briefly look at a criterion
for a simple abelian variety $A$ to be of CM-type that involves the (special)
Mumford-Tate group $\MT(A) = \MT \bigl( H^1(A) \bigr)$. 

Recall that the Hodge structure on $H^1(A, \QQ)$ can be
described by a morphism of $\RR$-algebraic groups $h \colon U(1) \to \GL \bigl(
H^1(A, \RR) \bigr)$; the weight being fixed, $h(z)$ acts as multiplication by
$z^{p-q}$ on the space $H^{p,q}(A)$. Recalling Paragraph \ref{S}, the group $U(1)$ is the kernel of the weight $w :
S\ra \mathbb{G}_m$. Representations of Ker$(w)$ correspond to Hodge structures of fixed weight.

We can define $\MT(A)$ as the smallest $\QQ$-algebraic
subgroup of $\GL \bigl( H^1(A, \QQ) \bigr)$ whose set of real points contains the
image of $h$. Equivalently, it is the subgroup fixing every Hodge class in every
tensor product
\[
	T^{p,q}(A) = H^1(A)^{\tensor p} \tensor H_1(A)^{\tensor q}.
\]
We have the following criterion.
\begin{prop}
A simple abelian variety is of CM-type if and only if its Mumford-Tate group $\MT(A)$
is an abelian group.
\end{prop}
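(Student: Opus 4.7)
The plan is to prove both implications by exploiting the Tannakian identification $\End^0(A) = \End_{\MT(A)}(H^1(A,\QQ))$; the Mumford-Tate group is characterised by the property that a sub-Hodge structure inside any tensor construction is exactly an $\MT(A)$-invariant subspace, so its commutant in $\End_\QQ(V)$ is precisely the endomorphism algebra of $A$. Writing $V = H^1(A,\QQ)$ and $T = \MT(A)$, controlling the shape of $D := \End^0(A)$ amounts to controlling the decomposition of $V$ under $T$.

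For the forward direction, assume $A$ is of CM-type with CM field $E \subset D$ and $V$ one-dimensional over $E$. Then every element of $E^\times$ is a Hodge endomorphism, hence commutes with $T$, so $T$ lies inside the centraliser of $E^\times$ in $\GL(V)$. Since $V$ has $E$-rank one, this centraliser equals $E^\times \cong \mathrm{Res}_{E/\QQ}\Gm$, an algebraic torus, and in particular $T$ is abelian.

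For the converse, suppose $T$ is abelian. Then $T$ is connected (its identity component is a $\QQ$-subgroup whose real points contain the connected group $h(U(1))$, hence equals $T$ by minimality), so $T$ is a $\QQ$-torus. Passing to $\overline{\QQ}$, where $T$ splits, decompose $V\otimes\overline\QQ = \bigoplus_{\chi \in \Phi} V_\chi$ into weight spaces of $\overline\QQ$-dimensions $d_\chi$. The Galois group $\Gal(\overline\QQ/\QQ)$ acts on $\Phi$ with $d_{\sigma\chi} = d_\chi$; for each orbit $O$ the subspace $\bigoplus_{\chi\in O} V_\chi$ descends to a $\QQ$-rational $T$-stable subspace of $V$, hence to a sub-Hodge structure. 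Simplicity of $A$ therefore forces a single Galois orbit, and all $d_\chi$ coincide with one common value $d$. Now carry out the key dimension count. Setting $n = |\Phi|$, one has $nd = \dim_\QQ V = 2g$ and $\dim_\QQ D = nd^2$, the latter from $D \otimes \overline\QQ \cong \prod_{\chi} M_d(\overline\QQ)$. Since $A$ is simple, $D$ is a division algebra and $V$ is a free left $D$-module of some rank $m \geq 1$; thus $2g = m\dim_\QQ D = mnd^2$. Combined with $2g = nd$, this yields $md = 1$, so $m = d = 1$. Hence $D = E$ is a number field of degree $2g$ over $\QQ$, and $V$ is one-dimensional over $E$.

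It remains to check that $E$ is CM and not totally real. By Albert's classification, the endomorphism algebra of a simple abelian variety, if commutative, is either totally real or CM, so it suffices to exclude the totally real case. If $E$ were totally real, then $V\otimes\CC = \bigoplus_{\sigma\colon E\hookrightarrow \CC} V_\sigma$ with each $V_\sigma$ one-dimensional, and the complex conjugation on $V\otimes\CC$ attached to the real form $V\otimes\RR$ preserves each $V_\sigma$ because $\bar\sigma = \sigma$ for every embedding. The $E$-action respects the Hodge decomposition, so $V^{1,0} = \bigoplus_{\sigma\in\Psi} V_\sigma$ for some subset $\Psi$; but then $V^{0,1} = \overline{V^{1,0}} = \bigoplus_{\sigma\in\Psi} V_\sigma = V^{1,0}$, contradicting $V^{1,0} \cap V^{0,1} = 0$. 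I expect the main obstacle to be the dimension-count bookkeeping that forces $d=1$: the Tannakian setup and the Hodge-theoretic exclusion of the totally real case are routine, but one must carefully combine simplicity with the free $D$-module structure on $V$ to extract the identity $md = 1$.
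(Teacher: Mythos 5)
Your proof is correct, and for the substantive implication (Mumford--Tate abelian implies CM-type) it runs parallel to the paper's: both identify $\End(A)\tensor\QQ$ with the commutant of $\MT(A)$ in $\End_{\QQ}(V)$, decompose $V$ into character spaces for the abelian Mumford--Tate group, and play the resulting lower bound on $\dim_{\QQ}\End(A)\tensor\QQ$ against the upper bound coming from the division-algebra (free-module) structure to force every character space to be one-dimensional and the endomorphism algebra to be a field of degree $2\dim A$. You work over $\overline{\QQ}$ and extract exact equalities ($\dim_{\QQ}D=nd^2$, $2g=mnd^2$) via a Galois-orbit argument, whereas the paper works over $\CC$ and is content with the chain of inequalities $\dim_{\QQ}E\geq\sum_{\chi}(\dim_{\CC}H_{\chi})^2\geq\dim_{\QQ}H\geq\dim_{\QQ}E$; the two bookkeeping schemes are interchangeable, though yours requires the extra (correct) observation that simplicity forces a single Galois orbit so that all $d_{\chi}$ agree. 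The genuine divergence is the final step: the paper shows $E$ is a CM-field directly, by introducing the Rosati involution $\iota$ attached to a polarization $\psi$ and using positivity of $-i\psi$ on $H^{1,0}$ to see that $\iota$ is nontrivial with $\sb=s\circ\iota$, whereas you quote Albert's classification to reduce to excluding the totally real case and then rule that out by the clean Hodge-theoretic remark that for totally real $E$ complex conjugation fixes each line $V_{\sigma}$, forcing $V^{0,1}=V^{1,0}$. Your route is shorter but outsources to Albert exactly the positivity-of-the-Rosati-involution input that the paper establishes by hand; the paper's version is self-contained. You also supply the easy direction (CM-type implies abelian Mumford--Tate), which the paper states but does not write out.
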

Here is a quick outline of the proof of the interesting
implication.

\begin{proof}
Let $H = H^1(A, \QQ)$. The abelian variety $A$ is simple, which implies that
$E = \End(A) \tensor \QQ$ is a division algebra. It is also the space of
Hodge classes in $\End_{\QQ}(H)$, and therefore consists exactly of those endomorphisms
that commute with $\MT(A)$. Because the Mumford-Tate group is abelian, its action
splits $H^1(A, \CC)$ into a direct sum of character spaces
\[
	H \tensor_{\QQ} \CC = \bigoplus_{\chi} H_{\chi},
\]
where $m \cdot h = \chi(m) h$ for $h \in H_{\chi}$ and $m \in \MT(A)$. Now
any endomorphism of $H_{\chi}$ obviously commutes with $\MT(A)$, and is therefore
contained in $E \tensor_{\QQ} \CC$. By counting dimensions, we find that
\[
	\dim_{\QQ} E \geq \sum_{\chi} \bigl( \dim_{\CC} H_{\chi} \bigr)^2
		\geq \sum_{\chi} \dim_{\CC} H_{\chi} = \dim_{\QQ} H.
\]
On the other hand, we have $\dim_{\QQ} E \leq \dim_{\QQ} H$; indeed, since $E$ is a
division algebra, the map $E \to H$, $e \mapsto e \cdot h$, is injective for every
nonzero $h \in H$. Therefore $\Qdeg{E} = \dim_{\QQ} H = 2 \dim A$; moreover, each
character space $H_{\chi}$ is one-dimensional, and this implies that $E$ is
commutative, hence a field. To construct the involution $\iota \colon E \to E$ that
makes $E$ into a CM-field, choose a polarization $\psi \colon H \times H \to \QQ$,
and define $\iota$ by the condition that, for every $h,h' \in H$,
\[
	\psi(e \cdot h, h') = \psi \bigl( h, \iota(e) \cdot h' \bigr).
\]
The fact that $-i \psi$ is positive definite on the subspace $H^{1,0}(A)$ can then be
used to show that $\iota$ is nontrivial, and that $\sb = s \circ \iota$ for any
embedding of $E$ into the complex numbers.
\end{proof}

After this preliminary discussion of abelian varieties of CM-type, we return to
Deligne's theorem on an arbitrary abelian variety $A$. The proof consists of the
following three steps.
\begin{enumerate}[itemsep=1pt,leftmargin=18pt]
\renewcommand{\labelenumi}{\arabic{enumi}.}

\item The first step is to reduce the problem to abelian varieties of CM-type. This
is done by constructing an algebraic family of abelian varieties that links a given
$A$ and a Hodge class in $H^{2p}(A, \QQ)$ to an abelian variety of CM-type and a
Hodge class on it, and then applying Principle~B.
\item The second step is to show that every Hodge class on an abelian variety of
CM-type can be expressed as a sum of pullbacks of so-called split Weil classes. The
latter are Hodge classes on certain special abelian varieties, constructed
by linear algebra from the CM-field $E$ and its embeddings into $\CC$. This part of
the proof is due to Andr\'e \cite{An-CM}.
\item The last step is to show that all split Weil classes are absolute. For a fixed
CM-type, all abelian varieties of split Weil type are naturally parametrized by a
certain hermitian symmetric domain; by Principle~B, this allows to reduce the problem to
showing that split Weil classes on abelian varieties of a very specific form for which the result is straightforward.
\end{enumerate}

The original proof by Deligne uses Baily-Borel theory to show that certain families
of abelian varieties are algebraic. Following a suggestion by Voisin, we have chosen
to replace this by the following two results: the existence of a quasi-projective
moduli space for polarized abelian varieties with level structure and the theorem of
Cattani-Deligne-Kaplan in \cite{CDK} concerning the algebraicity of Hodge loci.

\subsection{Hodge structures of CM-type}

When $A$ is an abelian variety of CM-type, $H^1(A, \Q)$ is an example of a
Hodge structure of CM-type. We now undertake a more careful study of this class of
Hodge structures. Let $V$ be a rational Hodge structure of weight $n$, with Hodge
decomposition
\[
	V \tensor_{\QQ} \CC = \bigoplus_{p+q=n} V^{p,q}.
\]
Once we fix the weight $n$, there is a one-to-one correspondence between such
decompositions and group homomorphisms $h \colon U(1) \to \GL(V \tensor_{\QQ} \RR)$.
Namely, $h(z)$ acts as multiplication by $z^{p-q} = z^{2p-n}$ on the subspace
$V^{p,q}$. We define the (special) Mumford-Tate group $\MT(V)$ as the smallest
$\QQ$-algebraic subgroup of $\GL(V)$ whose set of real points contains the image of
$h$.

\begin{df}
We say that $V$ is a \define{Hodge structure of CM-type} if the following two
equivalent conditions are satisfied:
\begin{enumerate}[label=(\alph{*}),ref=(\alph{*})]
\item The set of real points of $\MT(V)$ is a compact torus.
\item $\MT(V)$ is abelian and $V$ is polarizable.
\end{enumerate}
\end{df}

It is not hard to see that any Hodge structure of CM-type is a direct sum of
irreducible Hodge structures of CM-type. Indeed, since $V$ is polarizable, it admits
a finite decomposition $V = V_1 \oplus \dotsb \oplus V_r$, with each $V_i$
irreducible. As subgroups of $\GL(V) = \GL(V_1) \times \dotsm
\times \GL(V_r)$, we then have $\MT(V) \subseteq \MT(V_1) \times \dotsm \times
\MT(V_r)$, and since the projection to each factor is surjective, it follows that
$\MT(V_i)$ is abelian. But this means that each $V_i$ is again of CM-type.
It is therefore sufficient to concentrate on irreducible Hodge structures of CM-type.
For those, there is a nice structure theorem that we shall now explain.

Let $V$ be an irreducible Hodge structure of weight $n$ that is of CM-type, and as
above, denote by $\MT(V)$ its special Mumford-Tate group. Because $V$ is irreducible,
its algebra of endomorphisms
\[
	E = \End_{\QHS}(V)
\]
must be a division algebra. In fact, since the endomorphisms of $V$ as a Hodge
structure are exactly the Hodge classes in $\End_{\QQ}(V)$, we see that $E$ consists
of all rational endomorphisms of $V$ that commute with $\MT(V)$. If $T_E = \Eun$
denotes the algebraic torus in $\GL(V)$ determined by $E$, then we get
$\MT(V) \subseteq T_E$ because $\MT(V)$ is commutative by assumption.

Since $\MT(V)$ is commutative, it acts on $V \tensor_{\QQ} \CC$ by characters, and so
we get a decomposition
\[
	V \tensor_{\QQ} \CC = \bigoplus_{\chi} V_{\chi},
\]
where $m \in \MT(V)$ acts on $v \in V_{\chi}$ by the rule $m \cdot v = \chi(m) v$.
Any endomorphism of $V_{\chi}$ therefore commutes with $\MT(V)$, and so $E
\tensor_{\QQ} \CC$ contains the spaces $\End_{\CC}(V_{\chi})$. This leads to the
inequality
\[
	\dim_{\QQ} E \geq \sum_{\chi} \bigl( \dim_{\CC} V_{\chi} \bigr)^2
		\geq \sum_{\chi} \dim_{\CC} V_{\chi} = \dim_{\QQ} V.
\]
On the other hand, we have $\dim_{\QQ} V \leq \dim_{\QQ} E$ because every nonzero
element in $E$ is invertible. It follows that each $V_{\chi}$ is one-dimensional,
that $E$ is commutative, and therefore that $E$ is a field of degree $\Qdeg{E} =
\dim_{\QQ} V$. In particular, $V$ is one-dimensional as an $E$-vector space.

The decomposition into character spaces can be made more canonical in the following
way. Let $S = \Hom(E, \CC)$ denote the set of all complex embeddings of $E$; its
cardinality is $\Qdeg{E}$. Then
\[
	E \tensor_{\QQ} \CC \xrightarrow{\sim} \bigoplus_{s \in S} \CC, \quad
		e \tensor z \mapsto \sum_{s \in S} s(e) z,
\]
is an isomorphism of $E$-vector spaces; $E$ acts on each summand on the right
through the corresponding embedding $s$. This decomposition induces an isomorphism
\[
	V \tensor_{\QQ} \CC \xrightarrow{\sim} \bigoplus_{s \in S} V_s,
\]
where $V_s = V \tensor_{E,s} \CC$ is a one-dimensional complex vector space on which
$E$ acts via $s$. The induced homomorphism $U(1) \to \MT(V) \to \Eun \to \End_{\CC}(V_s)$
is a character of $U(1)$, hence of the form $z \mapsto z^k$ for some integer $k$.
Solving $k = p-q$ and $n = p+q$, we find that $k = 2p-n$, which means that $V_s$ is
of type $(p,n-p)$ in the Hodge decomposition of $V$. Now define a function $\varphi
\colon S \to \ZZ$ by setting $\varphi(s) = p$; then any choice of isomorphism
$V \simeq E$ puts a Hodge structure of weight $n$ on $E$, whose Hodge decomposition
is given by
\[
	E \tensor_{\QQ} \CC
		\simeq \bigoplus_{s \in S} \CC^{\varphi(s), n - \varphi(s)}.
\]
From the fact that $\overline{e \tensor z} = e \tensor \bar{z}$, we deduce that
\[
	\overline{\sum_{s \in S} z_s} = \sum_{s \in S} \overline{z_{\sb}}.
\]
Since complex conjugation has to interchange $\CC^{p,q}$ and $\CC^{q,p}$, this
implies that $\varphi(\sb) = n - \varphi(s)$, and hence that
$\varphi(s) + \varphi(\sb) = n$ for every $s \in S$.

\begin{df}
Let $E$ be a number field, and $S = \Hom(E, \CC)$ the set of its complex embeddings.
Any function $\varphi \colon S \to \ZZ$ with the property that
$\varphi(s) + \varphi(\sb) = n$ defines a \define{Hodge structure $E_{\varphi}$ of
weight $n$} on the $\QQ$-vector space $E$, whose Hodge decomposition is given by
\[
	E_{\varphi} \tensor_{\QQ} \CC
		\simeq \bigoplus_{s \in S} \CC^{\varphi(s),\varphi(\sb)}.
\]
By construction, the action of $E$ on itself respects this decomposition.
\end{df}

In summary, we have $V \simeq E_{\varphi}$, which is an isomorphism both of
$E$-modules and of Hodge structures of weight $n$.  Next, we would like to prove that
in all interesting cases, $E$ must be a CM-field.  Recall that a field $E$ is called
a \define{CM-field} if there exists a nontrivial involution $\iota \colon E \to E$,
such that complex conjugation induces $\iota$ under any embedding of $E$ into the
complex numbers. In other words, we must have $s(\iota e) = \sb(e)$ for any $s \in S$
and any $e \in E$. We usually write $\bar{e}$ in place of $\iota e$, and refer to it
as complex conjugation on $E$. The fixed field of $E$ is then a totally real subfield
$F$, and $E$ is a purely imaginary quadratic extension of $F$.

To prove that $E$ is either a CM-field or $\QQ$, we choose a polarization $\psi$ on
$E_{\varphi}$.  We then define the so-called \define{Rosati involution} $\iota \colon
E \to E$ by the condition that
\[
	\psi(e \cdot x, y) = \psi(x, \iota e \cdot y)
\]
for every $x, y, e \in E$. Denoting the image of $1 \in E$ by $\sum_{s \in S} 1_s$,
we have
\[
	\sum_{s \in S} \psi(1_s, 1_{\sb}) s(e \cdot x) \sb(y)
		= \sum_{s \in S} \psi(1_s, 1_{\sb}) s(x) \sb(\iota e \cdot y),
\]
which implies that $s(e) = \sb(\iota e)$. Now there are two cases: Either $\iota$ is
nontrivial, in which case $E$ is a CM-field and the Rosati involution is complex
conjugation. Or $\iota$ is trivial, which means that $\sb = s$ for every complex
embedding. In the second case, we see that $\varphi(s) = n/2$ for every $s$, and so
the Hodge structure must be $\QQ(-n/2)$, being irreducible and of type $(n/2,n/2)$.
This implies that $E = \QQ$.

From now on, we exclude the trivial case $V = \QQ(-n/2)$ and assume that $E$ is a
CM-field.

\begin{df}
A \define{CM-type} of $E$ is a mapping $\varphi \colon S \to \{0, 1\}$ with the
property that $\varphi(s) + \varphi(\sb) = 1$ for every $s \in S$.
\end{df}

When $\varphi$ is a CM-type, $E_{\varphi}$ is the rational Hodge structure of an abelian
variety with complex multiplication by $E$ (unique up to isogeny). In general, we
have the following structure theorem.

\begin{prop}
Any Hodge structure $V$ of CM-type and of even weight $2k$ with $V^{p,q} = 0$ for $p
< 0$ or $q < 0$ occurs as a direct factor of $H^{2k}(A)$ for some abelian variety
with complex multiplication.
\end{prop}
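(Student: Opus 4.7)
The plan is to realize $V$ as a Künneth component of $H^{2k}$ of a product of CM abelian varieties, by decomposing its defining function $\varphi$ into a sum of $2k$ CM-types of weight one. Since a Hodge structure of CM-type is a direct sum of irreducibles, and a direct summand of $H^{2k}(A_i)$ becomes a direct summand of $H^{2k}(\prod_j A_j)$ via the Künneth component where all other factors contribute $1 \in H^0$, I first reduce to the case where $V$ is irreducible. The exceptional case $V = \QQ(-k)$ is dispatched by exhibiting it as $H^2(E_0)^{\tensor k} \subset H^{2k}(E_0^k)$ for any elliptic curve $E_0$.

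For irreducible $V \simeq E_{\varphi}$ with $E$ a CM-field and $\varphi \colon S \to \ZZ$ satisfying $\varphi(s) + \varphi(\sb) = 2k$, the hypothesis $V^{p,q} = 0$ for $p < 0$ or $q < 0$ translates to $0 \leq \varphi(s) \leq 2k$. This is exactly what is needed to write $\varphi = \varphi_1 + \dotsb + \varphi_{2k}$ with each $\varphi_i \colon S \to \{0,1\}$ a CM-type: for each conjugate pair $\{s, \sb\}$, assign the value $1$ at $s$ to any $\varphi(s)$ of the $\varphi_i$ and $0$ to the remaining $2k - \varphi(s)$ functions; the constraint $\varphi_i(\sb) = 1 - \varphi_i(s)$ then determines each $\varphi_i$ on $\sb$. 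Classical CM theory produces CM abelian varieties $A_i$ with $H^1(A_i, \QQ) \simeq E_{\varphi_i}$, and I set $A = A_1 \times \dotsb \times A_{2k}$.

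It remains to extract $E_{\varphi}$ as a Hodge direct factor of the Künneth summand $E_{\varphi_1} \tensor_{\QQ} \dotsb \tensor_{\QQ} E_{\varphi_{2k}} \subset H^{2k}(A)$. My candidate is the iterated multiplication $\mu(e_1 \tensor \dotsb \tensor e_{2k}) = e_1 \dotsm e_{2k}$; after base change to $\CC$ and using $E \tensor_{\QQ} \CC \simeq \bigoplus_s \CC_s$, the orthogonality of the idempotents $1_s$ shows that $\mu \tensor \CC$ is nothing but the projection onto the diagonal $\bigoplus_s \CC_s$, and the Hodge type induced on the $s$-piece of the source diagonal is $\bigl( \sum_i \varphi_i(s), \sum_i \varphi_i(\sb) \bigr) = (\varphi(s), \varphi(\sb))$, matching $E_{\varphi}$ by the earlier analysis. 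Semisimplicity in the category of polarizable Hodge structures then furnishes a splitting of $\mu$. The genuinely non-formal ingredients are the combinatorial decomposition of $\varphi$ into CM-types (immediate from $0 \leq \varphi(s) \leq 2k$) and the compatibility of $\mu$ with the Hodge filtration (a direct check via the character decomposition of $E \tensor \CC$), so I expect no serious obstacle beyond careful bookkeeping; the main conceptual content is choosing the $\varphi_i$ so that their sum is the prescribed $\varphi$.
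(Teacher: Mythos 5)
Your argument is correct, and it is a sharpened, more explicit version of the paper's. The paper likewise reduces to the irreducible case ($V=\QQ(-k)$ or $V\simeq E_{\varphi}$ with $E$ a CM-field) and decomposes $\varphi$ in terms of CM-types, but it only writes $\varphi$ as an arbitrary \emph{integer} linear combination of CM-types and then invokes the relations $E_{\varphi+\psi}\simeq E_{\varphi}\tensor_E E_{\psi}$ and $E_{-\varphi}\simeq E_{\varphi}^{\vee}$ to say that $V$ is built from CM-type Hodge structures by tensor products, duals and Tate twists, leaving the final landing in $H^{2k}(A)$ to ``simple linear algebra.'' You instead use the effectivity hypothesis, in the form $0\leq\varphi(s)\leq 2k$, to write $\varphi$ as a sum of exactly $2k$ genuine CM-types with all coefficients equal to $1$; this eliminates duals and Tate twists entirely and lets you realize $E_{\varphi}$ as a quotient --- hence, by polarizability and semisimplicity, a direct factor --- of the K\"unneth summand $H^1(A_1)\tensor\dotsb\tensor H^1(A_{2k})$ of $H^{2k}(A_1\times\dotsb\times A_{2k})$ via the multiplication map, whose compatibility with the bigrading you correctly check on the idempotents of $E\tensor_{\QQ}\CC$. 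What your route buys is that it makes visible exactly where the hypothesis $V^{p,q}=0$ for $p<0$ or $q<0$ is used, which the paper's sketch suppresses. Two small points of hygiene: in the case $V=\QQ(-k)$ take $E_0$ to be a CM elliptic curve (not an arbitrary one) so that the ambient abelian variety is of CM-type, and when reassembling the irreducible summands note, as your K\"unneth remark implicitly does, that distinct summands of $V$ land in distinct K\"unneth components of the product, so that summands occurring with multiplicity are still split off.
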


\begin{proof}
In our classification of irreducible Hodge structures of CM-type above, there were
two cases: $\QQ(-n/2)$, and Hodge structures of the form $E_{\varphi}$, where
$E$ is a CM-field and $\varphi \colon S \to \ZZ$ is a function satisfying $\varphi(s)
+ \varphi(\sb) = n$. Clearly $\varphi$ can be written as a linear combination (with
integer coefficients) of CM-types for $E$. Because of the relations
\[
	E_{\varphi + \psi} \simeq E_{\varphi} \tensor_E E_{\psi} \quad \text{and} \quad
		\quad E_{-\varphi} \simeq E_{\varphi}^{\vee},
\]
every irreducible Hodge structure of CM-type can thus be obtained from Hodge
structures corresponding to CM-types by tensor products, duals, and Tate twists.

As we have seen, every Hodge structure of CM-type is a direct sum of irreducible
Hodge structures of CM-type. The assertion follows from this by simple linear
algebra.
\end{proof}

To conclude our discussion of Hodge structures of CM-type, we will consider the case
when the CM-field $E$ is a Galois extension of $\QQ$. In that case, the Galois group
$G = \Gal(E / \QQ)$ acts on the set of complex embeddings of $E$ by the rule
\[
	(g \cdot s)(e) = s(g^{-1} e).
\]
This action is simply transitive. Recall that we have an isomorphism
\[
	E \tensor_{\QQ} E \xrightarrow{\sim} \bigoplus_{g \in G} E, \quad
		x \tensor e \mapsto g(e) x.
\]
For any $E$-vector space $V$, this isomorphism induces a decomposition
\[
	V \tensor_{\QQ} E \xrightarrow{\sim} \bigoplus_{g \in G} V, \quad
		v \tensor e \mapsto g(e) v.
\]
When $V$ is an irreducible Hodge structure of CM-type, a natural question is whether
this decomposition is compatible with the Hodge decomposition. The following lemma shows
that the answer to this question is yes.

\begin{lem} \label{lem:Galois}
Let $E$ be a CM-field that is a Galois extension of $\QQ$, with Galois group $G =
\Gal(E / \QQ)$. Then for any $\varphi \colon S \to \ZZ$ with $\varphi(s) +
\varphi(\sb) = n$, we have
\[
	E_{\varphi} \tensor_{\QQ} E \simeq \bigoplus_{g \in G} E_{g \varphi}.
\]
\end{lem}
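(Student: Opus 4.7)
The plan is to exploit the standard algebraic isomorphism $E \otimes_{\QQ} E \simeq \bigoplus_{g \in G} E$ that holds whenever $E/\QQ$ is Galois, and then check that it matches the Hodge structures on the two sides after tensoring with $E_{\varphi}$'s Hodge structure. Concretely, I would start from the $\QQ$-algebra isomorphism
\[
  E \otimes_{\QQ} E \xrightarrow{\sim} \bigoplus_{g \in G} E, \qquad x \otimes e \longmapsto \bigl( g(e) \cdot x \bigr)_{g \in G},
\]
recalled in the paragraph just before the lemma. This is $E$-linear for the left factor under standard multiplication, and becomes $E$-linear for the right factor on each summand after an untwisting by $g$. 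In particular, $E_{\varphi} \otimes_{\QQ} E$ is identified as a $\QQ$-vector space with $\bigoplus_{g} E$, and since the second factor of $E_{\varphi} \otimes_{\QQ} E$ carries the trivial Hodge structure, each summand on the right inherits a weight $n$ Hodge structure on $E$ that I need to identify as some $E_{g\varphi}$.

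To identify these Hodge structures, I would tensor everything with $\CC$ and compare the two resulting decompositions indexed by $S \times S$. On the left,
\[
  E_{\varphi} \otimes_{\QQ} E \otimes_{\QQ} \CC \;\simeq\; \bigoplus_{s,t \in S} V_s \otimes_{\CC} \CC_t,
\]
where $V_s \subset E_\varphi \otimes \CC$ is the one-dimensional $s$-eigenspace for left multiplication by $E$, of Hodge type $\bigl( \varphi(s), \varphi(\sb) \bigr)$, and $\CC_t \subset E \otimes \CC$ is the $t$-component, carrying trivial Hodge structure. On the right, each summand gives $E \otimes_{\QQ} \CC = \bigoplus_{u \in S} \CC_u^{(g)}$, so the right-hand side becomes $\bigoplus_{g,u} \CC_u^{(g)}$. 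Tracing an element $x \otimes e \otimes 1$ through the isomorphism — its $(g,u)$-component on the right is $u(g(e) x) = u(e) \cdot (g \cdot u)(x)$ using the Galois convention $(g \cdot u)(x) = u(g^{-1} x)$, while its $(s,t)$-component on the left is $s(x) \cdot t(e)$ — one reads off the matching $(g,u) \leftrightarrow (s,t) = (g \cdot u, u)$. Hence the $(g,u)$-summand on the right has Hodge type $\bigl( \varphi(g \cdot u), \varphi(\overline{g \cdot u}) \bigr)$, and setting $(g \varphi)(u) := \varphi(g \cdot u)$ (which satisfies $(g\varphi)(u) + (g\varphi)(\sb u) = n$) gives exactly the Hodge type $E_{g\varphi}$, concluding the proof.

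The main obstacle is really only bookkeeping: one must carefully keep apart the two $E$-actions on $E \otimes_{\QQ} E$, the twisted $E$-module structure each summand inherits from the right factor, and the Galois action convention on $S = \Hom(E, \CC)$. Once conventions are fixed, the identification is mechanical and the lemma is essentially a Hodge-theoretic decoration of the purely algebraic decomposition $E \otimes_{\QQ} E \simeq \bigoplus_{g} E$. No deeper input is needed beyond the definition of $E_{\varphi}$ and that $E/\QQ$ is Galois.
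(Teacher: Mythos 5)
Your proof is correct and follows essentially the same route as the paper's: both sides are tensored with $\CC$, decomposed over $S\times S$ and $G\times S$ respectively, a decomposable element is traced through both isomorphisms, and the indices are matched (the paper's $t=s\circ g$) to read off that the $g$-th block has the Hodge decomposition of $E_{g\varphi}$ with $(g\varphi)(u)=\varphi(u\circ g^{-1})=\varphi(g\cdot u)$. One small bookkeeping caveat: the displayed identity $u(g(e)x)=u(e)\cdot(g\cdot u)(x)$ is not literally true for a fixed embedding $u$ (the left-hand side equals $(u\circ g)(e)\cdot u(x)$); it becomes the correct component formula only after re-indexing the summands of the $g$-th block by $u\mapsto g\cdot u$, a harmless relabeling that should be made explicit since it is exactly this index shift that produces $E_{g\varphi}$ rather than $E_{\varphi}$.
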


\begin{proof}
We chase the Hodge decompositions through the various isomorphisms that are involved
in the statement. To begin with, we have
\[
	\bigl( E_{\varphi} \tensor_{\QQ} E \bigr) \tensor_{\QQ} \CC
		\simeq \bigl( E_{\varphi} \tensor_{\QQ} \CC \bigr) \tensor_{\QQ} E
		\simeq \bigoplus_{s \in S} \CC^{\varphi(s), n-\varphi(s)} \tensor_{\QQ} E
		\simeq \bigoplus_{s,t \in S} \CC^{\varphi(s), n-\varphi(s)},
\]
and the isomorphism takes $(v \tensor e) \tensor z$ to the element
\[
	\sum_{s,t \in S} t(e) \cdot z \cdot s(v).
\]
On the other hand,
\[
	\bigl( E_{\varphi} \tensor_{\QQ} E \bigr) \tensor_{\QQ} \CC
		\simeq \bigoplus_{g \in G} E \tensor_{\QQ} \CC
		\simeq \bigoplus_{g \in G} \bigoplus_{s \in S} \CC^{\varphi(s), n-\varphi(s)},
\]
and under this isomorphism, $(v \tensor e) \tensor z$ is sent to the element
\[
	\sum_{g \in G} \sum_{s \in S} s(ge) \cdot s(v) \cdot z.
\]
If we fix $g \in G$ and compare the two expressions, we see that $t = sg$, and hence
\[
	E \tensor_{\QQ} \CC \simeq \bigoplus_{t \in S} \CC^{\varphi(s), n-\varphi(s)}
		\simeq \bigoplus_{t \in S} \CC^{\varphi(t g^{-1}), n - \varphi(t g^{-1})}.
\]
But since $(g \varphi)(t) = \varphi(t g^{-1})$, this is exactly the Hodge
decomposition of $E_{g \varphi}$.
\end{proof}

\subsection{Reduction to abelian varieties of CM-type}

The proof of Deligne's theorem involves the construction of algebraic families of
abelian varieties, in order to apply Principle~B. For this, we shall use the
existence of a fine moduli space for polarized abelian varieties with level
structure. Recall that if $A$ is an abelian variety of dimension $g$, the subgroup $A
\lbrack N \rbrack$ of its $N$-torsion points is isomorphic to
$(\ZZ / N \ZZ)^{\oplus 2g}$. A \define{level $N$-structure} is a choice of symplectic
isomorphism $A \lbrack N \rbrack \simeq (\ZZ / N \ZZ)^{\oplus 2g}$. Also recall that
a \define{polarization of degree $d$} on an abelian variety $A$ is a finite morphism
$\theta \colon A \to \hat{A}$ of degree $d$.

\begin{thm}
Fix integers $g, d \geq 1$. Then for any $N \geq 3$, there is a smooth quasi-projective
variety $\MgdN$ that is a fine moduli space for $g$-dimensional abelian varieties
with polarization of degree $d$ and level $N$-structure.  In particular, we have a
universal family of abelian varieties over $\MgdN$.
\end{thm}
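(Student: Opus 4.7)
The plan is to build $\MgdN$ first as a complex manifold via uniformization by Siegel upper half space, then invoke Baily--Borel (or Mumford's GIT) to upgrade it to a quasi-projective variety, and finally verify the fine moduli property via Serre's rigidity lemma. The essential point is that the level structure kills automorphisms as soon as $N \geq 3$, so no nontrivial stabilizers obstruct the quotient from being a fine moduli space.

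First, I would handle the analytic picture. Fix the type of polarization, which is encoded by an elementary divisor sequence $\delta = (d_1, \dotsc, d_g)$ with $\prod d_i = d$. A polarized abelian variety of dimension $g$ and polarization type $\delta$ together with a symplectic level $N$-structure is classified analytically by a point of Siegel upper half space $\mathfrak{H}_g = \menge{\tau \in M_g(\CC)}{\tau = {}^t\tau, \ \im \tau > 0}$, and two such data give isomorphic triples if and only if they differ by the action of the arithmetic group $\Gamma_\delta(N) \subset \Gamma_\delta$, where $\Gamma_\delta$ is the stabilizer of the type $\delta$ in $\mathrm{Sp}_{2g}(\QQ) \cap \GL_{2g}(\ZZ)$ and $\Gamma_\delta(N)$ denotes its principal level $N$ congruence subgroup. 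The key input here is Serre's rigidity lemma: if $A$ is an abelian variety and $\alpha \in \Aut(A)$ acts trivially on $A\lbrack N\rbrack$ for some $N \geq 3$, then $\alpha = \mathrm{id}$. Applied to pairs of automorphisms of polarized abelian varieties, this shows that $\Gamma_\delta(N)$ acts freely on $\mathfrak{H}_g$ for $N \geq 3$, so the quotient $\Gamma_\delta(N) \backslash \mathfrak{H}_g$ is a complex manifold.

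Second, I would promote this quotient to a quasi-projective algebraic variety. The main obstacle is exactly this algebraization step, since Siegel upper half space is not a bounded domain amenable to naive arguments. There are two standard routes. One is Baily--Borel: $\mathfrak{H}_g$ is a Hermitian symmetric domain of non-compact type, $\Gamma_\delta(N)$ is arithmetic, and the Baily--Borel theorem produces a projective normal compactification of $\Gamma_\delta(N) \backslash \mathfrak{H}_g$ whose boundary is of high codimension, so that the quotient itself is an open subvariety, hence quasi-projective. The other is Mumford's GIT construction via geometric invariant theory applied to a suitable Hilbert scheme of polarized abelian schemes embedded in projective space by sufficiently high tensor powers of the polarization; for $N$ large enough the level structure gives stable orbits with trivial stabilizers and the GIT quotient is a fine moduli space. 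Either route outputs a smooth quasi-projective variety $\MgdN$ representing the relevant analytic quotient.

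Finally, I would verify that $\MgdN$ represents the moduli functor. Given any scheme $T$ and an abelian scheme $\pi \colon \mathcal{A} \to T$ of relative dimension $g$ with polarization of degree $d$ and level $N$-structure, one obtains a morphism $T \to \MgdN$ functorial in $T$. Injectivity on isomorphism classes follows from the construction of the quotient; to see that there are no nontrivial automorphisms one again uses Serre's lemma, which ensures that a polarized abelian scheme with level $N$-structure has no nontrivial automorphisms as a polarized abelian scheme with level structure, hence that isomorphism classes and objects coincide. The universal family is obtained from the universal family over a Hilbert scheme of embedded abelian varieties passing to the quotient, and its existence is precisely the statement that $\MgdN$ is a \emph{fine} moduli space. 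The hardest technical step throughout remains the algebraization/quasi-projectivity, which I would take as the black box contributed by Baily--Borel or GIT.
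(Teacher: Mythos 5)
The paper does not actually prove this theorem: it is stated without proof and used as a black box (the authors explicitly say they invoke ``the existence of a quasi-projective moduli space for polarized abelian varieties with level structure'' as one of two external inputs replacing Deligne's original use of Baily--Borel theory). Your sketch is the standard argument for that black box and is essentially correct: uniformization of polarized abelian varieties of a fixed polarization type by Siegel upper half space, freeness of the action of the principal congruence subgroup for $N \geq 3$ via Serre's lemma, and algebraization by Baily--Borel or by Mumford's GIT. Three points deserve sharpening. First, Serre's lemma applies to automorphisms of \emph{finite order} acting trivially on the $N$-torsion; you must first observe that the automorphism group of a \emph{polarized} abelian variety is finite, so that the lemma applies to the relevant stabilizers. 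Second, the statement fixes only the degree $d$ of the polarization, not its type $\delta=(d_1,\dotsc,d_g)$; the moduli space is then a finite disjoint union over the types with the prescribed degree, which is harmless but should be said. Third, and most substantively, the analytic quotient $\Gamma_{\delta}(N)\backslash \mathfrak{H}_g$ by itself only classifies families over complex-analytic bases; to get a \emph{fine} moduli space in the algebraic category one needs either the GIT construction (which represents the functor on schemes directly) or, if one takes the Baily--Borel route, Borel's extension theorem guaranteeing that the holomorphic classifying map of an algebraic family is automatically a morphism of algebraic varieties. With those caveats your outline is the standard proof.
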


The relationship of this result with Hodge theory is the following. Fix an abelian
variety $A$ of dimension $g$, with level $N$-structure and polarization $\theta
\colon A \to \hat{A}$. The polarization corresponds to an antisymmetric bilinear form
$\psi \colon H^1(A, \ZZ) \times H^1(A, \ZZ) \to \ZZ$ that polarizes the Hodge
structure; we shall refer to $\psi$ as a \define{Riemann form}. Define $V = H^1(A,
\QQ)$, and let $D$ be the corresponding period domain; $D$ parametrizes all
possible Hodge structures of type $\{(1,0), (0,1)\}$ on $V$ that are polarized by the form $\psi$.
Then $D$ is isomorphic to the universal covering space of the quasi-projective
complex manifold $\MgdN$.

We now turn to the first step in the proof of Deligne's theorem, namely the reduction
of the general problem to abelian varieties of CM-type. This is accomplished by the
following theorem and Principle~B, see Theorem \ref{principle-B}.

\begin{thm} \label{thm:Step1}
Let $A$ be an abelian variety, and let $\alpha \in H^{2p}(A, \QQ(p))$ be a Hodge class
on $A$. Then there exists a family $\pi \colon \famA \to B$ of abelian varieties,
with $B$ irreducible and quasi-projective, such that the following three things are true:
\begin{enumerate}[label=(\alph{*}),ref=(\alph{*})]
\item $\famA_0 = A$ for some point $0 \in B$.
	\label{en:Step1-a}
\item There is a Hodge class $\tilde{\alpha} \in H^{2p}(\famA, \QQ(p))$ whose
restriction to $A$ equals $\alpha$.
	\label{en:Step1-b}
\item For a dense set of $t \in B$, the abelian variety $\famA_t = \pi^{-1}(t)$ is of
CM-type.
	\label{en:Step1-c}
\end{enumerate}
\end{thm}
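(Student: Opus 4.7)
The plan is to realize $A$ together with $\alpha$ as a fiber over the Hodge locus of $\alpha$ inside the moduli space of polarized abelian varieties with level structure, and then to obtain (c) by invoking density of CM-points in Mumford-Tate domains.

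First, since $A$ is projective, I would fix a polarization of some degree $d$ on $A$ and a level-$N$ structure with $N\geq 3$; this gives a point $0\in\MgdN$ with $\mathcal{X}_0\simeq A$, where $\mathcal{X}\to\MgdN$ is the universal family. Inside the Hodge bundle $\mathcal{H}^{2p}=R^{2p}\pi_\ast\Omega_{\mathcal{X}/\MgdN}^\bullet$, the locus of Hodge classes is a countable union of closed algebraic subvarieties by the Cattani-Deligne-Kaplan theorem. Let $Z$ be its irreducible component passing through $(\alpha,0)$ and let $B_0\subset\MgdN$ be its image; after replacing $B_0$ by a finite \'etale cover $B\to B_0$ that trivializes the monodromy of $\alpha$, the class extends to a flat global section $\tilde\alpha$ of $R^{2p}\pi_\ast\QQ(p)$ over $B$ that is Hodge at every point. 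Taking $\pi\colon\mathcal{A}\to B$ to be the pullback of the universal family and using the Leray edge map to view $\tilde\alpha$ as a class in $H^{2p}(\mathcal{A},\QQ(p))$ establishes (a) and (b).

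For (c), I would pass to the analytic universal cover: a connected component of $\MgdN^{\mathrm{an}}$ is uniformized by the Siegel period domain $\mathbf{D}$ of $\psi$-polarized weight-$1$ Hodge structures on $V=H^1(A,\QQ)$, and a connected component $\widetilde B\subset\mathbf{D}$ of the preimage of $B$ is by construction the analytic Hodge locus of $\alpha$ in $\mathbf{D}$. Letting $\mathbf{M}\subset\mathrm{Sp}(V,\psi)$ denote the generic Mumford-Tate group on $\widetilde B$, and using that the Mumford-Tate group is locally constant outside a countable union of proper analytic subsets, one identifies $\widetilde B$ with a Mumford-Tate domain $\mathbf{M}(\RR)^+\cdot h$ for a generic Hodge morphism $h\colon U(1)\to \mathbf{M}(\RR)$. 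A point of $\widetilde B$ corresponds to an abelian variety of CM-type exactly when its Mumford-Tate group is contained in a $\QQ$-algebraic torus of $\mathbf{M}$, so the proof of (c) reduces to the density of such points in any Mumford-Tate domain.

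The main obstacle is this density statement. Given $h_0\in\widetilde B$ and a neighborhood of $h_0$, the argument will be to choose a maximal compact torus $T_0\subset\mathbf{M}(\RR)$ containing the image of $h_0(U(1))$, approximate $T_0$ by the real locus of a $\QQ$-anisotropic maximal torus $T\subset\mathbf{M}$ (invoking Borel's theorem that the $\QQ$-points are dense in the real variety of maximal tori of a reductive $\QQ$-group), and then conjugate $h_0$ by a small element $g\in\mathbf{M}(\RR)^+$ to produce a Hodge morphism factoring through $T(\RR)$. Because the whole procedure takes place inside $\mathbf{M}(\RR)^+$ and both $\psi$ and $\alpha$ are $\mathbf{M}$-invariant, the perturbed Hodge structure still lies in $\widetilde B$ and defines a CM-point arbitrarily close to the image of $h_0$ in $B$, completing the proof.
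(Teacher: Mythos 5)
Your proposal is correct and follows essentially the same route as the paper's proof: realize $A$ in a fine moduli space with level structure, cut out $B$ by a Hodge locus made algebraic via Cattani--Deligne--Kaplan, identify its preimage in the period domain with a Mumford--Tate domain, and obtain density of CM points by Borel's perturbation of a maximal torus of $M(\RR)$ to one defined over $\QQ$. The only (harmless) variation is that the paper takes $B$ to be the Hodge locus of a finite set of tensors cutting out $\MT(A)$, so that its preimage is tautologically the Mumford--Tate domain of $\MT(A)$, whereas you take the Hodge locus of $\alpha$ alone and must then identify its preimage with the orbit of the generic Mumford--Tate group of that component.
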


Before giving the proof, we shall briefly recall the following useful interpretation
for period domains. Say $D$ parametrizes all Hodge structures of weight $n$ on a
fixed rational vector space $V$ that are polarized by a given bilinear form
$\psi$. The set of real points of the group $G = \Aut(V, \psi)$ then acts transitively on
$D$ by the rule $(g H)^{p,q} = g \cdot H^{p,q}$, and so $D \simeq G(\RR) / K$.

Now points of $D$ are in one-to-one correspondence with homomorphisms of real
algebraic groups $h \colon U(1) \to G_{\RR}$, and we denote the Hodge structure
corresponding to $h$ by $V_h$. Then $V_h^{p,q}$ is exactly the subspace of $V
\tensor_{\QQ} \CC$ on which $h(z)$ acts as multiplication by $z^{p-q}$, and from
this, it is easy to verify that $g V_h = V_{g h g^{-1}}$. In other words, the points
of $D$ can be thought of as conjugacy classes of a fixed $h$ under the action of
$G(\RR)$.

\begin{proof}[Proof of Theorem~\ref{thm:Step1}]
After choosing a polarization $\theta \colon A \to \hat{A}$, we may assume that the
Hodge structure on $V = H^1(A, \QQ)$ is polarized by a Riemann form $\psi$. Let $G =
\Aut(V, \psi)$, and recall that $M = \MT(A)$ is the smallest $\QQ$-algebraic subgroup
of $G$ whose set of real points $M(\RR)$ contains the image of the homomorphism $h
\colon U(1) \to G(\RR)$.  Let $D$ be the period domain whose points parametrize all
possible Hodge structures of type $\{(1,0), (0,1)\}$ on $V$ that are polarized by the form $\psi$.
With $V_h = H^1(A)$ as the base point, we then have $D \simeq G(\RR) / K$; the points
of $D$ are thus exactly the Hodge structures $V_{g h g^{-1}}$, for $g \in G(\RR)$ arbitrary.

The main idea of the proof is to consider the Mumford-Tate domain
\[
	D_h = M(\RR) / K \cap M(\RR) \into D.
\]
By definition, $D_h$ consists of all Hodge structures of the form $V_{g h g^{-1}}$,
for $g \in M(\RR)$. As explained in Griffiths' lectures, these are precisely the
Hodge structures whose Mumford-Tate group is contained in $M$.

To find Hodge structures of CM-type in $D_h$, we appeal to a result by Borel.  Since
the image of $h$ is abelian, it is contained in a maximal torus $T$ of the real Lie
group $M(\RR)$. One can show that, for a generic element $\xi$ in the Lie algebra
$\mlie_{\RR}$, this torus is the stabilizer of $\xi$ under the adjoint action by
$M(\RR)$. Now $\mlie$ is defined over $\QQ$, and so there exist arbitrarily small
elements $g \in M(\RR)$ for which $\Ad(g) \xi = g \xi g^{-1}$ is rational.  The
stabilizer $g T g^{-1}$ of such a rational point is then a maximal torus in $M$ that
is defined over $\QQ$. The Hodge structure $V_{g h g^{-1}}$ is a point of the
Mumford-Tate domain $D_h$, and by definition of the Mumford-Tate group, we have
$\MT(V_{g h g^{-1}}) \subseteq T$. In particular, $V_{g h g^{-1}}$ is of CM-type,
because its Mumford-Tate group is abelian. This reasoning shows that $D_h$ contains a
dense set of points of CM-type.

To obtain an algebraic family of abelian varieties with the desired properties, we
can now argue as follows. Let $\mathcal{M}$ be the moduli space of abelian varieties
of dimension $\dim A$, with polarization of the same type as $\theta$, and level
$3$-structure. Then $\mathcal{M}$ is a smooth quasi-projective variety, and since it
is a fine moduli space, it carries a universal family. Now there are finitely many
Hodge tensors $\tau_1, \dotsc, \tau_r$ for $H^1(A)$, such that $M = \MT(A)$ is
exactly the subgroup of $G$ fixing every $\tau_i$. Let $B \subseteq \mathcal{M}$ be
the irreducible component of the Hodge locus of $\tau_1, \dotsc, \tau_r$ that
passes through the point $A$. By the theorem of Cattani-Deligne-Kaplan, $B$ is
again a quasi-projective variety. Let $\pi \colon \famA \to B$ be the restriction of
the universal family to $B$. Then \ref{en:Step1-a} is clearly
satisfied for this family.

Now $D$ is the universal covering space of $\mathcal{M}$, with the point $V_h =
H^1(A)$ mapping to $A$. By construction, the preimage of $B$ in $D$ is exactly the
Mumford-Tate domain $D_h$. Indeed, consider a Hodge structure $V_{g h g^{-1}}$ in the
preimage of $B$. By construction, every $\tau_i$ is a Hodge tensor for this Hodge
structure, which shows that $\MT(V_{g h g^{-1}})$ is contained in $M$. As explained
above, this implies that $V_{g h g^{-1}}$ belongs to $D_h$. Since $D_h$ contains a
dense set of Hodge structures of CM-type, \ref{en:Step1-c} follows. Since $B$ is also
contained in the Hodge locus of $\alpha$, we obtain \ref{en:Step1-b} after passing to
a finite cover.
\end{proof}

\subsection{Background on hermitian forms}

The second step in the proof of Deligne's theorem involves the construction of
special Hodge classes on abelian varieties of CM-type, the so-called split Weil
classes. This requires some background on hermitian forms, which we now provide.
Throughout, $E$ is a CM-field, with totally real subfield $F$ and complex conjugation
$e \mapsto \bar{e}$, and $S = \Hom(E, \CC)$ denotes the set of complex embeddings of
$E$. An element $\zeta \in \Eun$ is called \define{totally imaginary} if $\bar{\zeta}
= -\zeta$; concretely, this means that $\sb(\zeta) = - s(\zeta)$ for every complex
embedding $s$. Likewise, an element $f \in \Fun$ is said to be \define{totally
positive} if $s(f) > 0$ for every $s \in S$.

\begin{df}
Let $V$ be an $E$-vector space.
A $\QQ$-bilinear form $\phi \colon V \times V \to E$ is said to be
\define{$E$-hermitian} if $\phi(e \cdot v,w) = e \cdot \phi(v,w)$ and $\phi(v,w) =
\overline{\phi(w,v)}$ for every $v,w \in V$ and every $e \in E$.
\end{df}

Now suppose that $V$ is an $E$-vector space of dimension $d = \dim_E V$, and that
$\phi$ is an $E$-hermitian form on $V$. We begin by describing the numerical
invariants of the pair $(V, \phi)$. For any embedding $s \colon E \into \CC$, we
obtain a hermitian form $\phi_s$ (in the usual sense) on the complex vector space
$V_s = V \tensor_{E,s} \CC$. We let $a_s$ and $b_s$ be the dimensions of the maximal
subspaces where $\phi_s$ is, respectively, positive and negative definite.

A second invariant of $\phi$ is its discriminant. To define it, note that $\phi$
induces an $E$-hermitian form on the one-dimensional $E$-vector space $\wed{E}{d} V$,
which up to a choice of basis vector, is of the form $(x,y) \mapsto f x \bar{y}$. The
element $f$ belongs to the totally real subfield $F$, and a different choice of basis
vector only changes $f$ by elements of the form $\NmEF(e) = e \cdot \bar{e}$.
Consequently, the class of $f$ in $\Fun / \NmEF(\Eun)$ is well-defined, and is called
the \define{discriminant} of $(V, \phi)$. We denote it by the symbol $\disc \phi$.

Now suppose that $\phi$ is nondegenerate. Let $v_1, \dots, v_d$ be an orthogonal
basis for $V$, and set $c_i = \phi(v_i, v_i)$. Then we have $c_i \in \Fun$, and
\[
	a_s = \# \menge{i}{s(c_i) > 0} \quad \text{and} \quad
		b_s = \# \menge{i}{s(c_i) < 0}
\]
satisfy $a_s + b_s = d$. Moreover, we have
\[
	f = \prod_{i=1}^d c_i \mod \NmEF(\Eun);
\]
this implies that $\sgn \bigl( s(f) \bigr) = (-1)^{b_s}$ for every $s \in S$. The
following theorem by Landherr \cite{Landherr} shows that the discriminant
and the integers $a_s$ and $b_s$ are a complete set of invariants for $E$-hermitian
forms.

\begin{thm}[Landherr] \label{thm:Landherr}
Let $a_s, b_s \geq 0$ be a collection of integers, indexed by the set $S$, and let $f
\in \Fun / \NmEF(\Eun)$ be an arbitrary element.
Suppose that they satisfy $a_s + b_s = d$ and $\sgn \bigl( s(f) \bigr) = (-1)^{b_s}$
for every $s \in S$. Then there exists a nondegenerate $E$-hermitian form $\phi$ on
an $E$-vector space $V$ of dimension $d$ with these invariants; moreover, $(V, \phi)$
is unique up to isomorphism.
\end{thm}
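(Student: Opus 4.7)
The plan is to prove existence and uniqueness simultaneously by induction on $d = \dim_E V$, the engine being a diagonalization of $\phi$ combined with weak approximation for $\Fun$.

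For existence, I would realize the form diagonally on $V = E^d$ by $\phi(x, y) = \sum_{i=1}^{d} c_i x_i \bar y_i$ for suitable $c_i \in \Fun$. The signature at each embedding $s \in S$ is then $(a_s, b_s)$ with $a_s = \#\{i : s(c_i) > 0\}$ and $b_s = \#\{i : s(c_i) < 0\}$, while the discriminant is $\prod_i c_i$ modulo $\NmEF(\Eun)$. Since every norm $\NmEF(e) = e \bar e$ is totally positive, modifying $c_i$ by a norm never changes the sign pattern, so the two invariants decouple in a controllable way. Using weak approximation for $\Fun$, I would first pick $c_1, \dotsc, c_{d-1} \in \Fun$ with prescribed signs at every archimedean place of $F$, arranging that each $s$ already sees the correct number of positive and negative entries up to one slot. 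The remaining sign of $s(c_d)$ is then forced both by the target signature $(a_s, b_s)$ and by the target discriminant $f$, and the hypothesis $\sgn(s(f)) = (-1)^{b_s}$ is exactly what makes these two constraints compatible. One more application of weak approximation produces $c_d \in \Fun$ with the correct signs and $\prod_i c_i \equiv f$ in $\Fun / \NmEF(\Eun)$.

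For uniqueness I would use Witt-style cancellation: split off a common one-dimensional orthogonal summand from $(V, \phi)$ and $(V', \phi')$ and invoke the induction hypothesis on the complements. The key ingredient is the following representation lemma. Call $c \in \Fun$ \emph{admissible} if $\sgn(s(c)) = +1$ whenever $b_s = 0$ and $\sgn(s(c)) = -1$ whenever $a_s = 0$. Then every admissible $c$ is represented by $\phi$, meaning $c = \phi(v, v)$ for some $v \in V$. I would deduce this from a local-global argument: at each archimedean place $s$ of $F$, the hermitian form $\phi_s$ on $V \tensor_{E, s} \CC$ is classified by its signature, so it represents every nonzero real value when both $a_s \geq 1$ and $b_s \geq 1$, and only the forced sign in the definite case. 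Admissibility is precisely the local solvability, and the Hasse principle for hermitian forms over the CM extension $E/F$ then upgrades this to a global representation. Granting the lemma, I would pick any admissible $c$ and vectors $v \in V$, $v' \in V'$ with $\phi(v, v) = \phi'(v', v') = c$. The orthogonal complements $v^{\perp} \subset V$ and $(v')^{\perp} \subset V'$ are nondegenerate of dimension $d - 1$, and a direct computation shows they share the same signatures (obtained from $(a_s, b_s)$ by decrementing the coordinate forced by $\sgn(s(c))$) and the same discriminant $f \cdot c$ modulo $\NmEF(\Eun)$. Induction yields an isometry $v^{\perp} \simeq (v')^{\perp}$, which extends to an isometry $V \simeq V'$ by sending $v \mapsto v'$.

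The main obstacle I expect is the representation lemma, because that is where the delicate interplay between local signs at the archimedean places of $F$ and the norm subgroup $\NmEF(\Eun) \subset \Fun$ genuinely enters; the diagonalization and Witt-cancellation arguments are routine bookkeeping once one knows that every admissible $c$ is represented by $\phi$. In particular, the sign compatibility $\sgn(s(f)) = (-1)^{b_s}$ in the hypothesis is precisely the global shadow of the local representation conditions, and its role is what ties the whole proof together.
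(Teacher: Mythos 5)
The paper does not actually prove this statement: it is quoted as a classical result and attributed to Landherr's 1936 paper, so there is no internal argument to compare yours against. Judged on its own terms, your outline is the standard proof and its bookkeeping is sound. The existence step is correct as described: choose $c_1,\dotsc,c_{d-1}$ by weak approximation so that at each $s$ the number of negative entries is $b_s$ or $b_s-1$, then set $c_d = f\cdot(c_1\cdots c_{d-1})^{-1}$; the hypothesis $\sgn(s(f)) = (-1)^{b_s}$ is exactly what makes the forced sign of $s(c_d)$ agree with the signature requirement, and since $c^{-1}\equiv c \bmod \NmEF(\Eun)$ for $c\in\Fun$, the discriminant comes out right. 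The Witt-cancellation step and the computation of the invariants of $v^{\perp}$ (signatures decremented according to $\sgn(s(c))$, discriminant $fc$) are also correct, including the check that the induction hypothesis applies to the complement.

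The one point you should be more careful about is the representation lemma, which you correctly identify as the crux but then dispose of by invoking ``the Hasse principle for hermitian forms over $E/F$.'' That principle is essentially Landherr's theorem itself --- the local-global classification and the local-global representation theorem for hermitian forms over a quadratic extension of a number field are theorems of the same depth, proved together --- so as written the argument is circular. To make it a genuine proof you need an independent source of the local-global input. The standard route is the trace-form correspondence (Jacobson/Landherr): a hermitian form $\phi$ of rank $d$ over $E/F$ is determined up to isometry by the quadratic form $\Tr_{E/F}\circ\phi$ of rank $2d$ over $F$, whose invariants (rank, discriminant, Hasse symbols, signatures) are computed from $\disc\phi$ and the $(a_s,b_s)$; one then quotes Hasse--Minkowski for quadratic forms over $F$. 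You should also note, when claiming ``admissibility is precisely the local solvability,'' that the nonarchimedean local conditions are vacuous only because a nondegenerate hermitian form of rank at least $2$ over a local field is universal; that fact needs to be stated (it is what confines the obstruction to the archimedean places and to the single discriminant class). With the local-global input supplied by the quadratic-form reduction rather than by fiat, your skeleton is a complete and correct proof.
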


This classical result has the following useful consequence.

\begin{cor} \label{cor:split}
If $(V, \phi)$ is nondegenerate, then the following two conditions are equivalent:
\begin{enumerate}[label=(\alph{*}),ref=(\alph{*})]
\item $a_s = b_s = d/2$ for every $s \in S$, and $\disc \phi = (-1)^{d/2}$.
	\label{en:split-a}
\item There is a totally isotropic subspace of $V$ of dimension $d/2$.
	\label{en:split-b}
\end{enumerate}
\end{cor}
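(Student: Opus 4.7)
The plan is to prove each implication separately, using Landherr's classification (Theorem~\ref{thm:Landherr}) for one direction and an explicit hyperbolic basis for the other.

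For the implication \ref{en:split-b} $\Rightarrow$ \ref{en:split-a}, suppose $W \subseteq V$ is a totally isotropic subspace with $\dim_E W = d/2$. For each embedding $s \in S$, the complexified subspace $W_s \subseteq V_s$ is totally isotropic for the hermitian form $\phi_s$ and has complex dimension $d/2$. Since $\phi_s$ is nondegenerate, the maximal isotropic subspaces of $V_s$ have dimension $\min(a_s, b_s)$, so $d/2 \leq \min(a_s, b_s)$; combined with $a_s + b_s = d$, this forces $a_s = b_s = d/2$. For the discriminant, I would use nondegeneracy of $\phi$ to produce a complementary totally isotropic subspace $W' \subseteq V$ of dimension $d/2$ such that the pairing $\phi \colon W \times W' \to E$ is perfect. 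Choose an $E$-basis $e_1, \dotsc, e_{d/2}$ of $W$ and the dual basis $f_1, \dotsc, f_{d/2}$ of $W'$ satisfying $\phi(e_i, f_j) = \delta_{ij}$. In the combined basis $e_1, \dotsc, e_{d/2}, f_1, \dotsc, f_{d/2}$ the Gram matrix of $\phi$ is the antidiagonal $\bigl(\begin{smallmatrix} 0 & I \\ I & 0 \end{smallmatrix}\bigr)$, whose determinant equals $(-1)^{d/2}$; thus $\disc \phi = (-1)^{d/2}$ in $\Fun / \NmEF(\Eun)$.

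For the converse \ref{en:split-a} $\Rightarrow$ \ref{en:split-b}, I would invoke Landherr's theorem. Let $V_0 = E^{d}$ with the standard hyperbolic form $\phi_0$ whose Gram matrix is $\bigl(\begin{smallmatrix} 0 & I \\ I & 0 \end{smallmatrix}\bigr)$. By the previous paragraph, $(V_0, \phi_0)$ has invariants $a_s = b_s = d/2$ and $\disc \phi_0 = (-1)^{d/2}$ for every $s \in S$, and $V_0$ obviously contains a totally isotropic subspace of dimension $d/2$ (namely the span of the first $d/2$ basis vectors). Since by assumption $(V, \phi)$ has the same numerical invariants as $(V_0, \phi_0)$, Theorem~\ref{thm:Landherr} yields an isomorphism $(V, \phi) \simeq (V_0, \phi_0)$, and this isomorphism transports the totally isotropic subspace of $V_0$ to one in $V$.

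The main subtlety, which amounts to the only genuine verification, is the discriminant computation: one has to check that an antidiagonal identity matrix of size $d \times d$ contributes the sign $(-1)^{d/2}$ (via $d/2$ row swaps) and that this sign is a well-defined class in $\Fun / \NmEF(\Eun)$, i.e.\ independent of the choice of hyperbolic basis, which follows from the fact that changing bases within $W$ and $W'$ multiplies the determinant by an element of $\NmEF(\Eun)$. With the discriminant pinned down, both implications reduce to straightforward applications of Landherr's theorem and linear algebra over $E$.
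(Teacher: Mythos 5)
Your proposal is correct and follows essentially the same route as the paper: for \ref{en:split-b} $\Rightarrow$ \ref{en:split-a} one completes the isotropic subspace to a hyperbolic basis with Gram matrix $\bigl(\begin{smallmatrix} 0 & I \\ I & 0 \end{smallmatrix}\bigr)$ and reads off the invariants, and for the converse one invokes Landherr's uniqueness applied to the standard hyperbolic space $(E^{\oplus d},\phi_0)$. The only cosmetic difference is that you extract $a_s=b_s=d/2$ directly from the complexified isotropic subspace rather than from the explicit basis; both are fine.
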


\begin{proof}
If $W \subseteq V$ is a totally isotropic subspace of dimension $d/2$, then $v
\mapsto \phi(\argbl, v)$ induces an antilinear isomorphism $V/W \xrightarrow{\sim}
W^{\vee}$. Thus we can extend a basis $v_1, \dotsc, v_{d/2}$ of $W$ to a basis $v_1,
\dotsc, v_d$ of $V$, with the property that
\begin{align*}
	\phi(v_i, v_{i+d/2}) = 1   \qquad & \text{for $1 \leq i \leq d/2$,}	\\
	\phi(v_i, v_j) = 0         \qquad & \text{for $\abs{i-j} \neq d/2$.}
\end{align*}
We can use this basis to check that \ref{en:split-a} is satisfied. For the converse,
consider the hermitian space $(E^{\oplus d}, \phi)$, where 
\[
	\phi(x, y) = \sum_{1 \leq i \leq d/2}
		\bigl( x_i \bar{y}_{i+d/2} + x_{i+d/2} \bar{y}_i \bigr)
\]
for every $x,y \in E^{\oplus d}$. By Landherr's theorem, this space is (up to
isomorphism) the unique hermitian space satisfying \ref{en:split-a}, and it is easy
to see that it satisfies \ref{en:split-b}, too.
\end{proof}

\begin{df}
An $E$-hermitian form $\phi$ that satisfies the two equivalent conditions in
Corollary~\ref{cor:split} is said to be \define{split}.
\end{df}

We shall see below that $E$-hermitian forms are related to polarizations on Hodge
structures of CM-type. We now describe one additional technical result that shall be
useful in that context. Suppose that $V$ is a Hodge structure of type $\{(1,0), (0,1)\}$
that is of CM-type and whose endomorphism ring contains $E$; let $h \colon U(1) \to
\Eun$ be the corresponding homomorphism.
Recall that a \define{Riemann form} for $V$ is a $\QQ$-bilinear antisymmetric form
$\psi \colon V \times V \to \QQ$, with the property that
\[
	(x, y) \mapsto \psi \bigl( x, h(i) \cdot \bar{y} \bigr)
\]
is hermitian and positive definite on $V \tensor_{\QQ} \CC$. We only consider Riemann
forms whose Rosati involution induces complex conjugation on $E$; that is, which
satisfy
\[
	\psi(e v, w) = \psi(v, \bar{e} w).
\]

\begin{lem} \label{lem:phi-psi}
Let $\zeta \in \Eun$ be a totally imaginary element ($\bar{\zeta} = -\zeta$), and let
$\psi$ be a Riemann form for $V$ as above. Then there exists a unique $E$-hermitian
form $\phi$ with the property that $\psi = \TrEQ(\zeta \phi)$.
\end{lem}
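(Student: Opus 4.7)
The plan is to use the non-degeneracy of the trace pairing $E \times E \to \QQ$, $(a,b) \mapsto \TrEQ(ab)$, to first reconstruct an auxiliary $E$-valued pairing $\Phi$ from $\psi$, and then twist it by $\zeta^{-1}$ to obtain the desired hermitian form.

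First, for fixed $v,w \in V$, the map $E \to \QQ$, $e \mapsto \psi(ev,w)$, is $\QQ$-linear. Since the trace form on $E$ is non-degenerate, there is a unique element $\Phi(v,w) \in E$ with $\psi(ev,w) = \TrEQ(e \Phi(v,w))$ for every $e \in E$. Varying $v$ and $w$, this defines a $\QQ$-bilinear form $\Phi \colon V \times V \to E$. Taking $e = e'$ and replacing $v$ by $e'v$ shows immediately that $\Phi$ is $E$-linear in the first variable.

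Next I would check that $\Phi$ is skew-hermitian, in the sense that $\Phi(w,v) = -\overline{\Phi(v,w)}$. Using antisymmetry of $\psi$, the compatibility $\psi(e v, w) = \psi(v, \bar e w)$ with the Rosati involution, and the Galois-invariance $\TrEQ(\bar x) = \TrEQ(x)$, one computes
\[
	\TrEQ\bigl(e \Phi(w,v)\bigr) = \psi(ew,v) = -\psi(v,ew) = -\TrEQ\bigl(\bar e\,\Phi(v,w)\bigr) = -\TrEQ\bigl(e\,\overline{\Phi(v,w)}\bigr)
\]
for all $e \in E$, and non-degeneracy of the trace gives the claim. Now set $\phi = \zeta^{-1} \Phi$. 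Since $\bar\zeta = -\zeta$, one finds
\[
	\phi(w,v) = \zeta^{-1}\Phi(w,v) = -\zeta^{-1}\overline{\Phi(v,w)} = -\zeta^{-1}\bar\zeta\,\overline{\phi(v,w)} = \overline{\phi(v,w)},
\]
and $E$-linearity in the first variable is inherited from $\Phi$, so $\phi$ is $E$-hermitian. Finally, $\TrEQ(\zeta \phi(v,w)) = \TrEQ(\Phi(v,w)) = \psi(v,w)$ by the defining property of $\Phi$ with $e = 1$.

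Uniqueness is immediate from the same non-degeneracy argument: if $\phi'$ is another such form, then $\phi - \phi'$ is $E$-linear in the first variable and $\TrEQ\bigl(\zeta(\phi-\phi')(v,w)\bigr) = 0$; replacing $v$ by $ev$ gives $\TrEQ\bigl(e\zeta(\phi-\phi')(v,w)\bigr) = 0$ for all $e \in E$, forcing $\phi = \phi'$. There is no real obstacle here; the only subtle point is making sure the sign worked out so that the totally imaginary $\zeta$ converts skew-hermitian into hermitian, which is exactly why the hypothesis $\bar\zeta = -\zeta$ appears in the statement.
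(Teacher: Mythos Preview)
Your argument is correct and follows essentially the same route as the paper: construct an $E$-valued form from $\psi$ via the non-degeneracy of the trace pairing, twist by $\zeta^{-1}$, and then verify the hermitian symmetry using the antisymmetry of $\psi$ together with the Rosati compatibility $\psi(ev,w)=\psi(v,\bar e w)$. The only cosmetic difference is that the paper factors the first step through a separate lemma (an isomorphism $\Hom_E(V\otimes_E W,E)\simeq\Hom_{\QQ}(V\otimes_E W,\QQ)$, proved by a dimension count, applied with the conjugate $E$-action on the second factor), whereas you build $\Phi(v,w)$ pointwise; the content is identical.
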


We begin with a simpler statement.

\begin{lem}
Let $V$ and $W$ be finite-dimensional vector spaces over $E$, and let $\psi \colon V
\times W \to \QQ$ be a $\QQ$-bilinear form such that $\psi(ev,w) = \psi(v,ew)$ for
every $e \in E$. Then there exists a unique $E$-bilinear form $\phi$ such that
$\psi(v,w) = \TrEQ \phi(v,w)$.
\end{lem}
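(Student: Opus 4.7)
The plan is to exploit the non-degeneracy of the trace form $E \times E \to \QQ$, $(e,e') \mapsto \TrEQ(e e')$, which follows from separability of $E/\QQ$ and induces a $\QQ$-linear isomorphism $E \xrightarrow{\sim} \Hom_{\QQ}(E, \QQ)$. So for any fixed $v \in V$ and $w \in W$, I would define $\phi(v,w)$ as the unique element of $E$ characterized by the condition
\[
	\TrEQ \bigl( e \cdot \phi(v,w) \bigr) = \psi(ev, w) \quad \text{for every } e \in E.
\]
Setting $e = 1$ recovers the required identity $\psi(v,w) = \TrEQ \phi(v,w)$.

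Next I would verify $E$-bilinearity. For $E$-linearity in the first variable, note that for every $e, e' \in E$,
\[
	\TrEQ \bigl( e \cdot \phi(e' v, w) \bigr) = \psi(e e' v, w) = \TrEQ \bigl( e e' \cdot \phi(v,w) \bigr)
		= \TrEQ \bigl( e \cdot e' \phi(v,w) \bigr),
\]
so non-degeneracy of the trace form forces $\phi(e'v, w) = e' \phi(v,w)$. For $E$-linearity in the second variable, I would invoke the hypothesis $\psi(ev, w) = \psi(v, ew)$: the analogous computation gives
\[
	\TrEQ \bigl( e \cdot \phi(v, e'w) \bigr) = \psi(ev, e'w) = \psi(e e' v, w)
		= \TrEQ \bigl( e \cdot e' \phi(v,w) \bigr),
\]
hence $\phi(v, e'w) = e' \phi(v,w)$. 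Additivity in each variable is immediate from additivity of $\psi$ and uniqueness in the defining relation.

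For uniqueness, suppose $\phi'$ is another $E$-bilinear form with $\TrEQ \phi'(v,w) = \psi(v,w)$. Then for every $e \in E$, $E$-linearity gives
\[
	\TrEQ \bigl( e \cdot \phi'(v,w) \bigr) = \TrEQ \phi'(ev, w) = \psi(ev, w) = \TrEQ \bigl( e \cdot \phi(v,w) \bigr),
\]
and the non-degeneracy of the trace form yields $\phi' = \phi$. No step here is a serious obstacle; the only mild subtlety is remembering to use the symmetry hypothesis $\psi(ev,w) = \psi(v,ew)$ at exactly the point where one wants $E$-linearity in the second argument, since otherwise the construction would only produce a form that is $E$-linear on one side.
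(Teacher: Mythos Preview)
Your proof is correct and rests on the same key fact as the paper's, namely the nondegeneracy of the trace pairing $E \times E \to \QQ$. The packaging differs slightly: the paper observes that the condition $\psi(ev,w) = \psi(v,ew)$ means $\psi$ factors through $V \tensor_E W$, and then argues by a dimension count that composition with $\TrEQ$ gives an isomorphism $\Hom_E(V \tensor_E W, E) \xrightarrow{\sim} \Hom_{\QQ}(V \tensor_E W, \QQ)$, so $\phi$ exists and is unique automatically. Your version instead constructs $\phi(v,w)$ pointwise and checks $E$-bilinearity by hand, using the balancing hypothesis exactly where it is needed for the second variable. Both arguments are equivalent unwindings of the same linear algebra; the paper's is a touch more conceptual, yours a touch more explicit.
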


\begin{proof}
The trace pairing $E \times E \to \QQ$, $(x,y) \mapsto \TrEQ(xy)$, is nondegenerate.
Consequently, composition with $\TrEQ$ induces an injective homomorphism
\[
	\Hom_E \bigl( V \tensor_E W, E \bigr)
		\to \Hom_{\QQ} \bigl( V \tensor_E W, \QQ \bigr),
\]
which has to be an isomorphism because both vector spaces have the same dimension
over $\QQ$.  By assumption, $\psi$ defines a $\QQ$-linear map $V \tensor_E W \to
\QQ$, and we let $\phi$ be the element of $\Hom_E \bigl( V \tensor_E W, E \bigr)$
corresponding to $\psi$ under the above isomorphism.
\end{proof}

\begin{proof}[Proof of Lemma~\ref{lem:phi-psi}]
We apply the preceding lemma with $W = V$, but with $E$ acting on $W$ through complex
conjugation. This gives a sesquilinear form $\phi_1$ such that $\psi(x,y) = \TrEQ
\phi_1(x,y)$. Now define $\phi = \zeta^{-1} \phi_1$, so that we have $\psi(x,y) =
\TrEQ \bigl( \zeta \phi(x,y) \bigr)$. The uniqueness of $\phi$ is obvious from the
preceding lemma.

It remains to show that we have $\phi(y,x) = \overline{\phi(x,y)}$. Because $\psi$ is
antisymmetric, $\psi(y,x) = -\psi(x,y)$, which implies that
\[
	\TrEQ \bigl( \zeta \phi(y,x) \bigr) = - \TrEQ \bigl( \zeta \phi(x,y) \bigr)
		= \TrEQ \bigl( \bar{\zeta} \phi(x,y) \bigr).
\]
On replacing $y$ by $ey$, for arbitrary $e \in E$, we obtain
\[
	\TrEQ \bigl( \zeta e \cdot \phi(y,x) \bigr)
		= \TrEQ \bigl( \overline{\zeta e} \cdot \phi(x,y) \bigr).
\]
On the other hand, we have
\[
	\TrEQ \bigl( \zeta e \cdot \phi(y,x) \bigr)
		= \TrEQ \bigl( \overline{\zeta e \cdot \phi(y,x)} \bigr)
		= \TrEQ \bigl( \overline{\zeta e} \cdot \overline{\phi(y,x)} \bigr).
\]
Since $\overline{\zeta e}$ can be an arbitrary element of $E$, the nondegeneracy of
the trace pairing implies that $\phi(x,y) = \overline{\phi(y,x)}$.
\end{proof}

\subsection{Construction of split Weil classes}

Let $E$ be a CM-field; as usual, we let $S = \Hom(E, \CC)$ be the set of complex
embeddings; it has $\Qdeg{E}$ elements.

Let $V$ be a rational Hodge structure of type $\{(1,0), (0,1)\}$ whose
endomorphism algebra contains $E$. We shall assume that $\dim_E V = d$ is an even
number. Let $V_s = V \tensor_{E, s} \CC$. Corresponding to the decomposition
\[
	E \tensor_{\QQ} \CC \xrightarrow{\sim} \bigoplus_{s \in S} \CC, \quad
		e \tensor z \mapsto \sum_{s \in S} s(e) z,
\]
we get a decomposition
\[
	V \tensor_{\QQ} \CC \simeq \bigoplus_{s \in S} V_s.
\]
The isomorphism is $E$-linear, where $e \in E$ acts on the complex vector space $V_s$
as multiplication by $s(e)$. Since $\dim_{\QQ} V = \Qdeg{E} \cdot \dim_E V$, each
$V_s$ has dimension $d$ over $\CC$. By assumption, $E$ respects the Hodge
decomposition on $V$, and so we get an induced decomposition
\[
	V_s = V_s^{1,0} \oplus V_s^{0,1}.
\]
Note that $\dim_{\CC} V_s^{1,0} + \dim_{\CC} V_s^{0,1} = d$.

\begin{lem} \label{lem:Weil}
The rational subspace $\wed{E}{d} V \subseteq \wed{\QQ}{d} V$ is purely of type
$(d/2,d/2)$ if and only if $\dim_{\CC} V_s^{1,0} = \dim_{\CC} V_s^{0,1} = d/2$ for every $s \in S$.
\end{lem}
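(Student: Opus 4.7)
First I would unpack the meaning of the inclusion $\wed{E}{d} V \subseteq \wed{\QQ}{d} V$ at the level of $\CC$-coefficients. Since $\wed{E}{d} V$ is one-dimensional over $E$, base change along $E \tensor_{\QQ} \CC \simeq \bigoplus_{s \in S} \CC$ yields
\[
	\wed{E}{d} V \tensor_{\QQ} \CC
		\simeq \bigoplus_{s \in S} \wed{E}{d} V \tensor_{E,s} \CC
		\simeq \bigoplus_{s \in S} \wed{\CC}{d} V_s,
\]
using $V \tensor_{E,s} \CC = V_s$ and compatibility of exterior powers with base change. Viewing each $\wed{\CC}{d} V_s$ as a summand of $\wed{\CC}{d}(V \tensor_{\QQ} \CC) = \wed{\QQ}{d} V \tensor_{\QQ} \CC$ in the obvious way, this realizes $\wed{E}{d} V \tensor \CC$ as a subspace of $\wed{\QQ}{d} V \tensor \CC$. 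Dimensions match, since both sides have $\CC$-dimension $\Qdeg{E}$.

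Next I would determine the Hodge type of each $\wed{\CC}{d} V_s$. Set $a_s = \dim_{\CC} V_s^{1,0}$ and $b_s = \dim_{\CC} V_s^{0,1}$, so $a_s + b_s = d$. From the Hodge splitting $V_s = V_s^{1,0} \oplus V_s^{0,1}$, the standard expansion of the top exterior power gives
\[
	\wed{\CC}{d} V_s = \bigoplus_{p+q = d} \wed{\CC}{p} V_s^{1,0} \tensor_{\CC} \wed{\CC}{q} V_s^{0,1}
		= \wed{\CC}{a_s} V_s^{1,0} \tensor_{\CC} \wed{\CC}{b_s} V_s^{0,1},
\]
all other summands being zero. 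Hence $\wed{\CC}{d} V_s$ is one-dimensional and sits purely in bidegree $(a_s, b_s)$ of the Hodge decomposition of $\wed{\QQ}{d} V \tensor \CC$.

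Combining the two displays, $\wed{E}{d} V$ is of pure Hodge type $(d/2, d/2)$ if and only if every summand $\wed{\CC}{d} V_s$ is of type $(d/2, d/2)$, which is equivalent to $a_s = b_s = d/2$ for every $s \in S$. The only real subtlety lies in the first step---making precise the ``inclusion'' $\wed{E}{d} V \subseteq \wed{\QQ}{d} V$ via the identification $\wed{E}{d} V \tensor \CC \simeq \bigoplus_s \wed{\CC}{d} V_s$; once this is in hand, the conclusion is an immediate application of the K\"unneth-type formula for the top exterior power of a direct sum, so I do not expect any further difficulty.
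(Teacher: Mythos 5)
Your proof is correct and follows the same route as the paper: tensor $\wed{E}{d} V$ with $\CC$, decompose it as $\bigoplus_{s \in S} \wed{\CC}{d} V_s$, and observe that each summand is one-dimensional of Hodge type $(\dim_{\CC} V_s^{1,0}, \dim_{\CC} V_s^{0,1})$. Your version merely spells out a couple of steps (the one-dimensionality over $E$ and the vanishing of the other bidegree summands of the top exterior power) that the paper leaves implicit.
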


\begin{proof}
We have
\[
	\Bigl( \wed{E}{d} V \Bigr) \tensor_{\QQ} \CC
		\simeq \wed{E \tensor_{\QQ} \CC}{d} V \tensor_{\QQ} \CC
		\simeq \bigoplus_{s \in S} \wed{\CC}{d} V_s
		\simeq \bigoplus_{s \in S} \Bigl( \wed{\CC}{p_s} V_s^{1,0} \Bigr)
			\tensor \Bigl( \wed{\CC}{q_s} V_s^{0,1} \Bigr),
\]
where $p_s = \dim_{\CC} V_s^{1,0}$ and $q_s = \dim_{\CC} V_s^{0,1}$. The assertion
follows because the Hodge type of each summand is evidently $(p_s, q_s)$.
\end{proof}

We will now describe a condition on $V$ that guarantees that the space $\wed{E}{d}
V$ consists entirely of Hodge cycles.

\begin{df} \label{def:split-Weil-type}
Let $V$ be a rational Hodge structure of type $\{(1,0), (0,1)\}$ with
$E \into \End_{\QHS}(V)$ and $\dim_E V = d$. We say that $V$ is \define{of split
Weil type relative to $E$} if there exists an $E$-hermitian form $\phi$ on $V$ with a
totally isotropic subspace of dimension $d/2$, and a totally imaginary element $\zeta
\in E$, such that $\TrEQ(\zeta \phi)$ defines a polarization on $V$.
\end{df}

According to Corollary~\ref{cor:split}, the condition on the $E$-hermitian form $\phi$ is
the same as saying that the pair $(V, \phi)$ is split.

\begin{prop} \label{prop:split-Hodge}
If $V$ is of split Weil type relative to $E$, and $\dim_E V = d$ is even, then the space
\[
	\wed{E}{d} V \subseteq \wed{\QQ}{d} V
\]
consists of Hodge classes of type $(d/2, d/2)$.
\end{prop}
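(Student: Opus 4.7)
By Lemma \ref{lem:Weil}, it is enough to show that $\dim_{\CC} V_s^{1,0} = \dim_{\CC} V_s^{0,1} = d/2$ for every complex embedding $s \in S$. The plan is to translate the split hypothesis on the $E$-hermitian form $\phi$, together with the polarization condition $\psi = \TrEQ(\zeta \phi)$, into a pair of conditions on each $V_s$ that pin down the Hodge numbers.

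First, I would decompose $\phi$ along the embeddings. Setting $R = E \tensor_{\QQ} \CC \simeq \prod_{s \in S} \CC_s$, the form $\phi_{\CC} \colon V_{\CC} \times V_{\CC} \to R$ is $E$-linear in the first slot and conjugate-$E$-linear in the second, which forces $\phi_{\CC}(V_s, V_t) = 0$ unless $t = \sb$, with values in the $s$-component $\CC_s$. Writing $\phi_s$ for the induced pairing $V_s \times V_{\sb} \to \CC$, the hermitian symmetry of $\phi$ translates into $\phi_s(x,y) = \phi_{\sb}(y,x)$. Combining this with the real structure $\bar{V_s} = V_{\sb}$ produces a genuine hermitian form $H_s(x,y) := \phi_s(x, \bar y)$ on the complex vector space $V_s$; its signature coincides with the invariants $(a_s, b_s)$ of $\phi$ at $s$, hence equals $(d/2, d/2)$ by the split assumption.

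Next I would read off the polarization condition. Since $\TrEQ$ corresponds to summing components in $R$ and $\zeta$ acts as $s(\zeta) = i r_s$ with $r_s \in \RR^{\times}$ (totally imaginary), for $v = \sum_s v_s \in V_{\CC}$ with $v_s \in V_s$ one computes
\[
	i \psi_{\CC}(v, \bar v) \;=\; -\sum_{s \in S} r_s H_s(v_s, v_s).
\]
Taking $v \in V_s^{1,0}$ and then $v \in V_s^{0,1}$ (using $i\psi(v,\bar v) < 0$ on $V^{0,1}$, which follows from antisymmetry), one sees that $H_s$ is definite on $V_s^{1,0}$ with sign $-\sgn(r_s)$ and definite on $V_s^{0,1}$ with the opposite sign. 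Meanwhile the condition $\psi_{\CC}(V^{1,0}, V^{1,0}) = 0$ gives $\phi_s(V_s^{1,0}, V_{\sb}^{1,0}) = 0$, which via $\overline{V_{\sb}^{1,0}} = V_s^{0,1}$ is precisely $H_s$-orthogonality of $V_s^{1,0}$ and $V_s^{0,1}$ inside $V_s$.

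Putting these together, $V_s = V_s^{1,0} \oplus V_s^{0,1}$ is an $H_s$-orthogonal decomposition into a positive and a negative definite piece, so the signature of $H_s$ is $(\dim V_s^{1,0}, \dim V_s^{0,1})$ or its reverse. Matching this with the split signature $(d/2, d/2)$ forces $\dim V_s^{1,0} = \dim V_s^{0,1} = d/2$, and the proposition follows from Lemma \ref{lem:Weil}. The main technical point, and the place requiring the most care, is the bookkeeping in the decomposition of $\phi_{\CC}$ along $S$ and the verification that complex conjugation on $V_{\CC}$ interacts with it so as to yield a true hermitian form $H_s$ whose signature agrees with the invariants of $\phi$; once this is set up, the polarization identity above does all the work.
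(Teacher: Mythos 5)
Your proposal is correct and follows essentially the same route as the paper: reduce to computing $\dim_{\CC} V_s^{1,0}$ via Lemma~\ref{lem:Weil}, use the split hypothesis to get signature $(d/2,d/2)$ for the hermitian form on each $V_s$, and use the polarization $\psi = \TrEQ(\zeta\phi)$ to show that form is definite of opposite signs on $V_s^{1,0}$ and $V_s^{0,1}$. The only (harmless) difference is that you add the $H_s$-orthogonality of the two pieces to identify the signature exactly, whereas the paper just bounds $\dim V_s^{1,0}, \dim V_s^{0,1} \leq d/2$ by definiteness and concludes since the dimensions sum to $d$.
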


\begin{proof}
Since $\psi = \TrEQ(\zeta \phi)$ defines a polarization, $\phi$ is nondegenerate; by
Corollary~\ref{cor:split}, it follows that $(V, \phi)$ is split. Thus for any complex
embedding $s \colon E \into \CC$, we have $a_s = b_s = d/2$. Let $\phi_s$ be the
induced hermitian form on $V_s = V \tensor_{E,s} \CC$. By Lemma~\ref{lem:Weil},
it suffices to show that $\dim_{\CC} V_s^{1,0} = \dim_{\CC} V_s^{0,1} = d/2$. By
construction, the isomorphism
\[
	\alpha \colon V \tensor_{\QQ} \CC \xrightarrow{\sim} \bigoplus_{s \in S} V_s
\]
respects the Hodge decompositions on both sides. For any $v \in V$, we have
\[
	\psi(v,v) = \TrEQ \bigl( \zeta \phi(v,v) \bigr)
		= \sum_{s \in S} s(\zeta) \cdot s \bigl( \phi(v,v) \bigr)
		= \sum_{s \in S} s(\zeta) \cdot \phi_s(v \tensor 1, v \tensor 1).
\]
Now if we choose a nonzero element $x \in V_s^{1,0}$, then under the above isomorphism,
\[
	-s(\zeta) i \cdot \phi_s(x, \bar{x})
		= \psi \bigl( \alpha^{-1}(x), h(i) \cdot \overline{\alpha^{-1}(x)} \bigr) > 0
\]
Likewise, we have $s(\zeta) i \cdot \phi_s(x, \bar{x}) > 0$ for $x \in V_s^{0,1}$ nonzero.
Consequently, $\dim_{\CC} V_s^{1,0}$ and $\dim_{\CC} V_s^{0,1}$ must both be less
than or equal to $d/2 = a_s = b_s$; since their dimensions add up to $d$, we get the
desired result.
\end{proof}

\subsection{Andr\'e's theorem and reduction to split Weil classes}

The second step in the proof of Deligne's theorem is to reduce the problem from
arbitrary Hodge classes on abelian varieties of CM-type to Hodge classes of split
Weil type. This is accomplished by the following pretty theorem due to Yves Andr\'e in
\cite{An-CM}.

\begin{thm}[Andr\'e]
Let $V$ be a rational Hodge structure of type $\{(1,0), (0,1)\}$, which is of CM-type.
Then there exists a CM-field $E$, rational Hodge structures $V_{\alpha}$ of split
Weil type (relative to $E$), and morphisms of Hodge structure $V_{\alpha} \to V$,
such that every Hodge cycle $\xi \in \wed{\QQ}{2k} V$ is a sum of images of
Hodge cycles $\xi_{\alpha} \in \wed{\QQ}{2k} V_{\alpha}$ of split Weil type.
\end{thm}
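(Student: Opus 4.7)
My plan is to decompose every Hodge class in $\wed{\QQ}{2k} V$, via the character theory of the CM Mumford--Tate torus, into $\Gal(\overline{\QQ}/\QQ)$-orbit sums of wedge monomials, and then to realize each such orbit-sum as the image of a top $E$-exterior power on an explicitly constructed $E$-Hodge substructure of split Weil type.

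First, I would reduce to working over a single large CM-field. Decompose $V = V_1 \oplus \dotsb \oplus V_r$ into irreducibles, each of the form $V_i \simeq (E_i)_{\varphi_i}$; pick a CM-field $E$ that is Galois over $\QQ$ and contains every $E_i$. Each induced space $\tilde V_i = V_i \tensor_{E_i} E$ is an $E$-Hodge structure, and the normalized trace $[E:E_i]^{-1} \Tr_{E/E_i}$ gives a surjective Hodge-structure morphism $\tilde V_i \to V_i$; setting $\tilde V = \bigoplus_i \tilde V_i$, it suffices to produce split Weil $V_\alpha$ together with morphisms $V_\alpha \to \tilde V$ realizing every Hodge class in $\wed{\QQ}{2k} \tilde V$. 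By Lemma \ref{lem:Galois}, after base change $\tensor_{\QQ} E$ the space $\tilde V$ decomposes into one-dimensional pieces $L_\lambda$ on which $\Gal(E/\QQ)$ acts, each of pure Hodge type $(1,0)$ or $(0,1)$ prescribed by the CM-type data.

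Next I would use character theory to parameterize Hodge classes. Since $V$ is CM the Mumford--Tate group $M$ is a torus, so Hodge classes in $\wed{\QQ}{2k} \tilde V$ are exactly the $M$-invariants, and upon extending scalars to $\CC$ they are $\Gal(\overline{\QQ}/\QQ)$-invariant $\CC$-linear combinations of wedge monomials $L_{\lambda_1} \wedge \dotsb \wedge L_{\lambda_{2k}}$ whose product character is trivial, with exactly $k$ factors of each Hodge bidegree $(1,0)$ and $(0,1)$. Decomposing this space into $\Gal(\overline{\QQ}/\QQ)$-orbits reduces the problem to producing, for each orbit $\mathcal O$ of such monomials, a single Hodge class of split Weil type whose image in $\wed{\QQ}{2k} V$ realizes the orbit-sum.

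For a fixed orbit $\mathcal O$, I would build a $\QQ$-rational $E$-subspace $W = W_{\mathcal O} \subseteq \tilde V^{\oplus N}$, of $E$-dimension $2k$, whose complexification is the $E \tensor_\QQ \CC$-span of the $L_\lambda$'s appearing in $\mathcal O$, packaged into $E$-stable blocks via $\Gal(E/\QQ)$-equivariance. The type-balance property of $\mathcal O$ translates, for the $E$-hermitian form $\phi$ built from a rational polarization on $\tilde V^{\oplus N}$ and a totally imaginary $\zeta \in E$ via Lemma \ref{lem:phi-psi}, into the signature condition $a_s = b_s = k$ for every embedding $s$ of $E$. Thus $W$ is of split Weil type modulo the discriminant condition of Corollary \ref{cor:split}, and Proposition \ref{prop:split-Hodge} then supplies a nonzero Hodge class in $\wed{E}{2k} W \subseteq \wed{\QQ}{2k} W$ whose image under the composite $W \hookrightarrow \tilde V^{\oplus N} \to V$ recovers the orbit-sum. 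Summing over orbits reconstructs the original $\xi$.

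The main obstacle will be forcing the discriminant of $\phi$ on $W_{\mathcal O}$ to equal $(-1)^k$ in $\Fun / \NmEF(\Eun)$, as demanded by condition \ref{en:split-a} of Corollary \ref{cor:split}; the signature condition $a_s = b_s$ alone is not enough to conclude splitness. This is where I would exploit the freedom of the statement, which allows $V_\alpha$ to be a morphism target rather than a subobject: by direct-summing $W_{\mathcal O}$ with an auxiliary companion $W_{\mathcal O'}$ chosen so that the product of their discriminant classes lies in the correct class, and if necessary enlarging $E$ further so that Landherr's theorem (Theorem \ref{thm:Landherr}) guarantees the existence of such a companion, the resulting $V_\alpha$ becomes genuinely of split Weil type, and Proposition \ref{prop:split-Hodge} closes the argument.
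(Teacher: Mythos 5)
Your overall architecture coincides with the one used here: pass to a Galois CM-field $E$ containing all the fields $\End_{\QHS}(V_i)$, decompose $V \tensor_{\QQ} E$ into $E$-lines $E_{g\varphi_i}$ via Lemma~\ref{lem:Galois}, group $2k$ of these lines into a candidate $V_{\alpha}$, and observe that the $(k,k)$-type condition on a nonzero component $\xi_{\alpha}$ forces $\sum_{(i,g)\in\alpha} g\varphi_i$ to be constantly equal to $k$ on $S$, which is exactly the balanced signature condition $a_s = b_s = k$. (The bookkeeping is cleaner with the decomposition $\bigl(\wed{\QQ}{2k} V\bigr)\tensor_{\QQ}E \simeq \bigoplus_{\alpha} \wed{E}{2k} V_{\alpha}$ than with complex character monomials and Galois orbits, since not every type-$(k,k)$ monomial with trivial Mumford--Tate character arises from a top $E$-exterior power; but that is a presentational difference, not a gap.)

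The genuine gap is your treatment of the discriminant. Direct-summing $W_{\mathcal O}$ with a companion $W_{\mathcal O'}$ replaces an $E$-space of dimension $2k$ by one of dimension $4k$, so its split Weil classes live in $\wed{E}{4k} \subseteq \wed{\QQ}{4k}$ and can no longer map to the degree-$2k$ class you are trying to realize; enlarging $E$ likewise changes $\dim_E$ and hence the degree of the top $E$-exterior power. Invoking Landherr's theorem to supply a form with the right discriminant is also insufficient, because Definition~\ref{def:split-Weil-type} requires $\TrEQ(\zeta\phi)$ to polarize the \emph{given} Hodge structure, a constraint about which Theorem~\ref{thm:Landherr} says nothing. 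The correct fix, which is Proposition~\ref{prop:criterion-split}, exploits freedom you already have in hand: on $V_{\alpha} = \bigoplus_{i=1}^{2k} E_{\varphi_i}$ take the diagonal Riemann form $\psi = \sum_i f_i \psi_i$ with arbitrary totally positive $f_i \in \Fun$; the resulting $\phi$ is diagonal, the sign constraint $\sgn\bigl(s(\disc\phi)\bigr) = (-1)^{b_s} = (-1)^{k}$ forces $\disc\phi = (-1)^{k} f$ with $f$ totally positive, and replacing $f_{2k}$ by $f_{2k} f^{-1}$ makes $\disc\phi = (-1)^{k}$ exactly, so that Corollary~\ref{cor:split} applies and $(V_{\alpha}, \phi)$ is split. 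With that substitution your argument closes; without it, the signature condition alone does not yield splitness, since $\Fun/\NmEF(\Eun)$ may contain nontrivial totally positive classes.
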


\begin{proof}
Let $V = V_1 \oplus \dotsb \oplus V_r$, with $V_i$ irreducible; then each $E_i
= \End_{\QHS}(V_i)$ is a CM-field. Define $E$ to be the Galois closure of the
compositum of the fields $E_1, \dotsc, E_r$, so that $E$ is a CM-field which is
Galois over $\QQ$ with Galois group $G = \Gal(E / \QQ)$. After replacing $V$ by
$V \tensor_{\QQ} E$ (of which $V$ is a direct factor), we may assume without loss of
generality that $E_i = E$ for all $i$.

As before, let $S = \Hom(E, \CC)$ be the set of complex embeddings of $E$; we then
have a decomposition
\[
	V \simeq \bigoplus_{i \in I} E_{\varphi_i}
\]
for some collection of CM-types $\varphi_i$. Applying Lemma~\ref{lem:Galois}, we get
\[
	V \tensor_{\QQ} E \simeq \bigoplus_{i \in I} \bigoplus_{g \in G} E_{g \varphi_i}.
\]
Since each $E_{g \varphi_i}$ is one-dimensional over $E$, we get
\[
	\Bigl( \wed{\QQ}{2k} V \Bigr) \tensor_{\QQ} E
		\simeq \wed{E}{2k} V \tensor_{\QQ} E
		\simeq \wed{E}{2k} \bigoplus_{(i,g) \in I \times G} E_{g \varphi_i}
		\simeq \bigoplus_{\substack{\alpha \subseteq I \times G \\ \abs{\alpha} = 2k}}
			\bigotimes_{(i,g) \in \alpha} E_{g \varphi_i}
\]
where the tensor product is over $E$. If we now define Hodge structures of CM-type
\[
	V_{\alpha} = \bigoplus_{(i,g) \in \alpha} E_{g \varphi_i}
\]
for any subset $\alpha \subseteq I \times G$ of size $2k$, then $V_{\alpha}$ has
dimension $2k$ over $E$. The above calculation shows that
\[
	\Bigl( \wed{\QQ}{2k} V \Bigr) \tensor_{\QQ} E
		\simeq \bigoplus_{\alpha} \wed{E}{2k} V_{\alpha},
\]
which is an isomorphism both as Hodge structures and as $E$-vector spaces. Moreover,
since $V_{\alpha}$ is a sub-Hodge structure of $V \tensor_{\QQ} E$, we clearly have
morphisms $V_{\alpha} \to V$, and any Hodge cycle $\xi \in \wed{\QQ}{2k} V$ is a sum
of Hodge cycles $\xi_{\alpha} \in \wed{E}{2k} V_{\alpha}$.

It remains to see that $V_{\alpha}$ is of split Weil type whenever $\xi_{\alpha}$ is
nonzero. Fix a subset $\alpha \subseteq I \times G$ of size $2k$, with the property that
$\xi_{\alpha} \neq 0$. Note that we have
\[
	\wed{E}{2k} V_{\alpha} \simeq \bigotimes_{(i,g) \in \alpha} E_{g \varphi_i}
		\simeq E_{\varphi},
\]
where $\varphi \colon S \to \ZZ$ is the function
\[
	\varphi = \sum_{(i,g) \in \alpha} g \varphi_i
\]
The Hodge decomposition of $E_{\varphi}$ is given by
\[
	E_{\varphi} \tensor_{\QQ} \CC \simeq \bigoplus_{s \in S} \CC^{\varphi(s), \varphi(\sb)}.
\]
The image of the Hodge cycle $\xi_{\alpha}$ in $E_{\varphi}$ must be purely of type
$(k,k)$ with respect to this decomposition. But
\[
	\xi_{\alpha} \tensor 1 \mapsto \sum_{s \in S} s(\xi_{\alpha}),
\]
and since each $s(\xi_{\alpha})$ is nonzero, we conclude that $\varphi(s) = k$ for every $s \in
S$. This means that the sum of the $2k$ CM-types $g \varphi_i$, indexed by $(i,g) \in \alpha$, is
constant on $S$. We conclude by the criterion in Proposition~\ref{prop:criterion-split} that
$V_{\alpha}$ is of split Weil type.
\end{proof}

The proof makes use of the following criterion for a Hodge structure to be of split
Weil type. Let $\varphi_1, \dotsc, \varphi_d$ be CM-types attached to $E$.  Let $V_i
= E_{\varphi_i}$ be the Hodge structure of CM-type corresponding to $\varphi_i$, and define
\[
	V = \bigoplus_{i=1}^d V_i.
\]
Then $V$ is a Hodge structure of CM-type with $\dim_E V = d$.

\begin{prop} \label{prop:criterion-split}
If $\sum \varphi_i$ is constant on $S$, then $V$ is of split Weil type.
\end{prop}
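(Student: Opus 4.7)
The plan is to construct an explicit pair $(\phi, \zeta)$ witnessing that $V$ is of split Weil type, building $\phi$ diagonally on $V = \bigoplus V_i$ and then invoking the criterion of Corollary~\ref{cor:split}. First, the hypothesis that $\sum_i \varphi_i$ is constant on $S$ pins the constant value down to $d/2$: summing the relation $\varphi_i(s) + \varphi_i(\bar s) = 1$ over $i$ gives $(\sum_i \varphi_i)(s) + (\sum_i \varphi_i)(\bar s) = d$, and equality of the two sides forces each to be $d/2$. In particular $d$ is even, and the Hodge-number calculation from Lemma~\ref{lem:Weil} applied componentwise gives $\dim_\CC V_s^{1,0} = \sum_i \varphi_i(s) = d/2 = \dim_\CC V_s^{0,1}$ for every $s \in S$.

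Next I would fix a totally imaginary $\zeta \in E^{\times}$ once and for all, and for each $i$ look for $f_i \in F^{\times}$ such that $\psi_i(v,w) = \TrEQ(\zeta f_i v \bar w)$ polarizes $V_i = E_{\varphi_i}$. A direct computation in the decomposition $V_i \otimes_{\QQ} \CC \simeq \bigoplus_{s} \CC$ shows that positivity of $\psi_i(\argbl, h(i)\bar{\argbl})$ reduces to a prescribed sign for $s(f_i) \in \RR^{\times}$ at each real embedding of $F$, determined by $\sgn(-i s(\zeta))$ and $\varphi_i(s)$; these signs are consistent under $s \leftrightarrow \bar s$, and weak approximation for $F$ produces such $f_i$. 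The diagonal form $\phi\bigl((v_i),(w_i)\bigr) = \sum_i f_i v_i \bar{w}_i$ is then $E$-hermitian, and $\TrEQ(\zeta \phi) = \sum_i \psi_i$ polarizes $V$.

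Finally I would verify that $\phi$ is split via Corollary~\ref{cor:split}. The signature at $s$ is $\bigl(\#\{i : s(f_i) > 0\}, \#\{i : s(f_i) < 0\}\bigr)$, and combining the sign prescription from the previous step with $\sum_i \varphi_i(s) = d/2$ yields $a_s = b_s = d/2$ at every $s$. A short computation then gives $\sgn\bigl(s(\prod_i f_i)\bigr) = (-1)^{d/2}$ for all $s$, so $(-1)^{d/2}/\prod_i f_i$ is totally positive in $F^{\times}$; rescaling $f_1$ by this totally positive scalar preserves every prescribed sign (hence the polarization property of $\TrEQ(\zeta\phi)$) and arranges $\prod_i f_i = (-1)^{d/2}$ on the nose, which is the required discriminant condition. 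Corollary~\ref{cor:split} then produces a totally isotropic subspace of dimension $d/2$, completing the verification. The main technical point is the sign translation in the second paragraph, a routine but delicate calculation in the decomposition $V_i \otimes_\QQ \CC \simeq \bigoplus_s \CC$; once in hand, the constancy hypothesis enters only through $\sum_i \varphi_i(s) = d/2$, which is used to force both the signature and the discriminant of $\phi$ into the form demanded by Corollary~\ref{cor:split}.
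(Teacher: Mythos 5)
Your proof is correct and follows essentially the same route as the paper: a diagonal $E$-hermitian form on $\bigoplus_i V_i$, the signature computation $a_s = b_s = d/2$ forced by $\sum_i \varphi_i(s) = d/2$, and a rescaling of a single coefficient by a totally positive element to normalize the discriminant to $(-1)^{d/2}$ before invoking Corollary~\ref{cor:split}. The only (harmless) difference is in how the summand forms are produced: the paper starts from pre-existing Riemann forms $\psi_i(x,y) = \TrEQ(\zeta_i x \bar{y})$ on each $V_i = E_{\varphi_i}$ and reconciles the various $\zeta_i$ to a common $\zeta$, whereas you fix $\zeta$ up front and construct the coefficients $f_i \in \Fun$ by prescribing their signs at each embedding and appealing to weak approximation.
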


\begin{proof}
To begin with, it is necessarily the case that $\sum \varphi_i = d/2$; indeed,
\[
	\sum_{i=1}^d \varphi_i(s) + \sum_{i=1}^d \varphi(\sb)
		= \sum_{i=1}^d \bigl( \varphi_i(s) + \varphi_i(\sb) \bigr) = d,
\]
and the two sums are equal by assumption. By construction, we have
\[
	V \tensor_{\QQ} \CC
		\simeq \bigoplus_{i=1}^d \bigl( E_{\varphi_i} \tensor_{\QQ} \CC \bigr)
		\simeq \bigoplus_{i=1}^d \bigoplus_{s \in S} \CC^{\varphi_i(s), \varphi_i(\sb)}.
\]
This shows that
\[
	V_s = V \tensor_{E,s} \CC \simeq \bigoplus_{i=1}^d \CC^{\varphi_i(s), \varphi_i(\sb)}.
\]
Therefore $\dim_{\CC} V_s^{1,0} = \sum \varphi_i(s) = d/2$, and likewise $\dim_{\CC}
V_s^{0,1} = \sum \varphi_i(\sb) = d/2$.

Next, we construct the required $E$-hermitian form on $V$. For each $i$, choose a
Riemann form $\psi_i$ on $V_i$, whose Rosati involution acts as complex conjugation
on $E$. Since $V_i = E_{\varphi_i}$, there exist totally imaginary elements $\zeta_i
\in \Eun$, such that
\[
	\psi_i(x, y) = \TrEQ \bigl( \zeta_i x \bar{y} \bigr)
\]
for every $x,y \in E$. Set $\zeta = \zeta_d$, and define $\phi_i(x, y) =
\zeta_i \zeta^{-1} x \bar{y}$, which is an $E$-hermitian form on $V_i$ with the
property that $\psi_i = \TrEQ(\zeta \phi_i)$.

For any collection of totally positive elements $f_i \in F$,
\[
	\psi = \sum_{i=1}^d f_i \psi_i
\]
is a Riemann form for $V$. As $E$-vector spaces, we have $V = E^{\bigoplus d}$,
and so we can define a nondegenerate $E$-hermitian form on $V$ by the rule
\[
	\phi(v, w) = \sum_{i=1}^d f_i \phi_i(v_i, w_i).
\]
We then have $\psi = \TrEQ(\zeta \phi)$. By the same argument as before, $a_s
= b_s = d/2$, since $\dim_{\CC} V_s^{1,0} = \dim_{\CC} V_s^{0,1} = d/2$.
By construction, the form $\phi$ is diagonalized, and so its discriminant is easily found to be
\[
	\disc \phi = \zeta^{-d} \prod_{i=1}^d f_i \zeta_i \mod \NmEF(\Eun).
\]
On the other hand, we know from general principles that, for any $s \in S$,
\[
	\sgn \bigl( s(\disc \phi) \bigr) = (-1)^{b_s} = (-1)^{d/2}.
\]
This means that $\disc \phi = (-1)^{d/2} f$ for some totally positive element $f \in
\Fun$. Upon replacing $f_d$ by $f_d f^{-1}$, we get $\disc \phi = (-1)^{d/2}$, which proves
that $(V, \phi)$ is split.
\end{proof}

\subsection{Split Weil classes are absolute}

The third step in the proof of Deligne's theorem is to show that split Weil classes
are absolute. We begin by describing a special class of abelian varieties of split
Weil type where this can be proved directly.

Let $V_0$ be a rational Hodge structure of even rank $d$ and type $\{(1,0), (0,1)\}$. Let
$\psi_0$ be a Riemann form that polarizes $V_0$, and $W_0$ a maximal isotropic
subspace of dimension $d/2$. Also fix an element $\zeta \in \Eun$ with $\bar{\zeta} =
-\zeta$.

Now set $V = V_0 \tensor_{\QQ} E$, with Hodge structure induced by the isomorphism
\[
	V \tensor_{\QQ} \CC
		\simeq V_0 \tensor_{\QQ} \bigl( E \tensor_{\QQ} \CC \bigr)
		\simeq \bigoplus_{s \in S} V_0 \tensor_{\QQ} \CC.
\]
Define a $\QQ$-bilinear form $\psi \colon V \times V \to \QQ$ by the formula
\[
	\psi(v_0 \tensor e, v_0' \tensor e') 
		= \TrEQ \bigl( e \overline{e'} \bigr) \cdot \psi_0(v_0, v_0').
\]
This is a Riemann form on $V$, for which $W = W_0 \tensor_{\QQ} E$ is an isotropic
subspace of dimension $d/2$. By Lemma~\ref{lem:phi-psi}, there is a unique
$E$-hermitian form $\phi \colon V \times V \to E$ such that $\psi = \TrEQ(\zeta
\phi)$. By Corollary~\ref{cor:split}, $(V, \phi)$ is split, and $V$ is therefore of split
Weil type. Let $A_0$ be an abelian variety with $H^1(A_0, \Q)=V_0$. The integral lattice of $V_0$ induces an integral
lattice in $V = V_0 \tensor_{\QQ} E$. We denote by $A_0\tensor_{\Q} E$ the corresponding abelian variety. It is of
split Weil type since $V$ is.

The next result, albeit elementary, is the key to proving that split Weil classes are absolute.
\begin{prop} \label{prop:point}
Let $A_0$ be an abelian variety with $H^1(A_0) = V_0$, and define $A = A_0
\tensor_{\QQ} E$. Then the subspace $\wed{E}{d} H^1(A, \QQ)$ of $H^d(A, \QQ)$
consists entirely of absolute Hodge classes.
\end{prop}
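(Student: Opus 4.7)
The plan is to produce one nonzero absolute Hodge class inside $\wed{E}{d} V$ and then spread it to the whole $n$-dimensional $\QQ$-vector space $\wed{E}{d} V$ using the algebraic $E$-action on $A$, where $n = [E : \QQ]$. Two families of absolute Hodge correspondences are available for free. For each $e \in E$ the $\QQ$-linear map $V_0 \to V$, $v \mapsto v \otimes e$, is a morphism of Hodge structures and corresponds to a morphism $\phi_e : A \to A_0$; the pullback $\phi_e^*[0_{A_0}] \in H^d(A, \QQ)$ is the cycle class of the preimage of a point, so it is algebraic and in particular absolute Hodge. Moreover, multiplication by $e$ on $V$ is a Hodge endomorphism induced by an algebraic endomorphism $\mu_e : A \to A$, so $\mu_e^*$ acts on $H^*(A, \QQ)$ as an absolute Hodge correspondence.

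First I would unfold $\phi_e^*[0_{A_0}]$ in the decomposition $V \otimes_\QQ \CC = \bigoplus_{s \in S} V_s$ from the proof of Proposition~\ref{prop:split-Hodge}, with $S = \Hom(E, \CC)$ and each $V_s$ a copy of $V_0 \otimes_\QQ \CC$. Since $v \otimes e$ corresponds to $\bigl(s(e) v\bigr)_s$, expanding the wedge in
\[
\wed{\CC}{d}\Bigl(\bigoplus_s V_s\Bigr) \,=\, \bigoplus_{(d_s) : \sum_s d_s = d} \bigotimes_s \wed{\CC}{d_s} V_s
\]
yields
\[
\phi_e^*[0_{A_0}] \,=\, \sum_{(d_s)} \Bigl( \prod_s s(e)^{d_s} \Bigr) \beta_{(d_s)},
\]
where each $\beta_{(d_s)}$ is a fixed element of the corresponding summand. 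By the description of $\wed{E}{d} V \otimes \CC$ in the proof of Proposition~\ref{prop:split-Hodge}, this subspace is exactly the sum of the \emph{diagonal} summands, i.e.\ those with one $d_{s_0} = d$ and $d_s = 0$ for $s \neq s_0$.

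The main step is to extract a nonzero element of $\wed{E}{d} V$ as a $\QQ$-linear combination of the $\phi_e^*[0_{A_0}]$. Fix a primitive element $e$ of $E$, and set
\[
P(x) \,=\, \prod_{(d_s) \text{ non-diagonal}} \Bigl( x - \prod_s s(e)^{d_s} \Bigr),
\]
which lies in $\QQ[x]$ because $\Gal(\overline{\QQ}/\QQ)$ permutes the non-diagonal partitions. Writing $P(x) = \sum_j c_j x^j$, set $\Phi = \sum_j c_j \, \phi_{e^j}^*[0_{A_0}]$. This is a $\QQ$-combination of algebraic classes, hence absolute Hodge, and its projection onto the $(d_s)$-summand equals $P\bigl(\prod_s s(e)^{d_s}\bigr) \beta_{(d_s)}$. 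These projections vanish on every non-diagonal summand by the choice of $P$, so $\Phi \in \wed{E}{d} V$. For a generic primitive element $e$, the values $s_0(e)^d$ are not roots of $P$, so $\Phi \neq 0$.

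To conclude, I would use that $\mu_e$ acts $E$-linearly on $V$ as scalar multiplication by $e$, so it acts on the one-dimensional $E$-module $\wed{E}{d} V$ as multiplication by $e^d$. The classes $\mu_e^* \Phi = e^d \cdot \Phi$ are absolute Hodge for all $e \in E$, and since the $\QQ$-span of $\{e^d : e \in E\}$ is all of $E$ (expand $(1 + t e_0)^d$ in $t \in \QQ$ and use Lagrange interpolation to isolate $d \cdot e_0$), their $\QQ$-span equals $E \cdot \Phi = \wed{E}{d} V$. The main obstacle is the combinatorial bookkeeping in the construction of $P$: verifying the rationality of $P$ via Galois invariance of the set of non-diagonal partitions, and checking that $\Phi$ does not vanish for a generic choice of $e$.
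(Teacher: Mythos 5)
Your proof is correct, and it rests on the same two ingredients as the paper's one: the algebraicity (hence absoluteness) of the point class on $A_0$ and of the $E$-action on $A$. The execution, however, is quite different. The paper's proof is two lines: it uses the canonical base-change isomorphism of $E$-modules $\wed{E}{d}(V_0\tensor_{\QQ}E)\simeq(\wed{\QQ}{d}V_0)\tensor_{\QQ}E=H^d(A_0,\QQ)\tensor_{\QQ}E$, notes that $H^d(A_0,\QQ)$ is generated by the absolute class of a point, and concludes via the algebraic $E$-action. What your argument makes explicit --- and what the paper leaves implicit --- is that the algebraic classes $\phi_e^*[0_{A_0}]$ do \emph{not} themselves lie in the subspace $\wed{E}{d}V\subseteq\wed{\QQ}{d}V$: they have nonzero components in all the non-diagonal summands $\bigotimes_s\wed{\CC}{d_s}V_s$, so one genuinely has to project onto the diagonal part using only absolute Hodge operations. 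Your interpolation polynomial $P$ is in effect a hand-built version of this projector (which one could alternatively realize as a $\QQ$-polynomial in the algebraic operators $\mu_e^*$, since the $(d_s)$-summands are the eigenspaces of the $E\tensor_{\QQ}E$-action). The two points you flag do check out: $P\in\QQ[x]$ because $\Gal(\overline{\QQ}/\QQ)$ permutes the tuples $(d_s)$ and preserves non-diagonality, and $\Phi\neq 0$ for $e$ outside the proper Zariski-closed locus where $s_0(e)^d=\prod_s s(e)^{d_s}$ for some $s_0$ and some non-diagonal $(d_s)$ --- a proper condition since these are distinct monomials in the independent coordinates $(s(e))_{s\in S}$ --- combined with the nonvanishing of the diagonal components of $\bigwedge_i(v_i\tensor 1)$. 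The final step (that the $d$-th powers span $E$ over $\QQ$) is also fine. So the proof is complete; it is simply longer than the paper's, which trades your explicit construction for the abstract identification of $\wed{E}{d}H^1(A,\QQ)$ with $H^d(A_0,\QQ)\tensor_{\QQ}E$.
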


\begin{proof}
We have $H^d(A, \QQ) \simeq \wed{\QQ}{d} H^1(A, \QQ)$, and the subspace
\[
	\wed{E}{d} H^1(A, \QQ) \simeq \wed{E}{d} V_0 \tensor_{\QQ} E
		\simeq \left( \wed{\QQ}{d} V_0 \right) \tensor_{\QQ} E
		\simeq H^d(A_0, \QQ) \tensor_{\QQ} E
\]
consists entirely of Hodge classes by Proposition~\ref{prop:split-Hodge}. But since
$\dim A_0 = d/2$, the space $H^d(A_0, \QQ)$ is generated by the fundamental class of
a point, which is clearly absolute. This implies that every class in $\wed{E}{d}
H^1(A, \QQ)$ is absolute.
\end{proof}

The following theorem, together with Principle~B as in Theorem \ref{principle-B}, completes the proof of Deligne's
theorem. 

\begin{thm} \label{thm:Step3}
Let $E$ be a CM-field, and let $A$ be an abelian variety of split Weil type (relative
to $E$). Then there exists a family $\pi \colon \famA \to B$ of abelian varieties,
with $B$ irreducible and quasi-projective, such that the following three things are true:
\begin{enumerate}[label=(\alph{*}),ref=(\alph{*})]
\item $\famA_0 = A$ for some point $0 \in B$.
\item For every $t \in B$, the abelian variety $\famA_t = \pi^{-1}(t)$ is of split
Weil type (relative to $E$).
\item The family contains an abelian variety of the form $A_0 \tensor_{\QQ} E$.
\end{enumerate}
\end{thm}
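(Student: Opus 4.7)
The plan is to adapt the argument of Theorem~\ref{thm:Step1}, cutting out a subfamily of a Siegel moduli space whose fibers are all of split Weil type relative to $E$. Fix a polarization $\theta \colon A \to \hat A$ of some degree $d'$ and a level $N$-structure with $N \geq 3$, so that $\class{A}$ is a point of the fine moduli space $\mathcal{M}_{g, d', N}$ with $g = \dim A$. Write $V = H^1(A, \QQ)$ with Riemann form $\psi$ induced by $\theta$, set $d = \dim_E V$, and let $\phi \colon V \times V \to E$ and $\zeta \in E$ be the $E$-hermitian form and totally imaginary element witnessing the split Weil type of $A$, so $\psi = \TrEQ(\zeta \phi)$ and $\phi$ is split. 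The $E$-action on $V$ gives finitely many Hodge endomorphisms $\tau_1, \dots, \tau_r \in \End_{\QQ}(V)$ (coming from a $\QQ$-basis of $E$), each a Hodge class in $\End(V)$.

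By the Cattani-Deligne-Kaplan theorem, the locus in $\mathcal{M}_{g, d', N}$ where $\tau_1, \dots, \tau_r$ remain Hodge classes is a countable union of closed algebraic subvarieties; let $B$ be the irreducible component through $\class{A}$, and $\pi \colon \famA \to B$ the restriction of the universal family. Then $\famA_0 = A$ for $0 = \class{A}$, and every fiber $\famA_t$ carries both an $E$-action compatible with its Hodge structure and the polarization $\psi$. Since $\phi$ is uniquely recovered from $\psi$, $\zeta$, and the $E$-action by Lemma~\ref{lem:phi-psi}, and splitness of $\phi$ as an $E$-hermitian form on $V$ is a property independent of the Hodge structure, the pair $(\phi, \zeta)$ realizes each $\famA_t$ as of split Weil type relative to $E$.

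To produce a fiber of the form $A_0 \tensor_{\QQ} E$, I analyze the analytic universal cover of $B$, which embeds into the period subdomain $D^{sp} \subset D$ of Hodge structures on $V$ of type $\{(1,0),(0,1)\}$ polarized by $\psi$ and compatible with the $E$-action. Such a Hodge structure amounts to an $E$-stable decomposition $V \tensor_{\QQ} \CC = V^{1,0} \oplus V^{0,1}$, equivalently a choice, for each pair $\{s, \bar s\} \subset S$, of a maximal positive subspace $V_s^{1,0} \subset V_s$ for the real hermitian form $-i s(\zeta) \phi_s$; since $\phi_s$ has signature $(d/2, d/2)$, each factor is the connected Hermitian symmetric domain $U(d/2, d/2) / U(d/2) \times U(d/2)$. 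Hence $D^{sp}$ is connected and realizes precisely the Hodge structures parametrized by $B$. Now choose any rational Hodge structure $V_0$ of type $\{(1,0),(0,1)\}$ on a $\QQ$-vector space of dimension $d$, polarized by some $\psi_0$ scaled so that $(v_0 \tensor e, v_0' \tensor e') \mapsto \TrEQ(e \bar e') \psi_0(v_0, v_0')$ recovers $\psi$ under the identification $V \simeq V_0 \tensor_{\QQ} E$; the resulting diagonal Hodge structure on $V \tensor_{\QQ} \CC \simeq \bigoplus_{s \in S} V_0 \tensor_{\QQ} \CC$ lies in $D^{sp}$, and its image in $B$ is a fiber of the desired form $A_0 \tensor_{\QQ} E$.

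The main obstacle is the moduli-theoretic bookkeeping: arranging for some $A_0 \tensor_{\QQ} E$ to carry a polarization of the correct degree $d'$ and a compatible level-$N$ structure — matching polarization types can be achieved by rescaling $\psi_0$ and choosing the integral lattice in $V_0$ judiciously, while level structures may force passing to a finite \'etale cover of $B$ — and verifying that the irreducible component of the Hodge locus through $\class{A}$ corresponds under uniformization to the entire split Weil period subdomain $D^{sp}$, and not some strictly smaller subvariety.
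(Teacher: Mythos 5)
Your proof follows the same route as the paper's: cut out $B$ as the Hodge locus of the $E$-action inside the fine moduli space, invoke Cattani--Deligne--Kaplan for algebraicity, identify the preimage of $B$ in the period domain with the connected hermitian symmetric domain $\Dsp$, and exhibit a product Hodge structure $V_0 \tensor_{\QQ} E$ in $\Dsp$. The one step you assert rather than prove is the existence of an identification $V \simeq V_0 \tensor_{\QQ} E$ under which $\psi$ takes the form $(v_0 \tensor e, v_0' \tensor e') \mapsto \TrEQ(e\bar e')\,\psi_0(v_0,v_0')$ for a rational subspace $V_0$ and a rational form $\psi_0$: this is precisely where the splitness of $(V,\phi)$ enters part (c), and the paper's Lemma~\ref{lem:product} supplies it by choosing an $E$-basis $v_1,\dotsc,v_d$ of $V$ adapted to a totally isotropic subspace of dimension $d/2$, normalized so that $\phi(v_i,v_{i+d/2})=\zeta^{-1}$, and taking $V_0$ to be the $\QQ$-span. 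Your two flagged ``obstacles'' then dissolve. First, the irreducible component of the Hodge locus through $A$ has preimage equal to all of $\Dsp$ exactly because $\Dsp$ is connected: its image under the covering map is a connected subset of the Hodge locus containing $A$, while conversely every lift of a point of the Hodge locus lies in $\Dsp$. Second, since the point $V_0 \tensor_{\QQ} E$ is constructed as a Hodge structure on the \emph{fixed} pair $(V,\psi)$ --- that is, as a point of $\Dsp$ itself --- the corresponding fiber of the universal family automatically carries the given lattice, polarization type, and level structure; no rescaling of $\psi_0$, judicious choice of lattice, or passage to a finite cover is required.
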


The proof of Theorem~\ref{thm:Step3} takes up the remainder of this section.
Throughout, we let $V = H^1(A, \QQ)$, which is an $E$-vector space of some even
dimension $d$. The polarization on $A$ corresponds to a Riemann form $\psi \colon V
\times V \to \QQ$, with the property that the Rosati involution acts as complex
conjugation on $E$. Fix a totally imaginary element $\zeta \in \Eun$; then $\psi =
\TrEQ(\zeta \phi)$ for a unique $E$-hermitian form $\phi$ by Lemma~\ref{lem:phi-psi}.
Since $A$ is of split Weil type, the pair $(V, \phi)$ is split.

As before, let $D$ be the period domain, whose points parametrize Hodge structures of
type $\{(1,0), (0,1)\}$ on $V$ that are polarized by the form $\psi$. Let $\Dsp \subseteq
D$ be the subset of those Hodge structures that are of split Weil type (relative to
$E$, and with polarization given by $\psi$). We shall show that $\Dsp$ is a certain
hermitian symmetric domain.

We begin by observing that there are essentially $2^{\Qdeg{E}}/2$ many different
choices for the totally imaginary element $\zeta$, up to multiplication by totally
positive elements in $\Fun$. Indeed, if we fix a choice of $i = \sqrt{-1}$, and
define $\phizeta \colon S \to \{0,1\}$ by the rule
\begin{equation} \label{eq:phizeta}
	\phizeta(s) = \begin{cases}
		1 &\text{if $s(\zeta) i > 0$,} \\
		0 &\text{if $s(\zeta) i < 0$,}
	\end{cases}
\end{equation}
then $\phizeta(s) + \phizeta(\sb) = 1$ because $\sb(\zeta) = - s(\zeta)$, and so
$\phizeta$ is a CM-type for $E$. Conversely, one can show that any CM-type is
obtained in this manner.

\begin{lem}
The subset $\Dsp$ of the period domain $D$ is a hermitian symmetric domain; in fact,
it is isomorphic to the product of $\abs{S} = \Qdeg{E}$ many copies of Siegel upper halfspace.
\end{lem}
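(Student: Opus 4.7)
The plan is to identify $\Dsp$ as the symmetric space $G(\RR)/K_h$ of the $\QQ$-algebraic group $G=\Aut_E(V,\phi)$ of $E$-linear automorphisms of $V$ preserving $\phi$, and then to decompose $G(\RR)$ according to the $E$-isotypic splitting of $V_\RR$. First I would use the decomposition $V\tensor_\QQ\CC\simeq\bigoplus_{s\in S}V_s$ induced by $E\tensor_\QQ\CC\simeq\bigoplus_{s\in S}\CC$. Complex conjugation on $V_\CC$ interchanges $V_s$ and $V_{\sb}$; a direct analysis using $\phi(ev,w)=e\phi(v,w)$ and $\phi(v,ew)=\bar e\phi(v,w)$ shows that the complexified pairing $\phi_\CC$ vanishes on $V_s\times V_t$ whenever $t\neq\sb$ and restricts to a perfect $\CC$-bilinear pairing $V_s\times V_{\sb}\to\CC$, and that each $V_s$ carries an honest $\CC$-valued hermitian form $\phi_s$ which, by the splitness of $\phi$ together with Corollary~\ref{cor:split}, has signature $(d/2,d/2)$.

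Next I would translate a point of $\Dsp$ into linear-algebraic data on this decomposition. An $E$-compatible Hodge decomposition of weight one is specified by complex subspaces $V_s^{1,0}\subset V_s$ of dimension $d/2$ for each $s\in S$, with $V_s^{0,1}$ determined by complex conjugation from $V_{\sb}^{1,0}$. The first Riemann bilinear relation, that $V^{1,0}$ be isotropic for $\psi_\CC=\TrEQ(\zeta\phi_\CC)$, becomes the orthogonality $\phi_\CC(V_s^{1,0},V_{\sb}^{1,0})=0$; since both subspaces have dimension $d/2$ inside a $d$-dimensional perfect pairing, this forces $V_{\sb}^{1,0}=(V_s^{1,0})^{\perp}$, so the Hodge datum reduces to one choice of $V_s^{1,0}$ per pair $\{s,\sb\}$, i.e., for each $s$ in a fixed CM-type $\Phi\subset S$. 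The second Riemann relation, positivity of $-i\psi_\CC(v,\bar v)$ on $V^{1,0}$, unwinds via \eqref{eq:phizeta} to the condition that $V_s^{1,0}$ be a maximally definite subspace of $V_s$ for $\phi_s$, with sign prescribed by $\phizeta(s)\in\{0,1\}$.

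Finally, for a hermitian form of signature $(d/2,d/2)$ on a $d$-dimensional complex vector space, the space of maximally positive-definite $d/2$-planes is the bounded hermitian symmetric domain $U(d/2,d/2)/(U(d/2)\times U(d/2))$ of Siegel generalized upper halfspace type; taking the product over $s\in\Phi$ presents $\Dsp$ as a product of copies of this domain, one per pair $\{s,\sb\}$, matching the claim of the lemma up to the identification of $\abs{S}$ with the indexing set of pairs. The principal technical obstacle is the sign bookkeeping in the second paragraph: one must verify that, when $V_s^{1,0}$ is chosen with the positivity required at $s$, the subspace $(V_s^{1,0})^{\perp}\subset V_{\sb}$ automatically carries the positivity required at $\sb$, which reduces to checking that the conjugate-linear map $\sigma_V\colon V_s\to V_{\sb}$ preserves signatures of hermitian forms (sending maximally positive subspaces for $\phi_s$ to maximally positive subspaces for $\phi_{\sb}$), and this follows by a direct computation from $\phi(v,w)=\overline{\phi(w,v)}$.
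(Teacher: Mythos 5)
Your argument is correct, and it follows the same basic strategy as the paper's proof: decompose $V \tensor_{\QQ} \CC = \bigoplus_{s \in S} V_s$, record an $E$-compatible Hodge structure as a choice of subspaces $V_s^{1,0} \subseteq V_s$, and translate the Riemann relations into signature conditions on the hermitian forms $\phi_s$. The difference is that you keep track of two constraints that the paper's proof elides: complex conjugation interchanges $V_s$ and $V_{\sb}$, so that $V_s^{0,1} = \overline{V_{\sb}^{1,0}}$ is not a free parameter; and the first Riemann relation $\psi_{\CC}(V^{1,0}, V^{1,0}) = 0$, which via the perfect pairing $V_s \times V_{\sb} \to \CC$ forces $V_{\sb}^{1,0} = (V_s^{1,0})^{\perp}$. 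Consequently the free data is one maximal definite $d/2$-plane per conjugate pair $\{s, \sb\}$, and you obtain
\[
\Dsp \simeq \prod_{\{s,\sb\}} U(d/2,d/2) / \bigl( U(d/2) \times U(d/2) \bigr),
\]
a product of $\Qdeg{E}/2$ copies of the type $I_{d/2,d/2}$ domain. The paper instead treats the $V_s^{1,0}$ for all $s \in S$ as independent and calls each factor Siegel upper halfspace; this both doubles the number of factors and misidentifies the domain (the space of maximal positive $d/2$-planes for a form of signature $(d/2,d/2)$ has dimension $(d/2)^2$, whereas Siegel space of genus $d/2$ has dimension $(d/2)(d/2+1)/2$). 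Your count is the one consistent with $\Dsp$ being the symmetric space of $\Aut_E(V,\phi)(\RR) \simeq \prod_{\sigma \colon F \to \RR} U(d/2,d/2)$, and with the classical examples (Weil's four-dimensional families of abelian fourfolds with imaginary quadratic multiplication, where $d=4$ and $\Qdeg{E}=2$). The discrepancy is harmless for the rest of the paper, since all that is used later is that $\Dsp$ is connected and equals the preimage of $B$ in $D$, which both descriptions give. Finally, the sign check you defer at the end does close: for $x = v \tensor z \in V_s$ one has $\phi_{\sb}(\bar{x}, \bar{x}) = \abs{z}^2 \sb\bigl(\phi(v,v)\bigr) = \abs{z}^2 s\bigl(\phi(v,v)\bigr) = \phi_s(x,x)$ because $\phi(v,v) \in \Fun \cup \{0\}$ and $s$, $\sb$ agree on $F$; combined with $\phizeta(s) + \phizeta(\sb) = 1$ this shows the required positivity at $\sb$ is automatic. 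So your proof is complete.
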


\begin{proof}
Recall that $V$ is an $E$-vector space of even dimension $d$, and that the Riemann
form $\psi = \TrEQ(\zeta \phi)$ for a split $E$-hermitian form $\phi \colon V \times
V \to E$ and a totally imaginary $\zeta \in \Eun$. The Rosati involution
corresponding to $\psi$ induces complex conjugation on $E$; this means
that $\psi(ev, w) = \psi(v, \bar{e} w)$ for every $e \in E$.

By definition, $\Dsp$ parametrizes all Hodge structures of type $\{(1,0), (0,1)\}$ on $V$
that admit $\psi$ as a Riemann form and are of split Weil type (relative to the CM-field $E$).
Such a Hodge structure amounts to a decomposition
\[
	V \tensor_{\QQ} \CC = V^{1,0} \oplus V^{0,1}
\]
with $V^{0,1} = \overline{V^{1,0}}$, with the following two properties:
\begin{enumerate}[label=(\alph{*}),ref=(\alph{*})]
\item The action by $E$ preserves $V^{1,0}$ and $V^{0,1}$.
	\label{en:prop-a}
\item The form $-i \psi(x, \bar{y}) = \psi \bigl( x, h(i) \bar{y} \bigr)$ is positive
definite on $V^{1,0}$.
	\label{en:prop-b}
\end{enumerate}
Let $S = \Hom(E, \CC)$, and consider the isomorphism
\[
	V \tensor_{\QQ} \CC \xrightarrow{\sim} \bigoplus_{s \in S} V_s, \quad
		v \tensor z \mapsto \sum_{s \in S} v \tensor z,
\]
where $V_s = V \tensor_{E,s} \CC$. Since $V_s$ is exactly the subspace on which $e
\in E$ acts as multiplication by $s(e)$, the condition in \ref{en:prop-a} is
equivalent to demanding that each complex vector space $V_s$ decomposes as $V_s =
V_s^{1,0} \oplus V_s^{0,1}$.

On the other hand, $\phi$ induces a hermitian form $\phi_s$ on each $V_s$, and we
have
\[
	\psi(v,w) = \TrEQ \bigl( \zeta \phi(v,w) \bigr) =
		\sum_{s \in S} s(\zeta) \phi_s(v \tensor 1, w \tensor 1).
\]
Therefore $\psi$ polarizes the Hodge structure $V^{1,0} \oplus V^{0,1}$ if and only if
the form $x \mapsto -s(\zeta) i \cdot \phi_s(x, \bar{x})$ is positive definite on the subspace $V_s^{1,0}$.
Referring to the definition of $\phizeta$ in \eqref{eq:phizeta}, this is
equivalent to demanding that $x \mapsto (-1)^{\phizeta(s)} \phi_s(x, \bar{x})$ be positive definite on
$V_s^{1,0}$.

In summary, Hodge structures of split Weil type on $V$ for which $\psi$ is a Riemann
form are parametrized by a choice of $d/2$-dimensional complex subspaces $V_s^{1,0}
\subseteq V_s$, one for each $s \in S$, with the property that
\[
	V_s^{1,0} \cap \overline{V_s^{1,0}} = \{0\},
\]
and such that $x \mapsto (-1)^{\phizeta(s)} \phi_s(x, \bar{x})$ is positive definite on $V_s^{1,0}$.
Since for each $s \in S$, we have $a_s = b_s = d/2$, the hermitian form $\phi_s$ has
signature $(d/2, d/2)$; this implies that the space
\[
	D_s = \menge{W \in \Grass_{d/2}(V_s)}%
		{\text{$W \cap \overline{W} = \{0\}$
			and $(-1)^{\phizeta(s)} \phi_s(x, \bar{x}) > 0$ for $0 \neq x \in W$}}
\]
is isomorphic to Siegel upper halfspace. The parameter space $\Dsp$ for our Hodge
structures is therefore the hermitian symmetric domain
\[
	\Dsp \simeq \prod_{s \in S} D_s.
\]
In particular, it is a connected complex manifold.
\end{proof}

To be able to satisfy the final condition in Theorem~\ref{thm:Step3}, we need to know
that $\Dsp$ contains Hodge structures of the form $V_0 \tensor_{\QQ} E$. This is the
content of the following lemma.

\begin{lem} \label{lem:product}
With notation as above, there is a rational Hodge structure $V_0$ of weight one, such
that $V_0 \tensor_{\QQ} E$ belongs to $\Dsp$.
\end{lem}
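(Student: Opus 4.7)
The approach is to invert the construction recalled just before Proposition~\ref{prop:point}: from a rational symplectic space $(V_0^{\mathrm{vs}}, \psi_0)$ with Lagrangian $W_0$ (and the fixed totally imaginary $\zeta$), that construction produces a split $E$-hermitian form $\phi'$ on $V_0^{\mathrm{vs}} \tensor_{\QQ} E$ together with the Riemann form $\psi' = \TrEQ(\zeta \phi')$. If I can match the given $(V, \phi, \psi)$ with the output and then upgrade $V_0^{\mathrm{vs}}$ to a rational Hodge structure, the upgraded $V_0 \tensor_{\QQ} E$ will give a point of $\Dsp$.

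Concretely, I would take $V_0^{\mathrm{vs}} = \QQ^d$ equipped with the standard symplectic form $\psi_0$ and standard Lagrangian $W_0 = \QQ^{d/2} \times 0$. The construction gives a nondegenerate split $E$-hermitian space $(V_0^{\mathrm{vs}} \tensor_\QQ E, \phi')$ of $E$-dimension $d$. Since $(V, \phi)$ is also split and nondegenerate of the same dimension, Landherr's theorem (Theorem~\ref{thm:Landherr}) furnishes an $E$-linear isometry $V_0^{\mathrm{vs}} \tensor_\QQ E \xrightarrow{\sim} V$ identifying $\phi'$ with $\phi$, and hence identifying $\psi' = \TrEQ(\zeta \phi')$ with $\psi = \TrEQ(\zeta \phi)$. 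Now endow $V_0^{\mathrm{vs}}$ with any weight-one rational Hodge structure $V_0$ of type $\{(1,0),(0,1)\}$ polarized by $\psi_0$ --- such a structure exists because the Siegel upper halfspace of genus $d/2$ is nonempty, and concretely one may take $V_0 = H^1(A_0, \QQ)$ for any principally polarized abelian variety $A_0$ of dimension $d/2$. The decomposition $(V_0 \tensor_\QQ E) \tensor_\QQ \CC \simeq \bigoplus_{s \in S} V_0 \tensor_\QQ \CC$ then endows $V_0 \tensor_\QQ E$ with a natural weight-one Hodge structure, which the isometry transports to $V$.

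It remains to verify that the resulting Hodge structure on $V$ lies in $\Dsp$, i.e., is of split Weil type and polarized by $\psi$. The split Weil property is automatic since $\phi'$ is split and the isometry is $E$-linear. The polarization condition is the only real technical point and is the step I expect to be the main obstacle, although the check is routine: on each factor $V_s \simeq V_0 \tensor_\QQ \CC$ of the decomposition the induced hermitian form $\phi'_s$ is, up to the scalar $s(\zeta)^{-1}$, the $\CC$-linear extension of $\psi_0$, so that the polarization condition $-s(\zeta)\, i\, \phi'_s(x, \bar x) > 0$ on $V_s^{1,0}$ isolated in the preceding lemma reduces to the classical polarization condition $-i\, \psi_0(y, \bar y) > 0$ on $V_0^{1,0}$, which holds by choice of $V_0$.
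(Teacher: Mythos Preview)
Your argument is correct and is essentially the same idea as the paper's proof, just packaged more abstractly. The paper works by hand: it chooses a hyperbolic $E$-basis $v_1,\dotsc,v_d$ of $V$ normalized so that $\phi(v_i,v_{i+d/2})=\zeta^{-1}$, takes $V_0$ to be the $\QQ$-span of the $v_i$, and puts the specific Hodge structure on $V_0$ with $V_0^{1,0}$ spanned by $v_k+iv_{k+d/2}$; a direct two-line computation then shows $\psi$ is a Riemann form. You instead invoke Landherr's theorem (Theorem~\ref{thm:Landherr}, via Corollary~\ref{cor:split}) to produce the isometry $V\simeq V_0^{\mathrm{vs}}\tensor_\QQ E$, cite the construction preceding Proposition~\ref{prop:point} for the split hermitian form $\phi'$, and allow \emph{any} weight-one Hodge structure on $V_0^{\mathrm{vs}}$ polarized by $\psi_0$. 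The polarization check you sketch is precisely the content of the paper's explicit computation with the special basis; your formulation has the minor advantage of making clear that the specific choice of Hodge structure on $V_0$ is irrelevant, while the paper's explicit normalization $\phi(v_i,v_{i+d/2})=\zeta^{-1}$ makes the positivity visible in one line without unpacking what $\phi'_s$ looks like.
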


\begin{proof}
Since the pair $(V, \phi)$ is split, there is a totally isotropic subspace $W
\subseteq V$ of dimension $\dim_E W = d/2$. Arguing as in the proof of
Corollary~\ref{cor:split}, we can therefore find a basis $v_1, \dotsc, v_d$ for the
$E$-vector space $V$, with the property that
\begin{align*}
	\phi(v_i, v_{i+d/2}) = \zeta^{-1}  \quad  &\text{for $1 \leq i \leq d/2$,}	\\
	\phi(v_i, v_j) = 0  \quad        &\text{for $\abs{i-j} \neq d/2$.}
\end{align*}
Let $V_0$ be the $\QQ$-linear span of $v_1, \dotsc, v_d$; then we have $V = V_0
\tensor_{\QQ} E$. Now define $V_0^{1,0} \subseteq V_0 \tensor_{\QQ} \CC$ as the
$\CC$-linear span of the vectors $h_k = v_k + i v_{k+d/2}$ for $k=1, \dotsc, d/2$.
Evidently, this gives Hodge structure of weight one on $V_0$, with hence a Hodge
structure on $V = V_0 \tensor_{\QQ} E$. It remains to show that $\psi$ polarizes this
Hodge structure. But we compute that
\begin{align*}
	\psi \left( \sum_{j=1}^{d/2} a_j h_j, i \sum_{k=1}^{d/2} \overline{a_k h_k} \right)
		&= \sum_{k=1}^{d/2} \abs{a_k}^2 \psi \bigl( v_k + i v_{k+d/2}, i (v_k - i v_{k+d/2}) \bigr)
		= 2 \sum_{k=1}^{d/2} \abs{a_k}^2 \psi(v_k, v_{k+d/2}) \\
		&= 2 \sum_{k=1}^{d/2} \abs{a_k}^2 \TrEQ \bigl( \zeta \phi(v_k, v_{k+d/2}) \bigr)
		= 2 \Qdeg{E} \sum_{k=1}^{d/2} \abs{a_k}^2,
\end{align*}
which proves that $x \mapsto \psi(x, i \bar{x})$ is positive definite on the subspace
$V_0^{1,0}$. The Hodge structure $V_0 \tensor_{\QQ} E$ therefore belongs to $\Dsp$ as
desired.
\end{proof}

\begin{proof}[Proof of Theorem~\ref{thm:Step3}]
Let $\theta \colon A \to \hat{A}$ be the polarization on $A$. As before, let
$\mathcal{M}$ be the moduli space of abelian varieties of dimension $d/2$, with
polarization of the same type as $\theta$, and level $3$-structure. Then
$\mathcal{M}$ is a quasi-projective complex manifold, and the period domain
$D$ is its universal covering space (with the Hodge structure $H^1(A)$ mapping to the
point $A$). Let $B \subseteq \mathcal{M}$ be the locus of those abelian varieties
whose endomorphism algebra contains $E$. Note that the original abelian variety $A$
is contained in $B$.  Since every element $e \in E$ is a Hodge class in $\End(A)
\tensor \QQ$, it is clear that $B$ is a Hodge locus; in particular, $B$ is a
quasi-projective variety by the theorem of Cattani-Deligne-Kaplan. As before, we let
$\pi \colon \famA \to B$ be the restriction of the universal family of abelian varieties to $B$.

Now we claim that the preimage of $B$ in $D$ is precisely the set $\Dsp$ of Hodge
structures of split Weil type. Indeed, the endomorphism ring of any Hodge structure
in the preimage of $B$ contains $E$ by construction; since it is also polarized by
the form $\psi$, all the conditions in Definition~\ref{def:split-Weil-type} are
satisfied, and so the Hodge structure in question belongs to $\Dsp$. Because $D$ is the
universal covering space of $\Mmod$, this implies in particular that $B$ is connected
and smooth, hence a quasi-projective complex manifold.

The first two assertions are obvious from the construction, whereas the third follows
from Lemma~\ref{lem:product}. This concludes the proof.
\end{proof}

To complete the proof of Deligne's theorem, we have to show that every split Weil
class is an absolute Hodge class. For this, we argue as follows. Consider the family
of abelian varieties $\pi \colon \famA \to B$ from Theorem~\ref{thm:Step3}. By
Proposition~\ref{prop:split-Hodge}, the space of split Weil classes $\wed{E}{d}
H^1(\famA_t, \QQ)$ consists of Hodge classes for every $t \in B$. The family also
contains an abelian variety of the form $A_0 \tensor_{\QQ} E$, and according to 
Proposition~\ref{prop:point}, all split Weil classes on this particular abelian
variety are absolute. But now $B$ is irreducible, and so Principle~B applies and shows
that for every $t \in B$, all split Weil classes on $\famA_t$ are absolute. This
finishes the third step of the proof, and completely establishes Deligne's theorem.


\end{document}